\newtheorem{thm}{Theorem}[section]
\newtheorem{f}[thm]{Fact}
\newtheorem{cor}[thm]{Corollary}
\newtheorem{lem}[thm]{Lemma}
\newtheorem{prop}[thm]{Proposition}
\newtheorem{problem}[thm]{Problem}
\newtheorem{question}[thm]{Question}
\theoremstyle{definition}
\newtheorem{defin}[thm]{Definition}
\theoremstyle{remark}
\newtheorem{remark}[thm]{Remark}
\newtheorem{remarks}[thm]{Remarks}
\newtheorem{ex}[thm]{Example}
\newtheorem{claim}[thm]{Claim}
\numberwithin{equation}{section}
\newcommand{\delete}[1]{} % Comment out text.
\newcommand{\nt}{\noindent}
\newcommand{\sig}{\sigma}
\def\eps{{\varepsilon}}
\newcommand{\sk}{\vskip 0.1cm}
\newcommand{\ben}{\begin{enumerate}}
\newcommand{\een}{\end{enumerate}}
\newcommand{\bit}{\begin{itemize}}
\newcommand{\eit}{\end{itemize}}
\newcommand{\TFAE}{The following are equivalent: }
\def\R {{\mathbb R}}
\def\N {{\mathbb N}}
\def\Z {{\mathbb Z}}
\def\U{{\mathbb U}}
\def\norm#1{\left\Vert#1\right\Vert}
\def\E{{\mathrm{E}}}
\def\Iso{{\mathrm{Iso}}\,}
\def\Homeo{{\mathrm{Homeo}}\,}
\newcommand{\id}{{\rm{id}}}
\def\A{{\mathcal{A}}}
\def\E{{\mathcal E}}
\def\F{{\mathcal F}}
\def\K{{\mathcal K}}
\def\calB{{\mathcal B}}
\def\Rcal {{\mathcal{R}}}
\def\QED{\nobreak\quad\ifmmode\roman{Q.E.D.}\else{\rm Q.E.D.}\fi}
\def\a{\alpha}
\def\om{\omega}
\def\Om{\Omega}
\def\s{\sigma}
\def\g{\gamma}
\newcommand{\del}{\delta}
\newcommand{\Del}{\Delta}
\newcommand{\br}{\vspace{4 mm}}
\newcommand{\imp}{\ \Rightarrow\ }
\newcommand{\cls}{\rm{cl\,}}
\newcommand{\la}{\lambda}
\newcommand{\al}{\alpha}
\newcommand{\inv}{\rm{inv}}
\newcommand{\Acal}{\mathcal{A}}
\newcommand{\lan}{\langle}
\newcommand{\ran}{\rangle}
\newcommand{\Pcal}{\mathcal{P}}
\newcommand{\ch}{\mathbf{1}}
\newcommand{\OC}{\bar{\mathcal{O}}}
\newcommand{\co}{{\rm{co\,}}}
\begin{document}

\title[]
% eee I prefer the older title
%2112
%{Affine compactifications and Banach representations of dynamical systems}
%200312  older title (almost :-)
{Banach representations and affine compactifications of dynamical systems}
%{Representations of enveloping semigroups on Banach spaces}

%Authors
%    Information for first author
\author[]{Eli Glasner}
\address{Department of Mathematics,
Tel-Aviv University, Ramat Aviv, Israel}
\email{glasner@math.tau.ac.il}
\urladdr{http://www.math.tau.ac.il/$^\sim$glasner}

%    Information for second author
\author[]{Michael Megrelishvili}
\address{Department of Mathematics,
Bar-Ilan University, 52900 Ramat-Gan, Israel}
\email{megereli@math.biu.ac.il}
\urladdr{http://www.math.biu.ac.il/$^\sim$megereli}

\date{April 18, 2013}

\begin{abstract}
To every Banach space $V$ we associate a compact right topological affine semigroup $\mathcal{E}(V)$.
We show that a separable Banach space $V$ is Asplund if and only if
$\E(V)$ is metrizable, and it is Rosenthal (i.e. it does not contain an isomorphic copy of $l_1$)
if and only if $\E(V)$ is a Rosenthal compactum.
We study representations of compact right topological semigroups in $\E(V)$.
In particular, representations of tame and HNS-semigroups arise naturally
as enveloping semigroups of tame and HNS (hereditarily non-sensitive) dynamical systems,
respectively. As an application we obtain a generalization
of a theorem of R. Ellis.
A main theme of our investigation is the relationship between the enveloping semigroup of
a dynamical system $X$ and the enveloping semigroup of its various affine compactifications $Q(X)$.
When the two coincide we say that the affine compactification $Q(X)$ is $E$-compatible.
This is a refinement of the notion of injectivity.
We show that distal non-equicontinuous systems do not admit any
$E$-compatible compactification.
We present several new examples of non-injective dynamical systems
and examine the relationship between injectivity and $E$-compatibility.
\end{abstract}

\keywords{affine compactification, affine flow, Asplund space, enveloping semigroup,
non-sensitivity, right topological semigroup, %Rosenthal dichotomy,
semigroup compactification, tame system, weakly almost periodic}

%\thanks{This research was partially supported by Grant No 2006119
%from the United States-Israel Binational Science Foundation (BSF)}

\maketitle
\setcounter{tocdepth}{1}
\tableofcontents

%050312  many changes
\section*{Introduction}
%2112
In this work we pursue our ongoing investigation of
representations of dynamical systems on Banach spaces
(see \cite{Me-op, Me-nz, GM1, GM-suc, Me-hilb, GMU, GW-hilb, GM-rose}).
%We study affine extensions of semigroups and semigroup actions.
%One of the main themes here is representations of enveloping semigroups
%into a compact right topological affine semigroup $\E(V)$ naturally defined
%for every Banach space $V$.

%We study affine extensions %embeddings
%of semigroup actions from three view points. Our foremost interest lies in the direction
%of dynamical systems. The main question here is how the knowledge of the asymptotic
%behavior of a dynamical system relates to the asymptotic behavior on
%the associated space of probability measures and other affine extensions.
%The second view point arises through the study of linear representations of dynamical systems on Banach spaces,
%and the third is concerned with the relevant aspects of semigroups theory.
%As we will show these various view points are strongly connected
%and analysis of one aspect sometimes shed light on the others as well.

Recall that a representation of a dynamical system $(G,X)$ on a
Banach space $V$ is given by a pair $(h,\a)$,
%$$
%(h,\a): G \times X \rightrightarrows \Iso(V) \times V^*,
%$$
where $h: G \to \Iso(V)$ is a
%210912  explaining co-homomorphism
\emph{co-homomorphism} (i.e., $h(g_1g_2)=h(g_2)h(g_1)$ for all $g_1, g_2 \in G$)
of the group $G$
into the group $\Iso(V)$ of linear isometries of $V$, and
$\a: X \to V^*$ is a weak$^*$ continuous
%020412
bounded
$G$-map with
respect to the dual action of $h(G)$ on $V^*$.
For semigroup actions $(S,X)$ we consider the co-homomorphisms $h: S \to \Theta(V)$, where $\Theta(V)$ is the semigroup of all
contractive operators.
For every representation $(h,\a)$, taking
%020412  $Y = \al(X)$ and $Q =\overline{co}^{w^*} (Y)$, we get natural affine $S$-embeddings $\a: X \to Q$.
$Q =\overline{co}^{w^*} (\a(X))$, we get natural affine $S$-compactifications $\a: X \to Q$.
This way of obtaining affine
compactifications
establishes a direct link to our earlier works which were mainly concerned with
representations on reflexive, Asplund and Rosenthal Banach spaces.

In Section \ref{s:OCandW} we discuss semigroup compactifications which arise from certain linear representations, the so-called
\emph{operator compactifications}. %of semigroups
These were studied by Witz \cite{Wi}
%(regarding some ergodic problems)
and Junghenn \cite{Ju}.
In the weakly almost periodic case this %issue
approach retrieves the classical work of de Leeuw and Glicksberg \cite{deLG}.
%, where weakly almost periodic functions (WAP) functions play a central role.

To every Banach space $V$ we associate a compact right topological
affine semigroup $\E(V)$.   %, called the enveloping semigroup of $V$.
This is actually the enveloping semigroup of the natural dynamical system
$(\Theta(V)^{op}, B^*)$,
where $B^* \subset V^*$ is the weak$^*$ compact unit ball and $\Theta(V)^{op}$ is the adjoint semigroup of $\Theta(V)$.
We show that
%properties of $\E(V)$ can be used to characterize two major classes of Banach spaces:
%Asplund spaces %(duals of separable subspaces are separable)
%and Rosenthal spaces (i.e. not containing an isomorphic copy of $l_1$). %Geometry
a separable Banach space $V$ is Asplund if and only if $\E(V)$ is metrizable,
and it is Rosenthal ( i.e. it does not contain an isomorphic copy of $l_1$)
if and only if $\E(V)$ is a Rosenthal compactum,
Theorems \ref{t:metr-W} and \ref{t:W=Ros-c} respectively.
We note that the first assertion, about Asplund spaces, can
in essence be already found in \cite{GMU}.

Among %(admissible)
the representations of compact right topological semigroups in $\E(V)$
we are especially interested in tame and HNS-semigroups.
These arise naturally in the study of tame and HNS (= hereditarily non-sensitive) dynamical systems.

%! In Section \ref{s:aff-def} we give technical results on affine compactifications.

Tame dynamical metric systems appeared first in the work of K\"{o}hler \cite{Ko} under the name of \emph{regular} systems.
%Definition was based on ideas coming from Rosenthal's dichotomy and the concept of $l_1$-sequences.
%Precisely, topological Bourgain-Fremlin-Talagrand (in short: BFT) dichotomy applied to $G$-orbits of $f \in C(X)$.
%Motivated by these ideas a dynamical version of BFT dichotomy was proved in \cite{GM1} regarding enveloping semigroup $E(X)$
%(Fact \ref{D-BFT} below).
%It, together with a characterization of Rosenthal Banach spaces, Theorem \ref{t:tame2},
%lead to a dichotomy Theorem \ref{t:W=Ros-c} for Banach spaces.
In \cite{GM1} we formulated a dynamical version of the Bourgain-Fremlin-Talagrand (in short: BFT) dichotomy (Fact \ref{D-BFT} below).
According to this an enveloping semigroup
is either {\it tame}: has cardinality $\le 2^{\aleph_0}$ and consists of Baire
class 1 maps, or it is topologically
%210912  wield
wild and contains a copy of $\beta \N$,
the \v{C}ech-Stone compactificaltion of a discrete countable set.
%090712  "tame" has now 2 different meanings -- in general nonmetrizable case this is a property but not the definition %!!!!!
%Now, a dynamical system is called {\it tame} if its enveloping semigroup is tame.
This dichotomy combined with a characterization of Rosenthal Banach spaces, Theorem \ref{t:tame2}, lead to a dichotomy theorem for Banach spaces
(Theorem \ref{t:W=Ros-c}).

%210912 adding the following 3 lines (since tame systems and tame semigroups play key role in the paper
%better to have the definition already in Introduction)
The enveloping semigroup characterization of (metric) tame systems in \cite{GMU} led us
in \cite{GM-rose} to a general, more flexible definition of tame systems. A (not necessarily
metrizable) compact dynamical system $X$ is {\em tame} if every member of its enveloping semigroup is a fragmented (Baire 1, for metrizable $X$) self-map on $X$.
%%

%Note that topological fragmentability concept from Banach space theory is turned out to be very useful in topological dynamics as well.
%We use the characterizations of Asplund and Rosenthal spaces in terms of fragmentability
%due to Namioka and Phelps \cite{NP} %and Stegall
%for Asplund case and E. Saab and P. Saab for Rosenthal spaces (see Facts \ref{f:Asp} and \ref{f:RosFr}).

%Hereditarily non-sensitive
% HNS dynamical systems defined in \cite{GM1}.
% motivated by results of Akin-Auslander-Berg on almost equicontinuous systems \cite{AAB98}.
In the papers \cite{GM-rose, GM1, Me-nz} we have shown that a metric system
is HNS (tame, WAP) if and only if it admits a faithful representation on an
Asplund (respectively, Rosenthal, reflexive) Banach space.
The algebra of all Asplund (tame) functions on a semigroup $S$ is defined as the collection of all functions on $S$ which come
from HNS (respectively, tame) $S$-compactifications $S \to X$.
These algebras are denoted by $\mathrm{Asp}(S)$ and $\mathrm{Tame}(S)$
respectively.
Tame and HNS dynamical systems were investigated in several recent publications.
See for example the papers by Huang \cite{Huang} and Kerr-Li \cite{KL}.

In Section \ref{s:repr} we strengthen some of our earlier results regarding representations on Banach spaces.
%We will actually work in the more general framework of a semigroup
%$S$ (rather than a group $G$) action, and in Section \ref{s:repr} we indicate
%how results which were obtained in previous works can be extended
%to semigroup actions.
We show in Theorem \ref{H_+} that the Polish group $G=H_+[0,1]$,
%of all orientation preserving homeomorphisms on the unit interval
which admits only trivial Asplund representations, is however Rosenthal representable.

%Studying affine compactifications $\a: X \to Q$ of a given dynamical system
%$(G,X)$, with $Q=\overline{co}(\a(X))$, we face
%the problem to compute the enveloping semigroup $E(Q)$ by $E(X)$.
One of the main topics treated in this work is a refinement of the notion of ``injectivity". The latter was introduced by K\"{o}hler \cite{Ko}
(who, in turn, was motivated by a problem of Pym \cite{Pym90})
and examined systematically in \cite{Gl-tame,Gl-str}.
A compact dynamical $G$-system $X$ is called \emph{injective} if the canonical (restriction) homomorphism %map
$r : E(P(X)) \to E(X)$ --- where $E(X)$ denotes the enveloping
semigroup of the system $(G,X)$ and $P(X)$ is the compact space
of probability measures on $X$ --- is an injection, hence an isomorphism.
The refinement we investigate in the present work is the following one (Section \ref{s:Op-env}).
Instead of considering just the space $P(X)$ we consider any
%290512
%affine embedding $(G,X) \hookrightarrow (G,Q)$ of the system
%$(G,X)$ and we say that this embedding is {\em $E$-compatible} if the homomorphism $r : E(Q) \to E(X)$ is injective (hence an isomorphism).
embedding $(G,X) \hookrightarrow (G,Q)$ into an affine $G$-system
$(G,Q)$
%031012
with $Q =\overline{co}^{w^*} (X)$
and we say that this embedding is {\em $E$-compatible} if the homomorphism $r : E(Q) \to E(X)$ is injective (hence an isomorphism).

Distal affine dynamical systems have quite rigid properties. See for example the work of Namioka \cite{Na83}.
It was shown in \cite{Gl-af} that a minimally generated metric distal affine $G$-flow is equicontinuous.
Using a version of this result we show that for a minimal distal
dynamical system $E$-compatibility in any
%290512  faithful
% ggg   we do not require an "affine embedding" to be 1-1, so without the word "faithful" the statement is wrong
faithful
affine compactification
implies equicontinuity. Thus such embedding is never $E$-compatible
when the system is distal but not equicontinuous (Proposition \ref{p:not-E-fl}).
In particular this way we obtain in Theorem \ref{e:dist-not} a concrete example of
a semigroup compactification which is not an \emph{operator compactification}. More precisely,
for the algebra $D(\Z)$ of all distal functions on $\Z$ the corresponding semigroup compactification $\a: \Z \to \Z^{D(\Z)}$
is not an operator compactification.
%There is no representation of $h: \Z \to \Iso(V)$ on a Banach space $V$ such that
%$\a$ is equivalent to $h: \Z \to \overline{h(\Z)}$, where $\overline{h(\Z)}$ is the closure of $h(\Z)$ in the semigroup $\E(V)$.

Non-injectivity is not restricted to distal systems.
We construct examples of Toeplitz systems which are not
injective, Theorem \ref{e: Toeplic}.
We don't have such examples for a weakly mixing system.
We also describe an example of a $\Z^2$-system which
admits an $E$-compatible embedding yet is not injective
(Example \ref{e:z2}).
We don't have such an example for $\Z$-systems.
%We also establish the fact that every tame system (not only a
%metrizable one) is injective (Theorem \ref{t:tame is inj}).

The notion of a \emph{left introverted} (we say shortly: \emph{introverted})
linear subspace of $C(S)$ was introduced by M.M. Day in 1957.
%In this paper we never use the right introverted subspaces. So
%we say shortly: \emph{introverted} and hope that no ambiguities can occur.
It is an important tool in the study of semigroups of means and affine semigroup compactifications.
It also plays a major role in the theory of Banach semigroup algebras and their second duals,
%050312 Eli, it seems quite intriguing to study the group algebras
%$M(\mathrm{Tame}(G)$ and $M(\mathrm{Asp}(G)$ (convolutions), say, for locally compact G, or even for \Z, \R.
%what happens between RUC(G) and WAP(G), when some classical in group algebras result holds for WAP and not for RUC
%what for Asp(G) and Tame(G). Many reasons to believe that it is a promising direction
see for example, \cite{BJM, Pym93, DLS}.
%120312 A stronger
%200312
A weaker
property of subalgebras of $C(S)$ is being m-Introverted.
It turns out that a subalgebra of $\mathrm{RUC}(G)$ is m-introverted iff
the corresponding dynamical system is point-universal iff it is isomorphic
(as a dynamical system) to its own enveloping semigroup.
%subalgebras control the semigroup compactifications of $S$.
It is well known that the algebras $\mathrm{RUC}(G)$ and $\mathrm{WAP}(G)$ are %!!   left
introverted.
%Also $\mathrm{WAP}(G)$ is amenable.
%(and the amenability of $\mathrm{RUC}(G)$ is a version of amenability of the topological group $G$).
In general there is a large room between the algebras $\mathrm{RUC}(G)$ and
$\mathrm{WAP}(G)$
%031012
for topological groups $G$.
Indeed, by \cite{MPU} (\cite{AG} for monothetic $G$),  $\mathrm{RUC}(G)=\mathrm{WAP}(G)$ iff $G$ is precompact.
%031012

We provide new %interesting and
non-trivial examples of %(left)
introverted spaces.
We show that $\mathrm{Tame}(S)$ is always introverted. Moreover,
all of its m-introverted $S$-subalgebras
(like, $\mathrm{Asp}(S)$ and $\mathrm{WAP}(S)$) are introverted.
As a particular case (Theorem \ref{m-intro to intro}) it follows
that every m-introverted \emph{separable} $S$-subalgebra of $C(S)$ is introverted.
Note also that, by \cite{GM-fp}, the algebra $\mathrm{Asp}(G)$
(which contains the algebra $\mathrm{WAP}(G)$) is (left) amenable
%031012
for every topological group $G$.
This is in contrast to the fact that the larger algebra
$\mathrm{Tame}(G)$ is, in general, non-amenable.

We show that a semigroup compactification $\nu: S \to P$ is an operator compactification iff the corresponding algebra
of this compactification $\A_{\nu}$ is \emph{intro-generated}. The latter
means that $\A_{\nu}$  contains an introverted subspace $F \subset \A_{\nu}$ such that the minimal closed subalgebra of $C(S)$ containing $F$ is $\A_{\nu}$.
(This phenomenon reflects the existence of an $E$-compatible system which is not injective.)
The space $D(\Z)$ of all distal functions on $\Z$ is not intro-generated (Theorem \ref{e:dist-not}).
The $\Z^2$-flow from Example \ref{e:z2} mentioned above provides
an intro-generated subspace of
%290512   $\l_2(\infty)$
$\l_{\infty}(\Z^2)$
which is not introverted.

In Section \ref{s:appl} we first show, in Theorem \ref{t:Ros-E}, that affine compactifications coming from
representations on Rosenthal spaces are E-compatible.
The core of the proof is Haydon's characterization of Rosenthal spaces in terms of the $w^*$-Krein-Milman property.
%show that every weakly continuous representation of a dynamical system $(S,X)$
%on a Rosenthal Banach space is E-compatible (Theorem \ref{t:Ros-E}).
Using results of Section \ref{s:repr} about representations of tame systems
on Rosenthal spaces we show in Theorem \ref{t:tame is inj} that every tame $S$-space $X$ is injective.
This result was proved by K\"{o}hler \cite{Ko} for metrizable systems.
In \cite{Gl-tame} there is a simple proof of this which uses the
fact that for a tame metrizable system $X$ its enveloping semigroup
is a Fr\'{e}chet space.
%Hence, every affine $S$-compactification of tame system is E-compatible.

%One of the main results %in this paper
Next we prove a representation theorem (Theorem \ref{t:Srepr}) according to which
% a tame (respectively, HNS)
% compact right topological
% semigroups, or equivalently,
the enveloping semigroup of a tame (respectively, HNS) system, admits an
admissible embedding into $\E(V)$, where $V$ runs over the class of Rosenthal (respectively, Asplund) Banach spaces.
These results extend the following well known theorem:
the class of reflexively representable compact right topological semigroups %in this setting
coincides with the class of compact semitopological semigroups (proved in \cite{Sh, Me-op}).
As an applications of Theorem \ref{t:Srepr}, using Theorem \ref{p:not-E-gr},
we obtain a \emph{generalized Ellis theorem}:
a tame compact right topological group is a topological group (Theorem \ref{gen-Ellis}).

Finally, a representation theorem for $S$-affine compactifications
(Theorem \ref{t:aff-rep}), shows that for tame (HNS, WAP)
% = weakly almost periodic)
compact metrizable $S$-systems, their $S$-affine compactifications can be
\emph{affinely $S$-represented} on Rosenthal (Asplund, reflexive) separable Banach spaces.

\section{Preliminaries}

Topological spaces are always assumed to be Hausdorff and completely
regular.
The closure of a subset $A \subset X$ %in a topological space $X$
is denoted by $\overline{A}$ or $cl(A)$.
Banach spaces and locally convex vector spaces are over the
field $\R$ of real numbers.
For a subset $A$ of a Banach space we denote by
$sp(A)$ and $\overline{sp}^{norm} (A)$
the linear span and the norm-closed linear span of $K$ respectively.
We denote by %$sp(A)$,
$co (A)$ and $\overline{co} (A)$ the convex hull and the closed convex hull of a set $A$, respectively.
If $A \subset V^*$ is a subset of the dual space $V^*$ of $V$ we mostly mean the weak$^*$ topology on $A$ and $\overline{co} (A)$
or $\overline{co} ^{w^*} (A)$ will denote the $w^*$-closure of $co (A)$ in $V^*$.
%! Whenever it is necessary we use subscripts
%Tykhonov, that is, Hausdorff and completely regular.
For a topological space $X$ we denote by $C(X)$ the Banach algebra of real
valued continuous and bounded functions equipped with the supremum norm. For a subset $A \subset C(X)$ we denote by
$\lan A \ran$ the smallest unital (i.e., containing the constants) closed subalgebra of $C(X)$ containing $A$.
%There exists a

\subsection{Semigroups and actions}

Let $S$ be a semigroup which is also a topological space.
By $\lambda_a: S \to S, x
\mapsto ax$ and $\rho_a: S \to S, x \mapsto xa$ we denote the left
and right $a$-transitions. %We let $C(S,S)$ denote the collection of continuous maps from $S$ to itself.
The subset $\Lambda(S):=\{a \in S: \ \la_a \ \text{is continuous}\}$
is called the \emph{topological center} of $S$.

\begin{defin} \label{d:sem1} A semigroup $S$ as above is said to be:
\ben
\item a \emph{right topological semigroup} if every $\rho_a$ is continuous.
\item \emph{semitopological} if the multiplication $S \times S \to S$ is separately continuous.
%\item \emph{topological} if the multiplication $S \times S \to S$ is continuous.
\item \cite{Na72} \emph{admissible} if $S$ is right topological and $\Lambda(S)$ is dense in $S$.
\een
\end{defin}

%!! A homomorphism $h: S_1 \to S_2$ is said to be \emph{admissible} if $h(\Lambda(S_1)) \subset \Lambda(S_2)$.
Let $A$ be a subsemigroup of a right topological semigroup $S$.
If $A \subset \Lambda (S)$ then the closure $cl(A)$ is a right topological semigroup.
In general, $cl(A)$ is not necessarily a subsemigroup of $S$
(even if $S$ is compact right topological and $A$ is a left ideal). Also $\Lambda(S)$ may be empty for general compact right topological semigroup $S$. See \cite[p. 29]{BJM}.
%%
%Remark p. 30 : $f(\Lambda(S)) \subset \Lambda(T)$ for every Hausdorff factor of crts $f: S \to T$

\begin{defin} \label{d:con-act}
Let $S$ be a semitopological semigroup with a neutral element $e$.
Let $\pi: S \times X \to X$ be a {\it left action} of $S$ on a
topological space $X$. This means that
%for each $s \in S$ the map $x \mapsto sx$ is continuous and,
$ex=x$ and $s_1(s_2x)=(s_1s_2)x$ for all $s_1, s_2 \in S$ and $x \in X$, where
as usual, we write $sx$ instead of $\pi(s,x)=\lambda_s (x)= \rho_x (s)$.
Let $S \times X \to X$ and $S \times Y \to Y$ be two actions.
A map $f: X \to Y$ between $S$-spaces is an \emph{$S$-map} if $f(sx)=sf(x)$ for every $(s,x) \in S \times X$.

We say that $X$ is a \emph{dynamical} \emph{$S$-system} (or an
\emph{$S$-space} or an \emph{$S$-flow}) if the action $\pi$ is separately continuous
(that is, if all orbit maps $\rho_x: S \to X $ and all translations $\lambda_s:
X \to X$ are continuous).
We sometimes write it as a pair $(S,X)$.
%We reserve the symbol $G$ for the case when $S$ is a group.

%Every $x \in X$ defines the orbit map ${\tilde x}: S \to X, \ s \mapsto sx$.
%Every $s \in S$ gives rise to the $s$-translation ${\breve s}: X \to X, \ x \mapsto sx$.
\end{defin}

%The action is {\it monoidal} If $S$ is a monoid and the identity
%$e$ of $S$ acts as the identity transformation of $X$.
%
%The actions of $G$ are always assumed to be monoidal.

 A {\it right system} $(X,S)$ can be defined analogously. If
$S^{op}$ is the {\it opposite semigroup} of $S$ with the same
topology then $(X,S)$ can be treated as a left system $(S^{op},X)$
(and vice versa).

\begin{f} \label{f:sc=c} \cite{La-additional}  %\emph{(Lawson \cite[Corollary 5]{La-additional})}
Let $G$ be a \v{C}ech-complete (e.g., locally compact or completely
metrizable) semitopological group. Then every separately continuous action of $G$ on a compact space $X$ is continuous.
\end{f}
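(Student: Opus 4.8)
The statement to prove is Fact~\ref{f:sc=c}: for a \v{C}ech-complete semitopological group $G$, every separately continuous action on a compact space is jointly continuous. This is attributed to Lawson, so the intended "proof" is really a citation; nonetheless let me sketch the argument one would give.

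\medskip

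The plan is to reduce joint continuity to continuity at a single point, namely $(e,x_0)$ for an arbitrary $x_0\in X$, and then exploit \v{C}ech-completeness through a Baire-category / Namioka-type argument. First I would recall that since each translation $\lambda_s\colon X\to X$ is a homeomorphism (it has inverse $\lambda_{s^{-1}}$, continuous because the action is a left action of the \emph{group} $G$), joint continuity of the action at an arbitrary point $(s_0,x_0)$ follows from joint continuity at $(e, s_0^{-1}x_0)$ by composing with the homeomorphisms $\lambda_{s_0}$ on $X$ and the right translation $\rho_{s_0}$ on $G$ (the latter continuous since $G$ is semitopological). So it suffices to prove continuity of $\pi$ at points of the form $(e,x)$.

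\medskip

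The heart of the matter is a Namioka-style theorem: a separately continuous map from (\v{C}ech-complete space) $\times$ (compact space) into a (regular, or metric) space is jointly continuous on a dense $G_\delta$ subset of the first factor. I would invoke this to get a dense $G_\delta$ set $D\subseteq G$ such that $\pi$ is jointly continuous at every point of $D\times X$. Now fix $x\in X$ and a neighborhood $W$ of $x$; I want to find neighborhoods $U\ni e$ in $G$ and $V\ni x$ in $X$ with $UV\subseteq W$. Pick any $g_0\in D$ (density). By joint continuity at $(g_0, g_0^{-1}x)$ there are neighborhoods $U_0\ni g_0$ and $V_0\ni g_0^{-1}x$ with $U_0 V_0\subseteq W$. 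Since $g_0^{-1}x = \lambda_{g_0}^{-1}(x)$ and $\lambda_{g_0}^{-1}=\lambda_{g_0^{-1}}$ is continuous, $V:=\lambda_{g_0}(V_0)$... hmm, I need $V$ to be a neighborhood of $x$, so rather take $V := \lambda_{g_0^{-1}}^{-1}(V_0)=\lambda_{g_0}(V_0)$, a neighborhood of $x$. And set $U := U_0 g_0^{-1}$... but left-multiplication in $G$ need not be continuous, so $U_0 g_0^{-1}$ need not be open. This is the genuine obstacle, and it is exactly why \v{C}ech-completeness (not just Baire) is needed: one must run the Namioka argument together with the group structure more carefully, using that the set of points of joint continuity is invariant under the (right-continuous) right translations and under the left translations $\lambda_s$ on $X$, to conclude that it is in fact all of $G\times X$. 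Concretely, the set $C$ of joint-continuity points projects to a dense $G_\delta$ $D$ in $G$; one shows $C$ is right-$G$-invariant in the sense $(g,x)\in C \Rightarrow (gh, h^{-1}x)\in C$ for all $h$, and $e\in \overline{D}$ forces, via a short neighborhood chase using continuity of $\rho_h$ and $\lambda_h$ on $X$, that $(e,x)\in C$ for every $x$.

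\medskip

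Since the statement is quoted as a Fact with a reference to \cite{La-additional}, the honest writeup is simply: this is proved in \cite{La-additional}; the argument is a refinement of Namioka's joint-continuity theorem \cite{Na} adapted to group actions, using \v{C}ech-completeness to upgrade "joint continuity on a dense $G_\delta$" to "joint continuity everywhere" via the homogeneity supplied by the group translations. The main obstacle, as indicated, is precisely the failure of left-translation continuity in a merely semitopological group, which prevents a naive transport of a single continuity point to all of $G\times X$ and necessitates the category-theoretic input rather than a purely algebraic one.
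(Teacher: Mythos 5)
The paper itself offers no proof of this Fact: it is quoted from Lawson \cite{La-additional}, so your closing remark that the honest writeup is a citation matches the paper's treatment exactly. But since you do sketch the intended argument, the sketch needs correcting. The obstacle you single out is not real: in a \emph{semitopological} group multiplication is separately continuous, so every left and every right translation of $G$ is a homeomorphism (its inverse is again a translation); what can fail is continuity of inversion and \emph{joint} continuity of multiplication, neither of which is needed here. In particular $U_0g_0^{-1}$ \emph{is} open, and your neighborhood chase closes with no extra invariance argument: put $U:=U_0g_0^{-1}$ and $V:=\{x'\in X:\ g_0^{-1}x'\in V_0\}$ (open, containing $x$, since $\lambda_{g_0^{-1}}$ is continuous); then $(ug_0^{-1})v=u(g_0^{-1}v)\in U_0V_0\subseteq W$, giving continuity at $(e,x)$, and continuity at an arbitrary $(s_0,x_0)$ follows from $sx'=\lambda_{s_0}\bigl(\pi(s_0^{-1}s,x')\bigr)$, using only continuity of translations. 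Equivalently, once $\pi$ is jointly continuous at every point of $\{d\}\times X$ for a single $d$, the identity $g'x'=(g'g^{-1}d)\bigl((d^{-1}g)x'\bigr)$ transports joint continuity to every $(g,x)$. So the ``category plus homogeneity'' transport you describe as the genuine difficulty is in fact routine.

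The genuine gap is in the step you take for granted. Namioka-type joint continuity theorems are stated for maps into metric (pseudometric) spaces, whereas here the target is the compactum $X$ itself, which need not be metrizable. Applying Namioka's theorem to $f\circ\pi:G\times X\to\R$ gives a dense $G_\delta$ set $D_f$ depending on $f\in C(X)$, and one cannot intersect uncountably many such sets to obtain even one point $d$ at which $\pi$ is jointly continuous on $\{d\}\times X$ (equivalently, at which $g\mapsto\lambda_g$ is continuous into $C(X,X)$ with the uniformity of uniform convergence). Producing such points — and this is where \v{C}ech-completeness of $G$, rather than mere Baireness, is actually exploited — is precisely the content of Lawson's proof in \cite{La-additional}; your sketch asserts it as ``a Namioka-style theorem \ldots into a (regular, or metric) space,'' which is not available off the shelf for non-metrizable targets. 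The citation stands, but the difficulty is located in the Baire-category step for the non-metrizable compactum, not in the group-theoretic transport.
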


\nt \textbf{Notation:}
\emph{
% 2112 We already said that
% Unless it is stated otherwise all topological spaces are assumed to be Hausdorff.
All semigroups $S$ are assumed to be monoids,
i.e, semigroups with a neutral element which %mostly
will be denoted by $e$.
Also actions are \emph{monoidal} (meaning $ex=x, \forall x \in X$) and separately continuous.
We reserve the symbol $G$ for the case when $S$ is a group.
%031012
All right topological semigroups below are assumed to be admissible.
}

%031012
Given $x \in X$, its {\em orbit} is the set $Sx=\{sx : s \in S\}$ and the closure of
this set, ${\cls}(Sx)$, is the {\em orbit closure} of $x$. A point $x$ with
${\cls}(Sx) = X$ is called a {\em transitive point},
and the set of transitive points is denoted by $X_{tr}$. We say that
the system is {\em point-transitive}
when $X_{tr} \ne\emptyset$. The system is called
{\em minimal} if $X_{tr} = X$.

\subsection{Representations of dynamical systems}
\label{s:ReprB}

A \emph{representation} of a semigroup $S$ on a normed space
$V$
%in this paper we mean
%the \emph{contractive representation}. That is,
is a co-homomorphism $h: S \to \Theta(V)$, where
$\Theta(V):=\{T \in L(V): \ ||T|| \leq 1\}$
and $h(e)=id_V$. Here $L(V)$ is the space of continuous
linear operators $V\to V$ and $id_V$ is the identity operator.
This is equivalent to the requirement that
$h: S \to \Theta(V)^{op}$ be a monoid homomorphism, where $\Theta(V)^{op}$
is the opposite semigroup of $\Theta(V)$.
If $S=G$, is a group then $h(G) \subset \Iso(V)$, where
$\Iso(V)$ is the group of all linear isometries from $V$ onto $V$.
The adjoint operator
%031012   Eli, we need the general case in Section 2
$adj: L(V) \to L(V^*)$
 induces an injective co-homomorphism $adj: \Theta(V) \to \Theta(V^*), adj(s)=s^*$.
We will identify
%031012
$adj(L(V))$ and the opposite semigroup $L(V)^{op}$; as well as
$adj(\Theta(V)) \subset L(V^*)$ and its opposite semigroup
$\Theta(V)^{op}$. Mostly we use the same symbol $s$ instead of $s^*$. Since $\Theta(V)^{op}$ acts from the right on $V$
and from the left on $V^*$
we sometimes write $vs$ for $h(s)(v)$ and $s \psi$ for $h(s)^* (\psi)$.

%This representation is
%called weakly (strongly) continuous if $h$ is weakly (respectively
%strongly) continuous. We use the same terminology also for
%co-representations, that is, when $h$ is a co-homomorphism,
%equivalently, whenever the map $h: S \to \Theta(V)^{op}$ of $S$ to
%the opposite semigroup of $\Theta(V)$ is a homomorphism. If $S=G$
%is a group then we require that the neutral element represented by
%the identity operator. Hence in this case we obtain a homomorphism
%(or, a co-homomorphism) $G \to Is(V)$.

A pair of vectors $(v,\psi) \in V \times V^*$ defines a function (called a {\it matrix coefficient} of $h$)
$$
m(v,\psi): S \to \R, \quad s \mapsto \psi(vs) = \lan vs, \psi \ran = \lan v, s \psi \ran.
$$

%Here, $vs:=h(s)(v), s \psi:=h(s)^*(\psi)$, where $h(s)^*: V^* \to V^*$ is the adjoint of $h(s)$.

%For each $m(v,\psi)$ the following diagram commutes
%
%\begin{equation*}
%\xymatrix { S \ar[d]_{h} \ar[r]^{m(v,\psi)} & \R
% \\
%\Theta(V) \ar[r]^{\lambda_v}  & V \ar[u]^{\psi}}
%\end{equation*}

The weak operator topology on $\Theta(V)$ (similarly, on $\Theta(V)^{op}$) is the weak topology generated by all matrix coefficients.
So $h: S \to \Theta(V)^{op}$ is \emph{weakly continuous} iff $m(v,
\psi) \in C(S)$ for every $(v,\psi) \in V \times V^*$.
The strong operator topology on $\Theta(V)$ (and on $\Theta(V)^{op}$) is the pointwise
topology with respect to its left (respectively, right) action on the Banach space $V$.

\begin{lem} \label{l:introtype}
Let $h: S \to \Theta(V)$ be a weakly continuous co-homomorphism.% on a Banach space $V$.
Then for every $v \in V$ the following map
%(we call it \emph{introversion type operator})
$$
T_v: V^* \to C(S), \ \ T_v(\psi)=m(v,\psi)
$$
is a well defined linear bounded weak$^*$-pointwise continuous $S$-map between left $S$-actions.
\end{lem}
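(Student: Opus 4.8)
The plan is to verify each of the claimed properties of $T_v$ in turn, all of which follow essentially from unwinding definitions and using that $h$ is a weakly continuous co-homomorphism.

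First I would check that $T_v$ is well defined, i.e.\ that $T_v(\psi) = m(v,\psi)$ actually lies in $C(S)$. This is immediate: by hypothesis $h$ is weakly continuous, which by definition means $m(v,\psi) \in C(S)$ for every $(v,\psi) \in V \times V^*$. Linearity of $T_v$ is clear since $\psi \mapsto \psi(vs)$ is linear in $\psi$ for each fixed $s$, and this linearity is preserved under taking the function $s \mapsto \psi(vs)$. Boundedness follows from $\abs{m(v,\psi)(s)} = \abs{\psi(vs)} \le \norm{\psi}\,\norm{vs} \le \norm{\psi}\,\norm{v}$ (using $\norm{h(s)} \le 1$), so $\norm{T_v(\psi)}_\infty \le \norm{v}\,\norm{\psi}$, giving $\norm{T_v} \le \norm{v}$.

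Next I would verify weak$^*$-to-pointwise continuity: if $\psi_i \to \psi$ in the weak$^*$ topology of $V^*$, then for each fixed $s \in S$ we have $T_v(\psi_i)(s) = \psi_i(vs) \to \psi(vs) = T_v(\psi)(s)$, simply because $vs \in V$ and weak$^*$ convergence means evaluation against every vector of $V$ converges. So $T_v(\psi_i) \to T_v(\psi)$ pointwise on $S$, as required.

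Finally, the $S$-map property. Here I must identify the relevant left $S$-actions: on $V^*$, the action is $\psi \mapsto s\psi = h(s)^*(\psi)$ coming from $\Theta(V)^{op}$ acting on the left of $V^*$; on $C(S)$, the action is the left regular action $(sf)(t) = f(ts)$. I would then compute, for $\psi \in V^*$ and $s, t \in S$,
\[
T_v(s\psi)(t) = \lan vt, s\psi \ran = \lan (vt)s, \psi \ran = \lan v(ts), \psi \ran = T_v(\psi)(ts) = (s\,T_v(\psi))(t),
\]
where the crucial middle equality $(vt)s = v(ts)$ is exactly the statement that $h$ is a co-homomorphism (so that the right action of $S$ on $V$ satisfies $v(ts) = (vt)s$). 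Hence $T_v(s\psi) = s\,T_v(\psi)$, so $T_v$ is an $S$-map. I expect no real obstacle here; the only point requiring care is bookkeeping of which action is which (the opposite-semigroup conventions and the left regular action on $C(S)$), so that the co-homomorphism identity lines up correctly with associativity of the regular action on $C(S)$.
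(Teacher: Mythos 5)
Your proof is correct and is exactly the routine verification the paper leaves implicit (the lemma is stated without proof there): well-definedness is the definition of weak continuity of $h$, and the key identity $(vt)s=v(ts)$, i.e.\ $h(ts)=h(s)h(t)$, is the co-homomorphism property, lined up with the left action $(sf)(t)=f(ts)$ on $C(S)$ just as in the paper's conventions. No gaps.
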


\begin{defin} \label{d:repr}
(See \cite{Me-nz,GM1}) Let $X$ be a dynamical $S$-system.
% and $S \times X \to X$ be an action.
\ben
\item
A \emph{representation} of $(S,X)$ on a normed space $V$ is a pair
$$(h,\a): S \times X \rightrightarrows \Theta(V) \times V^*$$
where $h: S \to \Theta(V)$ is a co-homomorphism of semigroups and
$\a: X \to V^*$ is a weak$^*$ continuous bounded $S$-mapping with
respect to the dual action
$$S \times V^* \to V^*, \ (s \varphi)(v):=\varphi(h(s)(v)).$$
We say that the representation is weakly (strongly) continuous if
$h$ is weakly (strongly) continuous.
%!In the sequel, if otherwise is not stated, all representations are assumed to be weakly continuous.
 A representation $(h,\a)$ is said to be \emph{faithful} if $\a$ is a topological embedding.

\item
If $\K$ is a subclass of the class of Banach spaces, we say
that a dynamical system $(S,X)$ is weakly (respectively, strongly) $\K$-\emph{representable}
if there exists a weakly (respectively, strongly) continuous
faithful representation of $(S,X)$ on a Banach space $V \in \K$.
\item
%We say that a separately continuous compact $S$-system is \emph{Asplund representable} if there exists a faithful
%weakly continuous representation $(h,\a)$ of $(S,X)$ on an Asplund Banach space $V$. So
%$\a: X \to V^*$ is a weak$^*$ embedding (the weak$^*$ compact space $\a(X)$ is necessarily norm-bounded)
%and $h: S \to \Theta(V)$ is weakly continuous.
A subdirect product, i.e. an $S$-subspace of a direct product, of weakly (strongly) $\K$-representable
$S$-spaces is said to be weakly (strongly) \emph{$\K$-approximable}.
\een
\end{defin}

We consider in particular the following classes of Banach spaces: Reflexive, Asplund, and Rosenthal spaces.
 A reflexively (Asplund) representable
compact dynamical system is a dynamical version of
the purely topological notion of an \emph{Eberlein} (respectively,
a \emph{Radon-Nikodym}) compactum, in the sense of Amir and
Lindenstrauss (respectively, in the sense of Namioka).

\subsection{Background on Banach spaces and fragmentability}
\label{s:Ban}

%\subsection{Fragmentable maps and families}
%\label{s:fr}

\begin{defin} \label{def:fr}
 Let $(X,\tau)$ be a topological space and
$(Y,\mu)$ a uniform space.
\ben
\item \cite{JOPV}
$X$ is {\em $(\tau,
\mu)$-fragmented\/} by a
%  (not necessarily continuous)
(typically, not continuous)
function $f: X \to Y$ if for every nonempty subset $A$ of $X$ and every $\eps
\in \mu$ there exists an open subset $O$ of $X$ such that $O \cap
A$ is nonempty and the set $f(O \cap A)$ is $\eps$-small in $Y$.
We also say in that case that the function $f$ is {\em
fragmented\/}. Notation: $f \in {\mathcal F}(X,Y)$, whenever the
uniformity $\mu$ is understood.
If $Y=\R$ then we write simply ${\mathcal F}(X)$.

\item \cite{GM1}
We say that a {\it family of functions} $F=\{f: (X,\tau) \to
(Y,\mu) \}$ is {\it fragmented}
%! maybe the term ``equi-fragmented" is better.
%Though I used this term in my 2003 paper \cite{Menz} in related but different context
%So we could use ``equi-fragmented" adding here a warning remark
if condition (1) holds simultaneously for all $f \in F$. That is,
%$O \cap A$ is nonempty and
$f(O \cap A)$ is $\eps$-small for every $f \in F$.
\item \cite{GM-tame}
We say that $F$ is an \emph{eventually fragmented family}
%290512  (\emph{sub-fragmented}, in terms of \cite{GM-tame})
if every
%210912  putting ``countable" into the brackets. Another solution is: deleting ``countable". One more: deleting ``countable" + saying that
%it is equivalent replacing infinite by ``countable infinite".
% (countable)
infinite subfamily $C \subset F$ contains
an infinite fragmented subfamily $K \subset C$.
\een
\end{defin}

%031012
In Definition \ref{def:fr}.1 when $Y=X, f={id}_X$ and $\mu$ is a
metric uniform structure, we get the usual definition of
fragmentability (more precisely, $(\tau,\mu)$-fragmentability) in the sense of Jayne and Rogers \cite{JR}.
Implicitly it already appears in a paper of Namioka and Phelps \cite{NP}.

\begin{remark} \label{r:fr1} \cite{GM1, GM-rose}
\ben
\item It is enough to check the condition of Definition \ref{def:fr}
only for closed subsets $A \subset X$ and
%!use the fact that if $O \cap cl(A))$ is nonempty then $O \cap A$ is also nonempty ...
for $\eps \in \mu$ from a {\it subbase} $\gamma$ of $\mu$ (that is,
the finite intersections of the elements of $\gamma$ form a base
of the uniform structure $\mu$).
\item
When $X$ and $Y$ are Polish spaces, $f: X \to Y$ is fragmented iff
$f$ is a Baire class 1 function.
%040212
\item
When $X$ is compact and $(Y,\rho)$ metrizable uniform space then $f: X \to Y$ is fragmented iff $f$ has a
\emph{point of continuity property} (i.e., for every closed
nonempty $A \subset X$ the restriction $f_{|A}: A \to Y$ has a continuity point).
\item When $Y$ is compact with its unique compatible uniformity $\mu$ then $p: X \to Y$ is fragmented if and only if
$f \circ p: X \to \R$ has a point of continuity property for every $f \in C(Y)$.
\een
\end{remark}

\begin{lem} \label{l:FrFa} \
\ben
\item
Suppose $F$ is a compact space, $X$ is \v{C}ech-complete,
$Y$ is a uniform space %metrizable space
and we are given a separately continuous map $w: F \times X \to Y$.
Then the naturally associated family
$\tilde{F}:=\{\tilde{f}: X \to Y\}_{f \in F}$ is fragmented, where $\tilde{f}(x) = w(f,x)$.
%Then each $f \in F$ defines a function $f : X \to  Y$,
%(which where $f$) and now the family $\{f : f \in   f\}$, is a barely
%continuous (and hence a fragmented) family of maps on $X$.
\item
Suppose $F$ is a compact metrizable space, $X$ is hereditarily Baire %(e.g., \v{C}ech-complete, compact, Polish)
%is Polish
 and $M$ is separable
%!! \footnote{only metrizable is enough}
and metrizable.
Assume we are given a map $w: F\times X \to M$ such that
every $\tilde{x}: F \to M, f \mapsto w(f,x)$ is continuous and $y: X \to M$ is continuous at
every $\tilde{y} \in Y$ for some dense subset $Y$ of $F$. Then the family $\tilde{F}$ is
%barely continuous (hence, fragmented).
fragmented.
\een
\end{lem}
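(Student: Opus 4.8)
The plan, for both items, is to reduce the statement to a single closed set and a single entourage and then to harvest the uniformity over $F$ from its compactness. By Remark~\ref{r:fr1}(1) it suffices to check the fragmentedness of $\tilde F$ on nonempty \emph{closed} subsets $A\subseteq X$ and on entourages $\eps$ from a subbase of the uniformity on the target; since a closed subspace of a \v{C}ech-complete (respectively, hereditarily Baire) space is again \v{C}ech-complete (respectively, Baire), I may assume $A$ closed and, in case (1), \v{C}ech-complete, and in case (2), Baire. So the goal in each case is: given such an $A$ and such an $\eps$, produce a nonempty open $O\subseteq X$ with $O\cap A\ne\emptyset$ and $\tilde f(O\cap A)$ $\eps$-small \emph{for every} $f\in F$ simultaneously.

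For (1) I would first trade the entourage $\eps$ for a uniformly continuous pseudometric $d$ on $Y$ and a number $r>0$ with $\{d<r\}\subseteq\eps$ (a standard reduction for uniform spaces); then $w|_{F\times A}$, regarded as a map into $(Y,d)$, is still separately continuous. As $A$ is \v{C}ech-complete, Namioka's joint continuity theorem gives a dense $G_\delta$ set $D\subseteq A$ such that $w|_{F\times A}$ is jointly continuous at every point of $D\times F$; fix one $x_0\in D$. For each $f\in F$ pick open sets $U_f\ni f$ in $F$ and $W_f\ni x_0$ in $A$ on whose product the $d$-oscillation of $w$ is less than $r$; by compactness $F=U_{f_1}\cup\dots\cup U_{f_n}$, and $W:=\bigcap_i W_{f_i}$ is a relative neighbourhood of $x_0$, say $W=O\cap A$. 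If $f\in F$ and $x,x'\in O\cap A$ then $f\in U_{f_i}$ for some $i$ and both points lie in $W_{f_i}$, whence $d(w(f,x),w(f,x'))<r$ and so $(w(f,x),w(f,x'))\in\eps$; thus $\tilde f(O\cap A)$ is $\eps$-small for every $f\in F$, as required.

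For (2) there is no joint continuity to lean on, so I would argue by Baire category. Fix a bounded compatible metric $\rho$ on $M$ and a metric $d_F$ on $F$, and pick a countable $Y_0\subseteq Y$ dense in $F$; for $y\in Y_0$ the ``$X$-slice'' $\tilde y|_A\colon A\to M$ is continuous. Set
\[
A_N:=\bigl\{x\in A:\ \rho(w(y,x),w(y',x))\le\eps/3\ \text{ for all }\ y,y'\in Y_0\ \text{ with }\ d_F(y,y')\le 1/N\bigr\}.
\]
Each $A_N$ is closed in $A$ (the maps $x\mapsto\rho(w(y,x),w(y',x))$, $y,y'\in Y_0$, are continuous), and $\bigcup_N A_N=A$ because for every $x$ the ``$F$-slice'' $\tilde x\colon F\to M$ is continuous on the compact $F$, hence uniformly continuous. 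Since $A$ is Baire, some $A_{N_0}$ has nonempty relative interior $U_0$. Next cover $F$ by finitely many balls $B_{d_F}(y_{m_1},1/(3N_0)),\dots,B_{d_F}(y_{m_J},1/(3N_0))$ centred at points of $Y_0$, and shrink $U_0$, using continuity at one of its points of the finitely many functions $\tilde y_{m_1}|_A,\dots,\tilde y_{m_J}|_A$, to a nonempty relatively open $U_1\subseteq U_0$ with $\diam\tilde y_{m_j}(U_1)\le\eps/3$ for all $j$. Finally, given $f\in F$, choose $j$ with $d_F(f,y_{m_j})<1/(3N_0)$ and a sequence $z_k\to f$ in $Y_0$; eventually $d_F(z_k,y_{m_j})<1/N_0$, so $\rho(w(z_k,x),w(y_{m_j},x))\le\eps/3$ for $x\in U_1\subseteq A_{N_0}$, and passing to the limit (continuity of the slice $\tilde x$ at $f$) gives $\rho(w(f,x),w(y_{m_j},x))\le\eps/3$; combining with $\diam\tilde y_{m_j}(U_1)\le\eps/3$ yields $\rho(w(f,x),w(f,x'))\le\eps$ for all $x,x'\in U_1$. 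Writing $U_1=O\cap A$ completes the argument.

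I expect the real obstacle to be item (2): one must squeeze \emph{uniformity over all of} $F$ out of the compactness of $F$ and the Baire property of $A$, although the ``$X$-slices'' $\tilde y$ are known to be continuous only for $y$ in a dense set. The decomposition $A=\bigcup_N A_N$ is the device that does this, and the requirement that the $A_N$ be \emph{closed} is exactly what forces their definition to involve only $Y_0$-slices; reconciling a general $f\in F$ with these approximants is where the genuine (everywhere) continuity of the $F$-slices $\tilde x$ must be fed back in, through $w(f,x)=\lim_k w(z_k,x)$. Item (1), by contrast, is essentially a repackaging of Namioka's theorem, the only care being the passage from a uniform-space target to a pseudometric one and the compactness trick for simultaneity.
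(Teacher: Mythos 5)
Your proof is correct. For item (1) you follow essentially the paper's route: reduce the uniform target to (pseudo)metric images and invoke Namioka's joint continuity theorem on $F\times A$ for closed $A$ (using Remark \ref{r:fr1}.1); the only difference is that you spell out the finite-subcover step that converts joint continuity at all points of $F\times\{x_0\}$ into a single relative neighbourhood that is $\eps$-small for \emph{all} $f\in F$ simultaneously, a step the paper leaves implicit. For item (2) your route genuinely differs: the paper passes to the map $j\colon X\to C(F,M)$ with the sup-metric and cites Proposition 2.4 of \cite{GMU} to get a point of continuity of $j|_A$, i.e.\ equicontinuity of $\{\tilde f|_A\}$ at a point, whereas you give a self-contained Baire-category argument via the closed oscillation sets $A_N$ (a modulus-of-uniform-continuity decomposition of $A$), then a finite $1/(3N_0)$-net of $F$ in $Y_0$ and a limit along $Y_0$ to reconcile an arbitrary $f\in F$ with the net. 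In effect you re-prove inline exactly the case of the cited proposition that is needed; what this buys is independence from the external reference and, as a bonus, your argument nowhere uses the separability of $M$ (only metrizability of $F$ and of $M$, compactness of $F$, and the Baire property of closed subsets of $X$), so it proves a slightly more general statement. Both arguments ultimately rest on the same two ingredients — uniform continuity of the compact slices $\tilde x\colon F\to M$ and the Baire property of closed $A$ — so the difference is one of packaging rather than of underlying mechanism, but your version is the more elementary and more general of the two.
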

\begin{proof}
%!!! detalize ?
(1): There exists a collection of uniform maps $\{\varphi_i: Y \to M_i\}_{i \in I}$
into metrizable uniform spaces $M_i$ which generates the uniformity on $Y$.
 Now for every closed subset $A \subset X$ apply Namioka's joint continuity theorem %\cite{N-jct}
 to the separately continuous map
 $\varphi_i \circ w: F \times A \to M_i$ and take into account Remark \ref{r:fr1}.1.

%1501 proof can be removed (reference ?? )
(2): Since every $\tilde{x}: F \to M$ is continuous, the natural map $j: X
\to C(F, M), \ j(x)=\tilde{x}$ is well defined.
Every closed nonempty
subset $A \subset X$ is Baire. By
\cite[Proposition 2.4]{GMU}, %the restriction
$j |_{A}: A \to
C(F, M)$ has a point of continuity, where $C(F,M)$ carries the
sup-metric.
Hence, $\tilde{F}_{A} =\{\tilde{f}
\upharpoonright_{A}: A \to M \}_{f \in F}$ is equicontinuous at
some point $a\in A$. This implies %by Definition \ref{d:fr-family}.2, that
that the family $\tilde{F}$ is fragmented.
%barely continuous.
%160111  (hence, fragmented).
\end{proof}

For other properties of fragmented maps and fragmented families
refer to \cite{Me-nz, GM1, GM-rose}.

Recall that a Banach space $V$ is an {\em Asplund\/} space if the
dual of every separable Banach subspace is separable. In the following result the equivalence (1) $\Leftrightarrow$ (2) is a
well known criterion \cite{N}, and (3) is a reformulation of (2) in terms of fragmented families.
When $V$ is a Banach space we denote by $B$, or $B_V$,
the closed unit ball of $V$. $B^*=B_{V^*}$ and $B^{**}:=B_{V^{**}}$ will denote the
weak$^*$ compact unit balls in the dual $V^*$ and second dual
$V^{**}$ of $V$ respectively.

\begin{f} \label{f:Asp} \emph{\cite{NP, N}} %and \cite{GM1})  }
Let $V$ be a Banach space. The following conditions are
equivalent:
\ben
\item $V$ is an Asplund space.
\item
%\cite{N}
Every bounded subset $A$ of the dual $V^*$ is (weak${}^*$,norm)-fragmented.
\item $B$ %:=\{f|_{B^*}: B^* \to \R : \ f \in B\}
is a fragmented family of real valued maps on the compactum $B^*$.
%iff $V^*$ has the Radon-Nikod\'ym property.
\een
\end{f}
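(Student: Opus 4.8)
The plan is to treat this as a repackaging of the classical characterization of Asplund spaces via fragmentability of dual balls, so the burden is mostly to verify that the passage from statement (2) to statement (3) is indeed a faithful translation into the language of fragmented families (Definition \ref{def:fr}.2). For (1) $\Leftrightarrow$ (2) I would simply cite \cite{NP, N}: a Banach space $V$ is Asplund iff every bounded subset of $V^*$ is $(w^*,\|\cdot\|)$-fragmented. (One direction uses the separable reduction — the dual of a separable subspace being separable forces the point-of-continuity property on bounded sets — and the other is Namioka's argument that $(w^*,\|\cdot\|)$-fragmentability of $B^*$ yields separability of duals of separable subspaces.) It would suffice to do this for $A = B^*$ itself, since fragmentability passes to subsets and every bounded set is, up to scaling, contained in $B^*$.

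The substance is (2) $\Leftrightarrow$ (3). Here one regards each $v \in B$ as the real-valued function $\hat v : B^* \to \R$, $\hat v(\psi) = \psi(v) = \langle v, \psi\rangle$, on the $w^*$-compact space $B^*$; each such $\hat v$ is $w^*$-continuous. The family $B = \{\hat v : v \in B\}$ being a fragmented family of maps $B^* \to \R$ means, unwinding Definition \ref{def:fr}.2, that for every nonempty $A \subseteq B^*$ and every $\eps > 0$ there is a $w^*$-open set $O$ with $O \cap A \neq \emptyset$ such that $\hat v(O \cap A)$ has diameter $< \eps$ simultaneously for all $v \in B$. But $\sup_{v \in B} \operatorname{diam}(\hat v(O \cap A)) = \sup_{v \in B} \sup_{\psi_1,\psi_2 \in O \cap A} |\langle v, \psi_1 - \psi_2\rangle| = \sup_{\psi_1,\psi_2 \in O\cap A} \|\psi_1 - \psi_2\|$, by the definition of the dual norm. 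So the fragmented-family condition for $B$ on $B^*$ says precisely that $O \cap A$ is norm-$\eps$-small for a suitable $w^*$-open $O$, which is exactly $(w^*,\|\cdot\|)$-fragmentability of $B^*$ — i.e. statement (2) for $A = B^*$, and hence (by scaling and restriction) for all bounded $A$.

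I expect the only mild obstacle to be bookkeeping: making sure the quantifiers in Definition \ref{def:fr}.2 (the open set $O$ chosen \emph{uniformly} over the family) line up with the single $O$ in the classical $(w^*,\|\cdot\|)$-fragmentability condition, and that passing from ``$\eps$-small'' for a metric uniformity to ``diameter $< \eps$'' is the standard identification (with the freedom, via Remark \ref{r:fr1}.1, to test only on closed $A$ and on a subbase of entourages, though that refinement is not even needed here since $\R$ carries a single metric). No deep input beyond \cite{NP, N} is required; the proof is essentially the observation that the dual norm is the supremum of the evaluation functionals coming from $B$.
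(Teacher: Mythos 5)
Your proposal is correct and matches the paper's treatment: the paper likewise cites \cite{NP, N} for the equivalence (1) $\Leftrightarrow$ (2) and dismisses (3) with the remark that it is a reformulation of (2), which is precisely the dual-norm identity $\sup_{v \in B}\operatorname{diam} \hat v(O\cap A) = \operatorname{diam}_{\|\cdot\|}(O\cap A)$ that you spell out. Your extra bookkeeping (reduction of bounded $A$ to $B^*$ by scaling and heredity) is a harmless elaboration of the same argument.
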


%290512
Assertion (3) is a reformulation of (2).
Reflexive spaces and spaces of the type $c_0(\Gamma)$
are Asplund. For more details cf. \cite{Fa, N}.

We say that a Banach space $V$ is {\em Rosenthal} if it does
not contain an isomorphic copy of $l_1$. Clearly, every Asplund space is Rosenthal.

\begin{defin} \label{d:Ros-F}
\cite{GM-rose}
Let $X$ be a topological space. We say that a subset $F\subset
C(X)$ is a \emph{Rosenthal family} (for $X$) if $F$ is norm bounded and
the pointwise closure ${\cls_p}(F)$ of $F$ in $\R^X$ consists of fragmented maps,
that is,
${\cls_p}(F) \subset {\mathcal F}(X).$
\end{defin}

%In the following result we combine two theorems from Talagrand's book \cite{Tal}.
%Here we reformulate assertion (3)
%in terms of ${\mathcal F}(X)$ using the equality ${\mathcal F}(X)=B_r'(X)$
%(Corollary \ref{c:fr=B1}.1).

%210912  adding some definitions--7 lines-- about $l_1$-sequences (referee's request)
Let $f_n: X \to \R$ be a uniformly bounded sequence of functions on a \emph{set} $X$. Following  Rosenthal we say that
this sequence is an \emph{$l_1$-sequence} on $X$ if there exists a real constant $a >0$
such that for all $n \in \N$ and all choices of real scalars $c_1, \cdots, c_n$ we have
$$
a \cdot \sum_{i=1}^n |c_i| \leq ||\sum_{i=1}^n c_i f_i||.
$$
This is the same as requiring that the closed linear span in $l_{\infty}(X)$
of the sequence $f_n$ be linearly homeomorphic to the Banach space $l_1$.
In fact, in this case the map
$$l_1 \to l_{\infty}(X), \ \ (c_n) \to \sum_{n \in \N} c_nf_n$$
is a linear homeomorphic embedding.
%%

%290512
The equivalence of (1) and (2) in the following fact is well known. See for example, \cite{Tal}.
%%!!!

\begin{f} \label{f:sub-fr}
%290512  \cite{GM-rose}
\cite{Tal, GM-rose}
Let $X$ be a compact space and $F \subset C(X)$ a bounded subset.
The following conditions are equivalent:
\begin{enumerate}
\item
$F$ does not contain a subsequence equivalent to the unit basis of $l_1$.
%\item
%Each sequence in $F$ has a pointwise convergent subsequence in
%$\R^X$. % (i.e., $F$ is sequentially precompact in $\R^X$).
\item
$F$ is a Rosenthal family for $X$. %${\cls}_p(F) \subset {\mathcal F}(X)$.
%2810 new item
%\item
%Every sequence in $F$ has a weak-Cauchy subsequence.
%%120910 new item (countably determined)
%\item Every countable subfamily $S$ of $F$ is a Rosenthal family for $X$.
\item $F$ is an eventually fragmented family.
% Every sequence in $F$ has a subsequence which is a fragmented family on $X$

%2810
%\item
%Every sequence in $F$ has a Cauchy subsequence with respect to the
%\emph{weak uniformity} $\mu_w$ generated by the collection of all
%maps $\{\tilde{x}: x \in X\}$, where $\tilde{x}:F \to \R$ is the
%evaluation map $\tilde{x}(f)=f(x),\ f\in F$.
\end{enumerate}
\end{f}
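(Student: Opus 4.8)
The plan is to establish the cycle of implications $(1)\Rightarrow(3)\Rightarrow(2)\Rightarrow(1)$, leaning on the Rosenthal--Bourgain--Fremlin--Talagrand circle of ideas and on the basic properties of fragmented families collected in Remark \ref{r:fr1}. First I would deduce $(1)\Rightarrow(3)$ from Rosenthal's $\ell_1$-theorem: given an infinite subfamily $C\subset F$, extract a sequence $(f_n)\subset C$; since $F$ contains no $\ell_1$-subsequence, $(f_n)$ has a subsequence $(f_{n_k})$ that is weakly Cauchy in $C(X)$, hence pointwise convergent on $X$ to some $f\in \R^X$. The point is that such a pointwise-convergent, uniformly bounded sequence is automatically a fragmented family: this is the ``easy half'' of the Bourgain--Fremlin--Talagrand theorem — a relatively norm-compact-free argument showing that the pointwise limit of a weakly Cauchy sequence in $C(X)$ (equivalently, a Baire-1 function in the metrizable case, an $\eps$-interchange-of-limits argument in general) lies in $\mathcal F(X)$, and the same $\eps$-argument applied uniformly to the sequence shows $\{f_{n_k}\}$ itself is a fragmented family. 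Thus $C$ contains an infinite fragmented subfamily $K=\{f_{n_k}\}$, which is exactly eventual fragmentedness.

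Next, $(3)\Rightarrow(2)$. Let $F$ be eventually fragmented; I must show every $g\in \cls_p(F)\subset \R^X$ is a fragmented map. Fix a closed nonempty $A\subset X$ and $\eps>0$; by Remark \ref{r:fr1}.1 it suffices to find a relatively open $O\cap A\ne\emptyset$ with $g(O\cap A)$ $\eps$-small. Suppose not. Then one builds inductively a sequence $f_n\in F$ and a decreasing sequence of nonempty relatively open subsets of $A$ witnessing oscillation $\ge\eps$ at every scale — the standard ``fragmentation tree'' construction — producing an infinite subfamily of $F$ with no infinite fragmented subfamily, since fragmentedness of a subfamily would kill the oscillation on some relatively open piece. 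This contradicts $(3)$. (Alternatively, one can invoke that $\cls_p(F)$ is, by $(3)$ together with the Bourgain--Fremlin--Talagrand dichotomy, a Rosenthal compactum, every point of which is a Baire-1 / fragmented function on $X$.) Hence $\cls_p(F)\subset\mathcal F(X)$, i.e. $F$ is a Rosenthal family.

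Finally, $(2)\Rightarrow(1)$ is the contrapositive combined with a classical fact: if $F$ contained a subsequence $(f_n)$ equivalent to the unit basis of $\ell_1$, then by Rosenthal's characterization the closed linear span of $(f_n)$ in $C(X)$ is isomorphic to $\ell_1$, and in particular no subsequence of $(f_n)$ converges pointwise on $X$; but more is true — such an $\ell_1$-sequence has a pointwise cluster point $g\in\cls_p(F)$ which fails the point-of-continuity property on some closed $A\subset X$ (indeed one can produce, via the independence/Rademacher structure forced by an $\ell_1$-sequence, a closed set on which every oscillation is bounded below), so $g\notin\mathcal F(X)$, contradicting $(2)$. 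As the excerpt notes, the equivalence $(1)\Leftrightarrow(2)$ is already known (see \cite{Tal}), so this direction may simply be cited.

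The main obstacle is the implication $(3)\Rightarrow(2)$ in the stated generality: $X$ is an arbitrary compact space (not assumed metrizable), so one cannot invoke the metric characterization of Baire-1 functions directly and must run the oscillation/fragmentation argument with entourages, being careful that ``eventually fragmented'' — a statement about \emph{infinite subfamilies} — is exactly strong enough to defeat the tree construction. Getting the bookkeeping of this inductive construction right (choosing at each node a new function from $F$ and a relatively open refinement on which its oscillation stays $\ge\eps$, while ensuring the chosen functions form an infinite subfamily with no fragmented infinite piece) is where the real work lies; everything else is either Rosenthal's theorem or the soft properties of fragmentability from Remark \ref{r:fr1}.
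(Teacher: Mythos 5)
Your cycle $(1)\Rightarrow(3)\Rightarrow(2)\Rightarrow(1)$ cannot be checked against an in-paper argument: the paper states this as a quoted Fact (citing \cite{Tal} and \cite{GM-rose}) and proves nothing, so what matters is whether your sketch closes on its own. The step $(1)\Rightarrow(3)$ is essentially right and can be made precise with tools already in the paper: after Rosenthal's $l_1$-theorem gives a weakly Cauchy, hence pointwise convergent, subsequence $(f_{n_k})$, the set $\{f_{n_k}\}\cup\{\lim_k f_{n_k}\}$ is a compact metrizable subspace of $\R^X$ in the pointwise topology, and Lemma \ref{l:FrFa}.2 (with $X$ compact, hence hereditarily Baire, and $M=\R$) says exactly that this family is fragmented; your ``$\varepsilon$-interchange-of-limits'' phrase is the vague form of that lemma. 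The direction $(2)\Rightarrow(1)$ you may indeed just cite, as the paper does.

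The genuine gap is $(3)\Rightarrow(2)$. To refute $(3)$ you must produce an infinite $C\subset F$ \emph{none} of whose infinite subfamilies is fragmented, and your construction --- functions $f_n\in F$ oscillating by $\ge\varepsilon$ along a decreasing chain of relatively open subsets $O_n$ of $A$ --- does not do this: a fragmented subfamily only needs one good piece somewhere in $A$, not on your chain. Concretely, on $X=[0,1]$ take $O_n=(0,1/n)$ and continuous $f_n$ oscillating by $\varepsilon$ inside $O_n$ and vanishing on $[1/n,1]$; every $f_n$ oscillates on every $O_m$ with $m\le n$, yet \emph{every} infinite subfamily is fragmented (given closed $A'$ and $\varepsilon'$, pick $x\in A'$, discard the cofinitely many members vanishing near $x$ and shrink a neighborhood to control the remaining finitely many continuous functions), so the chain bookkeeping proves nothing. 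Moreover $g\in{\cls}_p(F)$ need not be a sequential limit of members of $F$, so the selection of the $f_n$ must go through pointwise approximation of $g$ on larger and larger finite sets, which is precisely what forces a dyadic tree and an independent (hence $l_1$) sequence rather than a chain; and your alternative appeal to the BFT dichotomy is not available here since $X$ is an arbitrary compact space, not Polish. The economical repair is to prove $(3)\Rightarrow(1)$ contrapositively instead: if $(f_n)\subset F$ is an $l_1$-sequence, then so is every subsequence; an infinite fragmented subfamily $K$ would have fragmented pointwise closure (\cite[Lemma 2.8]{GM-rose}, quoted after Definition \ref{d:HNS}), i.e.\ $K$ would be a Rosenthal family, contradicting the cited implication $(2)\Rightarrow(1)$. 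Together with your $(1)\Rightarrow(3)$ and the cited equivalence $(1)\Leftrightarrow(2)$ this closes the proof; as written, the ``tree construction defeats every infinite subfamily'' claim is unproved, and that is where the substance of the Fact lies.
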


We need some known characterizations of Rosenthal spaces.

\begin{f} \label{f:RosFr}
Let $V$ be a Banach space. The following conditions are
equivalent:
\begin{enumerate}
\item
$V$ is a Rosenthal Banach space.
%290512 adding this item (shame not mentioning Rosenthal's source result)
\item \emph{(Rosenthal \cite{Ros0})}
Every bounded sequence in $V$ has a weak-Cauchy subsequence.
%\item The unit ball $B_V$ of $V$ is a Rosenthal family for the weak$^*$
%compact unit ball $B_{V^*}$ of $V^*$.
\item \emph{(E. Saab and P. Saab \cite{SS})}
Each $x^{**} \in V^{**}$ is a fragmented map when restricted to the
weak${}^*$ compact ball $B^*$. Equivalently, if $B^{**} \subset \F(B^*)$.
%\item
%Every bounded
%subset $A$ of the dual $V^*$ is $(\tau_{w^*}, \mu_w)$-fragmented
%(that is, $id_A: (A,\tau_{w^*}) \to (A,\mu_w)$ is
%fragmented)
%where $\tau_{w^*}$ is the weak${}^*$ topology and $\mu_w$ is the
%weak uniformity on $A$.
%\item
%$B_V$ is a sub-fragmented family of functions on $(B_{V^*}, w^*)$.

%! \item $V$ has the Krein-Milman property for weak$^*$ compact sets.
\item \emph{(Haydon \cite[Theorem 3.3]{Hay})} For every weak$^*$ compact subset $Y \subset V^*$ the weak$^*$
and norm closures of the convex hull $co(Y)$ in $V^*$ coincide:
$\overline{co}^{w^*} (Y)=\overline{co}^{norm} (Y)$.
\item $B$ is a \emph{Rosenthal family} for the weak$^*$ compact unit ball $B^{*}$.
\end{enumerate}
\end{f}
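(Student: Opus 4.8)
The statement to prove is Fact \ref{f:RosFr}, which collects several equivalent characterizations of Rosenthal Banach spaces. Since the individual implications are attributed to Rosenthal, E.\ Saab--P.\ Saab, and Haydon, the ``proof'' is really a matter of assembling the known pieces and checking the bookkeeping against the terminology already set up in the excerpt (fragmented maps, Rosenthal families, the balls $B^*$ and $B^{**}$).

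\medskip

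\emph{Plan of proof.} The equivalence (1) $\Leftrightarrow$ (2) is precisely Rosenthal's $\ell_1$-theorem, so I would simply cite \cite{Ros0}. For (2) $\Leftrightarrow$ (3), the point is that $B^{**}$, viewed as a bounded subset of functions on the compact space $B^*$, has no subsequence equivalent to the $\ell_1$-basis if and only if $V$ contains no copy of $\ell_1$; combined with Fact \ref{f:sub-fr} (applied with $X = B^*$ and $F = B^{**}$, or $F=B$), the ``no $\ell_1$-subsequence'' condition is equivalent to $B^{**}$ being a Rosenthal family for $B^*$, i.e.\ ${\cls_p}(B^{**}) \subset \F(B^*)$. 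But $B^{**}$ is already weak$^*$ closed in $\R^{B^*}$ (being weak$^*$ compact), so its pointwise closure on $B^*$ is itself, giving $B^{**} \subset \F(B^*)$ --- this is exactly (3), and is the content of \cite{SS}. The same reasoning applied to $F = B$ rather than $F = B^{**}$ yields the equivalence with (5): $B$ is a Rosenthal family for $B^*$. Here one uses that $B^*$ is compact and $B \subset C(B^*)$, so Fact \ref{f:sub-fr} applies directly; one should note that ${\cls_p}(B) \subset B^{**}$, so (3) $\Rightarrow$ (5) is immediate, while (5) $\Rightarrow$ (1) follows because an $\ell_1$-sequence in $B$ would be an $\ell_1$-sequence in $C(B^*)$, contradicting that $B$ is a Rosenthal family via Fact \ref{f:sub-fr}.

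\medskip

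It remains to fold in Haydon's condition (4). I would prove (1) $\Leftrightarrow$ (4) by citing \cite[Theorem 3.3]{Hay} directly, but to make the logical web self-contained it is cleanest to show (5) $\Rightarrow$ (4) and (4) $\Rightarrow$ (2) (or (4) $\Rightarrow$ (1)). For (4) $\Rightarrow$ (1): if $V \supset \ell_1$ then, taking $Y$ to be the weak$^*$ closure of the image of the unit ball of that copy of $\ell_1$, one checks the weak$^*$ and norm closures of $co(Y)$ differ (the $w^*$-closure picks up ``finitely additive'' averages that are not norm-limits of convex combinations) --- this is the standard obstruction. For (5) $\Rightarrow$ (4): given a weak$^*$ compact $Y \subset V^*$, the fact that $B$ is a Rosenthal family for $B^*$ implies (after rescaling into $B^*$) that elements of $B$ restricted to $Y$ behave like Baire-1-type functions, and one invokes a Krein--Milman / barycenter argument --- every point of $\overline{co}^{w^*}(Y)$ is the barycenter of a measure on $Y$, and the fragmentability forces such barycenters to lie in the norm-closed convex hull. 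This barycentric/fragmentation argument is the technical heart and is exactly the ``$w^*$-Krein--Milman property'' alluded to elsewhere in the introduction; I would present it by reducing to the separable case and using Remark \ref{r:fr1}.3.

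\medskip

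\emph{Main obstacle.} The genuinely non-formal step is the equivalence involving Haydon's condition (4), specifically the implication that $B$ being a Rosenthal family for $B^*$ forces the coincidence of the $w^*$- and norm-closures of $co(Y)$ for every $w^*$-compact $Y$. Everything else is either a direct citation (Rosenthal's theorem, Saab--Saab) or a mechanical unwinding of the definitions of ``Rosenthal family'' and ``fragmented'' against Fact \ref{f:sub-fr}, using that $B^*$ and $B^{**}$ are $w^*$-compact so that pointwise closures on $B^*$ do not enlarge them. I expect to spend essentially all the real effort on the barycenter estimate underlying (5) $\Leftrightarrow$ (4), and to dispatch the rest in a few lines of cross-referencing.
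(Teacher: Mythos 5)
Your citation skeleton is exactly what the paper does: the statement is a quoted Fact, and the paper offers no proof beyond the attributions (Rosenthal for (2), Saab--Saab for (3), Haydon for (4)) and the two remarks that (3) is a reformulation of the Saab--Saab point-of-continuity criterion and that ``(5) can be derived from (3)''. Your derivations of (3) $\Rightarrow$ (5) (via $\cls_p(B)\subset B^{**}|_{B^*}$) and (5) $\Rightarrow$ (1) via Fact \ref{f:sub-fr} with $F=B$ are correct and match the paper's intent.

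However, the self-contained steps you add contain genuine errors. First, Fact \ref{f:sub-fr} applies only to bounded subsets $F\subset C(X)$; the elements of $B^{**}$ are in general not weak$^*$ continuous on $B^*$, so ``$F=B^{**}$'' is not a legitimate instance, and the equivalence you lean on --- that $B^{**}$, viewed as functions on $B^*$ (equivalently, in the bidual norm, since $B^*$ is norming), has no $l_1$-subsequence iff $V$ contains no copy of $l_1$ --- is simply false: $V=c_0$ contains no copy of $l_1$, yet $B^{**}=B_{l_\infty}$ certainly contains sequences equivalent to the $l_1$-basis. The correct route is the one you mention only parenthetically: take $F=B\subset C(B^*)$, and identify $\cls_p(B)|_{B^*}$ with $B^{**}|_{B^*}$ by Goldstine's theorem (on bounded subsets of $V^{**}$ the weak$^*$ topology is pointwise convergence on $B^*$); this yields (1) $\Leftrightarrow$ (5) $\Leftrightarrow$ (3). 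Second, in your sketch of (4) $\Rightarrow$ (1) the set $Y$ must be a weak$^*$ compact subset of $V^*$, whereas ``the unit ball of that copy of $l_1$'' and its weak$^*$ closure live in $V\subset V^{**}$; one must instead transfer a bad set from $l_\infty=(l_1)^*$ to $V^*$ through the adjoint $T^*\colon V^*\to l_\infty$ of the embedding $T\colon l_1\to V$ (which is onto), and even then producing the bad set is not a one-line ``standard obstruction'' --- naive candidates such as $\{0,1\}^{\N}\subset l_\infty$ have coinciding hulls by a layer-cake argument. Finally, the barycentric argument offered for (5) $\Rightarrow$ (4) is a restatement of Haydon's theorem rather than a proof of it; citing \cite[Theorem 3.3]{Hay}, as the paper does, is fine, but then the claim that the logical web has been made self-contained should be dropped.
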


%290512
Condition (3) is a reformulation (in terms of fragmented maps) of a
criterion from \cite{SS} which was originally stated in terms of the point of continuity property.
(5) can be derived from (3).

\begin{f} \label{f:functionals} \emph{(Banach-Grothendieck theorem)}
%\cite[Cor. I.1.9]{Alf},
\cite[Cor. 2.6]{AE}
If $V$ is a Banach space then for every continuous linear functional
$u: V^* \to \R$ on the dual space $V^*$ the following are equivalent:
\ben
\item $u$ is $w^*$-continuous.
\item The restriction $u|_{B^*}$ is $w^*$-continuous.
\item $u$ is the evaluation at some point of $V$. That is, $u \in i(V)$,
where $i: V \hookrightarrow V^{**}$ is the canonical embedding.
\een
\end{f}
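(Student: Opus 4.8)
The implications $(3)\Rightarrow(1)\Rightarrow(2)$ are immediate and I would dispose of them first: if $u=i(v)$ for some $v\in V$ then $u(\varphi)=\varphi(v)$ is $w^*$-continuous on all of $V^*$ by the definition of the weak$^*$ topology, and the restriction of a $w^*$-continuous map to a subset is $w^*$-continuous. Hence all the content lies in $(2)\Rightarrow(3)$, and the plan is to derive it from the Krein--\v{S}mulian theorem.

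So I would assume $u|_{B^*}$ is $w^*$-continuous (and $u\neq 0$, since otherwise $u=i(0)$). For each real $r>0$ the dilation $\varphi\mapsto r\varphi$ is a $w^*$-homeomorphism of $B^*$ onto $rB^*$ with $u(r\varphi)=ru(\varphi)$, so $u|_{rB^*}$ is $w^*$-continuous too; since $rB^*$ is $w^*$-compact and $\{0\}$ is closed in $\R$, the set $\ker u\cap rB^*=(u|_{rB^*})^{-1}(\{0\})$ is $w^*$-closed in $V^*$. Thus the convex hyperplane $\ker u$ meets every closed ball of $V^*$ in a $w^*$-closed set, so by Krein--\v{S}mulian $\ker u$ is $w^*$-closed, and a linear functional is continuous exactly when its kernel is closed; hence $u$ is continuous on $(V^*,w^*)$.

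Finally I would recall that a $w^*$-continuous linear functional $u$ on $V^*$ is an evaluation at a point of $V$: by $w^*$-continuity the set $\{|u|<1\}$ contains a basic neighbourhood $\{\varphi:\ |\varphi(v_j)|<\eps,\ j=1,\dots,n\}$, whence $\bigcap_{j=1}^{n}\ker i(v_j)\subseteq\ker u$, so $u$ is a linear combination of $i(v_1),\dots,i(v_n)$ and therefore $u\in i(V)$, which is $(3)$. The only genuinely nontrivial step is the appeal to Krein--\v{S}mulian; the remaining arguments are routine duality theory, and of course one could instead simply quote the result as in \cite{AE}.
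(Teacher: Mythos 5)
Your proposal is correct. The paper does not prove this statement at all --- it is quoted as a known fact with the reference \cite[Cor.\ 2.6]{AE} --- so there is no in-paper argument to compare with; your write-up supplies the standard proof. The easy implications $(3)\Rightarrow(1)\Rightarrow(2)$ are handled correctly, and the substantive step $(2)\Rightarrow(3)$ is exactly the classical Krein--\v{S}mulian route: $w^*$-continuity on $B^*$ passes to every ball $rB^*$ by dilation, so $\ker u \cap rB^*$ is $w^*$-closed for all $r$, Krein--\v{S}mulian (which needs completeness of $V$, available here) gives that $\ker u$ is $w^*$-closed, hence $u$ is $w^*$-continuous, and the identification of $(V^*,w^*)^*$ with $i(V)$ via the finite-intersection-of-kernels lemma finishes the argument. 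All steps are sound, including the separate treatment of $u=0$.
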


Let $\{V_i\}_{i \in I}$ be a family of Banach spaces. The $l_2$-sum of this family, denoted by $V:=(\Sigma_{i \in I} V_i)_{l_2}$,
is defined as the space of all functions $(x_i)_{i \in I}$ on $I$ such that $x_i \in V_i$ and
$$
||x||:=(\sum_{i \in I} ||x_i||^2)^{\frac{1}{2}} < \infty.
$$

%\begin{equation}
%(\Sigma_{i \in I} V_i)_{l_2}^*=(\Sigma_{i \in I} V_i^*)_{l_2}
%\end{equation}

\begin{lem} \label{l:sum} \
\ben
\item $V^*=(\Sigma_{i \in I} V_i)_{l_2}^*=(\Sigma_{i \in I} V_i^*)_{l_2}$ and the pairing $V \times V^* \to \R$ is defined by
$\lan v, f \ran = \sum_{i \in I} f_i(v_i)$.
\item If every $V_i$ is reflexive (Asplund, Rosenthal) then $V$ is reflexive (respectively: Asplund, Rosenthal).
%290512 adding new item
\item For every semitopological semigroup $S$ the classes of reflexively (Asplund, Rosenthal) representable compact $S$-spaces are
closed under countable products.
\een
\end{lem}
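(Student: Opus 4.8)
The plan is to prove the three parts of Lemma~\ref{l:sum} in order, using the $l_2$-sum construction as the main tool.

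\textbf{Part (1).} First I would recall that for $1 < p < \infty$ the standard $l_p$-sum duality $\left(\Sigma_{i \in I} V_i\right)_{l_p}^* = \left(\Sigma_{i \in I} V_i^*\right)_{l_q}$ (with $1/p + 1/q = 1$) is classical; here $p = q = 2$. The pairing formula $\lan v, f \ran = \sum_{i \in I} f_i(v_i)$ is exactly the one witnessing this isometric identification, and the sum converges absolutely by Cauchy--Schwarz in $l_2$: $\sum_i |f_i(v_i)| \le \sum_i \|f_i\| \|v_i\| \le \|f\| \|v\|$. That a general continuous functional on $\left(\Sigma V_i\right)_{l_2}$ has this form follows by restricting to each coordinate copy $V_i \hookrightarrow V$ to get $f_i \in V_i^*$, checking $(f_i) \in l_2$ via testing on finitely-supported vectors, and density of finitely-supported vectors in the $l_2$-sum. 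This is routine and I would present it briefly.

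\textbf{Part (2).} For reflexivity, apply (1) twice: $V^{**} = \left(\Sigma_i V_i^{**}\right)_{l_2} = \left(\Sigma_i V_i\right)_{l_2} = V$ when each $V_i$ is reflexive, so $V$ is reflexive. For the Asplund case I would use Fact~\ref{f:Asp}: a Banach space is Asplund iff every separable subspace has separable dual, equivalently every bounded subset of the dual is $(w^*, \mathrm{norm})$-fragmented. Given a separable subspace $W \subset V$, its elements involve only countably many coordinates, so $W$ embeds in a countable sub-$l_2$-sum $\left(\Sigma_{n} V_{i_n}\right)_{l_2}$; reducing to the countable case, I would show the countable $l_2$-sum of Asplund spaces is Asplund --- for instance by the separable-subspace criterion again, or by fragmentability: a bounded set in $V^* = \left(\Sigma_n V_n^*\right)_{l_2}$ can be fragmented coordinate-by-coordinate, using that a countable "product" of fragmenting operations can be carried out by a Baire-category / diminishing-$\eps$ argument on closed subsets (the tail beyond coordinate $N$ contributes norm $< \eps$ by the $l_2$ condition, and the first $N$ coordinates are each individually fragmentable). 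For the Rosenthal case I would use Fact~\ref{f:RosFr}(2): every bounded sequence has a weak-Cauchy subsequence. Given a bounded sequence $(x^{(k)})$ in $V = \left(\Sigma_i V_i\right)_{l_2}$, again only countably many coordinates are involved, so reduce to $V = \left(\Sigma_n V_n\right)_{l_2}$; then diagonalize --- extract a subsequence weak-Cauchy in coordinate $1$, then a further subsequence in coordinate $2$, etc., take a diagonal subsequence, and check it is weak-Cauchy in $V$ by pairing against an arbitrary $f = (f_n) \in \left(\Sigma_n V_n^*\right)_{l_2}$: the tail beyond $N$ is uniformly small by Cauchy--Schwarz (boundedness of $(x^{(k)})$ times the $l_2$-tail of $f$), and each of the first $N$ pairings converges. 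So $\lan x^{(k)}, f \ran$ is Cauchy.

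\textbf{Part (3).} Let $\{(S, X_n)\}_{n \in \N}$ be compact $S$-systems, each reflexively (resp.\ Asplund, Rosenthal) representable via a faithful representation $(h_n, \a_n)$ on $V_n$ in the given class, with $\a_n: X_n \to V_n^*$ weak$^*$ continuous and bounded --- after rescaling, $\|\a_n(x)\| \le 2^{-n}$ say, so that $\sum_n \|\a_n(x_n)\|^2 < \infty$ for every $(x_n) \in \prod_n X_n$. Set $V = \left(\Sigma_n V_n\right)_{l_2}$, which lies in the same class by part (2). Define $h: S \to \Theta(V)$ coordinatewise, $h(s)(v_n)_n = (h_n(s) v_n)_n$ --- this is a co-homomorphism and each $h(s)$ is contractive since each $h_n(s)$ is. Define $\a: \prod_n X_n \to V^* = \left(\Sigma_n V_n^*\right)_{l_2}$ by $\a((x_n)_n) = (\a_n(x_n))_n$; this is well-defined by the summability arrangement, it is an $S$-map coordinatewise, and it is a topological embedding because each $\a_n$ is an embedding and the $w^*$-topology on the $l_2$-sum dual ball restricts to the product of the $w^*$-topologies on bounded sets. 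Finally I would check weak$^*$ continuity of $\a$: a net $(x^{(k)}) \to x$ in $\prod X_n$ forces $\a_n(x_n^{(k)}) \to \a_n(x_n)$ $w^*$ in each coordinate, and pairing against $f = (f_n) \in V$ (note the pairing involves $f \in V$ on the $V^*$ side by part (1)) the tail is uniformly small by Cauchy--Schwarz while finitely many coordinates converge. Hence $(h, \a)$ is a faithful representation of $\left(S, \prod_n X_n\right)$ on $V$.

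\textbf{Main obstacle.} The one genuinely non-formal point is the Asplund part of (2): unlike reflexivity (which is a one-line bidual computation) and unlike Rosenthal (which reduces cleanly to a sequential diagonalization), the Asplund property is a fragmentability statement about the whole dual ball, and one must run a countable fragmentation argument where the error from the infinitely many tail coordinates is controlled uniformly by the $l_2$-structure. I would handle this by fixing $\eps > 0$, choosing $N$ with the $l_2$-tail contribution below $\eps/2$ on the bounded set in question, fragmenting the finite sum $\left(\Sigma_{n \le N} V_n^*\right)_{l_2}$ (a finite $l_2$-sum of Asplund spaces, which is Asplund by the separable-dual criterion) to control the first $N$ coordinates within $\eps/2$, and combining --- invoking Remark~\ref{r:fr1}(1) to restrict to closed subsets and a subbase of the norm uniformity. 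Alternatively, the slicker route is: every separable subspace of $V$ sits inside $\left(\Sigma_n W_n\right)_{l_2}$ with $W_n \subset V_n$ separable, whose dual is $\left(\Sigma_n W_n^*\right)_{l_2}$ with each $W_n^*$ separable, and a countable $l_2$-sum of separable spaces is separable --- so $V$ has the separable-dual-for-separable-subspaces property, i.e.\ is Asplund. I would likely use this cleaner argument and only gesture at the fragmentation computation.
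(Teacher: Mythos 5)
Your proposal is correct, and parts (1) and (3) run along the same lines as the paper: (1) is the classical $l_2$-sum duality, which the paper simply cites, and (3) is exactly the paper's construction (an $l_2$-sum of the representations $(h_n,\a_n)$, rescaled so that $\|\a_n(x)\|\le 2^{-n}$, combined with part (2)); the only point you leave implicit there is the weak (resp.\ strong) continuity of the summed homomorphism $h$, which follows from the same Cauchy--Schwarz tail estimate you use for the weak$^*$ continuity of $\a$. Part (2) is where you genuinely diverge. For the Asplund case the paper just cites Namioka--Phelps and Fabian, whereas you give a short self-contained proof straight from the separable-subspace definition (a separable $W\subset V$ sits inside a countable sub-sum $(\Sigma_n W_n)_{l_2}$ with each $W_n\subset V_{i_n}$ separable, whose dual $(\Sigma_n W_n^*)_{l_2}$ is separable); this is clean and correct, and preferable to your sketched fragmentation alternative. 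For the Rosenthal case the paper argues through item (3) of Fact \ref{f:RosFr} (Saab--Saab): after reducing to countably many summands (because $l_1$ is separable), it shows every $u=(u_n)\in V^{**}$ is fragmented on $B_{V^*}$, using that $\F(B_{V^*})\cap l_\infty(B_{V^*})$ is a closed linear subspace of $l_\infty(B_{V^*})$ to reduce to the coordinate functionals $u_{n_0}$, each fragmented because $V_{n_0}$ is Rosenthal. You instead use item (2) of Fact \ref{f:RosFr} (Rosenthal's weak-Cauchy criterion): countable joint support of a bounded sequence, a diagonal extraction giving coordinatewise weak-Cauchy behaviour, and uniform control of the $l_2$-tail of $f\in V^*$ by Cauchy--Schwarz. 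Both routes are valid; yours is more elementary and sequential, while the paper's stays inside the fragmentability framework it exploits throughout (e.g.\ in Theorem \ref{t:tame2}), at the price of invoking the closedness of the space of bounded fragmented functions.
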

\begin{proof} (1) This is well known (see, for example, \cite{NP}).

% eee ?
(2) The reflexive case follows easily from (1). For the Asplund case see \cite{NP} (or \cite{Fa} for a simpler proof).
%070211
%I could not find any reference of the next lemma (which seems to be expected). As a first step,
%One may easily reduce the question to the case of countably many Rosenthal spaces. I am not sure if this reduction is a necessary step
%!! if each of $V_i$ is separable then we can use the characterization of Odell-Rosenthal which asserts that a
%separable $V$ is Rosenthal iff the second dual cardinality $|V^{**}|=|V|$. See \cite[Fact 4.10]{GM-rose}.
%%
Now suppose that each $V_i$ is Rosenthal
%200312
and $l_1 \subset V=(\Sigma_{i \in I} V_i)_{l_2}$.
Since $l_1$ is separable one may easily reduce the question to the case of countably many Rosenthal spaces $V_i$.
So we can suppose that $V:=(\Sigma_{n \in \N} V_n)_{l_2}$.
In view of Fact \ref{f:RosFr} it suffices to show that every element
$u=(u_n)_{ \in V^{**}}$
is a fragmented map on the weak$^*$ compact unit ball
%300312
$B_{V^*}$. That is, we need to check that $u \in \F(B_{V^*})$.
The set $\F(X) \cap l_{\infty}(X)$ is
%050312 please check
a Banach subspace of $l_{\infty}(X)$ for every topological space $X$.
So the proof can be reduced to the case of coordinate
functionals $u_{n_0}$. Also, $\lan u_{n_0}, (f_n)_{n \in \N} \ran= f_{n_0}(u_{n_0})$.
Now use the fact that $u_{n_0}$ is a fragmented map
on $B_{V^*_{n_0}}$ because $V_{n_0}$ is Rosenthal (Fact \ref{f:RosFr}).
 %We treat each $V_n$ as a subspace of $V$.
% Since $u$ is a norm limit of the sequence $u_n$ in $V$ it is enough to establish that every $u_n

(3) Similar to \cite[Lemma 3.3]{Me-hilb} (or \cite[Lemma 4.9]{Me-nz}) using (2) and the $l_2$-sum of representations
$(h_n,\a_n)$ of $(S,X_n)$ on $V_n$ where $||\a_n(x)|| \leq 2^{-n}$ for every $x \in X_n$ and $n \in \N$.
\end{proof}

 \subsection{$S$-Compactifications and functions} \label{s:fun}

A \emph{compactification} of $X$ is a pair $(\nu,Y)$ where $Y$ is a
compact (Hausdorff, by our assumptions) space and $\nu$ is a continuous map with a
dense range.
%When $\nu$ is a topological embedding the compactification is said to be \emph{faithful}.

%!?
The Gelfand-Kolmogoroff theory \cite{GK} establishes an order
preserving bijective correspondence (up to equivalence of
compactifications) between Banach {\it unital}
subalgebras $\Acal \subset C(X)$ and compactifications $\nu: X
\to Y$ of $X$.
Every Banach unital $S$-subalgebra $\Acal$ induces
the {\it canonical $\Acal$-compactification} $\a_{\Acal}: X \to
X^{\Acal}$, where $X^{\Acal}$ is the spectrum
(or the Gelfand space --- the collection of continuous multiplicative functionals on $\Acal$).
The map $\a_{\Acal}: X \to X^{\Acal} \subset \A^*$ is
defined by the {\it Gelfand transform}, the evaluation at $x$
functional, $\a_{\Acal}(x)(f):=f(x)$. Conversely,
every compactification $\nu: X \to~Y$ is equivalent to the {\it
canonical $\Acal_{\nu}$-compactification} $\a_{\Acal_{\nu}}: X \to
X^{\Acal_{\nu}}$, where the algebra $\Acal_{\nu}$ is defined as
the image $j_{\nu}(C(Y))$ of the embedding $ j_{\nu}: C(Y)
\hookrightarrow C(X), \ \phi \mapsto \phi \circ \nu. $

%The algebra
%$\Acal_{\nu}$ determines the compactification $\nu$ uniquely, up
%to the equivalence of $S$-compactifications.

\begin{defin} \label{d:comp}
Let $X$ be an $S$-system. An $S$-compactification of $X$ is a
continuous $S$-map $\alpha: X \to Y$,  with a dense range,
%({\it $S$-compactification map})
into a compact $S$-system $Y$. An $S$-compactification is said to be
\emph{jointly continuous}
(respectively, \emph{separately continuous}) if the action $S \times Y \to Y$ is jointly continuous (respectively, separately continuous).
\end{defin}

%\emph{Topological} (\emph{semitopological})

By $S_d$ we denote the discrete copy of $S$.

\begin{remark} \label{r:domin}
If $\nu_1: X \to Y_1$ and $\nu_2: X \to Y_2$ are two
compactifications, then $\nu_2$ {\it dominates} $\nu_1$, that is,
$\nu_1= q \circ \nu_2$ for some (uniquely defined) continuous map
$q: Y_2 \to Y_1$ iff $\Acal_{\nu_1} \subset \Acal_{\nu_2}$. If in
addition, $X$, $Y_1$ and $Y_2$ are $S_d$-systems (i.e., all the
$s$-translations on $X$, $Y_1$ and $Y_2$ are continuous)
and if $\nu_1$ and $\nu_2$ are $S$-maps, then $q$ is also an $S$-map. Furthermore, if
the action on $Y_1$ is (separately) continuous then the action on $Y_2$ is (respectively, separately) continuous.
If $\nu_1$ and $\nu_2$ are
homomorphisms of semigroups then $q$ is also a homomorphism. See \cite[App. D]{Vr-b}.
%eli ???
%0912 first proof for the dense part $\nu_2(X) \subset Y_2$ and use Hausdorff property
\end{remark}

\subsection{From representations to compactifications}
\label{s:canonical-r}

Representations of dynamical systems $(S,X)$ lead to $S$-compactifications of $X$.
%\begin{lem} \label{l:semi}
Let $V$ be a normed space and let
$$
(h,\a): (S,X) \rightrightarrows (\Theta(V)^{op}, V^*)
$$
be a representation of $(S,X)$, where $\a$ is a
%2112
% bounded
weak$^*$
continuous map.
%!(and $h$ is not necessarily continuous).
Consider the induced compactification $\a: X \to Y:=\overline{\a(X)}$,
the weak$^*$ closure of $\a(X)$.
Clearly, the induced natural action $S \times Y \to Y$ is well defined and
every left translation is continuous. So, $Y$ is an $S_d$-system.
%$\a(X)$ is an $S$-invariant subset of $B^*$.

\begin{remark} \label{l:actions-g} \
\ben
\item
The induced action $S \times Y \to Y$ is separately continuous iff
%(equivalently, $\a: X \to Y$ is a semitopological $S$-compactification).
the matrix coefficient $ m(v,y): S \to \R
% \quad s \mapsto \psi(vs)
$
%of the given representation
is continuous $\forall$ $v \in V, \ y \in Y$.
%!

\item
If $h$ is strongly (weakly) continuous then the induced dual action of $S$ on the weak$^*$ compact unit ball
$B^*$ and on $Y$ is jointly (respectively, separately) continuous.
%It follows that in this case $S \times Y \to Y$ is continuous.$S \times Y \to Y$ is separately continuous.
\een
\end{remark}

To every
$S$-space $X$ we associate the \emph{regular representation} on
the Banach space $V:=C(X)$ defined by the pair $(h,\a)$ where $h:
S \to \Theta(V), s \mapsto L_s$
%2112
(with $L_sf(x) = f(sx)$) is the natural co-homomorphism and
$\a: X \to V^*, x \mapsto \delta_x$ is the evaluation map
$\delta_x(f)=f(x)$. Denote by ($\mathrm{WRUC}(X)$) \ $\mathrm{RUC}(X)$
the set of all (weakly) right uniformly continuous functions.
That is functions $f \in C(X)$ such that the orbit
map $\tilde{f}: S \to C(X), s \mapsto fs=L_s(f)$ is (weakly) norm continuous.
%2112   why $fs$ (and not $sf$) ?
%\footnote{$f \in \mathrm{RUC}(X)$ iff for
%every $s_0 \in S$ and $\varepsilon > 0$ there exists a
%neighborhood $U$ of $s_0$ such that $|f(sx)-f(s_0x)| <
%\varepsilon$ for every $(s,x) \in U \times X$.}.
Then $\mathrm{RUC}(X)$ and $\mathrm{WRUC}(X)$ are norm closed
$S$-invariant unital linear subspaces of $C(X)$ and the restriction of the regular
representation is continuous on $\mathrm{RUC}(X)$ and weakly
continuous on $\mathrm{WRUC}(X)$.
%(meaning $h$ is strongly and weakly continuous, respectively).
Furthermore, $\mathrm{RUC}(X)$ is a Banach subalgebra of $C(X)$.
If $S \times X \to X$ is continuous and $X$ is compact then
%the compactness of $X$ implies that
$C(X)=\mathrm{RUC}(X)$.
In particular, for the left action of $S$ on itself $X:=S$ we write simply $\mathrm{RUC}(S)$ and $\mathrm{WRUC}(S)$.
If $X:=G$ is a topological group with the left action on itself then $\mathrm{RUC}(G)$
is the usual algebra of right uniformly continuous functions on $G$.
Note that $W\mathrm{RUC}(S)$ plays a major role in the theory of semigroups being the
largest left introverted linear subspace of $C(S)$ (Rao's theorem; see for example, \cite{BJMo}).

We say that a function $f\in C(X)$ on an $S$-space $X$ \emph{comes from} an %compact $G$-system $Y$ if there exist a
$S$-compactification  $\nu: X \to Y$
(recall that we require only that the actions on $X,Y$ are separately continuous)
if there exists $\tilde{f} \in C(Y)$ such that $f=\tilde{f} \circ \nu$.
Denote by $\mathrm{RMC}(X)$ the set (in fact a unital Banach algebra) of all functions on $X$ which come from $S$-compactifications.
The algebra $\mathrm{RUC}(X)$ is the set of all functions which come from jointly continuous $S$-compactifications.

\begin{remark} \label{r:actions}
Let $X$ be an $S$-system.
\ben
\item
For every
%2112
% an
$S$-invariant normed
%! closed unital
subspace $V$ of $\mathrm{WRUC}(X)$ we have
the regular weakly continuous $V$-representation $(h,\a)$ of $(S,X)$ on $V$ defined by $\a(x)(f)=f(x), \ f \in V$
and the corresponding $S$-compactification $\a: X \to Y:=\overline{\a(X)}$.
The action of $S$ on $Y$ is continuous
%!!!! maybe better global replacing "iff" by ...
iff $V \subset \mathrm{RUC}(X)$.
%! The algebra associated to this compactification %denoted by $\langle V \rangle$,
%is the minimal closed $S$-invariant unital subalgebra of $C(X)$ containing $V$.
%%! (respectively, $V \subset \mathrm{RMC}(X)$).
\item
Let $(h,\a)$ be a representation of the $S$-system $X$ on a Banach space $V$.
%2112 is this an extra assumption ?
% with weak$^*$ continuous $\a$.
The inclusion $\a(X) \subset V^*$ induces a \emph{restriction operator}
$$r: V \to C(X), \ r(v)(x)=\lan v,\a(x) \ran.$$
Then $r$ is a linear $S$-operator (between right actions) with $||r|| \leq 1$.
If $h$ is weakly (strongly) continuous then $r(V) \subset \mathrm{WRUC}(X)$ (respectively, $r(V) \subset \mathrm{RUC}(X)$).

%031012
\item For every topological space $X$ the classical order preserving  Gelfand-Kolmogoroff correspondence
between compactifications %(up to equivalence)
of $X$ and unital subalgebras has a natural $S$-space generalization. More precisely, if $X$ is an $S$-space then
$S$-invariant unital Banach subalgebras $F$ of $\textrm{RUC}(X)$ (resp., $\textrm{RMC}(X)$) control
the $S$-compactifications $X \to Y$ with (resp., separately) continuous actions $S \times Y \to Y$.
\een
\end{remark}

%031012
%In this way one gets a description of jointly continuous $S$-compactifications
%of an $S$-space $X$ in terms of $S$-subalgebras of $\mathrm{RUC}(X)$ in the
%spirit of
%%!!!!
% Gelfand-Kolmogoroff theorem.
%This theory is well known for topological group actions, \cite{vr-embed}.

The correspondence described in Remark \ref{r:actions}.3
 for Banach subalgebras $F$ of $\textrm{RUC}(X)$ is well known for topological group actions, \cite{vr-embed}.
One can easily extend it to the case of topological semigroup actions \cite{BH, Me-cs}.
Compare this also to the description of jointly continuous \emph{affine $S$-compactifications}
(Section \ref{s:Affine}) in terms of $S$-invariant closed linear unital subspaces of $\mathrm{RUC}(X)$.

%are also strongly related to canonical $V$-representations.

 Regarding a description of separately continuous $S$-compactifications via subalgebras of $\mathrm{RMC}(X)$
and for more details about Remarks \ref{l:actions-g}, \ref{r:actions} see, for example, \cite{Me-nz, Me-cs} and,
also, Remark \ref{r:cycl-comes} below.

%\nt
%\textbf{Caution:}
A word of caution
about our notation of $\mathrm{WRUC}(S), \mathrm{RUC}(S), \mathrm{RMC}(S)$. Note that in \cite{BJMo} the corresponding notation
is $\mathrm{WLUC}(S)$, $\mathrm{LUC}(S)$, $\mathrm{LMC}(S)$ (and sometimes $\mathrm{WLC}(S)$, $\mathrm{LC}(S)$, \cite{BJM}).
%050312 at the same time our notation of $\mathrm{RUC}(G)$ agrees with the notation in the book of Hewitt and Ross.
%Eli, I am not so sure what is better. The present notation continues the line of our previous publications
%but it interchanges the notation of BJM and also Dona Strauss ...

%!! %SV
\begin{remark} \label{r:univ}
Let $\mathcal{P}$ be a class of compact separately continuous $S$-dynamical systems.
The subclass of $S$-systems with continuous actions will be denoted by
$\mathcal{P}_c$.
Assume that $\mathcal{P}$ %and $\mathcal{P}_c$ are
is closed under products, closed subsystems and $S$-isomorphisms.
In such cases (following \cite[Ch. IV]{Vr-b}) we say that $\mathcal{P}$ is \emph{suppable}.
%such that the one point trivial $G$-system has the property $P$.
Let $X$ be a not necessarily compact $S$-space and let $\Pcal(X)$
be the collection of functions on $X$ coming from %separately continuous
systems having property $\mathcal{P}$.
%300312
Then, as in the case of jointly continuous actions (see \cite[Prop. 2.9]{GM1}),  %or \cite[Ch. IV]{deVries-book}),
there exists a universal $S$-compactification $X \to X^{\Pcal}$ of $X$ such that $(S,X) \in \Pcal$.
Moreover,
%300312   ?
%$j(C(X^{\Pcal}))=\Pcal_X$.
$j(C(X^{\Pcal}))=\Pcal(X)$.
In particular, $\Pcal(X)$ is a
uniformly closed, $S$-invariant subalgebra of $C(X)$.
%300312
Analogously, one defines $\Pcal_c(X)$. Again it is  a uniformly closed,
$S$-invariant subalgebra of $C(X)$, which is in fact a subalgebra of $\mathrm{RUC}(X)$.
For the corresponding $S$-compactification $X \to X^{\Pcal_c}$ the action of $S$ on $X^{\Pcal_c}$ is continuous.

In particular, for the left action of $S$ on itself we get the definitions of $\Pcal(S)$ and $\Pcal_c(S)$. As in
\cite[Prop. 2.9]{GM1} one may show that $\Pcal(S)$ and $\Pcal_c(S)$ are m-introverted Banach subalgebras of $C(S)$ and they
define the $\Pcal$-universal and $\Pcal_c$-universal semigroup compactifications $S \to S^{\Pcal}$ and $S \to S^{\Pcal_c}$.
\end{remark}

%300312
In the present paper we are especially interested in the following classes of compact $S$-systems:
%300312
a) Tame systems (Definition \ref{d:Tame});
b) Hereditarily Non-Sensitive, HNS in short (Definition \ref{d:HNS});
c) Weakly Almost Periodic, WAP in short (Section \ref{s:wap}).
See Sections \ref{s:wap}, \ref{s:HNS}, \ref{s:tame} and also \cite{GM1, GM-suc, GM-rose}.% for continuous group actions.

For the corresponding
algebras, defined by Remark \ref{r:univ}, we use the following notation:
$\mathrm{Tame}(X)$, $\mathrm{Asp}(X)$, $\mathrm{WAP}(X)$. Note that
% 300312
the Tame (respectively, HNS, WAP)
systems are exactly the compact systems which admit sufficiently many representations on
Rosenthal (respectively, Asplund, reflexive) Banach spaces (Section \ref{s:repr}). %This justifies the notation $\mathrm{Asp}(X)$.

%In particular, for the left action of $S$ on itself we get the definitions of
%$\mathrm{WAP}(S)$, $\mathrm{Asp}(S)$, $\mathrm{Tame}(S)$, $\mathrm{WRUC}(S)$
%and $\mathrm{WAP}_c(S)$, $\mathrm{Asp}_c(S)$, $\mathrm{Tame}_c(S)$, $\mathrm{RUC}(S)$.

%If $X$ is compact then $(G,X^{\Pcal})$ is the maximum factor of $(G,X)$ with property $P$.
%\item
%Let $\Pcal \subset C(G)$ be the set of functions coming from
%systems with property $P$. Then $(G^{\Pcal}, \eva_e)$ is the
%universal point transitive compact $G$-system having property $P$.
%Moreover $\Pcal$ is a point-universal  subalgebra of $\mathrm{RUC}(G)$.
%(Thus, $\Pcal$ is uniformly closed, right and left $G$-invariant
%and $X_f \subset \Pcal$ for every $f \in \Pcal$.)
%% and $\Pcal$ defines a factor $G^{\Pcal}$ of $G^R$ which is the maximum
%% factor of $G^R$ with property $P$.
%
%%; i.e. if $(G,Z)$ is any dynamical system with property $P$ and
%%$z_0\in Z$ is a transitive point then there exists a homomorphism
%%$\phi: G^{\Pcal}\to Z$ mapping the distinguished point
%%${\eva}_e\in G^{\Pcal}$ onto $z_0$.
%\item
%If in addition $P$ is preserved by factors then $f \in \Pcal$ iff
%$X_f$ has property $P$.
%\end{enumerate}

\begin{lem} \label{l:classes}  \
\ben
\item
For every $S$-space $X$ we have
$\Pcal_c(X) \subset \mathrm{RUC}(X) \subset \mathrm{WRUC}(X)
\allowbreak \subset \mathrm{RMC}(X)$  and
%$\Pcal_c(X) \subset \Pcal(X) \subset \mathrm{RMC}(X)$ and $\Pcal_c(X) \subset \mathrm{RUC}(X) \subset \mathrm{RMC}(X)  \subset \mathrm{WRUC}(X)$.
$\Pcal_c(X) \subset \Pcal(X) \cap \mathrm{RUC}(X)$.
%050312
If %in addition
$\Pcal$ is preserved by factors then
%$f \in \Pcal(X)$ iff the cyclic $S$-space $X_f$ has property $\Pcal$.
$\Pcal_c(X) = \Pcal(X) \cap \mathrm{RUC}(X)$.

%  $\mathrm{WAP}_c(X) \subset \mathrm{WAP}(X)$, $\mathrm{Asp}_c(X) \subset \mathrm{Asp}(X)$, $Tame_c(X) \subset \mathrm{Tame}(X)$
%and $\mathrm{RUC}(X) \subset \mathrm{WRUC}(X) \subset C(X)$.
%
%$\mathrm{WAP}(X) \subset \mathrm{Asp}(X) \subset \mathrm{Tame}(X) \subset \mathrm{WRUC}(X)$
%
%$\mathrm{WAP}_c(X) \subset \mathrm{Asp}_c(X) \subset Tame_c(X) \subset \mathrm{RUC}(X)$

\item
If $X$ is a compact $S$-system with continuous action
then %an $S$-compactification $\nu: X \to Y$ in this case is an onto map and
$\Pcal_c(X) = \Pcal(X)$, $\mathrm{RUC}(X)=\mathrm{WRUC}(X)=\mathrm{RMC}(X)=C(X)$.

%050312  I don't know if Tame_c(G)=Tame(G) (or even Asp_c(G)=\mathrm{Asp}(G)) in general topological groups
\item
If $S=G$ is a \v{C}ech-complete semitopological group then for every $G$-space $X$ we have
%\nl
$\Pcal_c(X) = \Pcal(X)$, $\mathrm{RUC}(X)=\mathrm{WRUC}(X)=\mathrm{RMC}(X)$;
in particular, $\mathrm{RUC}(G)=\mathrm{WRUC}(G)=\mathrm{RMC}(G)$.

\item %$\mathrm{Asp}_c(G)=\mathrm{Asp}(G)$ !!!!!! in particular >>>>>>>>>
%$\mathrm{WAP}_c(X)=\mathrm{WAP}(X)$ (in particular,
$\mathrm{WAP}_c(G)=\mathrm{WAP}(G)$ remains true for every semitopological group $G$.% and a $G$-space $X$.
%!!!! check it
\item \cite[p. 173]{BJM} If $S$ is a k-space as a topological space then $\mathrm{WRUC}(X)=\mathrm{RMC}(X)$.
\een
\end{lem}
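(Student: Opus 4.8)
The plan is to read off (1)--(3) from the definitions of $\mathrm{RUC}(X)$, $\mathrm{WRUC}(X)$, $\mathrm{RMC}(X)$, $\Pcal(X)$, $\Pcal_c(X)$ combined with the $S$-equivariant Gelfand--Kolmogoroff dictionary (Remark~\ref{r:actions}.3), the domination lemma (Remark~\ref{r:domin}) and the automatic-continuity Fact~\ref{f:sc=c}; part (4) is then a short application of the same dictionary together with one classical input about $\mathrm{WAP}$, and (5) is quoted from \cite[p.\,173]{BJM}. \textbf{Part (1).} The inclusion $\Pcal_c(X)\subseteq\mathrm{RUC}(X)$ holds by definition: a function in $\Pcal_c(X)$ comes from a compact $\Pcal$-system with \emph{jointly} continuous action, hence from a jointly continuous $S$-compactification, and $\mathrm{RUC}(X)$ is precisely the algebra of such functions (Section~\ref{s:canonical-r}); and $\mathrm{RUC}(X)\subseteq\mathrm{WRUC}(X)$ because norm continuity of the orbit map $s\mapsto L_sf$ implies weak continuity. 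For $\mathrm{WRUC}(X)\subseteq\mathrm{RMC}(X)$ I would apply Remark~\ref{r:actions}.1 to the $S$-invariant normed subspace $V:=\mathrm{WRUC}(X)$: its regular representation is weakly continuous, so the associated $S$-compactification $\alpha\colon X\to Y:=\overline{\alpha(X)}^{\,w^*}\subseteq V^*$ carries a separately continuous action (Remark~\ref{l:actions-g}.1, since $m(v,y)\in C(S)$ for all $v\in V$, $y\in Y$), and each $f\in V$ satisfies $f(x)=\alpha(x)(f)=\widehat f(\alpha(x))$ with $\widehat f\in C(Y)$ the restriction to $Y$ of the $w^*$-continuous evaluation at $f$; thus $f$ comes from the separately continuous compactification $\alpha$, i.e. $f\in\mathrm{RMC}(X)$. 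Since $\Pcal_c\subseteq\Pcal$ we also get $\Pcal_c(X)\subseteq\Pcal(X)$, hence $\Pcal_c(X)\subseteq\Pcal(X)\cap\mathrm{RUC}(X)$. Now assume $\Pcal$ is closed under factors and put $F:=\Pcal(X)\cap\mathrm{RUC}(X)$, a uniformly closed $S$-invariant unital subalgebra of $\mathrm{RUC}(X)$; by Remark~\ref{r:actions}.3 its $S$-compactification $X\to X^F$ has continuous action. As $F\subseteq\Pcal(X)$, the compactification $X\to X^F$ is dominated by the universal $\Pcal$-compactification $X\to X^\Pcal$ (whose algebra is $\Pcal(X)$, Remark~\ref{r:univ}), through a surjective $S$-map $X^\Pcal\to X^F$ (Remark~\ref{r:domin}). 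Thus $X^F$ is a factor of $X^\Pcal\in\Pcal$, so $(S,X^F)\in\Pcal$; together with joint continuity this gives $(S,X^F)\in\Pcal_c$ and hence $F\subseteq\Pcal_c(X)$, so equality holds.

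\textbf{Parts (2) and (3).} If $X$ is compact with continuous action then $C(X)=\mathrm{RUC}(X)$ (Section~\ref{s:canonical-r}), and since always $\mathrm{RMC}(X)\subseteq C(X)$, the chain from (1) collapses to $\mathrm{RUC}(X)=\mathrm{WRUC}(X)=\mathrm{RMC}(X)=C(X)$. For $\Pcal(X)=\Pcal_c(X)$: given $f\in\Pcal(X)$, write $f=\widetilde f\circ\nu$ for an $S$-compactification $\nu\colon X\to Z$ with $(S,Z)\in\Pcal$; then $\Acal_\nu=j_\nu(C(Z))\subseteq C(X)=\mathrm{RUC}(X)$, so by Remark~\ref{r:actions}.3 the action on $Z$ is continuous, i.e. $(S,Z)\in\Pcal_c$ and $f\in\Pcal_c(X)$. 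For (3), if $G$ is a \v{C}ech-complete semitopological group then by Fact~\ref{f:sc=c} every separately continuous action of $G$ on a compact space is automatically jointly continuous; hence every separately continuous $G$-compactification (resp. $\Pcal$-system) occurring in the definition of $\mathrm{RMC}(X)$ (resp. $\Pcal(X)$) already has continuous action, giving $\mathrm{RMC}(X)\subseteq\mathrm{RUC}(X)$ and $\Pcal(X)\subseteq\Pcal_c(X)$; combined with (1) this yields all the stated equalities, and specialising to $X=G$ with its (separately continuous) left action gives $\mathrm{RUC}(G)=\mathrm{WRUC}(G)=\mathrm{RMC}(G)$.

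\textbf{Parts (4) and (5).} For (4), recall the classical fact that every weakly almost periodic function on a semitopological group is right uniformly continuous, that is, $\mathrm{WAP}(G)\subseteq\mathrm{RUC}(G)$ (see, e.g., \cite{BJM}); equivalently, every reflexively representable co-homomorphism of a group is strongly continuous. Hence $\mathrm{WAP}(G)$ is an $S$-invariant unital subalgebra of $\mathrm{RUC}(G)$, so by Remark~\ref{r:actions}.3 the universal $\mathrm{WAP}$-compactification $G\to G^{\mathrm{WAP}}$ --- whose algebra is exactly $\mathrm{WAP}(G)$ --- has continuous action, i.e. $(G,G^{\mathrm{WAP}})\in\mathrm{WAP}_c$; therefore $\mathrm{WAP}(G)=j(C(G^{\mathrm{WAP}}))\subseteq\mathrm{WAP}_c(G)$, and the reverse inclusion is trivial. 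Part (5) is \cite[p.\,173]{BJM}: when $S$ is a $k$-space, weak right uniform continuity of a function on $X$ can be tested on the compact subsets of $S$, along which a separately continuous action on a compactum behaves well enough to force $\mathrm{RMC}(X)\subseteq\mathrm{WRUC}(X)$; the reverse inclusion is (1). The genuinely non-formal ingredients are thus Fact~\ref{f:sc=c} for (3), the classical $\mathrm{WAP}(G)\subseteq\mathrm{RUC}(G)$ for (4) and the $k$-space argument of \cite{BJM} for (5); the rest is bookkeeping with the Gelfand--Kolmogoroff correspondence, and the one point requiring care throughout is matching each ``comes from'' statement to a compactification (or $\Pcal$-system) with exactly the right degree of continuity.
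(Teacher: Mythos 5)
Your proof is correct, and all five parts are established; the only places where you genuinely diverge from the paper's (very terse) argument are the non-trivial inclusion in (1) and the mechanism in (4). For $\Pcal(X)\cap\mathrm{RUC}(X)\subset\Pcal_c(X)$ the paper works function-by-function via the cyclic compactification: given $f\in\Pcal(X)\cap\mathrm{RUC}(X)$, Remark \ref{r:cycl-comes}.1 exhibits $X_f$ as a factor of whatever $\Pcal$-system $f$ comes from (so $X_f\in\Pcal$ by factor-closedness) and notes that the action on $X_f$ is jointly continuous precisely because $f\in\mathrm{RUC}(X)$. You instead argue globally with the single compactification $X^F$ attached to the algebra $F=\Pcal(X)\cap\mathrm{RUC}(X)$, realizing it as a factor of the universal $\Pcal$-compactification $X^{\Pcal}$ via Remark \ref{r:domin}; this buys you one uniform object rather than one per function, at the cost of having to check that $F$ is a closed unital $S$-invariant subalgebra (which it is) — the two arguments are otherwise interchangeable. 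In (4) the paper invokes the Ellis--Lawson joint continuity theorem (Fact \ref{f:Lawson}) directly on $\gamma(G)\subset G^{\mathrm{WAP}}$, whereas you quote the classical inclusion $\mathrm{WAP}(G)\subset\mathrm{RUC}(G)$ for semitopological groups from \cite{BJM}; since that inclusion is itself proved in \cite{BJM} via Lawson's theorem, the essential input is the same and your packaging is legitimate, though slightly less self-contained relative to the facts the paper has already stated. Parts (2), (3) and (5) follow the paper's route exactly.
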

\begin{proof}
(1) is straightforward. In order to check the less obvious part $\Pcal_c(X) \supset \Pcal(X) \cap \mathrm{RUC}(X)$
we use a fundamental property of cyclic compactifications (see Remark \ref{r:cycl-comes}.1).

(2) easily follows from (1). (3) follows from Fact \ref{f:sc=c}, and (4) from Fact \ref{f:Lawson}.
(5) is a generalized version of \cite[Theorem 5.6]{BJM} and easily follows from Grothendieck's Lemma \cite[Cor. A6]{BJM}.
\end{proof}

\begin{defin} \label{d:admS}
Let $X$ be a compact space with a separately continuous action $\pi: S \times X \to X$.
We say that $X$ is WRUC-\emph{compatible}
%300312
(or that $X$ is WRUC) if $C(X)=\textrm{WRUC}(X)$.
An equivalent condition is that
the induced action $\pi_P: S \times P(X) \to P(X)$ be separately continuous (Lemma \ref{cont-act}).
\end{defin}

\begin{remark} \label{r:WRUC}
We mention three useful sufficient conditions for being WRUC-compatible (compare \cite[Def. 7.6]{Me-nz}
where this concept appears under the name \emph{w-admissible}):
a) the action $S \times X \to X$ is continuous; b) $S$, as a topological space, is a k-space (e.g., metrizable);
c) $(S,X)$ is WAP.
Below in Proposition \ref{p:tame-is-wruc} we show that $\mathrm{Tame}(X) \subset \mathrm{WRUC}(X)$ for every $S$-space $X$. In particular,
it follows that every \emph{compact} tame (hence, every WAP) $S$-system is WRUC-compatible.
\end{remark}

\subsection{Semigroup compactifications}

%The following definitions of semigroup compactifications %(1) and (2)
%are monoid versions of well known concepts. See for example, \cite[p. 105]{BJM} and  \cite[p. 101]{Ru}.

\begin{defin} \label{d:semcomp} Let $S$ be a semitopological semigroup.
\ben
\item \cite[p. 105]{BJM} A {\it right topological semigroup compactification} of $S$ is a pair $(\g,T)$ such
that $T$ is a compact right topological semigroup, and $\gamma$ is
a continuous semigroup homomorphism from $S$ into $T$, where $\gamma(S)$ is
dense in $T$ and the left translation $\lambda_s: T \to T, \
x\mapsto \gamma(s)x$ is continuous for every $s \in S$, that is,
$\gamma(S) \subset \Lambda(T)$.

 It follows that the {\it associated action}
%(the {\it associated flow} in \cite{La})
$$\pi_{\gamma}: S \times T \to T, \ \ (s,x) \mapsto
\gamma(s) x=\lambda_s(x)$$ is separately continuous.
%Hence $(S,T,\gamma)$ is a semitopological flow.

\item  \cite[p. 101]{Ru} A \emph{dynamical right topological
semigroup compactification} of $S$
%(see also \emph{monoidal compactification} of Lawson \cite{La})
is a right topological semigroup compactification $(\g,T)$ in the sense of (1)
such that, in addition, $\gamma$ is a jointly continuous $S$-compactification, i.e.,
the action $\pi_{\gamma}: S \times T \to T$ is jointly continuous.

%\item We say that
%a topological semigroup $S$ is compactifiable ({\it
%$LMC$-compactifiable}) if it admits a faithful topological (respectively,
%semitopological) $S$-compactification (see also Definition
%\ref{d:LMC}).
\een
\end{defin}

If $S$ is a monoid (as we require in the present paper) with the neutral element $e$
 then it is easy to show that necessarily $T$ is a monoid with the neutral element $\g(e)$.
 %Indeed, since $\g(e) \in \Lambda(T)$ and $\g(S)$ is dense in $T$ we have $\g (e) t= \g (e) \lim (\g (s_i)=\lim (\g(es_i)=t$.
 %Since $T$ is a right topological we have $t \g(e)= \lim (s_i) \g(e)=\lim(s_i e)=t$.
For
%300312
a discrete semigroup $S$, (1) and (2) are equivalent.
Directly from Lawson's theorem mentioned above (Fact \ref{f:sc=c}) we have:

\begin{f} \label{p:r=d}
Let $G$ be a \v{C}ech-complete (e.g., locally compact or completely
metrizable) semitopological group. Then $\gamma: G \to T$ is a right
topological semigroup compactification of $G$ if and only if
$\gamma$ is a dynamical right topological
semigroup compactification of $G$.
\end{f}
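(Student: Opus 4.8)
The plan is to reduce the statement to Lawson's theorem (Fact \ref{f:sc=c}), since the only substantive direction is to upgrade separate continuity of the associated action to joint continuity.

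First, the implication ``dynamical right topological semigroup compactification $\Rightarrow$ right topological semigroup compactification'' is immediate and requires no hypothesis on $G$: by Definition \ref{d:semcomp}(2) a dynamical right topological semigroup compactification of $G$ is, by definition, a right topological semigroup compactification $(\gamma,T)$ in the sense of Definition \ref{d:semcomp}(1) which in addition has jointly continuous associated action. Dropping the extra clause leaves exactly a right topological semigroup compactification.

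For the converse, suppose $\gamma: G \to T$ is a right topological semigroup compactification of $G$. I would first record that the associated action $\pi_{\gamma}: G \times T \to T$, $(s,x) \mapsto \gamma(s)x = \lambda_s(x)$, is separately continuous: for fixed $s \in G$ the map $\lambda_s: T \to T$ is continuous because $\gamma(G) \subset \Lambda(T)$, and for fixed $x \in T$ the orbit map $s \mapsto \gamma(s)x = \rho_x(\gamma(s))$ is continuous as the composition of the continuous homomorphism $\gamma: G \to T$ with the continuous right translation $\rho_x: T \to T$ (here we use that $T$ is right topological). This is precisely the separate continuity already noted in Definition \ref{d:semcomp}(1). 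Now $T$ is a compact (Hausdorff) space and $G$ is a \v{C}ech-complete semitopological group, so Fact \ref{f:sc=c} applies to the separately continuous action $\pi_{\gamma}: G \times T \to T$ and yields that $\pi_{\gamma}$ is jointly continuous. Hence $\gamma$ is a jointly continuous $S$-compactification, i.e. a dynamical right topological semigroup compactification in the sense of Definition \ref{d:semcomp}(2).

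There is essentially no obstacle here beyond invoking Fact \ref{f:sc=c}; the only point to be careful about is that the associated action is genuinely an action of the topological group $G$ (not merely of $S_d$) and that its separate continuity is exactly what the definition of a right topological semigroup compactification guarantees, after which Lawson's theorem does all the work.
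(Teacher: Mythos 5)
Your proof is correct and follows exactly the paper's route: the paper deduces this fact directly from Lawson's theorem (Fact \ref{f:sc=c}), applied to the separately continuous associated action $\pi_{\gamma}: G \times T \to T$ guaranteed by Definition \ref{d:semcomp}(1). Your careful verification of the separate continuity and of the trivial converse direction is exactly what is implicit in the paper's "directly from Lawson's theorem" remark.
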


For every semitopological
semigroup $S$ there exists a maximal right topological
(dynamical) semigroup compactification. The corresponding algebra is
$\mathrm{RMC}(S)$ (respectively, $\mathrm{RUC}(S)$). %(sometimes denoted by $LMC(S)$,~\cite{BJM}).
%! consistently for our notation it would be denoted by $\mathrm{RMC}(X)$ and not $LMC(S)$
%Analogously defined algebra for $S$-spaces gives the maximal
%separately continuous $S$-compactification of $S$.%, \cite{Me-cs}.
If in the definition of a semigroup
compactification $(\g,T)$ we remove the %admissibility
condition
$\gamma(S) \subset \Lambda(T)$ then maximal compactifications (in
this setting) need not exist (See \cite[Example V.1.11]{BJMo}
which is due to J. Baker).

Let $\Acal$ be a closed unital subalgebra of $C(X)$ for some
topological space $X$.  We let $\nu_{\Acal}: X \to X^{\Acal}$ be the
associated compactification map (where, as before, $X^{\Acal}$ is
the maximal ideal space of $\Acal$). For instance, the
\emph{greatest ambit}
(see, for example, \cite{Usp-comp, Vr-b})
of a topological group $G$ is the compact $G$-space
$G^{\mathrm{RUC}}:=G^{\mathrm{RUC}(G)}$. It defines the
\emph{universal %(right topological)
dynamical semigroup compactification} of $G$. For
$\Acal=\mathrm{WAP}(G)$ we get the \emph{universal semitopological
compactification} $G \to G^{WAP}$ of $G$, which is the
universal WAP compactification of $G$ (see \cite{deLG}).
Note that by \cite{MPU} the %natural
projection $q: G^{\mathrm{RUC}} \to G^{WAP}$ is a homeomorphism
iff $G$ is precompact.

\begin{remark} \label{r:ex} \
\ben
\item Recall that $\mathrm{RUC}(G)$ generates the topology of $G$ for
every topological group $G$. It follows that the corresponding
canonical representation
 ({\it Teleman's representation})
$$(h,\a_{\textrm{RUC}}): (G,G) \rightrightarrows (\Theta(V)^{op},
B^*)$$ on $V:=\textrm{RUC}(G)$ is faithful and $h$ induces a
topological group embedding of $G$ into $\Iso(V)$. See \cite{pest-wh} for details.

\item  There exists a nontrivial Polish
group $G$ whose universal semitopological compactification $G^{WAP}$ is trivial.
This is shown in \cite{Me-rup} for the Polish group $G:=H_+[0,1]$
of orientation preserving homeomorphisms of the unit interval.
Equivalently: every (weakly) continuous representation
$G \to \Iso(V)$ of $G$ on a reflexive Banach space $V$ is trivial.
\item
%eli
A stronger result is shown in \cite{GM-suc}: every continuous representation
$G \to \Iso(V)$ of $G$ on an Asplund space $V$ is trivial and every Asplund function on $G$  is constant
(note that $Asp_c(G)=\mathrm{Asp}(G)$ for Polish $G$ by Lemma \ref{l:classes}.3).
% Equivalently, every wap function on $G$ is constant.
% By \cite{Me-sing} for
%every $G$-compactifiable second countable $G$-space $X$ with any
%second countable $G$ admits a faithful \emph{metrizable}
%$G$-compactification. In contrast note that
Every nontrivial right topological semigroup
compactification of the Polish topological group $G:=H_+[0,1]$ is
not metrizable \cite{GMU}. In contrast we show in Theorem \ref{H_+} that $G$ is Rosenthal representable.
  \een
\end{remark}

\subsection{Enveloping semigroups}

Let $X$ be a compact $S$-system with a separately continuous action.
Consider the natural map $j: S \to C(X,X), s \mapsto \lambda_s$.
As usual denote by $E(X)=cl_p(j(S)) \subset X^X$ the enveloping (Ellis)
semigroup of $(S,X)$. The associated homomorphism
$j: S \to E(X)$ is
a right topological semigroup compactification
%020412 defining Ellis compactification
(say, \emph{Ellis compactification})
of $S$, $j(e)=id_X$ and the associated action $\pi_j: S \times E(X) \to E(X)$ is separately
continuous. Furthermore, if the $S$-action on $X$ is continuous then $\pi_j$ is continuous,
 i.e., $S \to E(X)$ is a dynamical %right topological
 semigroup compactification.

%
%The {\em enveloping (or Ellis) semigroup\/} $E=E(G,X)=E(X)$ of a
%dynamical system $(G,X)$ is defined as the closure in $X^X$ (with
%its compact pointwise convergence topology) of the set
%$\breve{G}=\{\breve{g}: X \to X\}_{g \in G}$ of translations
%considered as a subset of $X^X$. With the operation of composition
%of maps this is a right topological semigroup. Moreover, the map
%$$j=j_X: G \to E(X),\ g \mapsto \breve{g}$$ is a right topological
%semigroup compactification of $G$. The compact space $E(X)$
%becomes a $G$-space with respect to the natural action
%$$G \times E(X) \to E(X), \ \ \ \ (gp)(x)=gp(x).$$
%
%

%
%Let $X$ be a compact $G$-flow. Denote by $E:=E(X) \subset X^X$ the
%corresponding (compact right topological) enveloping semigroup. It
%is the pointwise closure of the set of translations
%$\breve{G}_X:=\{\breve{g}_X: X \to X\}_{g \in G}$ in the product
%space $X^X$. Clearly,

%0912 some minor changes
\begin{lem}  \label{l:Env-all}  \
\ben
\item
Let $X$ be a compact semitopological $S$-space and $L$ a subset
of $C(X)$ such that $L$ separates points of $X$.
%the minimal $S$-invariant unital closed subalgebra containing $L$ is $C(X)$.
Then the Ellis compactification
%300312 is this formally defined somewhere?
$j: S \to E(X)$ is equivalent to the
compactification of $S$ which corresponds to the subalgebra
$\A_{L}:=\lan m(L,X) \ran$, the smallest norm closed $S$-invariant unital subalgebra of
$C(S)$ which contains the family
$$
%m(L,X):=
\{m(f,x): S \to \R, \ s \mapsto f(sx)\}_{f \in L, \ x \in X}.
$$
\item
Let $q: X_1 \to X_2$ be a continuous onto $S$-map between compact
$S$-spaces.
%! equipped with separately continuous actions.
There exists a (unique) continuous onto semigroup homomorphism
$Q: E(X_1) \to E(X_2)$ with $j_{X_1} \circ Q=j_{X_2}$.
%and $q \circ p = Q(p) \circ q$ for every $p \in E(X)$.
\item Let $Y$ be a closed $S$-subspace of a compact %separately continuous
$S$-system $X$.
The %natural restriction
map $r_X: E(X) \to E(Y), \ p \mapsto p|_Y$
is the unique continuous onto semigroup homomorphism such that $r_X \circ j_X
=j_Y$.
%\item
%Assume that the collection of orbit maps $\{\tilde{x}: S \to
%X\}_{x \in X}$ generates the uniformity $\mu_Y$. Then the map
%$r_X: E(Y) \to E(X)$ is an isomorphism.
%
%\item
%Let $q: X \to X_0$ be a continuous onto $G$-map between compact
%$G$-spaces. \bit
%\item [(a)]
%There exists a (unique) continuous onto homomorphism $Q: E(X) \to
%E(X_0)$ such that $j_X \circ Q=j_{X_0}$ and $q \circ p = Q(p)
%\circ q$ for every $p \in E(X)$.
%\item [(b)]
%$E^{f_0} \circ q=E^f$ for every $f_0 \in C(X_0)$ and $f:=f_0 \circ
%q$. Furthermore, the map $\gamma: \R^{X_2} \to \R^{X_1}, \
%\gamma(\phi)=\phi \circ q$ induces a homeomorphism between
%$E^{f_0}$ and $E^f$. \eit \een
\item
Let $\a: S \to P$ be a right topological compactification of
a semigroup $S$. Then the enveloping semigroup $E(S,P)$ of the semitopological system $(S,P)$
is naturally isomorphic to $P$.
%\item %!!
%Let $S_1 \subset S$ be a subsemigroup of $S$. Then we have a natural
%embedding $\varsigma_{S_1}: E(S_1,X) \subset E(S,X)$ of enveloping semigroups.
\item If $X$ is metrizable then $E(X)$ is separable.
%031012
 Moreover, $j(S) \subset E(X)$ is separable. %in the pointwise topology.
\een
\end{lem}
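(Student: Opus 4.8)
The plan is to establish separability not directly inside $X^X$, where it is delicate, but inside the larger space $C(X,X)$ of continuous self-maps of $X$ equipped with the much finer topology of uniform convergence, and then to transport a countable dense set back along the inclusion of topologies. First I would fix a metric $d$ on $X$ compatible with its (compact, metrizable) topology and record that $(C(X,X),\rho)$, where $\rho(f,g)=\sup_{x\in X}d(f(x),g(x))$, is a separable metric space. For this, pick a topological embedding $X\hookrightarrow [0,1]^{\N}$; then $C(X,X)\subset C(X,[0,1]^{\N})$, and $C(X,[0,1]^{\N})$ with the topology of uniform convergence is naturally a countable product $\prod_{n\in\N}C(X,[0,1])$ of separable metric spaces (each $C(X,[0,1])$ is separable because $C(X)$ is separable for compact metrizable $X$). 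Hence $C(X,[0,1]^{\N})$, and with it its subspace $C(X,X)$, is separable metrizable.

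Next I would run the main argument. Since every translation $\lambda_s\colon X\to X$ is continuous (this is part of the definition of a dynamical system), we have $j(S)=\{\lambda_s:s\in S\}\subset C(X,X)$. Being a subspace of the separable metric space $(C(X,X),\rho)$, the set $j(S)$ is itself separable, so there is a countable set $D\subset S$ with $\{\lambda_d:d\in D\}$ being $\rho$-dense in $j(S)$. Now the topology of pointwise convergence on $C(X,X)$ is coarser than the $\rho$-topology, and restricted to $j(S)$ it coincides with the topology inherited from $X^X$, hence from $E(X)$; therefore $\{\lambda_d:d\in D\}$ stays dense in $j(S)$ for this coarser topology. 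This gives the ``moreover'' assertion: $j(S)$ is a separable subspace of $E(X)$. Finally, since $E(X)=cl_p(j(S))$ by definition, the same countable set $\{\lambda_d:d\in D\}$ is dense in $E(X)$, so $E(X)$ is separable as well.

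The point to be careful about, and the only genuine subtlety, is why a more naive argument fails: one is tempted to take a countable dense $D_{0}\subset X$ and to hope that controlling $\lambda_{s}$ on $D_{0}$ controls it on all of $X$ in the product topology, but this breaks down precisely because $j(S)$ need not be equicontinuous (and indeed $E(X)$ need not be metrizable, even for compact metric $X$). The route above avoids this: one never needs a countable set dense for the pointwise topology on $j(S)$ ``ab initio'', only one dense for a strictly finer topology in which separability is cheap, after which passage to the coarser topology is automatic. Beyond bookkeeping of which topology is in force at each stage, I anticipate no further difficulty.
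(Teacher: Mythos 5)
Your proposal addresses only item (5) of the lemma (items (1)--(4) are not treated), but for that item it is correct and follows essentially the paper's own route: the paper likewise observes that $C(X,X)$ is separable and metrizable in the compact-open (= uniform convergence) topology for compact metrizable $X$, concludes that $j(S)\subset C(X,X)$ is separable there, and then passes to the coarser pointwise topology, in which $j(S)$ is dense in $E(X)=cl_p(j(S))$. Your explicit verification that $C(X,X)$ is separable, via the embedding $X\hookrightarrow[0,1]^{\N}$ and the product decomposition of $C(X,[0,1]^{\N})$, simply fills in a standard detail the paper leaves unstated.
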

\begin{proof} (1) The proof is straightforward using the Stone-Weierstrass theorem.

(2) By Remark \ref{r:domin} it suffices to show that the
compactification $j_{X_1}: S \to E(X_1)$ dominates the compactification
$j_{X_2}: S \to E(X_2)$. Equivalently we have to verify the
inclusion of the corresponding algebras.
 Let $q(x)=y, f_0 \in C(X_2)$ and $f= f_0 \circ q$. Observe
that $m(f_0,y) = m(f,x)$ and use (1).% applied to the discrete semigroup $S$.

(3) Is similar to (2).

(4) Since $E(S,P) \to P, a \mapsto a(e)$ is a natural
homomorphism, $j_P:S \to E(S,P)$ dominates the compactification $S
\to P$. So it is enough to show that, conversely, $\a: S \to P$ dominates $j_P:S \to
E(S,P)$. By (1) the family of functions
$$
\{m(f,x): S \to \R\}_{f \in C(P), \ x
\in P}
$$
generates the Ellis compactification $j_P:S \to E(S,P)$. Now observe
that each $m(f,x): S \to \R$ can be extended naturally to the
function $P \to \R, \ p \mapsto f(px)$ which is \emph{continuous}.

%(5) $\varsigma_{S_1}$ is the inclusion map of $cl(j(S_1))=E(S_1,X)$ into $cl(j(S))=E(S,X)$.

(5) Since $X$ is a metrizable compactum, $C(X,X)$ is separable and metrizable in the compact open topology. Then $j(S) \subset C(X,X)$
is separable (and metrizable) in the same topology. Hence, the dense subset $j(S) \subset E(X)$ is separable in the pointwise topology.
This implies that $E(X)$ is separable.
\end{proof}

\begin{remark} \label{r:env=crtadm}
Every enveloping semigroup $E(S,X)$ is an example of a compact right topological admissible semigroup.
Conversely, every compact right topological \emph{admissible} semigroup $P$ is an enveloping semigroup
(of $(\Lambda(P),P)$, as it follows from Lemma \ref{l:Env-all}.4).
\end{remark}

%$E(X)$ can be treated also as a completion of the uniform space
%$(S, \mu_X)$, where $\mu_X$ is the (not necessarily separated) on
%$S$ uniformity generated by the family of all orbit maps
%$\{\tilde{x}: S \to X, \ s  \to s x\}_{x \in X}$. Precisely this
%means that $\mu_X$ is the weakest uniformity on $S$ which makes
%all orbit maps %$\tilde{x}: S \to X, \ s  \to s x$
%uniformly continuous.
%for every $x \in X$.

\section{Operator compactifications}
\label{s:OCandW}

Operator compactifications
provide an important tool for constructing and studying
semigroup compactifications via representations of
semigroups on Banach spaces (or, more generally, on locally convex vector spaces).
% eli
%Several classical authors (like: Eberlein, de Leeuw-Glicksberg, for instance) explore a, now standard, fact
%that weakly almost periodic Banach representations of a group $S$ induce semitopological compactifications of $S$.
In classical works by Eberlein, de Leeuw and Glicksberg,
it was shown that weakly almost periodic Banach representations of a semigroup
$S$ induce semitopological compactifications of $S$.
In general the situation is more complicated and we have
to deal with right topological semigroup compactifications of
a semigroup $S$.
We refer to the papers of Witz \cite{Wi} and Junghenn \cite{Ju}. % contain several important results in this direction.
We note also that in his book \cite{Ellis} R. Ellis builds his entire theory of
abstract topological dynamics using the language of operator representations.

First we reproduce the construction of Witz with some minor changes.
Let $h: S \to L(V)$ be a
weakly continuous, equicontinuous representation (co-homomorphism) of a
semitopological semigroup $S$
%210912 replacing ``on" by ``to "
into the space $L(V)$ of continuous linear operators on
a locally convex vector space $V$.
%031012 the referee asked to explain "equicontinuous"
``Equicontinuous" here means that the subset $h(S) \subset L(V)$ is an equicontinuous family of linear operators.
Then the weak$^*$ operator closure $\overline{h(S)^{op}}$ of the adjoint semigroup
$h(S)^{op} \subset L(V)^{op}=adj(L(V))$ in $L(V^*)$
%(endowed with its weak$^*$ operator topology)
is a right topological semigroup compactification of $S$. We obtain the compactification
%0912 it differs but equivalent to ...
$$
S \to P:=\overline{h(S)^{op}} \subset L(V^*)
$$
which, following Junghenn, we call an \emph{operator
compactification} of $S$ (induced by the representation $h$).
% eli
%A net $p_i$ in $L(V^*)$ converges to $p$ with respect to the weak$^*$
%operator topology iff $p_i(\psi)$ converges  to $p(\psi)$
%in $V^*$ with respect to the weak$^*$ topology for each $\psi \in V^*$. Or,
%equivalently, iff
%$$
%\lan v, p_i \psi \ran \to \lan v, p \psi \ran \ \ \forall \ v \in V.
%$$
%
%That is if the topology on $L(V^*)$ is
The weak$^*$ operator topology on $L(V^*)$ is the weakest topology generated by the system
$$\widetilde{m}(v, \psi): L(V^*) \to \R, \ p \mapsto \lan v, p \psi \ran  = \lan vp, \psi \ran$$
%of all matrix coefficients $m(v, \psi)$
of maps, where $v \in V, \psi \in V^*$, $vp=p^*(v) \in V^{**}$ and $p^*: V^{**} \to V^{**}$ is the adjoint of $p$.
%!!!!

In fact, the semigroup $P$ can be treated also as the weak$^*$ operator closure of $h(S)$ in $L(V,V^{**})$.
The latter version is found mainly in \cite{Wi} and \cite{Ju}.

The \emph{coefficient algebra} $A_h$ (respectively, \emph{coefficient space} $M_h$)
of the representation $h: S \to \Theta(V)$ is the smallest norm closed,
unital subalgebra (respect., subspace)  of $C(S)$ containing all
%300312
the matrix coefficients of $h$
% eli
$$m(V,V^*)=\{m(v,\psi):
S \to \R, \ \ s \mapsto  \lan vh(s),\psi \ran | \ v \in V, \ \psi \in V^*  \}.$$
%0912
That is, according to our notation $A_h=\lan m(V,V^*) \ran$ and $M_h=\overline{sp}^{norm} (m(V,V^*) \cup \{\textbf{1}\})$.

\begin{lem} \label{l:by-coef-alg}
Let $S \to P:=\overline{h(S)^{op}} \subset L(V^*)$ be the operator
compactification induced by a weakly continuous equicontinuous
representation $h: S \to L(V)$ on a locally convex space $V$.
The algebra of this compactification is just the coefficient algebra $A_h$.
\end{lem}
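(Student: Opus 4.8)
The plan is to compute the algebra $\mathcal{A}_{\gamma}$ attached by the Gelfand-Kolmogoroff correspondence to the compactification $\gamma\colon S \to P$, $\gamma(s)=h(s)^{op}$, and to check directly that it equals $A_h=\lan m(V,V^*)\ran$. Recall that $\mathcal{A}_{\gamma}=j_{\gamma}(C(P))$, where $j_{\gamma}\colon C(P)\to C(S)$, $\phi\mapsto \phi\circ\gamma$, is an isometric unital algebra homomorphism, injective because $\gamma(S)$ is dense in $P$; in particular $\mathcal{A}_{\gamma}$ is a closed unital subalgebra of $C(S)$.

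First I would show $A_h\subseteq \mathcal{A}_{\gamma}$. For $v\in V$ and $\psi\in V^*$ the map $\widetilde{m}(v,\psi)\colon L(V^*)\to\R$, $p\mapsto \lan v,p\psi\ran$, is continuous by the very definition of the weak$^*$ operator topology, so its restriction to the compact set $P$ lies in $C(P)$. Using the identification $h(s)^{op}=\mathrm{adj}(h(s))=h(s)^*$ one computes $\widetilde{m}(v,\psi)(\gamma(s))=\lan v,h(s)^*\psi\ran=\psi(h(s)v)=m(v,\psi)(s)$, so $m(v,\psi)=j_{\gamma}\bigl(\widetilde{m}(v,\psi)|_P\bigr)\in\mathcal{A}_{\gamma}$. (Here $m(v,\psi)\in C(S)$: boundedness comes from equicontinuity of $h(S)$, continuity is weak continuity of $h$, so $A_h$ really is a closed unital subalgebra of $C(S)$.) Since $A_h$ is by definition the smallest closed unital subalgebra of $C(S)$ containing all matrix coefficients, we obtain $A_h\subseteq\mathcal{A}_{\gamma}$.

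For the reverse inclusion, the key observation is that $P$ carries, by construction, the subspace topology of the weak$^*$ operator topology on $L(V^*)$, which is exactly the initial topology generated by the maps $\widetilde{m}(v,\psi)$. These functions separate the points of $P$: if $p_1\ne p_2$ in $P\subset L(V^*)$, choose $\psi\in V^*$ with $p_1\psi\ne p_2\psi$ and then $v\in V$ with $\lan v,p_1\psi\ran\ne\lan v,p_2\psi\ran$. Hence, by the Stone-Weierstrass theorem, the smallest closed unital subalgebra of $C(P)$ containing $\{\widetilde{m}(v,\psi)|_P\colon v\in V,\ \psi\in V^*\}$ is all of $C(P)$. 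Applying the isometric unital homomorphism $j_{\gamma}$ and the identity $j_{\gamma}(\widetilde{m}(v,\psi)|_P)=m(v,\psi)$ from the previous step, $\mathcal{A}_{\gamma}=j_{\gamma}(C(P))$ equals the smallest closed unital subalgebra of $C(S)$ containing all $m(v,\psi)$; that is, $\mathcal{A}_{\gamma}=A_h$.

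I do not anticipate a genuine difficulty here. The only delicate points are bookkeeping: one must apply the conventions $vs=h(s)(v)$, $s\psi=h(s)^*(\psi)$ and $h(s)^{op}=h(s)^*$ consistently when verifying $\widetilde{m}(v,\psi)\circ\gamma=m(v,\psi)$, and one must invoke equicontinuity of $h(S)$ both for compactness of $P$ (recalled in the construction preceding the lemma) and for boundedness of the matrix coefficients, so that $A_h$ is a legitimate subalgebra of $C(S)$ to begin with.
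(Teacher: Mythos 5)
Your proof is correct and follows essentially the same route as the paper: each matrix coefficient $m(v,\psi)$ is the pullback along $\gamma$ of the weak$^*$-operator-continuous function $\widetilde{m}(v,\psi)|_P$, these functions separate the points of $P$, and Stone--Weierstrass then identifies the compactification algebra with $A_h$. The paper's argument is just a terser version of the same computation, so no further comment is needed.
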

 % (= the algebra generated by all matrix coefficients).
%\item The space of the induced semigroup affine compactification is just the coefficient space $A_h$.  ...????......????
%\item The compactification $j: S \to P$ of $S$ is
%equivalent to the completion of $(S, \mu)$, where $\mu$ is the
%weakest uniformity on $S$ generated by all matrix coefficients.

%\item Let $\mu$ is the uniformity defined by  $j$ is equivalent to the compactification
%$E(X)$ can be treated also as a completion of the uniform space
%$(S, \mu)$, where $\mu$ is the \emph{uniformity generated by the
%family} of all orbit maps $\{\tilde{x}: S \to V^*, \ \breve{g} \
%\to \breve{g} x\}_{x \in X}$. Precisely this means that $\mu_X$ is
%the weakest uniformity on $\breve{G}_X$ which makes the orbit maps
%$\tilde{x}: \breve{G}_X \to X, \ \breve{g} \ \to \breve{g} x$
%uniformly continuous for every $x \in X$.

\begin{proof} For every $(v, \psi) \in V \times V^*$ the function $m(v,\psi): S \to \R$ is a restriction of the
continuous map
$\widetilde{m}(v, \psi)|_P: P \to \R, \ \ p \mapsto \lan v,p\psi \ran=\psi(vp)$. Such maps separate points of $P$.
%300312
Now use the Stone-Weierstrass theorem.
\end{proof}

%!
%The space of the induced semigroup affine compactification $S \to \overline{co}(P)$ is just the coefficient space $A_h$ (see Remark \ref{r:affAnd}.1).

\subsection{The enveloping semigroup of a Banach space}
\label{s:WS}

%Here we introduce one of the basic tools of the present work.
Let $V$ be a Banach space and $\Theta(V)$ the semigroup of all
non-expanding operators from $V$ to itself. As in Section \ref{s:ReprB}
consider the natural left action of $\Theta(V)^{op}$ on the weak$^*$ compact unit ball $B^*$.
%defined by $(s^*,\phi) \mapsto s^* \phi$, where $(s^* \phi)(v)=\phi (sv)$.
This action is separately continuous
when $\Theta(V)^{op}$ carries the weak operator topology.

\begin{defin} \label{d:envBan}
Given a Banach space $V$ we denote by $\E(V)$ the enveloping semigroup of the dynamical system $(\Theta(V)^{op}, B^*)$.
We say that $\E(V)$ is the \emph{enveloping semigroup of $V$}. %induced semigroup ....
\end{defin}

 Always, $\E(V)$ is a compact
right topological admissible affine semigroup.
The corresponding Ellis compactification $j: \Theta(V)^{op} \to \E(V)$ is a topological embedding.
Alternatively, $\E(V)$ can be defined as the weak$^*$ operator closure of the adjoint monoid
$\Theta(V)^{op}$ in $L(V^*)$ (Lemma \ref{l:W=E}.2). So it is the operator compactification of the semigroup $\Theta(V)$.
%In Section \ref{s:semigroups} we study when $\E (V)$ is

If $V$ is separable then $\E(V)$ is separable by Lemma \ref{l:Env-all}.5 because $B^*$ is metrizable.
$\E(V)$ is metrizable iff $V$ is separable Asplund, Theorem \ref{t:metr-W}.

Every weakly continuous representation $h: S \to \Theta (V)$ of a semitopological semigroup %(stsg)
on a Banach space $V$ (by non-expanding operators) gives rise to a right topological semigroup compactification
$$
h: S \to \overline{h(S)^{op}} \subset \E(V)
$$
%of $S$,
where $\overline{h(S)^{op}}$ is the closure in $\E(V)$.
We sometimes call it the \emph{standard operator compactification}  of $S$ (generated by the representation $h$).

%This is of course a particular case of
%operator semigroup compactification.
%and depends on the representation $h$.
%As it directly follows from Lemma
%\ref{l:by-coef-alg} the algebra of this compactification is just
%the \emph{coefficient algebra} $M_h$ (= the algebra generated by all
%matrix coefficients).
%
%\item The compactification $j: S \to P$ of $S$ is
%equivalent to the completion of $(S, \mu)$, where $\mu$ is the
%weakest uniformity on $S$ generated by all matrix coefficients.

\begin{defin} \label{d:reprCRTS}
Let $\a: P \to K$ be a continuous (not necessarily onto) homomorphism
between compact right topological admissible semigroups.
Suppose that $S$ is a dense subsemigroup of $\Lambda(P)$. We say that:
\ben
\item
$\a$ is $S$-\emph{admissible} if $\a(S) \subset \Lambda(K)$.
\item
% eee ?
$\a$ is \emph{admissible} if it is $S$-admissible with respect to
%200312 an appropriate
some dense subsemigroup $S \subset \Lambda(P)$.
%%
%there exists a dense subsemigroup of $\Lambda(P_1)$  with
%\item ($S$-admissible, admissible) embedding if $\a$ is an embedding ...
\item
$P$ is \emph{representable} on a Banach space $V$ if there exists an admissible embedding $\a$ of $P$ into $\E(V)$.
If $V$ is Rosenthal (Asplund, reflexive) then we say that $P$ is Rosenthal (Asplund, reflexively) representable.
\een
\end{defin}

%031012
Every standard operator compactification generated by a representation $h$ of $S$ on $V$
induces an admissible embedding of $\overline{h(S)^{op}}$ into $\E(V)$ because,
$h(S) \subset \Theta(V)^{op}$ (and $\Theta(V)^{op}=\Lambda(\E(V))$, Lemma \ref{l:transit}.5).
In the next lemma,
%200312 and elsewhere in this paper,
as before,
given a subset $A \subset C(S)$, we let $\langle A \rangle$ denote the closed unital
subalgebra of $C(S)$ generated by $A$.

\begin{lem} \label{l:W=E}  \
%Let $V$ be a Banach
\ben
\item
Every standard operator compactification
$h: S \to \overline{h(S)^{op}} \subset \E(V)$ is equivalent to
the Ellis compactification $j: S \to E=E(S,B^*)$.
%(the enveloping semigroup of the %separately continuous
%action $S \times B^* \to B^*$).
The algebra of these compactifications is the coefficient algebra $A_h=\lan m(V,V^*) \ran$.
\item  $\overline{\Theta (V)^{op}}$ is isomorphic to $\E(V)$ %(the enveloping semigroup $E(\Theta(V)^{op},B^*)$).
and the algebra of the compactification $j: \Theta(V)^{op} \hookrightarrow \E(V)$ is the coefficient algebra $A_h$ for
$h: \Theta(V) \hookrightarrow \overline{\Theta (V)^{op}} = \E(V)$.
\item
The natural inclusion $\a: E(S,B^*) \hookrightarrow \E(V)$ is $j(S)$-admissible.
%\item
%In particular, the completion of $(\Theta^{op}(V), \mu_{B^*})$ is
%just $W(B)$.
%12.5
%0912
%\item
%The Banach space associated to a semigroup affine compactification
%$h: S \to \overline{co} (S)$ %!(see Section \ref{s:semigroups})
%is the coefficient space $M_h$.
\een
\end{lem}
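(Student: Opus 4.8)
The plan is to deduce all three statements by unwinding the definitions of $\E(V)$ and of the standard operator compactification, and then appealing to Lemma~\ref{l:Env-all} together with elementary facts about enveloping semigroups. The computation underlying everything is an identification of subsets of $(B^*)^{B^*}$. Given a weakly continuous $h\colon S\to\Theta(V)$, the induced dual action $S\times B^*\to B^*$ is separately continuous: each $\lambda_s=h(s)^*|_{B^*}$ is $w^*$-continuous because $h(s)$ is bounded, and each orbit map $s\mapsto s\psi$ is $w^*$-continuous because $s\mapsto\langle h(s)v,\psi\rangle=m(v,\psi)(s)$ is continuous for every $v\in V$; moreover $\lambda_s(\psi)=s\psi=h(s)^*\psi$, so the orbit map $s\mapsto\lambda_s$ is exactly the composite $S\to\Theta(V)^{op}\hookrightarrow\E(V)$ (the first arrow being $h$ viewed, via $adj$, as a homomorphism into $\Theta(V)^{op}$). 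Thus the image of $S$ in $E(S,B^*)$ equals $h(S)^{op}$ inside $\E(V)$, and taking pointwise closures in the pointwise-closed set $\E(V)\subseteq(B^*)^{B^*}$ yields $\overline{h(S)^{op}}=E(S,B^*)$ with the two compactification maps being literally the same map; this already gives the equivalence asserted in~(1), the standard operator compactification being the Ellis compactification $j\colon S\to E(S,B^*)$.

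For the algebra claim in~(1) I would apply Lemma~\ref{l:Env-all}.1 to the compact semitopological $S$-space $B^*$ with the point-separating family $L=\{\hat v|_{B^*}\colon v\in V\}$, where $\hat v(\psi)=\psi(v)$ (these are $w^*$-continuous and separate points of $B^*$). Since $m(\hat v,\psi)(s)=\hat v(s\psi)=(s\psi)(v)=\langle h(s)v,\psi\rangle=m(v,\psi)(s)$ for $v\in V$, $\psi\in B^*$, the family $m(L,B^*)$ equals, up to scalars, the set $m(V,V^*)$ of matrix coefficients; hence the algebra $\A_L=\langle m(L,B^*)\rangle$ produced by Lemma~\ref{l:Env-all}.1 is $\langle m(V,V^*)\rangle=A_h$. (En route one checks that $m(L,B^*)$ is invariant under left and right $S$-translations --- this uses that each $h(t)$ is a contraction, so that $t\psi\in B^*$ again --- so $\langle m(L,B^*)\rangle$ is automatically $S$-invariant, as required by the formulation of Lemma~\ref{l:Env-all}.1.)

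Part~(2) is the special case of~(1) with $S:=\Theta(V)^{op}$ carrying the weak operator topology and $h$ the canonical representation of $\Theta(V)^{op}$ on $V$ (the identity co-homomorphism $\Theta(V)^{op}\to\Theta(V)$, which is weakly continuous). Then $h(S)^{op}=\Theta(V)^{op}$ and the induced dual action of $S$ on $B^*$ is precisely the defining action $(\Theta(V)^{op},B^*)$, so $E(S,B^*)=\E(V)$ by Definition~\ref{d:envBan}. As $\Theta(V)^{op}$ is dense in $\E(V)$ by construction and the Ellis map $j\colon\Theta(V)^{op}\hookrightarrow\E(V)$ is a topological embedding, $\overline{\Theta(V)^{op}}=\E(V)$; and~(1), applied to this instance, identifies the algebra of $j$ with the coefficient algebra $A_h$ of this canonical representation.

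Finally, for~(3) the inclusion $\a\colon E(S,B^*)\hookrightarrow\E(V)$ is well defined by the first paragraph, since the image of $S$ in $E(S,B^*)$ is $h(S)^{op}\subseteq\Theta(V)^{op}\subseteq\E(V)$ and $E(S,B^*)$ is its pointwise closure, hence lies in the pointwise-closed set $\E(V)$. Both semigroups are compact right topological and admissible ($E(S,B^*)$ by Remark~\ref{r:env=crtadm}). Since $j\colon\Theta(V)^{op}\to\E(V)$ is a right topological semigroup compactification, $j(\Theta(V)^{op})\subseteq\Lambda(\E(V))$; thus $h(S)^{op}\subseteq\Lambda(\E(V))$ is a subsemigroup, and its pointwise closure $E(S,B^*)$ is a closed subsemigroup of $\E(V)$ (the closure of a subsemigroup of the topological centre of a right topological semigroup is again a subsemigroup --- see the remark after Definition~\ref{d:sem1}), so $\a$ is a continuous semigroup embedding. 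Taking $j(S)\subseteq\Lambda(E(S,B^*))$ --- dense there since $j$ is the Ellis compactification --- as the relevant dense subsemigroup, we have $\a(j(S))=h(S)^{op}\subseteq j(\Theta(V)^{op})\subseteq\Lambda(\E(V))$, which is exactly the $j(S)$-admissibility of $\a$ in the sense of Definition~\ref{d:reprCRTS}. I expect the main obstacle to be purely a matter of bookkeeping --- keeping straight the passages between $S$, $\Theta(V)$, $\Theta(V)^{op}$ and their images in $(B^*)^{B^*}$ --- together with the standard lemma, used twice above, that the closure of a subsemigroup contained in the topological centre is a subsemigroup.
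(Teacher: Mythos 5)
Your proposal is correct, and for parts (2) and (3), as well as for the computation of the algebra via Lemma~\ref{l:Env-all}.1 applied to the point-separating family $L=\{\hat v|_{B^*}: v\in V\}$, it coincides with the paper's argument. The one genuine difference is in how you obtain the equivalence in (1): the paper deduces it indirectly, by showing that both compactifications have the same algebra --- Lemma~\ref{l:by-coef-alg} gives $A_h$ for the operator compactification and Lemma~\ref{l:Env-all}.1 gives it for the Ellis compactification --- and then invoking the Gelfand--Kolmogoroff correspondence (Remark~\ref{r:domin}); you instead observe that, inside $(B^*)^{B^*}$, the element $j(s)=\lambda_s$ \emph{is} the restriction $h(s)^*|_{B^*}$, so the two compactification maps are literally the same map with the same (pointwise-closed) codomain $\overline{h(S)^{op}}=E(S,B^*)\subset\E(V)$, after which only one algebra computation is needed. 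Your route is slightly more direct and bypasses Lemma~\ref{l:by-coef-alg} entirely; the paper's route has the advantage of working at the level of coefficient algebras, which is the formulation it reuses later (e.g.\ in Lemma~\ref{l:E-compatible} and Proposition~\ref{t:conditions-new}). Both arguments are sound, and your explicit verification of admissibility in (3) (that $j(S)$ is dense in $\Lambda(E(S,B^*))$ and $\a(j(S))=h(S)^{op}\subset\Theta(V)^{op}=\Lambda(\E(V))$) is exactly the paper's one-line justification, spelled out.
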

\begin{proof} (1)
Both of these compactifications have the same algebra $A_h$.
Indeed Lemma \ref{l:by-coef-alg} implies this for the compactification $h: S \to \overline{h(S)^{op}} \subset \E(V)$.
For $j: S \to E(S,B^*)$ use Lemma \ref{l:Env-all}.1.

Note that (2) is a particular case of (1) for $S=\Theta (V)$.
% and the canonical  co-homomorphism $h: \Theta (V)^{op} \to \Theta (V), \ h(s)=s$.

(3) is trivial because $j(S)$ is dense in $\Lambda(E(S,B^*))$ and $\a(j(S))=h(S) \subset \Theta(V)^{op}$.
\end{proof}

\begin{prop} \label{p:factorOC}
Every semigroup compactification %(e.g., enveloping semigroup compactification of a monoid action)
is a factor of an operator semigroup compactification.
\end{prop}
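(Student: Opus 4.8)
The plan is to realize an arbitrary semigroup compactification $\nu\colon S\to P$ as a factor of a suitable operator compactification, by embedding the given compactification into the Ellis compactification attached to a faithful linear representation. First I would recall that, by Remark \ref{r:env=crtadm}, the compact right topological admissible semigroup $P$ is (isomorphic to) the enveloping semigroup $E(\Lambda(P),P)$, where $\Lambda(P)\supseteq \nu(S)$ is its topological center acting on $P$. Viewing $P$ as a compact semitopological $S$-system via the associated action $\pi_\nu\colon S\times P\to P$, I would then consider the regular representation of $(S,P)$ on the Banach space $V:=\mathrm{WRUC}(P)$ (or on $C(P)$ together with the regular $h\colon S\to\Theta(V)$, $s\mapsto L_s$), with evaluation map $\a\colon P\to V^*$, $p\mapsto\delta_p$. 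Since $\mathrm{WRUC}(P)$ separates the points of $P$, the map $\a$ is a topological embedding of $P$ into the weak$^*$-compact ball, and $h$ is weakly continuous on $\mathrm{WRUC}(P)$; moreover $P\ni p\mapsto\nu$ restricted to $S$ gives a weakly continuous representation of $S$ as well.

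Next I would invoke Lemma \ref{l:W=E}(1): the standard operator compactification $h\colon S\to\overline{h(S)^{op}}\subset\E(V)$ is equivalent to the Ellis compactification $j\colon S\to E(S,B^*)$, and its algebra is the coefficient algebra $A_h=\langle m(V,V^*)\rangle$. By Lemma \ref{l:Env-all}(1) applied with $X=P$ and $L=V$, the Ellis compactification $j\colon S\to E(S,P)$ corresponds to the subalgebra $\A_L=\langle m(L,P)\rangle$ of $C(S)$. But $m(V,V^*)$ contains in particular all the matrix coefficients $m(f,\delta_p)\colon s\mapsto\langle h(s)f,\delta_p\rangle=f(sp)$ for $f\in V$, $p\in P$, so $\A_L\subseteq A_h$; hence, by the order-preserving Gelfand--Kolmogoroff correspondence (Remark \ref{r:domin}), the operator compactification $S\to\overline{h(S)^{op}}$ dominates the Ellis compactification $S\to E(S,P)$, and the dominating map is a semigroup homomorphism. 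Finally, Lemma \ref{l:Env-all}(4) identifies $E(S,P)$ with $P$ itself (using that $\nu$ is a right topological semigroup compactification, so $\nu(S)\subset\Lambda(P)$ and the action extends continuously), so the given compactification $\nu\colon S\to P$ is a factor of the operator compactification $S\to\overline{h(S)^{op}}\subset\E(V)$, as claimed.

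The main point requiring care — and the step I expect to be the chief obstacle — is the passage from $P$ to the linear representation: one must ensure that the regular representation of $(S,P)$ is \emph{weakly continuous} (so that it genuinely gives an operator compactification in the sense of Section \ref{s:OCandW}, i.e.\ the matrix coefficients lie in $C(S)$) and that the evaluation map embeds $P$. Weak continuity forces us to restrict to $\mathrm{WRUC}(P)\subset C(P)$ rather than all of $C(P)$; then one must check that $\mathrm{WRUC}(P)$ still separates points of $P$. This is where the hypothesis that $(\nu,P)$ is a genuine semigroup compactification (so $\pi_\nu$ is separately continuous and $P$ is a \emph{compact} semitopological $S$-system) is used: for such $P$, the coefficient functions $m(f,p)\colon s\mapsto f(sp)$ with $f\in C(P)$ are continuous on $S$, which is exactly what places them in $\mathrm{WRUC}(P)$ and makes the regular representation weakly continuous; and these same functions, as $f$ and $p$ vary, separate points of $P$. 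Once this is in place the rest is a bookkeeping application of Lemmas \ref{l:W=E} and \ref{l:Env-all} together with Remark \ref{r:domin}.
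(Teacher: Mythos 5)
Your overall route is the paper's: represent the $S$-flow $P$ regularly on a function space, observe via Lemma \ref{l:W=E} that the resulting standard operator compactification $S\to\overline{h(S)^{op}}\subset\E(V)$ is the Ellis compactification $S\to E(S,B^*)$, and then use $E(S,P)\cong P$ (Lemma \ref{l:Env-all}.4) to exhibit $\nu$ as a factor; the paper does this with $V=C(P)$ and the restriction homomorphism $E(S,B^*)\to E(S,P)$, while you do the same bookkeeping through algebras and Remark \ref{r:domin}, which is an equivalent formulation.

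However, the step you yourself single out as the chief obstacle is not actually closed by your argument. The continuity of the matrix coefficients $m(f,\delta_p)\colon s\mapsto f(sp)$ (for $f\in C(P)$, $p\in P$) only says that the orbit map $s\mapsto L_sf$ is continuous against \emph{point evaluations}; membership of $f$ in $\mathrm{WRUC}(P)$ requires continuity of $s\mapsto\mu(L_sf)$ for \emph{every} $\mu\in C(P)^*$, and this is exactly the $\mathrm{RMC}$ versus $\mathrm{WRUC}$ distinction the paper stresses (cf. Remark \ref{r:cycl-comes}.3). So your sentence ``the coefficient functions $m(f,p)$ are continuous on $S$, which is exactly what places them in $\mathrm{WRUC}(P)$ and makes the regular representation weakly continuous'' is wrong on two counts: $m(f,p)$ are functions on $S$, not on $P$ (so they can neither lie in $\mathrm{WRUC}(P)$ nor ``separate points of $P$''), and their continuity does not yield weak continuity of the regular representation on $C(P)$. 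Consequently, if you work with $V=C(P)$ you have not justified that $h$ is weakly continuous (which Lemma \ref{l:W=E} presupposes), and if you retreat to $V=\mathrm{WRUC}(P)$ (where weak continuity does hold), then the hypothesis of Lemma \ref{l:Env-all}.1 with $L=V$ — that $\mathrm{WRUC}(P)$ separates the points of $P$ — is left unproved; without it your algebra containment only realizes a possibly proper factor of $P$ as a factor of an operator compactification. The paper's own proof simply takes the regular representation on $C(P)$ (where, e.g., for jointly continuous compactifications $C(P)=\mathrm{RUC}(P)\subset\mathrm{WRUC}(P)$ by Lemma \ref{l:classes}.2, so no issue arises) and invokes Lemmas \ref{l:W=E} and \ref{l:Env-all}.4; your added ``care'' does not supply a correct substitute for that point.
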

\begin{proof}
Let $(\g,P)$ be a semigroup compactification of %a monoid
$S$. Take a faithful Banach representation of the $S$-flow $P$ on $V$. For example, one can take
the regular representation of $P$ on $V:=C(P)$.
Now the enveloping semigroup $E(S,P)$ is a factor of $E(S,B^*)$ which is an
operator semigroup compactification of $S$ (Lemma \ref{l:W=E}) and
%$E(S,B^*)$ is naturally embedded into $\E(V)=E(\Theta(V)^{op},B^*))$.
$E(S,P)$ is naturally isomorphic to $P$ (Lemma \ref{l:Env-all}.4).
\end{proof}

%\begin{remark}
%\ben
%\item
In Example \ref{ex} we show that there exists a right topological semigroup compactifications of the group $\Z$,
which is not an operator compactifications.
%210912   it
It
follows that compact right topological \emph{operator}
semigroups are not closed under factors. Indeed the compactification
$\Z \to \beta \Z=\Z^{\mathrm{RUC}}$ is an operator compactification
(by Remarks \ref{r:intro}.1 below) and it is the universal $\Z$-ambit.

%\item
Not every admissible compact right topological (semi)group admits a representation
on a Banach space (see Theorem \ref{e:dist-not}).
On the other hand we will later investigate the question when a ``good" semigroup compactification can be realized as
a standard operator compactification %via a semigroup representation
on ``good" Banach spaces (see Section \ref{s:appl}).
%\een
%\end{remark}

%0912  One may define also $\E_0(V)$ = the closure of the group $Is(V)^{op}$ in $\E(V)$.
%Then $\E_0(V)$ is r.t.c. subsemigroup. For Hilbert spaces $\E_0(V)=\E(V)$ but in general $\E_0(V)$ is smaller.
%take for example Banach spaces with very small $Iso(V)$ ...

\sk

In the sequel whenever $V$ is understood we use the following simple notations $\E:=\E (V), \ \Theta:=\Theta(V), \ \Theta^{op}:=\Theta(V)^{op}$. By $S_V$ we denote the unit sphere of $V$.

\begin{lem} \label{l:transit}
For every Banach space $V$, %denote by $B, B^*, B^{**}$ the closed unit ball of $V,V^*, V^{**}$ respectively.
every $v \in S_V$ and $\psi \in S_{V^*}$ we have

\ben \item
 $\Theta v = B.$
 \item
$v \E= B^{**}.$
 \item
$ cl_{w^*} (\Theta^{op} \psi) = B^*$.
\item
$\E \psi=B^*$.
%0912 new
\item $\Lambda(\E)=\Theta^{op}$.
 \een
\end{lem}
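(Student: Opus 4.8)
The statement collects five orbit-computations in the dynamical system $(\Theta^{op}, B^*)$ and its associated enveloping semigroup $\E = \E(V)$. I would prove them in the order $(1), (3), (2), (4), (5)$, since each later item leans on the earlier ones. For $(1)$, the inclusion $\Theta v \subset B$ is immediate from $\|T\| \le 1$ and $\|v\| = 1$. For the reverse inclusion, given $w \in B$ with $w \ne 0$ I would exhibit a norm-one operator $T$ with $Tv = w$: pick $\varphi \in S_{V^*}$ with $\varphi(v) = 1$ (Hahn–Banach) and set $T(x) = \varphi(x) w$; then $\|T\| = \|w\| \le 1$ and $Tv = w$. The case $w = 0$ is handled by the zero operator (which lies in $\Theta$, all our semigroups being monoids of contractions including $0$). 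This already gives $(1)$.

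\textbf{Dual orbit and weak${}^*$-closure.} For $(3)$, again $\Theta^{op}\psi \subset B^*$ is clear since adjoints of contractions are contractions; I would take the $w^*$-closure. To get all of $B^*$, fix $\eta \in B^*$ and approximate: for $v \in S_V$ with $\psi(v)$ of modulus close to $1$ — more carefully, I would use the operator $T(x) = \varphi(x) v$ as above (with $\varphi(v)=1$ chosen so that $\psi(v)=1$, which requires first choosing $v$ with $\psi(v)=1$, possible since $\|\psi\|=1$) whose adjoint sends $\psi \mapsto \psi(v)\varphi = \varphi$, so in fact $\Theta^{op}\psi \supseteq \{\varphi : \varphi(v)=1, v\in S_V, \psi(v)=1\}$; a short Hahn–Banach / Goldstine-type density argument then shows this set is $w^*$-dense in $B^*$. (Alternatively one cites that $\E\psi = B^*$ follows from $(4)$ and deduces $(3)$ from the fact that $j(\Theta^{op})$ is $w^*$-dense in $\E$, but I would rather keep $(3)$ self-contained and use it to help prove $(4)$.)

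\textbf{Items $(2)$, $(4)$, $(5)$ via the enveloping semigroup.} For $(5)$, $\Theta^{op} \subseteq \Lambda(\E)$ holds because each $\lambda_s$ on $E(S,B^*)$ is continuous for $s$ in the dense acting semigroup (Ellis compactification, Lemma~\ref{l:Env-all}); the reverse inclusion $\Lambda(\E)\subseteq \Theta^{op}$ is the key rigidity point — I would argue that any $p \in \Lambda(\E)$ gives a continuous left translation on $B^*$, hence $p$ acts $w^*$-continuously on $B^*$, so by the Banach–Grothendieck theorem (Fact~\ref{f:functionals}) $p$ is the restriction of the adjoint of a bounded operator on $V$, and contractivity forces $p \in \Theta^{op}$. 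This is the step I expect to be the main obstacle: one must pin down that membership in the topological center $\Lambda(\E)$ of a purely enveloping-semigroup element forces it to come from an honest operator, and verify the $w^*$-continuity on each ``column'' $B^*$ carefully. Once $(5)$ and $(1)$, $(3)$ are in hand: for $(2)$, $v\E = \overline{v\,\Theta^{op}}^{w^*}$ in $V^{**}$ — compute $v\,\Theta^{op} = \Theta v$ (identifying via adjoints) $= B$ by $(1)$, and its $w^*$-closure in $V^{**}$ is $B^{**}$ by Goldstine's theorem. For $(4)$, $\E\psi = \overline{j(\Theta^{op})\psi}^{w^*}$ (since $\rho_\psi : \E \to B^*$, $p \mapsto p\psi$, is continuous and $j(\Theta^{op})$ is dense in $\E$), and by $(3)$ this closure is $B^*$. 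Throughout I would use Remark~\ref{r:env=crtadm} and Lemma~\ref{l:Env-all}.4 to freely identify $\E$ with the $w^*$-operator closure of $\Theta^{op}$ in $L(V^*)$, which makes the orbit maps $p\mapsto vp$ and $p \mapsto p\psi$ manifestly $w^*$-continuous.
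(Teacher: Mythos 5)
Your route coincides with the paper's (rank-one operators for (1) and (3), Goldstine plus density of $\Theta^{op}$ in $\E$ for (2) and (4), Banach--Grothendieck for (5)), but two steps fail as written. In (3), your ``more careful'' version requires a $v\in S_V$ with $\psi(v)=1$, ``possible since $\|\psi\|=1$'' --- this is false in general: a norm-one functional on a non-reflexive space need not attain its norm, so the set $\{\varphi:\ \varphi(v)=1,\ v\in S_V,\ \psi(v)=1\}$ may be empty, and no density argument can then produce $B^*$ from it. Moreover the extra requirement $\varphi(v)=1$ is both unnecessary and harmful: even when $\psi$ does attain its norm (e.g.\ in a Hilbert space) that set collapses to $\{\psi\}$, which is certainly not $w^*$-dense in $B^*$. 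The repair is precisely the approximation you began with and then abandoned: given $\phi\in B^*$, $\eps>0$ and $v_1,\dots,v_n\in V$, choose $z\in B$ with $|\phi(v_i)(1-\psi(z))|<\eps$ for all $i$ and set $s(x)=\phi(x)z$; then $s\in\Theta$ and $s^*\psi=\psi(z)\phi$ agrees with $\phi$ up to $\eps$ at each $v_i$. This is exactly how the paper proves (3).

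In (5) the decisive implication --- that $\sigma\in\Lambda(\E)$ forces $\sigma|_{B^*}:B^*\to B^*$ to be $w^*$-continuous --- is asserted but never proved, and you flag it yourself as the expected obstacle. The missing idea is to feed (4) into it: fix $z\in S_{V^*}$ and consider the orbit map $\tilde z:\E\to B^*,\ p\mapsto pz$. It is continuous and, by (4), onto; since $\E$ is compact and $B^*$ is Hausdorff, $\tilde z$ is a closed, hence quotient, map. From $\tilde z\circ l_\sigma=\sigma|_{B^*}\circ\tilde z$ and the continuity of $l_\sigma$ one deduces continuity of $\sigma|_{B^*}$, and then Fact \ref{f:functionals}, applied to $\sigma^*(v)|_{B^*}$ for each $v\in V$, shows $\sigma$ is the adjoint of a contraction on $V$, i.e.\ $\sigma\in\Theta^{op}$. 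In particular (4) is an essential input to (5), so the dependence must run (3) $\Rightarrow$ (4) $\Rightarrow$ (5); your third paragraph treats (5) before (4) and the transfer of continuity from $\E$ to $B^*$ never appears. Items (1), (2) and (4) themselves are fine and coincide with the paper's arguments.
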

\begin{proof} (1)
Take $f \in S_{V^*}$ such that $f(v)=1$. For every $z \in B$
define the rank 1 operator $$A(f,z): V \to V, \ \ x \mapsto f(x)z.$$

Then $A(f,z)(v)=z$ and $A(f,z) \in \Theta$ since $||A(f,z)||=||f|| \cdot ||z||=||z|| \leq 1$.

(2) By (1), $v \Theta^{op} = \Theta v = B$ which is pointwise dense in $B^{**}$ by Goldstine theorem. So, $v \E= B^{**}$
because $\E \to (V^{**},w^*), p \mapsto vp$ is continuous and $\E=\overline{\Theta^{op}}$.

(3) We can suppose that $V$ is infinite-dimensional (use (1) for the finite-dimensional case).
Then the unit sphere $S_{V^{\ast}}$ is
%zz norm
weak (hence, weak$^*$) dense in $B^{*}$. So
it is enough to prove that the weak$^*$ closure of $\Theta^{op} \psi$ contains $S_{V^*}$.
%(where $\psi$ as before be an arbitrary fixed element of $S_{V^*}$).
Let $\phi \in S_{V^*}$.  We have to show that
for every $\eps >0$ and $v_1, v_2, \cdots , v_n
\in V$ there exists $s \in \Theta$ such that $||s^*\psi(v_i) - \phi(v_i)|| < \eps$ for every $i=1,2, \cdots,n$,
where $s^* \in \Theta^{op}$
is the adjoint of the operator $s$.
%210912 correcting the proof
%Choose $z \in B_V$ such that $\psi(z)=1$ and define $s:=A(\phi,z)$.
%Then $$|(s^*\psi)(v_i)- \phi(v_i)|=|\psi(sv_i) - \phi(v_i)|=|\psi(\phi(v_i)z)-\phi(v_i)|=0 < \eps$$ for every $i$.
Since $\psi \in V^*$ and $||\psi||=1$ one may choose $z \in B_V$ such that
$$
|\phi(v_i)(\psi(z)-1)| < \eps
$$
for every $i=1,2, \cdots,n$. Define $s:=A(\phi,z)$.
Then $$|(s^*\psi)(v_i)- \phi(v_i)|=|\psi(sv_i) - \phi(v_i)|=|\psi(\phi(v_i)z)-\phi(v_i)|=|\phi(v_i)(\psi(z)-1)| < \eps$$ for every $i$.

(4) Follows from (3) because $\E$ is the weak$^*$ operator closure of $\Theta^{op}$.

%0912
(5) Trivially, $\Lambda(\E) \supseteq \Theta^{op}$.
Conversely, let $\s \in \Lambda(\E).$ Then $\s \in L(V^*)$ with $||\s|| \leq 1$.
Consider the adjoint operator $\s^*: V^{**} \to V^{**}$. We have to show that
$\s^*(v) \in V \subset V^{**}$, for every $v \in \Lambda$,
where we treat $V$ as a Banach subspace of $V^{**}$.
% $i: V \hookrightarrow V^{**}$ is the canonical embedding.
By Fact \ref{f:functionals}
it is enough to show that $\s^*(v)|_{B^*}: B^* \to \R$ is w$^*$-continuous.
%Now we show that $\s^*(v)|_{B^*}: B^* \to \R$ is w$^*$-continuous.
By our assumption, $\s \in \Lambda(\E).$ That is, the left translation $l_{\s}: \E \to \E$ is continuous.
Choose a point $z \in S_{V^*}$ and consider the orbit map $\tilde{z}: \E \to B^*, p \mapsto pz$.
Then, $\tilde{z} \circ l_{\s}=\s|_{B^*} \circ \tilde{z}$. By (4) we have $\E z=B^*$, hence, $\tilde{z}: \E \to B^*$ is onto.
Since $\E$ is compact, it follows that the map $\s|_{B^*}: B^* \to B^*$ is continuous. This implies that
 $\s^*(v)|_{B^*}: B^* \to \R$ is w$^*$-continuous (for any $v \in V$), as desired.
\end{proof}

\section{Affine compactifications of dynamical systems and introversion}
\label{s:Affine}

\subsection{Affine compactifications in terms of state spaces}
\label{s:aff-def}

Let $S$ be a semitopological semigroup.
%All dynamical $S$-systems in this section are assumed to be having at least continuous translations.
%0912 by our terminology, this means in particular that the action is separately continuous
An $S$-system $Q$ is an \emph{affine $S$-system} if $Q$ is a convex subset of a locally convex
vector space and each $\lambda_s: Q \to Q$ is affine. If in
addition $S=Q$
%210912 act
acts
on itself by left translations and if right
translations are also affine maps then $S$ is said to be an \emph{affine semigroup}.
For every compact affine $S$-system $Q$ each element of its enveloping
semigroup is a (not necessarily, continuous) affine self-map of $Q$.

\begin{defin} \label{d:affinization} \
\ben
\item
% eli replace P by Q
\cite[p. 123]{BJM} An \emph{affine semigroup compactification}
of a semitopological semigroup $S$ is a pair $(\psi,Q)$,
where $Q$ is a compact right topological affine semigroup and
$\psi: S \to Q$ is a continuous %(monoid)
homomorphism such that $co (\psi(S))$ is dense in $Q$ and % $\psi$ is admissible (that is,
$\psi(S) \subset \Lambda(Q)$.
\item
By an \emph{affine $S$-compactification}
%(or, \emph{$S$-affine compactification})
of an $S$-space $X$ %(with separately continuous action)
we mean a pair
$(\a,Q)$, where $\a: X \to Q$ is a continuous $S$-map and $Q$ is a
convex compact affine $S$-flow
such that $\a(X)$ \emph{affinely generates} $Q$, that is \
 $\overline{co} (\a(X)) =Q$ (see \cite{Gl-af}).
 \item
In particular, for a trivial action (or for the trivial semigroup $S$) we
retrieve in (2) the notion of an \emph{affine compactification} of a topological space $X$.
\een
\end{defin}

% ggg
% Every affine $S$-compactification $\a: X \to Q$ is an $S$-compactification of
% $co (X)$ and it induces the
% $S$-compactification $\a: X \to Y:=\overline{\a(X)}  \subset Q$ of $X$.
An affine $S$-compactification $\a: X \to Q$ induces the
$S$-compactification $\a: X \to Y:=\overline{\a(X)}  \subset Q$ of $X$.
Of course we have $Y=\overline{\a(X)}=\a(X)$ when $X$ is compact.
Definition \ref{d:affinization}.2 is a natural extension of Definition \ref{d:affinization}.1.
 % eli ??? JP
%In \cite{JP} the authors deal with affine bi-compactifications of $(S,X)$. That is, with equivariant pairs $h: S \to T$, $\a: X \to Y$,
%where $T$ is a compact right topological affine semigroup, all translations $\lambda_t: Y \to Y$ $(t \in T$) are affine,
%$h$ is an affine semigroup compactification and $\a$ is an affine $S$-compactification.

\begin{remarks} \label{r:aff-list}  \
\ben
\item
For any Banach space $V$,
$\Theta$ is an affine semitopological semigroup, $(\Theta^{op}, B^*)$ is an affine system and
% and $(\Theta,B(V))$ is a semitopological affine flow (all topologies are weak).
the inclusion $\Theta^{op} \hookrightarrow \E$ is
an affine semigroup compactification.
\item
Not every semigroup compactification (in contrast to affine semigroup compactifications)
comes as an operator compactification. See Theorem \ref{e:dist-not} (and Proposition \ref{p:dominates}) below.

\item
For every continuous %(more generally, for every WRUC-compatible)
compact $S$-system $X$, the weak$^*$ compact unit ball
$B^* \subset C(X)^*$ and its closed subset $P(X)$ of all
probability measures, are continuous affine $S$-systems (Proposition \ref{bar}.2).
\item
Every Banach representation $(h,\a)$ of an $S$-flow $X$ naturally induces an $S$-affine compactification $X \to Q:= \overline{co} \ \a(X)$ (Section \ref{s:Op-env}).
%(see Section \ref{s:Aff-c-fl})
Conversely, every affine compactification of an $S$-space $X$ comes from a Banach representation of the $S$-space $X$
on the Banach space $V \subset C(S)$ which is
%210912 the
just the \emph{affine compactification space}
(see Lemmas \ref{aff-descr} and \ref{cont-act}).
\een
\end{remarks}

As in the case of compactifications of flows one defines notions of
%Similar to compactifications one may define the natural
preorder, factors and isomorphisms of affine compactifications.
More explicitly, we say that for two affine compactifications,
$\a_1: X \to Q_1$ \emph{dominates} $\a_2: X \to Q_2$ if there exists a
continuous affine map (a {\emph{morphism}) $q: Q_1 \to Q_2$
%(say, \emph{morphism})
such that $q \circ \a_1=\a_2$. Notation: $\a_1 \succeq \a_2$.
If one may choose $q$ to be a homeomorphism then we say that
$q$ is an \emph{isomorphism} of affine compactifications.
Notation: $\a_2 \cong \a_1$. It is easy to see that $\a_2 \cong \a_1$ iff $\a_2 \succeq \a_1$ and $\a_1 \succeq \a_2$.

\begin{lem} \label{l:morph}
If $q: Q_1 \to Q_2$ is a morphism
between two $S_d$-affine compactifications $\a_1: X \to Q_1$ and $\a_2: X \to Q_1$
then $q$ is an $S$-map.
%here at least separate continuity of the actions is vital ...
\end{lem}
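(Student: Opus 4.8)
The statement is essentially the ``affine version'' of the functoriality of compactifications already recorded in Remark \ref{r:domin}. The plan is to reduce it to that remark. First I would observe that an $S_d$-affine compactification $\a_i : X \to Q_i$ (with $\overline{co}(\a_i(X)) = Q_i$) induces, by passing to the closed linear spans, an ordinary $S_d$-compactification of $X$: namely set $Y_i := \overline{\a_i(X)} \subset Q_i$, so that $\a_i : X \to Y_i$ is a compactification in the classical sense, and $Q_i = \overline{co}(Y_i)$. Since $q \circ \a_1 = \a_2$ and $q$ is continuous and affine, the restriction $q|_{Y_1}$ maps $Y_1$ onto a dense subset of (in fact onto) $Y_2$ and satisfies $q|_{Y_1} \circ \a_1 = \a_2$; thus $q|_{Y_1} : Y_1 \to Y_2$ is a morphism of the ordinary compactifications $\a_1 : X \to Y_1$ and $\a_2 : X \to Y_2$.

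\textbf{Using Remark \ref{r:domin}.} Now $X$, $Y_1$, $Y_2$ are all $S_d$-systems (all $s$-translations are continuous: on $X$ by hypothesis, and on $Y_i \subset Q_i$ because the affine translations $\lambda_s : Q_i \to Q_i$ are continuous), and $\a_1, \a_2$ are $S$-maps. Hence by Remark \ref{r:domin} the map $q|_{Y_1} : Y_1 \to Y_2$ is automatically an $S$-map, i.e.\ $q(sy) = s\, q(y)$ for every $s \in S$ and $y \in Y_1$. It remains to promote this from $Y_1$ to all of $Q_1 = \overline{co}(Y_1)$.

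\textbf{Extending from $Y_1$ to $Q_1$.} Fix $s \in S$. Both maps $q \circ \lambda_s : Q_1 \to Q_2$ and $\lambda_s \circ q : Q_1 \to Q_2$ are continuous and affine on $Q_1$ (continuity of $q$ is given, and each $\lambda_s$ is continuous and affine on $Q_1$ and on $Q_2$ since these are affine $S$-flows). They agree on $Y_1$ by the previous paragraph. Two continuous affine maps on the compact convex set $Q_1$ that agree on $Y_1$ must agree on $co(Y_1)$ by affineness, and then on $\overline{co}(Y_1) = Q_1$ by continuity. Therefore $q(sx) = s\,q(x)$ for all $x \in Q_1$ and all $s \in S$, i.e.\ $q$ is an $S$-map.

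\textbf{Main obstacle.} There is no deep obstacle here; this is a ``soft'' argument. The only point requiring a little care is the very last density/affineness step — one should note that ``agree on $Y_1$ $\Rightarrow$ agree on $co(Y_1)$'' uses that \emph{both} competing maps are affine, and ``agree on $co(Y_1)$ $\Rightarrow$ agree on $Q_1$'' uses continuity together with $\overline{co}(\a_1(X)) = Q_1$. Equivalently, one can invoke the fact that an affine compactification is determined, as in Remarks \ref{r:aff-list}, by its associated ordinary compactification together with the convex structure, so that morphisms of affine compactifications are in bijection with morphisms of the underlying ordinary compactifications; the $S$-equivariance then transfers directly from Remark \ref{r:domin}.
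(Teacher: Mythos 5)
Your proof is correct and relies on the same soft ingredients as the paper's own argument: Remark \ref{r:domin} together with an affineness-plus-density extension to $Q_1=\overline{co}(\a_1(X))$. The only (immaterial) difference is the order of the steps: the paper first checks equivariance of $q$ on the convex hull $co(\a_1(X))$ using affineness and then lets Remark \ref{r:domin} handle the passage to the closures, whereas you apply Remark \ref{r:domin} to the induced ordinary compactifications onto $Y_i=\overline{\a_i(X)}$ and then carry out the affine/density extension from $Y_1$ to $Q_1$ by hand.
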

\begin{proof} Since the $s$-translations in $Q_1$ and $Q_2$ are affine
it easily follows that the inclusion maps $co(\a_1(X)) \hookrightarrow Q_1$, $co(\a_2(X)) \hookrightarrow Q_2$
are $S_d$-compactifications and also
the restriction map $q: co(\a_1(X)) \to co(\a_2(X))$ is
an onto $S$-map. The induced map $co(\a_1(X)) \to Q_2$ defines an $S_d$-compactification.
Now Remark \ref{r:domin} yields that $q: Q_1 \to Q_2$ is an $S$-map.
\end{proof}

% eli
%We emphasize the natural similarity between Gelfand-Kolmogoroff
%description \cite{GK} of compactifications and the description of
%affine compactifications.
%in terms of Banach Banach unital subspaces (instead
Recall that for a normed unital subspace $F$ of $C(X)$ the
\emph{state space} %(see for example \cite{AE})
of $F$ is the $w^*$-compact subset  %(in the weak$^*$-topology)
$$M(F):=\{\mu \in F^*: \ \|\mu\|=\mu(\textbf{1})=1\}$$ of all \emph{means}
%! positive linear functionals on assigning the value 1 to the constant f. 1
on $F$. If in addition $F \subset C(X)$ is a subalgebra, we denote
by $MM(F)$ the compact set of all multiplicative means on $F$. %(= the \emph{spectrum} of $F$).
For a compact space $X$ and for $F=C(X)$ the %corresponding
state space $M(C(X))$ is the space of all probability measures on $X$ which we denote as usual by $P(X)$.

%We list here some basic properties of these spaces:

\begin{lem} \label{l:state}  \emph{(\cite{Ph, BJMo,BJM}} For every topological space $X$ %and a normed unital subspace $F$ of $C(X)$
we have:
\ben
\item State space $M(F)$ is convex and weak$^*$ compact in the dual $F^*$ of $F$.
\item The map
$\delta: X \to M(F), \ \delta(x)(f)=f(x),$ is
affine and weak$^*$-continuous, and its image $\delta(X)$ affinely generates
$M(F)$ (i.e. $\overline{co}^{w^*}(\delta(X)) = M(F)$).
\item
Every $\mu \in F^*$ is a finite linear combination of members of $M(F)$.
% eli ???  %0912   what do you ask ?
\item
If $F \subset C(X)$ is a subalgebra then $\delta(X)$ is dense in $MM(F)$.
\een
\end{lem}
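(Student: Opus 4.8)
The plan is to treat the four assertions in order, since each one is a standard fact about state spaces of order-unit normed spaces, and each builds on the previous. All four are classical (the references \cite{Ph, BJMo, BJM} cover them), so the proof should be short and mostly a matter of assembling the right observations.

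For (1), I would observe that $M(F)$ is the intersection of the $w^*$-closed unit ball $B_{F^*}$ with the two closed affine slices $\{\mu : \mu(\mathbf 1) \le 1\}$ and $\{\mu : \mu(\mathbf 1) \ge 1\}$; since the closed unit ball is $w^*$-compact by Alaoglu and the slices are $w^*$-closed, $M(F)$ is $w^*$-compact. Convexity is immediate: if $\mu_0,\mu_1 \in M(F)$ and $t \in [0,1]$ then $\mu_t := (1-t)\mu_0 + t\mu_1$ satisfies $\mu_t(\mathbf 1) = 1$ and $\|\mu_t\| \le (1-t)\|\mu_0\| + t\|\mu_1\| = 1$, while $\mu_t(\mathbf 1)=1$ forces $\|\mu_t\| \ge 1$, so $\|\mu_t\|=1$. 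For (2), the map $\delta$ is clearly affine (it is linear in the trivial sense that $\delta(x)$ is evaluation at $x$, and affineness refers to the target structure on $M(F)$; the point is that each $\delta(x)$ is a mean since $\delta(x)(\mathbf 1)=\mathbf 1(x)=1$ and $\|\delta(x)\| = \sup_{\|f\|\le 1}|f(x)| = 1$). Weak$^*$ continuity of $\delta$ is the statement that $x \mapsto f(x) = \delta(x)(f)$ is continuous for each $f \in F \subset C(X)$, which holds by definition. That $\overline{co}^{w^*}(\delta(X)) = M(F)$ is the only slightly substantial point: the inclusion $\subseteq$ is clear from (1); for $\supseteq$, if some $\mu \in M(F)$ were not in the $w^*$-closed convex set $\overline{co}^{w^*}(\delta(X))$, Hahn–Banach in the locally convex space $(F^*, w^*)$ — whose continuous dual is $F$ itself — would give $f \in F$ and a real $c$ with $f(x) = \langle \delta(x), f\rangle \le c < \langle \mu, f\rangle$ for all $x \in X$; but then $\mu(c\mathbf 1 - f) < 0$ while $c\mathbf 1 - f \ge 0$ on $X$, contradicting positivity of the mean $\mu$ (a mean on a unital subspace is positive: $\|g\|\le 1$ and $g\ge 0$ give $\mu(\mathbf 1 - g) \le \|\mathbf 1 - g\| \le 1 = \mu(\mathbf 1)$, hence $\mu(g)\ge 0$, and scale).

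For (3), I would write an arbitrary nonzero $\mu \in F^*$, decompose it into a difference of positive functionals via the usual order-unit Jordan decomposition (or, more elementarily for the real scalar case at hand, note that any bounded real functional on a subspace containing $\mathbf 1$ extends to $C(X')$ for the relevant compactification and there is a signed-measure hence Hahn decomposition, whose positive and negative parts restrict to positive multiples of means on $F$), so $\mu = a\mu_+ - b\mu_-$ with $\mu_\pm \in M(F)$ and $a,b\ge 0$; this is a finite linear combination of members of $M(F)$. For (4), when $F$ is a subalgebra, $MM(F)$ is exactly the Gelfand space $X^F$ of the compactification $\alpha_F : X \to X^F$ described in Subsection \ref{s:fun}, and $\delta(X) = \alpha_F(X)$ has dense range in $X^F$ by the very definition of the canonical $F$-compactification; so $\delta(X)$ is dense in $MM(F)$.

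The \textbf{main obstacle}, such as it is, is purely the separation/positivity argument in (2): making precise that a mean on a unital subspace is automatically positive and then invoking the correct bipolar/Hahn–Banach statement in $(F^*,w^*)$. Everything else is routine once that is in hand; in particular (4) reduces entirely to the Gelfand–Kolmogoroff setup already recalled in the Preliminaries, and (3) is the standard Jordan-type decomposition of a functional on an order-unit space.
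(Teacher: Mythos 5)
The paper does not prove this lemma at all: it is quoted as a known fact with the references \cite{Ph, BJMo, BJM}, so there is no in-paper argument to compare yours against. Your proposal is a correct assembly of the standard arguments. In (1)--(2) the key points (weak$^*$-closedness of $\{\mu:\mu(\mathbf{1})=1\}$ inside the Alaoglu ball, automatic positivity of means on a unital subspace, and Hahn--Banach separation in $(F^*,w^*)$ whose dual is $F$) are all stated and used correctly. In (3) your route via a norm-preserving Hahn--Banach extension to $C(X)$, Riesz representation and Hahn--Jordan decomposition on the compactification (here $C(X)\cong C(\beta X)$, legitimate under the paper's standing assumption that spaces are completely regular Hausdorff), and restriction of the normalized positive parts back to $F$ does yield $\mu=a\mu_+-b\mu_-$ with $\mu_\pm\in M(F)$; this is one of the two standard proofs (the other being the order-unit/base-norm decomposition you mention).

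Two small remarks. First, in (2) the word ``affine'' is vacuous since $X$ is not convex; the substance is exactly what you verify, namely $\delta(x)\in M(F)$ and weak$^*$-continuity. Second, your treatment of (4) is really a re-citation: identifying $MM(F)$ with the Gelfand space and invoking the density built into the Gelfand--Kolmogoroff compactification recalled in the Preliminaries is acceptable here (the paper itself treats the whole lemma as known), but it proves nothing new. If you want (4) self-contained, use multiplicativity plus the positivity you already established: if $\mu\in MM(F)$ avoided a basic neighborhood determined by $f_1,\dots,f_n\in F$ and $\eps>0$, then $g=\sum_i (f_i-\mu(f_i)\mathbf{1})^2$ lies in $F$ (it is an algebra with constants), satisfies $g\ge \eps^2$ on $X$, yet $\mu(g)=0$ by multiplicativity, contradicting $\mu(g)\ge\eps^2$. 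With these caveats your proof is complete.
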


Thus $\delta: X \to M(F)$ is an affine compactification of $X$. We call it the
% eli why canonical ?      %0912 terminology of [BJM]
\emph{canonical $F$-affine compactification} of $X$. The induced
compactification $\delta: X \to \overline{\delta(X)}=Y$
is said to be the \emph{canonical $F$-compactification} of $X$.  By
Stone-Weierstrass theorem it follows that %the algebra
$C(Y)$ is naturally isometrically isomorphic to $\Acal_{\delta}:=\lan F \ran$,
the closed unital subalgebra of $C(X)$ generated by $F$. %(Remark \ref{r:affAnd}.1).
% eli  Do we ever use this notation?
%Clearly, $\delta$ is a topological embedding iff $F$ generates the given topology of $X$.

For every compact convex subset $Q$ of a locally convex vector space $V$ we denote by
$\mathrm{A}(Q)$ the Banach unital subspace of $C(Q)$ consisting of
the affine continuous functions on $Q$.
Of course $f|_Q: Q \to \R$ is affine and continuous for every $f \in V^*$.
So by the Hahn-Banach theorem $\mathrm{A}(Q)$ always separates points of $Q$.
 It is well known
(see \cite[Cor. 4.8]{AE}) that the subspace $$\mathrm{A}_0(Q):=\{f|_Q +c : \ \  \ f \in V^*, \ c \in
\R \}$$ is uniformly dense in $\mathrm{A}(Q)$. If $V$ is a Banach space then by Fact \ref{f:functionals}
every $w^*$-continuous functional on $V^*$ is the evaluation at some point $v \in V$.
This implies the following useful lemma.
%031012  Eli, the following lemma seems to be quite fundamental (the case where $Q$ is the weak-star compact convex subset in the dual of a
%BANACH space) and I was expected to find a direct reference. I hope that the given proof --- combining two nontrivial facts mentioned above --
%indeed gives a correct proof. What do you think ?
\begin{lem} \label{l:dense}
For every Banach space $V$ and a weak$^*$ compact convex set $Q \subset V^*$
the subspace $$\mathrm{A}_0(Q):=\{\tilde{v}|_Q +c : \ \   v \in V, \ c \in
\R \} = r_Q(V)+\R \cdot \textbf{1}$$ is uniformly dense in $\mathrm{A}(Q)$,
where $\tilde{v}(\varphi)=\lan v,\varphi \ran$ and $r_Q:V \to C(Q)$ is the restriction operator.
\end{lem}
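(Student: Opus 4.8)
The plan is to deduce this directly from the already-quoted general fact about affine functions on compact convex sets, namely that $\mathrm{A}_0(Q):=\{f|_Q + c : f \in V^*, c \in \R\}$ is uniformly dense in $\mathrm{A}(Q)$ (the cited \cite[Cor. 4.8]{AE}), together with the Banach--Grothendieck theorem (Fact \ref{f:functionals}). Here $V$ is a genuine Banach space and $Q$ is $w^*$-compact convex in $V^*$, so the ambient locally convex space one should use is $(V^*, w^*)$; its topological dual is exactly $i(V)$, the canonical image of $V$ in $V^{**}$, by Fact \ref{f:functionals}. Thus every $w^*$-continuous linear functional on $V^*$ has the form $\varphi \mapsto \lan v, \varphi\ran = \tilde v(\varphi)$ for a unique $v \in V$.

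First I would note that $\mathrm{A}(Q)$ should be understood as the continuous affine functions on $Q$ where $Q$ carries the $w^*$-topology; this is the relevant compact topology making $Q$ a compact convex set. Then I would invoke \cite[Cor. 4.8]{AE} with the locally convex space taken to be $(V^*, w^*)$: it gives that the functions of the form $g|_Q + c$, with $g$ a $w^*$-continuous linear functional on $V^*$ and $c \in \R$, are uniformly dense in $\mathrm{A}(Q)$. Next, substituting the description of $w^*$-continuous functionals from Fact \ref{f:functionals}, each such $g$ equals $\tilde v$ for some $v \in V$, so $g|_Q + c = \tilde v|_Q + c$. Hence $\mathrm{A}_0(Q) = \{\tilde v|_Q + c : v \in V,\ c \in \R\}$ is uniformly dense in $\mathrm{A}(Q)$.

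Finally I would verify the bookkeeping identity $\{\tilde v|_Q + c : v \in V,\ c \in \R\} = r_Q(V) + \R\cdot \textbf{1}$. This is immediate from the definition of the restriction operator $r_Q : V \to C(Q)$, $r_Q(v) = \tilde v|_Q$ (i.e. $r_Q(v)(\varphi) = \lan v, \varphi \ran$ for $\varphi \in Q$), and the fact that the constant function $c$ on $Q$ is $c \cdot \textbf{1}$. There is essentially no obstacle here; the only point that requires a moment's care is making sure the locally convex structure on $V^*$ invoked in \cite[Cor. 4.8]{AE} is the $w^*$-topology (so that its dual is $i(V)$ and not all of $V^{**}$), which is exactly what makes the passage from $V^*$-functionals to elements of $V$ legitimate via Fact \ref{f:functionals}. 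In other words, the entire content of the lemma is the translation ``$w^*$-continuous functional on $V^*$'' $\leftrightarrow$ ``evaluation at a point of $V$'', applied to the general density statement.
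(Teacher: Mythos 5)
Your proposal is correct and matches the paper's argument: the lemma is stated there as an immediate consequence of the general density fact \cite[Cor. 4.8]{AE} applied to the locally convex space $(V^*,w^*)$ together with the Banach--Grothendieck theorem (Fact \ref{f:functionals}) identifying $w^*$-continuous functionals on $V^*$ with evaluations at points of $V$. The bookkeeping identity with $r_Q$ is exactly the routine observation you make at the end.
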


% eli
Next we classify the affine compactifications of a topological space
$X$ in terms of unital closed subspaces of $C(X)$,
in the spirit of the Gelfand-Kolmogoroff theorem
%031012
(compare Remark \ref{r:actions}.3).
At least for compact spaces $X$ and point-separating subspaces
$F \subset C(X)$ versions of Lemma \ref{aff-descr} below
can be found in several classical sources.
%\footnote{It seems that this comes from Kadison}
See for example \cite[Ch. 6]{Ph}, \cite[Ch. 6, \S 29]{Choquet}, \cite[Theorem II.2.1]{Alf},
 \cite[Ch. 6, \S  23]{Sem}, \cite[Ch. 1, \S 4]{AE}.
 % eli
%For affine bi-compactifications of transformation semigroups it remains true in a suitable setting, \cite[Remark 3.2]{JP}.
%0912 I think we have to mention \cite{JP}
For affine bi-compactifications of transformation semigroups it remains true in a suitable setting, \cite[Remark 3.2]{JP}.

%For the sake of completeness we include a detailed proof of the general case.

\begin{lem} \label{aff-descr}
Let $X$ be a topological space. The assignment $\Upsilon: F
\mapsto \delta_F$, where $\delta_F: X \to M(F))$ is the canonical
$F$-affine compactification, defines an order preserving  bijective correspondence between the collection of
unital Banach subspaces $F$ of $C(X)$ and the collection of
affine compactifications of $X$ (up to equivalence).
In the converse direction, to every
affine compactification $\a: X \to Q$ corresponds the unital Banach subspace
$F:=A(Q)|_X \subset C(X)$ (called the \emph{affine
compactification space}). Then the canonical affine
compactification $\delta_F: X \to M(F)$ is affinely equivalent to
$\a: X \to Q$.
\end{lem}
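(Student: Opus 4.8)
The plan is to set up the two maps — $\Upsilon: F \mapsto \delta_F$ in one direction, and $\a \mapsto A(Q)|_X$ in the other — and show they are mutually inverse up to the appropriate equivalence. First I would verify that $\Upsilon$ is well defined: given a unital Banach subspace $F \subset C(X)$, Lemma \ref{l:state}(1)--(2) already tells us that $M(F)$ is a weak$^*$ compact convex set, that $\delta_F$ is affine and weak$^*$-continuous, and that $\delta_F(X)$ affinely generates $M(F)$; hence $(\delta_F, M(F))$ is an affine compactification in the sense of Definition \ref{d:affinization}.3. In the converse direction, given an affine compactification $\a: X \to Q$, the space $F := A(Q)|_X$ is a unital linear subspace of $C(X)$ since $A(Q)$ contains the constants, and it is point-separating on $\overline{\a(X)}$ by Hahn--Banach; I would note that it is norm-closed (or else replace it by its closure — this is harmless since $A(Q)$ is already a Banach space and restriction to the generating set $\a(X)$ is isometric because $\overline{co}(\a(X)) = Q$).

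The heart of the argument is the round trip $\a \mapsto F:=A(Q)|_X \mapsto \delta_F$, where I must produce an affine homeomorphism $Q \to M(F)$ commuting with $\a$ and $\delta_F$. The natural candidate is the map $\Phi: Q \to F^*$ sending $q \in Q$ to the functional $h|_X \mapsto h(q)$ for $h \in A(Q)$; this is well defined precisely because $h|_X$ determines $h$ on $Q = \overline{co}(\a(X))$ by continuity and affineness. One checks $\Phi(q) \in M(F)$ (it is a norm-one functional sending $\textbf{1}$ to $1$ since constants restrict to constants), that $\Phi$ is affine and weak$^*$-continuous, and that $\Phi \circ \a = \delta_F$ by unwinding definitions. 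Injectivity of $\Phi$ follows because $A(Q)$ separates points of $Q$; for surjectivity I would use that $\Phi(Q)$ is compact convex and contains $\Phi(\a(X)) = \delta_F(X)$, whose closed convex hull is all of $M(F)$ by Lemma \ref{l:state}(2), so $\Phi(Q) = M(F)$. Being a continuous bijection between compacta, $\Phi$ is a homeomorphism, and since it is affine it is an isomorphism of affine compactifications.

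For the other round trip $F \mapsto \delta_F: X \to M(F) \mapsto A(M(F))|_X$, I would show $A(M(F))|_X = F$. By Lemma \ref{l:dense} (in the form for the subspace $F^*$, or directly the cited \cite[Cor. 4.8]{AE}) the affine continuous functions on $M(F)$ are uniformly approximated by functionals $\mu \mapsto \mu(f) + c$ with $f \in F$, $c \in \R$; restricting along $\delta_F$ sends $\mu \mapsto \mu(f)$ to the function $x \mapsto f(x)$, i.e. back to $f$ itself. Hence $A(M(F))|_X$ is the uniform closure of $F + \R\textbf{1} = F$, which is $F$ since $F$ is closed and unital. Finally I would record that both assignments are order preserving: $F_1 \subset F_2$ yields a restriction morphism $M(F_2) \to M(F_1)$ commuting with the $\delta$'s (so $\delta_{F_2} \succeq \delta_{F_1}$), and conversely a morphism $q: Q_1 \to Q_2$ pulls $A(Q_2)$ back into $A(Q_1)$, giving $A(Q_2)|_X \subset A(Q_1)|_X$; combined with the two bijection statements this gives the order-preserving bijective correspondence. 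The main obstacle is the surjectivity and well-definedness of $\Phi$ — making sure that restriction $A(Q) \to C(X)$ loses no information and that every mean on $F$ is realized by a point of $Q$ — but both reduce cleanly to the defining property $\overline{co}(\a(X)) = Q$ together with Lemma \ref{l:state}(2); everything else is routine bookkeeping.
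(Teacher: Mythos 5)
Your proof is correct and takes essentially the same route as the paper: your map $\Phi$ is exactly the paper's evaluation map $e: Q \to M(F)$, verified by the same steps (restriction $A(Q)\to C(X)$ is isometric because $\overline{co}(\a(X))=Q$, affineness and weak$^*$ continuity, injectivity from point separation by $A(Q)$, surjectivity from Lemma \ref{l:state}.2). The only minor difference is that you check the second round trip $A(M(F))|_X=F$ explicitly via Lemma \ref{l:dense}, whereas the paper obtains injectivity and order preservation from the claim that $\a_2$ dominates $\a_1$ iff $F_1\subset F_2$; the content is the same.
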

\begin{proof}
For a Banach unital subspace $F$ of $C(X)$ define $\Upsilon(F)=(\delta_F,M(F))$
as the canonical $F$-affine compactification $\delta_F: X \to M(F)$.

\sk

\nt \emph{Surjectivity} of $\Upsilon$:

\nt
Every affine compactification $\a: X \to Q$, up to equivalence, is a
canonical $F$-affine compactification. In order to show this consider the set
$A(Q)$ of all continuous affine functions on $Q$, viewed as a (Banach unital)
subspace of $C(Q)$. Let $A(Q)|_X$ be the set of all functions on
$X$ which are $\a$-extendable to a continuous affine function on $Q$.
%, called the \emph{affine compactification space} of $\a$.
Thus $A(Q)|_X:=\a_{\sharp}(A(Q)) \subset C(X)$, where $\a_{\sharp}: C(Q)
\to C(X), f \mapsto f \circ \a$ is the natural linear operator induced by
$\a: X \to Q$. Every such operator has norm 1.
Moreover, since $\a(X)$ affinely generates $Q$ and the functions in $A(Q)$ are affine, it follows that
 %and $A(Q)$ consists of affine maps
 $\a_{\sharp}: A(Q) \to C(X)$ is a linear isometric embedding.
 Denote by $F$ the Banach unital subspace $\a_{\sharp}(A(Q))=A(Q)|_X$ of $C(X)$. We are going to show that
 the affine compactifications $\delta_F: X \to M(F)$ and $\a: X \to Q$ are isomorphic.
%where $F:=A(Q)|_X$ is the space of the affine compactification.
Define the evaluation map
$$e: Q \to M(F) \subset F^*, \ \ e(q)(f)= \tilde{f}(q),$$
where $\tilde{f}:=\a_{\sharp}^{-1}(f) \in A(Q)$ is the uniquely defined extension of $f \in F:=A(Q)|_X$.
Since $\a_{\sharp}^{-1}: F \to A(Q)$ is a linear isometry we easily obtain that $e(q) \in F^*$. Clearly, $||e(q)||=e(q)(\textbf{1})=1$
for every $q \in Q$. Hence, indeed $e(q) \in M(F)$ and the map $e: Q \to M(F)$ is well-defined.
Since, $\tilde{f}: Q \to \R$ is an affine map for every $f \in F$, it easily follows that $e: Q \to M(F)$ is an affine map.
%and $||e(q)|| \leq 1$
For every $x \in X$ we have $e(\a(x))(q)=\tilde{f}(\a(x))=f(x)$. So,  $\delta_F=e \circ \a$.
It is also clear that $e$ is $w^*$-continuous. Since $\delta_F (X)$ affinely generates $M(F)$ (Lemma \ref{l:state}), it follows that
$e(Q)=M(F)$. Always, $\mathrm{A}(Q)$ separates points of $Q$. This implies that $e: Q \to M(F)$ is injective, hence a homeomorphism.

 \sk

\nt The \emph{Injectivity} and \emph{order-preserving} properties of $\Upsilon$:

These properties follow from the next claim.
\sk

\nt \textbf{Claim:} \emph{If $\a_1: X \to Q_1$ and $\a_2: X \to Q_2$ are two affine compactifications then $\a_2$ dominates $\a_1$
if and only if $F_2 \supset F_1$,
where $F_1$ and $F_2$ are the corresponding affine compactification spaces.}

\sk

%Proving the claim, in one direction, let
Suppose $F_2 \supset F_1$ and let $j: F_1 \hookrightarrow F_2$
be the inclusion map.
Then the restricted adjoint map $j^*: M(F_2) \to M(F_1)$ is a weak$^*$
continuous affine map and the following diagram commutes
\begin{equation*}
\xymatrix { X \ar[dr]_{\delta_1} \ar[r]^{\delta_2} & M(F_2)
\ar[d]^{j^*} \\
  & M(F_1) }
\end{equation*}
Moreover $j^*$ is onto (use for example Lemma \ref{l:state}.2). The second
direction is trivial.
\end{proof}

\begin{cor} \label{c:bar}
%Follows from the Claim.
For every compact space $X$ the Banach space $C(X)$
 determines the universal (greatest) affine compactification $\a_{b}: X \to P(X)=M(C(X))$.
 For any other affine compactification $X \to Q$ we have a uniquely determined
 natural affine continuous onto map, called the \emph{barycenter map},
 $b: P(X) \to Q$, such that $\a_b=b \circ \delta$.
\end{cor}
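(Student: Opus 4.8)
The plan is to derive Corollary~\ref{c:bar} directly from Lemma~\ref{aff-descr} together with Lemma~\ref{l:state}. First I would observe that by Lemma~\ref{aff-descr}, affine compactifications of the compact space $X$ correspond bijectively (and order-preservingly) to unital Banach subspaces $F\subset C(X)$. Since $C(X)$ itself is the largest such subspace, the associated canonical $F$-affine compactification $\delta_{C(X)}\colon X\to M(C(X))$ dominates every other affine compactification; and by Lemma~\ref{l:state} applied to $F=C(X)$ (and the remark preceding it) the state space $M(C(X))$ is exactly $P(X)$, the space of probability measures on $X$, with $\delta\colon X\to P(X)$, $\delta(x)(f)=f(x)$, the canonical embedding. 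This already gives the universality of $\a_b:=\delta\colon X\to P(X)$.

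Next, given any other affine compactification $\a\colon X\to Q$, with affine compactification space $F:=A(Q)|_X\subset C(X)$, the inclusion $j\colon F\hookrightarrow C(X)$ is a unital embedding, so by the Claim proved inside Lemma~\ref{aff-descr} the compactification $\a_b$ dominates $\a$; explicitly the dominating map is (the transport along the isomorphism $e\colon Q\xrightarrow{\cong}M(F)$ of Lemma~\ref{aff-descr} of) the restricted adjoint $j^*\colon M(C(X))\to M(F)$. Composing, one obtains a continuous affine onto map $b\colon P(X)\to Q$ with $\a=b\circ\delta$. Surjectivity of $b$ follows because $\delta(X)=\a_b(X)$ affinely generates $P(X)$, hence $b(P(X))\supseteq \overline{co}(b(\delta(X)))=\overline{co}(\a(X))=Q$; continuity and affineness are immediate from those of $j^*$ and $e$. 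Uniqueness of $b$ is forced since it is already determined on the dense subset $\delta(X)$ by $b\circ\delta=\a$ and any two continuous maps agreeing on a dense set coincide. This $b$ is the barycenter map: for $\mu\in P(X)$, $b(\mu)$ is the unique point of $Q$ on which every $\tilde f\in A(Q)$ evaluates to $\mu(f)$, $f=\tilde f\circ\a$, which is the classical barycenter characterization.

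There is essentially no hard step here: the corollary is a direct specialization of Lemma~\ref{aff-descr} to the top element $F=C(X)$ of the lattice of unital subspaces, combined with the identification $M(C(X))=P(X)$. The only point requiring a small amount of care is checking that $A(Q)|_X$ really is a \emph{unital} Banach subspace of $C(X)$ (so that Lemma~\ref{aff-descr} applies and the Claim can be invoked) --- but this was already established in the proof of Lemma~\ref{aff-descr}, where $\a_\sharp\colon A(Q)\to C(X)$ was shown to be a linear isometric embedding carrying the constants to the constants. So the proof is really just the assembly of already-proven facts, and I would write it in a few lines accordingly.
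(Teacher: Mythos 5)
Your proof is correct and matches the paper's intended argument: the corollary is stated without a separate proof precisely because it is the specialization of Lemma \ref{aff-descr} (together with the Claim inside its proof and Lemma \ref{l:state}) to the top element $F=C(X)$ of the lattice of unital Banach subspaces, with $M(C(X))=P(X)$. One small correction: $\delta(X)$ is not dense in $P(X)$, so uniqueness of $b$ should be deduced from affineness and continuity — agreement with $\a$ on $\delta(X)$ forces agreement on $co(\delta(X))$, whose closure is all of $P(X)$ by Lemma \ref{l:state}.2.
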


%
%Lemma \ref{aff-descr} deals with affine compactifications of $X$ in the purely topological case. As expected it can be extended
%to the setting of dynamical $S$-systems. % (using, in particular, Lemma \ref{l:morph}).

\sk
Next we deal with affine compactifications of $S$-systems.

\begin{lem} \label{cont-act}
Assume that $X$ is endowed with a semigroup action $S \times X \to X$ with continuous translations, i.e.,
$X$ is an $S_d$-space.
Let $\a: X \to Q$ be an affine compactification of
the $S_d$-space $X$.
Denote by $F$ the corresponding affine compactification space.
\ben
\item
$S \times Q \to Q$ is naturally topologically $S$-isomorphic to the %affine
action $S \times M(F) \to M(F)$.
\item $S \times Q \to Q$ is separately continuous iff $F \subset \mathrm{WRUC}(X)$. %(see Definition in Section \ref{s:canonical-r}).
\item $S \times Q \to Q$ is continuous iff $F \subset \mathrm{RUC}(X)$.
\een
\end{lem}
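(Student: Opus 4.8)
The plan is to establish all three statements by transporting everything through the affine isomorphism $e \colon Q \to M(F)$ constructed in Lemma \ref{aff-descr}, and then reducing to the behavior of matrix coefficients. First I would observe that, by Lemma \ref{aff-descr}, the affine compactification $\a \colon X \to Q$ is affinely isomorphic to the canonical $F$-affine compactification $\delta_F \colon X \to M(F)$, via a $w^*$-continuous affine homeomorphism $e \colon Q \to M(F)$ with $\delta_F = e \circ \a$. Since $X$ is an $S_d$-space and the $s$-translations on $Q$ are affine, the inclusion $\co(\a(X)) \hookrightarrow Q$ is an $S_d$-compactification; Remark \ref{r:domin} then forces $e$ to be an $S$-map once we know $M(F)$ carries a compatible $S_d$-action. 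That action is the natural one: for $s \in S$ and $\mu \in M(F)$, set $(s\mu)(f) = \mu(sf)$ where $(sf)(x) = f(sx)$ — this is well-defined precisely because $F$ is $S$-invariant (which it is, being an affine compactification space of an $S$-space, by Lemma \ref{l:morph} applied to the $s$-translations $\lambda_s \colon Q \to Q$), each $\lambda_s$ on $M(F)$ is $w^*$-continuous and affine, and $\delta_F$ is an $S$-map. This gives part (1).

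For part (2), the claim is that $S \times Q \to Q$ is separately continuous iff $F \subset \mathrm{WRUC}(X)$. Using (1), I would work with $M(F)$. Left translations $\lambda_s$ on $M(F)$ are always $w^*$-continuous, so separate continuity reduces to continuity of each orbit map $\rho_\mu \colon S \to M(F)$, $s \mapsto s\mu$. Since $M(F)$ is affinely generated by $\delta_F(X)$ and $w^*$-topology on $M(F)$ is generated by evaluations at $f \in F$, continuity of $\rho_\mu$ for all $\mu$ is equivalent to continuity of $s \mapsto \mu(sf) = \langle s f, \mu\rangle$ for all $f \in F$, $\mu \in M(F)$; testing on $\mu = \delta_F(x)$ (which, with their convex combinations, are $w^*$-dense), this is continuity of the matrix coefficients $s \mapsto f(sx) = m(f,x)(s)$ for all $f \in F$, $x \in X$. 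Now $F \subset \mathrm{WRUC}(X)$ means the orbit maps $\tilde f \colon S \to C(X)$, $s \mapsto sf$ are weakly continuous, i.e. $\psi(\tilde f(s))$ is continuous in $s$ for every $\psi \in C(X)^*$; in particular for $\psi = \delta_x$ we recover continuity of $m(f,x)$. Conversely, continuity of all $m(f,x)$ together with a uniform boundedness / Grothendieck-type argument (the orbit $\{sf : s\in S\}$ is norm-bounded, so weak continuity of $\tilde f$ follows from continuity of the scalar functions $x \mapsto$ evaluations, using that weak continuity into $C(X)$ can be checked on the point evaluations which are $w^*$-dense in $B_{C(X)^*}$, via Grothendieck's completeness lemma as in \cite[Cor. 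A6]{BJM}) yields $F \subset \mathrm{WRUC}(X)$. I expect this Grothendieck-style density argument to be the main technical point; it is, however, exactly the kind of reasoning already invoked in Lemma \ref{l:classes}(5) and Remark \ref{r:actions}.

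For part (3), the argument is parallel but uses joint rather than separate continuity. Again via (1) work on $M(F)$. If $F \subset \mathrm{RUC}(X)$, then each $\tilde f \colon S \to C(X)$ is norm-continuous; I would show $S \times M(F) \to M(F)$ is jointly continuous by estimating, for $(s,\mu)$ near $(s_0,\mu_0)$ and a finite set $f_1,\dots,f_n \in F$, the quantity $|(s\mu)(f_i) - (s_0\mu_0)(f_i)| \le |\mu(sf_i) - \mu(s_0 f_i)| + |\mu(s_0 f_i) - \mu_0(s_0 f_i)|$; the first term is bounded by $\|s f_i - s_0 f_i\|$ which is small by norm-continuity of $\tilde f_i$ (uniformly in $\mu$, since $\|\mu\| = 1$), and the second is small for $\mu$ $w^*$-near $\mu_0$. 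Conversely, if the action on $Q \cong M(F)$ is jointly continuous, then for fixed $f \in F$ the map $(s,\mu) \mapsto (s\mu)(f)$ is continuous; specializing $\mu = \delta_F(x)$ and using joint continuity gives that $s \mapsto sf \in C(X)$ is continuous for the topology of pointwise convergence on $X$, and a standard compactness/equicontinuity upgrade (or, again, the fact that $\mathrm{RUC}(X)$ is exactly the algebra coming from jointly continuous $S$-compactifications, together with the universality built into $M(F)$) promotes this to norm-continuity, i.e. $F \subset \mathrm{RUC}(X)$. The only genuine obstacle throughout is the passage from pointwise/weak data on matrix coefficients back to norm-continuity of the vector-valued orbit maps $\tilde f$; everywhere else the statements unwind formally from Lemma \ref{aff-descr}, Remark \ref{r:domin}, and the definitions of $\mathrm{WRUC}(X)$ and $\mathrm{RUC}(X)$.
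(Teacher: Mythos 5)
Your part (1) and the forward direction of part (3) are essentially the paper's argument. The real problem is in part (2), where both directions, as you route them, break down. You reduce separate continuity of $S\times Q\to Q$ (equivalently, of $S\times M(F)\to M(F)$) to continuity of the matrix coefficients $m(f,x)$, $f\in F$, $x\in X$, on the grounds that the point evaluations $\delta_F(x)$ and their convex combinations are $w^*$-dense in $M(F)$. Density does not help here: continuity of $s\mapsto\mu(fs)$ for $\mu$ in a dense subset of $M(F)$ does not pass to $w^*$-limits, so ``continuity of all $m(f,x)$'' does \emph{not} imply separate continuity on $Q$. Likewise your recovery step --- that continuity of all $m(f,x)$ plus norm-boundedness of the orbit $fS$ yields weak continuity of $\tilde f\colon S\to C(X)$ ``via Grothendieck's lemma'' --- is false for general $S$: that is exactly the difference between $\mathrm{RMC}(X)$ and $\mathrm{WRUC}(X)$, which the paper stresses is strict in general (Remark \ref{r:cycl-comes}.3, citing \cite[p. 219]{BJM}); the Grothendieck argument only works when $S$ is a k-space (Lemma \ref{l:classes}.5). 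So if your chain were valid it would prove $\mathrm{WRUC}(X)=\mathrm{RMC}(X)$, which is not true. In short, by testing only at the dirac means you throw away precisely the information the lemma is about.

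The correct (and simpler) route, which is the paper's, never leaves the level of general means: by Lemma \ref{l:state}.3 every $\mu\in F^*$ is a finite linear combination of elements of $M(F)$, so separate continuity of the dual action on $M(F)$ is equivalent to continuity of $s\mapsto\mu(fs)$ for \emph{all} $\mu\in F^*$ and $f\in F$; since $F$ is $S$-invariant ($fs\in F$) and, by Hahn--Banach, every $\psi\in C(X)^*$ restricts to an element of $F^*$ while every element of $F^*$ extends, this is the same as weak continuity of the orbit maps $\tilde f\colon S\to C(X)$, i.e. $F\subset\mathrm{WRUC}(X)$ --- this is what ``use Lemma \ref{l:state}.3 and the restriction operator of Remark \ref{r:actions}'' amounts to. A milder version of the same slip occurs in your converse of (3): pointwise continuity of $s\mapsto fs$ does not ``upgrade'' to norm continuity by compactness alone; what works is your parenthetical fallback made precise --- joint continuity on $Q$ restricts to the subsystem $Y=\overline{\delta_F(X)}$, each $f\in F$ comes from $Y$, and then $\|fs-fs_0\|_\infty\le\sup_{y\in Y}|\tilde f(sy)-\tilde f(s_0y)|$ shows $f\in\mathrm{RUC}(X)$.
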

\begin{proof} (1) All the translations $\lambda_s: X \to X$ are continuous and
$F=\mathrm{A}(Q)|_X$ is an $S$-invariant subset of $C(X)$.
So it is clear that the natural dual action $S \times F^* \to F^*$ is well defined and that every translation
$\lambda_s: F^* \to F^*$ is weak$^*$ continuous.
Now observe that $S \times M(F) \to M(F)$ is a restriction of the action $S \times F^* \to F^*$.
Since $X \to Q$ and $X \to M(F)$ are $S_d$-affine compactifications and
the evaluation map $e: Q \to M(F)$ from the proof of Lemma \ref{aff-descr} is an isomorphism
of affine compactifications, we obtain by Lemma \ref{l:morph} that $e: Q \to M(F)$ is an $S$-map.

%!
(2) Use Lemma \ref{l:state}.3 and the restriction operator $r: F \to C(X)$ (Remark \ref{r:actions}).

(3) Use  Remark \ref{r:actions}.1.
\end{proof}

\begin{prop} \label{bar} \
\ben
\item
If $X$ is an $S$-space then the same
assignment $\Upsilon$, as in Lemma \ref{aff-descr}, establishes an order preserving bijection between the collection of
$S$-invariant unital Banach subspaces $F$ of $\textrm{WRUC}(X)$ and
(equivalence classes of) $S$-affine compactifications of the $S$-system $X$.
Furthermore, the subspaces of $\textrm{RUC}(X)$ correspond exactly to the $S$-affine
compactifications $X \to Q$ with continuous actions $S \times Q \to Q$.
\item
Let $X$ be a compact $S$-system and $\a_{b}: X \to P(X)=M(C(X))$ be the
universal affine compactification of the space $X$.
 \begin{itemize}
  \item [(a)] For every $S_d$-affine compactification $\a: X \to Q$ the barycenter map $b: P(X) \to Q$ is an $S$-map.
  \item [(b)] $S \times P(X) \to P(X)$ is separately continuous iff $C(X)=\mathrm{WRUC}(X)$ (iff $X$ is $\mathrm{WRUC}$).
\item [(c)] $S \times X \to X$ is continuous iff $S \times P(X) \to P(X)$ is continuous iff $C(X)=\mathrm{RUC}(X)$.
%!\footnote{i.e., iff the $S$-dynamical system $X$ is \emph{w-admissible} (in the sense of \cite{Me-nz}).}
%! \footnote{See Definition \ref{w-adm}}
 \end{itemize}
 \een
\end{prop}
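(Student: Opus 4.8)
The plan is to derive the whole proposition by assembling results already in the text --- Lemmas \ref{aff-descr}, \ref{l:morph}, \ref{cont-act} and Corollary \ref{c:bar} --- with essentially no new idea: the only content lies in matching the $S$-invariance of affine compactification spaces with the separate (resp.\ joint) continuity of the induced actions on the compactifications.

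\emph{Part (1).} By Lemma \ref{aff-descr}, $\Upsilon\colon F\mapsto \delta_F$ is an order-preserving bijection between \emph{all} unital Banach subspaces of $C(X)$ and equivalence classes of affine compactifications of the topological space $X$, with inverse $(\a\colon X\to Q)\mapsto A(Q)|_X$; in particular the affine compactification space of $\delta_F\colon X\to M(F)$ is $F$ itself. So I only have to check that this bijection restricts correctly. If $F\subset \mathrm{WRUC}(X)$ is $S$-invariant, then $F^*$ carries the dual $S$-action with $w^*$-continuous translations, $M(F)$ is an $S$-invariant $w^*$-compact convex set, $\delta_F$ is an $S$-map, and $S\times M(F)\to M(F)$ is separately continuous by Lemma \ref{cont-act}.2 applied to $\delta_F$ (whose compactification space is $F\subset\mathrm{WRUC}(X)$); hence $\delta_F$ is an $S$-affine compactification. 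Conversely, if $\a\colon X\to Q$ is an $S$-affine compactification and $F=A(Q)|_X$, then each $\lambda_s\colon Q\to Q$ is affine and continuous, so $f\mapsto f\circ\lambda_s$ preserves $A(Q)$, and since $\a$ is an $S$-map we get $L_s(f|_X)=(f\circ\lambda_s)|_X\in F$; thus $F$ is $S$-invariant, and $F\subset\mathrm{WRUC}(X)$ again by Lemma \ref{cont-act}.2. Finally, the affine homeomorphism $e\colon Q\to M(F)$ from the proof of Lemma \ref{aff-descr}, and the adjoint $j^*\colon M(F_2)\to M(F_1)$ witnessing $F_1\subset F_2$, are morphisms of $S_d$-affine compactifications, hence $S$-maps by Lemma \ref{l:morph}; so domination of $S$-affine compactifications coincides with inclusion of compactification spaces, and $\Upsilon$ is an order-preserving bijection in the $S$-equivariant sense. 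The ``furthermore'' clause is the identical argument with $\mathrm{RUC}(X)$ in place of $\mathrm{WRUC}(X)$ and Lemma \ref{cont-act}.3 in place of Lemma \ref{cont-act}.2.

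\emph{Part (2).} Since $X$ is compact, $\delta\colon X\to P(X)=M(C(X))$ is an affine compactification of $X$ whose affine compactification space is $C(X)$, and $C(X)$ is trivially $S$-invariant (the translations $\lambda_s$ on $X$ being continuous), so $P(X)$ with the dual action is an $S_d$-affine compactification of $X$. For (a): any $S_d$-affine compactification $\a\colon X\to Q$ is dominated by $\a_b$ with barycenter morphism $b\colon P(X)\to Q$ (Corollary \ref{c:bar}), and $b$ is an $S$-map by Lemma \ref{l:morph}. For (b): by Lemma \ref{cont-act}.2 with $Q=P(X)$ and $F=C(X)$, the action $S\times P(X)\to P(X)$ is separately continuous iff $C(X)\subset\mathrm{WRUC}(X)$, i.e.\ $C(X)=\mathrm{WRUC}(X)$, which is by definition the statement that $X$ is $\mathrm{WRUC}$. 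For (c): if $S\times X\to X$ is continuous then $C(X)=\mathrm{RUC}(X)$ by compactness of $X$, hence $S\times P(X)\to P(X)$ is continuous by Lemma \ref{cont-act}.3; and if $S\times P(X)\to P(X)$ is continuous, then restricting it to the closed $S$-invariant subset $\delta(X)\cong X$ shows $S\times X\to X$ is continuous, which closes the loop.

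The only step that calls for any care --- the nearest thing to an obstacle --- is the claim that the affine compactification space of $\delta_F\colon X\to M(F)$ is $F$ itself, which is what makes Lemma \ref{cont-act} applicable to $\delta_F$. This is exactly the bijectivity in Lemma \ref{aff-descr}: by Fact \ref{f:functionals} the $w^*$-continuous affine functions on $M(F)\subset F^*$ are, up to additive constants, the maps $\mu\mapsto\mu(v)$ with $v\in F$; these restrict along $\delta_F$ to the elements of $F$; and $A_0(M(F))$ is uniformly dense in $A(M(F))$ while restriction along $\delta_F$ is an isometric embedding into $C(X)$, so $A(M(F))|_X=F$.
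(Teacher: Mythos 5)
Your proof is correct and follows essentially the same route as the paper, which proves the proposition by citing Lemma \ref{cont-act} (together with Lemma \ref{aff-descr}), Lemma \ref{l:morph} and Corollary \ref{c:bar}; you merely spell out the details, including the useful observation that the affine compactification space of $\delta_F\colon X\to M(F)$ is $F$ itself, which is exactly what Lemma \ref{aff-descr} (via Lemma \ref{l:dense}) provides.
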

\begin{proof}
(1) Use Lemma \ref{cont-act}.

(2) (a) Apply Lemmas \ref{l:morph} and Corollary \ref{c:bar}. For (b) and (c) use Lemma \ref{cont-act}.
\end{proof}

\begin{remark} \label{r:affAnd} \
\ben
\item
As we already mentioned every $S$-affine compactification $\a: X \to Q$ induces the
$S$-compactification $\a_0: X \to Y:=\overline{\a(X)}$ of $X$. Always
the affine compactification space $F:=A(Q)|_X \subset C(X)$
generates the induced compactification algebra $\A$ of $\a_0$. That is, $\langle F \rangle=\A$.
Indeed, affine continuous functions on $Q$ separate the points.
Hence, by Stone-Weierstrass theorem $A(Q)|_Y$ generates $C(Y)$.
It follows that $\a_0^*(A(Q)|_Y)=F$ generates $\a_0^*(C(Y))=\A$, where $\a_0^*: C(Y) \to C(X)$ is the induced map.
%!!
\item
For every weakly continuous representation
$h: S \to \Theta(V)$ on a Banach space $V$ we have the associated
affine semigroup compactification $h: S \to Q:=\overline{co}(h(S)^{op}) \subset \E$.
Observe that since ${co}(h(S)^{op}) \subset \Theta^{op} = \Lambda(\E)$,
the closure $Q:=\overline{co}(h(S)^{op})$ in $\E$ is a semigroup.
In this case we say that $S \to Q$ is a \emph{standard affine semigroup compactification}.
Every affine semigroup compactification can be obtained in this way
(see Proposition \ref{l:W=Univ}.2).
\item
More generally, every operator compactification (Section \ref{s:OCandW})
$h: S \to P:=\overline{h(S)^{op}} \subset L(V^*)$
of $S$ on a locally convex vector space $V$
induces an affine semigroup compactification $\a_h: S \to Q:=\overline{co}(h(S)^{op} \subset L(V^*)$.
%(see proof of (2) $\Rightarrow$ (3) in Proposition \ref{t:conditions-new}).
%050312
The affine compactification space $\A(Q)|_S$ coincides with the coefficient space $M_h$.
Indeed, $M_h \subset \A(Q)|_S$ because every matrix coefficient
$m(v, \psi): S \to \R$ is a restriction of the map $\widetilde{m}(v,\psi): Q \to \R$ which is continuous and affine.
On the other hand the collection
$\{\widetilde{m}(v,\psi)\}_{v \in V, \psi \in V^*}$
%!!!
separates the points of $Q$. It follows that
the Banach unital subspaces $M_h$ and $\A(Q)|_S$ of $C(S)$ induce the isomorphic affine compactifications of $S$.
By Lemma \ref{aff-descr} we obtain that $M_h=\A(Q)|_S$.
\een
\end{remark}

%!
%0912 so $\mathrm{WRUC}(X)$ induces the maximal $S$-compactification of an $S$-space $X$, analogue of P(X). For compact X it is P(X) iff X is an %w-admissible.

\subsection{Cyclic affine $S$-compactifications}
 \label{s:cyclAff}

Let $X$ be a (not necessarily compact) $S$-system.
For every $f \in \mathrm{WRUC}(X)$ denote by $\A_f:=\lan fS \cup \{\textbf{1}\} \ran \subset C(X)$
the smallest $S$-invariant unital Banach subalgebra which contains $f$. The corresponding Gelfand compactification is an
$S$-compactification $\a_f: X \to |\A_f| \subset \A_f^*$.
We call it the \emph{cyclic compactification} of $X$ (induced by $f$).
Now consider $V_f:=\overline{sp}^{norm} (fS \cup \{\textbf{1}\})$ ---
the smallest closed linear unital $S$-subspace of $\mathrm{WRUC}(X)$ generated by $f$.
By Proposition \ref{bar}.1 we have the affine $S$-compactification
$\delta_f: X \to M(V_f) \subset V_f^*$
%020412
(where, $\delta_f(x)(\varphi)=\varphi(x)$ for every $\varphi \in V_f$)
which we call the \emph{cyclic affine $S$-compactification} of $X$. % (induced by $f$).
%300312
% I think that an explicit formula for the map $\delta_f: X \to M(V_f) \subset V_f^*$ is needed here.
A natural idea is to reconstruct
$\a_f$ from $\delta_f$ restricting the codomain %range
of $\delta_f$.
%300312   how can one restrict the range ? Do you mean restricting the domain ?
%020412 I changed "range" to "codomain" -- two places (by "range" I meant "codomain" (not the image))
In the following technical lemma
we also give a useful realization (up to isomorphisms)
of these two compactifications in $C(S)$ with the pointwise convergence topology.
Note that we have also a left action of $S$ on $C(S)$ defined by
$S \times C(S) \to C(S), (sf)(t)=(ts)=R_sf$.

 %We call it the \emph{cyclic affine $S$-compactification} (induced by $f$).

\begin{lem} \label{l:f-introtype} \
%Let $f \in \mathrm{WRUC}(X)$ on an $S$-system $X$.
\ben
\item The following map
$$
T_f: V_f^* \to C(S), \ \ T_f(\psi)=m(f,\psi).
$$
is a well defined linear bounded weak$^*$-pointwise continuous $S$-map between left $S$-actions.
\item
The restriction $T_f: M(V_f) \to Q_f$ is an isomorphism of the affine $S$-compactifications $\delta_f: X \to M(V_f)$ and
$\pi_f: X \to Q_f$, where $\pi_f:=T_f \circ \delta_f$ and $Q_f:=T_f(M(V_f)) \subset C(S)$.
\item
Consider $$X_f:=\overline{\pi_f(X)}^p= cl_p(\{m(f,\delta_f(x)\}_{x \in X})  \subset Q_f.$$
The restriction of the codomain
%300312 same objection as above
leads to the $S$-compactification $\pi_f: X \to X_f$ which
is isomorphic to the cyclic compactification $\a_f: X \to |\A_f|$.
\een
\end{lem}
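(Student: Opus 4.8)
The plan is to treat the three parts in order, with parts (1) and (2) being essentially formal and part (3) containing the real content. For part (1), the key observation is that $T_f$ is nothing but the map $T_v$ from Lemma \ref{l:introtype} specialized to the regular representation of $(S,X)$ on the Banach space $V:=V_f=\overline{sp}^{norm}(fS\cup\{\mathbf 1\})\subset \mathrm{WRUC}(X)$, with $v:=f$. Indeed $V_f$ carries a weakly continuous co-homomorphism $h:S\to\Theta(V_f)$ (restriction of the regular representation, which is weakly continuous on $\mathrm{WRUC}(X)$ as recorded in Section \ref{s:canonical-r}), and for $\psi\in V_f^*$ the matrix coefficient $m(f,\psi):S\to\R$, $s\mapsto \psi(fs)$, is continuous precisely because $f\in\mathrm{WRUC}(X)$; linearity and the bound $\|T_f\|\le\|f\|$ are immediate, weak$^*$-to-pointwise continuity of $T_f$ is the definition of the weak operator topology, and the fact that $T_f$ is an $S$-map between the left $S$-actions (the dual action on $V_f^*$ and the $R_s$-action on $C(S)$) is the content of Lemma \ref{l:introtype}. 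So part (1) is just a citation with the substitution $V=V_f$, $v=f$.

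For part (2), by definition $\delta_f:X\to M(V_f)$ is the canonical $V_f$-affine compactification and $\pi_f:=T_f\circ\delta_f$, $Q_f:=T_f(M(V_f))$, so $\pi_f:X\to Q_f$ is automatically a surjective-onto-dense $S$-map into the convex $w^*$-compact (image of a compact under a continuous affine map) set $Q_f\subset C(S)$, and $\overline{co}(\pi_f(X))=Q_f$ since $T_f$ is affine and $\overline{co}^{w^*}(\delta_f(X))=M(V_f)$ by Lemma \ref{l:state}.2. To get an \emph{isomorphism} of affine $S$-compactifications it suffices, by the definition of isomorphism and Lemma \ref{l:morph}, to show $T_f|_{M(V_f)}$ is injective; equivalently, using the bijective correspondence of Lemma \ref{aff-descr} (or Proposition \ref{bar}.1), that the affine compactification spaces agree, i.e. $\mathrm{A}(Q_f)|_X = V_f$. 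One inclusion: every $\varphi\in V_f$ gives the evaluation functional $\widetilde\varphi$ on $V_f^*$, which restricts to an affine $w^*$-continuous function on $M(V_f)$, hence (transported by $T_f$, noting $T_f$ is a homeomorphism onto $Q_f$ once injectivity is known) an element of $\mathrm{A}(Q_f)|_X$; conversely $\mathrm{A}_0(Q_f)$ is dense in $\mathrm{A}(Q_f)$ and, since $Q_f\subset C(S)$, evaluation at a point $s\in S$ is a $w^*$-continuous functional on $C(S)$ — but to pull this back to $X$ one uses that $m(f,\delta_f(x))(s)=\langle f\cdot s,\delta_f(x)\rangle$ realizes $fs\in V_f$, so the coordinate functions on $Q_f$ restrict on $X$ to the generators $fs$ of $V_f$. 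Injectivity of $T_f|_{M(V_f)}$ itself follows because the functions $\{\widetilde{fs}:s\in S\}\cup\{\widetilde{\mathbf 1}\}$ separate points of $M(V_f)$ — a mean $\mu\in M(V_f)$ is determined by its values on the linearly dense set $fS\cup\{\mathbf 1\}$, and $\mu(fs)=T_f(\mu)(s)$.

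For part (3), this is where the argument has substance: we must identify the pointwise closure $X_f=cl_p(\{m(f,\delta_f(x))\}_{x\in X})\subset Q_f$ with the spectrum $|\mathcal A_f|$ of the cyclic algebra $\mathcal A_f=\langle fS\cup\{\mathbf 1\}\rangle$, compatibly with the $S$-maps from $X$. The natural approach: $\pi_f:X\to X_f$ is an $S$-compactification, so by the Gelfand–Kolmogoroff correspondence (and its $S$-equivariant version, Remark \ref{r:actions}.3, applicable since $f\in\mathrm{WRUC}(X)$ forces the relevant functions into $\mathrm{RMC}(X)$ and the induced action on $X_f$ to be separately continuous) it suffices to show that the induced algebra $\pi_f^*(C(X_f))$ equals $\mathcal A_f$. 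For the inclusion $\mathcal A_f\subset\pi_f^*(C(X_f))$: the coordinate function $\mathrm{ev}_s:Q_f\to\R$ is continuous and affine, its restriction to $X_f$ lies in $C(X_f)$, and $\pi_f^*(\mathrm{ev}_s|_{X_f})(x)=m(f,\delta_f(x))(s)=f(sx)=(R_?)\dots$ — more precisely it equals the function $x\mapsto (fs)(x)$, i.e. $\pi_f^*$ hits every generator $fs$ of $\mathcal A_f$, and $\pi_f^*(C(X_f))$ is a closed unital subalgebra, so it contains $\langle fS\cup\{\mathbf 1\}\rangle=\mathcal A_f$. For the reverse inclusion one shows the coordinate functions $\{\mathrm{ev}_s|_{X_f}\}_{s\in S}$ together with $\mathbf 1$ separate points of $X_f$ (two points of $X_f\subset C(S)$ that agree on all $\mathrm{ev}_s$ are equal as functions on $S$) and generate a dense subalgebra, so by Stone–Weierstrass they generate $C(X_f)$, whence $\pi_f^*(C(X_f))=\langle \pi_f^*(\mathrm{ev}_s|_{X_f}),\mathbf 1\rangle=\langle fS\cup\{\mathbf 1\}\rangle=\mathcal A_f$. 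Equality of the two induced algebras then gives, via the $S$-equivariant Gelfand correspondence, that $\pi_f:X\to X_f$ and $\alpha_f:X\to|\mathcal A_f|$ are isomorphic $S$-compactifications. The main obstacle I anticipate is the bookkeeping in part (3): carefully checking that the action of $S$ on $X_f$ (a pointwise-closed, generally non-convex subset of $C(S)$ on which $S$ acts by $R_s$) is separately continuous so that the equivariant Gelfand–Kolmogoroff machinery of Remark \ref{r:actions}.3 genuinely applies, and making sure the identifications $\pi_f^*(\mathrm{ev}_s|_{X_f})=fs$ and $T_f\circ\delta_f$ are compatible with the various $S$-actions (dual action on $V_f^*$, right-translation action $R_s$ on $C(S)$, original action on $X$) — all the structure is forced, but verifying it cleanly requires care.
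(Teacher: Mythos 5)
Your proposal is correct, and its overall structure matches the paper's proof: part (1) is exactly the specialization of Lemma \ref{l:introtype} to the regular representation on $V_f$ (the paper's one-line argument is the same, resting on $f\in\mathrm{WRUC}(X)$), and part (3) is the same Stone--Weierstrass identification of algebras that the paper performs by citing Remark \ref{r:affAnd}.1, namely that the algebra of the compactification $\pi_f:X\to X_f$ is $\lan V_f\ran=\A_f$, so the Gelfand correspondence gives the isomorphism with $\a_f:X\to|\A_f|$. The only genuine divergence is in part (2): you prove injectivity of $T_f|_{M(V_f)}$ directly, observing that a mean $\mu$ is recovered from $T_f(\mu)$ since $\mu(fs)=T_f(\mu)(s)$ and $fS\cup\{\mathbf 1\}$ has dense span in $V_f$, and then use compactness to conclude $T_f$ is an affine homeomorphism intertwining $\delta_f$ and $\pi_f$; the paper instead argues by mutual domination, getting $\pi_f\succeq\delta_f$ from the single observation that the evaluation-at-$e$ functional $\widehat e$ restricted to $Q_f$ is continuous and affine with $f=\widehat e\circ\pi_f$, so that $f$ (hence all of $V_f$, by $S$-invariance and closedness) lies in the affine compactification space of $\pi_f$. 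Both arguments are equally short and rest on the same underlying fact that evaluations at points of $S$ recover $V_f$-data from $Q_f\subset C(S)$; yours is marginally more self-contained, the paper's avoids any appeal to compactness/injectivity by staying entirely within the domination formalism of Lemma \ref{aff-descr}. Your worry about separate continuity of the action on $X_f$ is harmless: $f\in\mathrm{WRUC}(X)\subset\mathrm{RMC}(X)$, and in any case the equivalence of compactifications only needs the $S_d$-equivariant version of the Gelfand--Kolmogoroff correspondence (Remark \ref{r:domin}).
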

\begin{proof} (1) $m(f,\psi) \in C(S)$ \ $\forall$ $\psi \in V_f^*$ because $f \in \mathrm{WRUC}(X)$.
Other conditions are also easy.

(2)
%Lemma \ref{l:introtype} implies that
$T_f: M(V_f) \to Q_f$ is a morphism of the affine $S$-compactifications $\delta_f: X \to M(V_f)$ and
$\pi_f: X \to Q_f$, where $\pi_f:=T_f \circ \delta_f$. So, $\delta_f \succeq \pi_f$.
In order to establish that $\pi_f \succeq \delta_f$ it is enough to show that
our original function $f: X \to \R$ belongs to the affine compactification
space of $\pi_f$. This follows from the observation that the evaluation at
$e$ functional, $\widehat{e}: C(S) \to \R$,
restricted to $\overline{co} (X_f) \subset C(S)$, is an affine function such that $f=\widehat{e} \circ \pi_f$.

(3) By Remark \ref{r:affAnd}.1 the algebra of the cyclic compactification $\pi_f: X \to X_f$
is just $\lan V_f \ran$, but this is exactly $\A_f$,
the algebra of the compactification $\a_f: X \to |\A_f|$.
\end{proof}

%\begin{defin} \label{d:cyclic}
%The affine $S$-compactification $\pi_f: X \to Q_f$ will be called the \emph{cyclic affine compactification} of $X$ (generated by $f$).
%Consider $X_f:=\overline{\pi_f(X)}^p  \subset Q_f$.
%Induced $S$-compactification $\pi_f: X \to X_f$ will be called the \emph{cyclic compactification} of $X$.
%\end{defin}

\begin{remark} \label{r:cycl-comes}  \
\ben
\item
Note that $f=F_e \circ \pi_f$, where $F_e:= \widehat{e}|_{X_f}$.
So $f$ comes from the $S$-system $X_f$. Moreover, if $f$
comes from an $S$-system $Y$ and an $S$-compactification $\nu: X \to Y$,
then there exists a continuous onto $S$-map $\a: Y \to X_f$ such that $\pi_f=\a \circ \nu$.
%In particular, $f \in \Acal_f \subset \Acal_{\nu}$.
The action of $S$ on $X_f$ is continuous iff $f \in \mathrm{RUC}(X)$
%031012
(see Remark \ref{r:actions}.3).
%%
%mm+++ podumat $S \widehat{e}=\{\widehat{s}\}_{s \in S}$
\item  Moreover, $\widehat{e}S =\{\widehat{s}\}_{s \in S}$ separates points of $Q_f$
(where $\widehat{s}$ is the evaluation at $s$ functional) and $(X_f)_{F_e} = X_f$.
\item
%031012 It is easy to show that the
The action of $S$ on $X_f$ is separately continuous iff $f \in \mathrm{RMC}(X)$
%031012
(use again Remark \ref{r:actions}.3).
By definition, $Q_f = \overline{co}^p (X_f)$ in $C(S)$.
%050312    I don't know if   $Q_f$ and $X_f$ are WRUC compact $S$-systems for every $f \in \mathrm{WRUC}(X)$.
At the same time the extended action of $S$ on $Q_f$ need not be separately continuous for $f \in \mathrm{RMC}(X)$.
%050312  this is the reason why we deal with $f \in \mathrm{WRUC}(X)$ and not $f \in \mathrm{RMC}(X)$
This is a reflection of the fact that in general $\mathrm{RMC}(X)$ is strictly larger than $\mathrm{WRUC}(X)$ (see \cite[p. 219]{BJM}).
%!!!!!!!!  p. 172 + Milnes-Pym80
However by Lemma \ref{l:classes} we know that $\mathrm{WRUC}(X)=\mathrm{RMC}(X)$ in many natural cases.
Also, $\mathrm{Tame}(X) \subset \mathrm{WRUC}(X)$ for every $S$-system $X$ by Proposition \ref{p:tame-is-wruc}.
\een
\end{remark}

\begin{lem} \label{l:prop}
Let $V$ be a Banach $S$-invariant unital subspace of $\mathrm{WRUC}(X)$ and $f \in V$. Then
\ben
\item $X_f:=\overline{\pi_f(X)}^p=cl_p(m(f,\delta_f(X)))$.
\item $Q_f=m(f,M(V_f))=m(f,M(V))=\overline{co}^p (X_f)$.
\item In the particular case of $X:=S$, with the left action of $S$ on itself, we have $X_f=\overline{Sf}^p$.
\een
\end{lem}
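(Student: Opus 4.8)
The plan is to read all three statements off the definitions of $\pi_f$, $Q_f$ and $X_f$ from Lemma~\ref{l:f-introtype}, using one appeal to Lemma~\ref{l:state}.2 and a routine ``continuous affine image of a compact convex hull'' argument; the only non-automatic point is the middle equality of (2). For (1): since $\pi_f=T_f\circ\delta_f$ and $T_f(\psi)=m(f,\psi)$, one has $\pi_f(X)=m(f,\delta_f(X))$ (these matrix coefficients lie in $C(S)$ because $fS\subset V_f\subset\mathrm{WRUC}(X)$), so $X_f=\overline{\pi_f(X)}^p=cl_p(m(f,\delta_f(X)))$ is literally the defining formula for $X_f$ in Lemma~\ref{l:f-introtype}.3; one may note in passing that $m(f,\delta_f(x))$ is the function $s\mapsto f(sx)$.

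For (2): by definition $Q_f=T_f(M(V_f))=m(f,M(V_f))$. To get $m(f,M(V_f))=m(f,M(V))$ I would extend $m(f,\cdot)$ to $V^*$ by $m(f,\mu)(s)=\mu(fs)$ --- well defined into $C(S)$ by Lemma~\ref{l:introtype}, applied to the restriction of the regular representation to the $S$-invariant subspace $V$ --- and then observe two things: (a) since $fs\in fS\subset V_f$ for every $s\in S$, the function $m(f,\mu)$ depends only on $\mu|_{V_f}$; and (b) the restriction map $M(V)\to M(V_f)$ is onto, because a mean on $V_f$ extends by Hahn--Banach to a norm-one functional on $V$, which is again a mean since $\textbf{1}\in V_f$ (as in the Claim inside the proof of Lemma~\ref{aff-descr}). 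Together (a) and (b) give $m(f,M(V))=m(f,M(V_f))$. For the last equality, recall from Lemma~\ref{l:f-introtype}.1 that $T_f\colon (V_f^*,w^*)\to(C(S),p)$ is linear (hence affine) and weak$^*$-to-pointwise continuous, and from Lemma~\ref{l:state}.2 that $M(V_f)=\overline{co}^{w^*}(\delta_f(X))$ is weak$^*$-compact. Then $T_f(M(V_f))=\overline{co}^p(\pi_f(X))$: the inclusion $\subseteq$ follows by approximating a point of $\overline{co(\delta_f(X))}^{w^*}$ by a net from $co(\delta_f(X))$ and using continuity and affineness of $T_f$; the inclusion $\supseteq$ holds because $T_f(M(V_f))$ is a compact convex set containing $\pi_f(X)$. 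As $\overline{co}^p(\pi_f(X))=\overline{co}^p(X_f)$, this yields $Q_f=\overline{co}^p(X_f)$.

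For (3): specialize to $X=S$ with left translations, so that $L_s$ acts on $C(S)$ as the right translation $R_s$. For $t,s\in S$ we compute $\pi_f(t)(s)=\delta_f(t)(fs)=(fs)(t)=f(st)=(R_tf)(s)$, i.e.\ $\pi_f(t)=R_tf=tf$ in the notation for the left $S$-action on $C(S)$; hence $\pi_f(S)=Sf$ and $X_f=\overline{\pi_f(S)}^p=\overline{Sf}^p$. I expect the only genuinely substantive step to be (b) above --- making precise that $Q_f$ is insensitive to the choice of ambient space $V\ni f$, which rests on $m(f,\cdot)$ seeing only $fS\cup\{\textbf{1}\}\subset V_f$ together with surjectivity of restriction between state spaces --- everything else being a direct translation of the definitions.
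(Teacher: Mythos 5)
Your proof is correct and follows essentially the same route as the paper, whose entire proof of this lemma is the one-line remark that it is straightforward from Lemma~\ref{l:f-introtype}; you have simply filled in the routine details (the Hahn--Banach extension of means giving surjectivity of $M(V)\to M(V_f)$, and the continuous-affine-image argument for $T_f(M(V_f))=\overline{co}^p(X_f)$), exactly the verifications the paper leaves implicit.
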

\begin{proof}
Straightforward, using Lemma \ref{l:f-introtype}.
\end{proof}

%Below in we study the question:
%when is $\a_f: G \to \overline{co} X_f$ E-compatible (Definition \ref{d:E-compat}).  For example
%if $f$ satisfies the Haydon's property
%%! Krein-Milman Property, $\overline{co} X_f$
%(see Definition \ref{d:new}), in particular if $f \in
%\mathrm{Tame}(G)$ (Theorem \ref{t:tame-func-n}).

Some other useful properties of cyclic compactifications can be found in \cite{BJM, GM1, GM-suc}.

\subsection{Introversion and semigroups of means}

In this section we assume that $F$ is a
%left and right $S$-invariant
% eli unital ? closed ?
normed
unital subspace of $C(S)$, where $S$, as before, is a semitopological
monoid. Suppose also that $F$ is
left translation invariant, that is, the function
% eli
$(L_sf)(x)=f(sx)$
belongs to $F$ for every $(f,s) \in F \times S$. Then the dual
action $S \times M(F) \to M(F)$ is well defined and each
$s$-translation, being the restriction of the adjoint operator
$L_s^*$, is continuous on $M(F)$.

We recall the fundamental definition of introverted subspaces which
was introduced by M.M. Day. We follow \cite{BJMo} and \cite
{BJM}.
%pages 72, 77 and section 3.

\begin{defin}\label{def-day} (M.M. Day)
%Let $F$ be a left translation invariant unital linear subspace of $l_{\infty}(S)$.
\ben
\item $F$ is \emph{left introverted}
%\footnote{we dot use right version, so we simply say: introverted}
if $m(F,F^*) \subset F$ (equivalently (Lemma \ref{l:state}.3),
$m(F,M(F)) \allowbreak \subset F$).
\item
When $F$ is an algebra then $F$ is said to be \emph{left
m-introverted} if $m(F,MM(F)) \allowbreak \subset F$.
%\item $F$ is \emph{admissible} (\emph{m-admissible}) if $F$ is left introverted
%(left m-introverted) and closed.

\nt \textbf{Causion:} \emph{We usually say simply that $F$ is \emph{introverted} (rather than \emph{left introverted}).}
\een
\end{defin}

%We sometimes say simply: ``introverted" instead of ``left introverted".

% eli
%The next result is mentioned for completeness, we will not need it in the sequel.

\begin{f} \label{f:product} (Evolution product (in the sense of J.S. Pym) \cite{BJMo}, \cite[Ch.2.2]{BJM}).
\ben
\item
If $F$ is an %left
introverted closed subspace of $C(S)$ then $F^*$ is a
Banach algebra under the dual space norm and multiplication
 $(\mu, \varphi) \mapsto \mu \odot \varphi$, where
$$
(\mu \odot \varphi) (f): = \mu (m(f,\varphi))
\ \ (f \in F).
$$
Furthermore, with respect to the weak$^*$ topology, $F^*$ is a right topological
affine semigroup, $(M(F), \odot)$ is a compact right topological
affine subsemigroup, \allowbreak $co(\delta(S)) \subset \Lambda(M(F))$
% \footnote{hence $\Lambda(M(F))$ is dense in $M(F)$}
and $\delta: S \to M(F)$ is an affine semigroup compactification.
\item
If $F$ is an m-introverted closed subalgebra of $C(S)$
then $(MM(F), \odot)$ is a compact right topological subsemigroup
of $(M(F), \odot)$. Furthermore, $\delta: S \to MM(F)$ is a right
topological semigroup compactification.
%300312
See also subsection \ref{s:p-univ} below for another view of m-introverted algebras as the algebras corresponding to enveloping semigroups.
 \een
\end{f}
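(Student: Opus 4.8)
The plan is to verify the Day--Pym evolution product directly from the definitions. First I would check that $\odot$ is well defined and bilinear on $F^*$: given $\mu,\varphi\in F^*$ and $f\in F$, introversion gives $m(f,\varphi)\in F$, so the formula $(\mu\odot\varphi)(f):=\mu(m(f,\varphi))$ is meaningful; linearity in $f$ is inherited from linearity of $f\mapsto m(f,\varphi)$, and since each left translation $L_s$ is a contraction of $C(S)$ one has $\|m(f,\varphi)\|_\infty\le\|\varphi\|\,\|f\|_\infty$, whence $\mu\odot\varphi\in F^*$ with $\|\mu\odot\varphi\|\le\|\mu\|\,\|\varphi\|$. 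Bilinearity of $(\mu,\varphi)\mapsto\mu\odot\varphi$ is immediate from the formula.

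The heart of part (1) is associativity, which comes down to the identity $m\bigl(m(f,\psi),\varphi\bigr)=m(f,\varphi\odot\psi)$ in $F$. To establish it I would first observe that $L_s\bigl(m(f,\psi)\bigr)=m(L_sf,\psi)$, which reduces to $L_{st}=L_tL_s$ on $C(S)$; evaluating both sides of the claimed identity at $s\in S$ then produces $\varphi\bigl(m(L_sf,\psi)\bigr)$ in either case. Applying $\mu$ yields $(\mu\odot\varphi)\odot\psi=\mu\odot(\varphi\odot\psi)$, so together with the norm estimate above $(F^*,\odot)$ is a Banach algebra. For the weak$^*$ topology: with $\varphi$ fixed, $\mu\mapsto(\mu\odot\varphi)(f)=\mu(m(f,\varphi))$ is weak$^*$-continuous because $m(f,\varphi)\in F$; hence every right translation $\rho_\varphi$ is weak$^*$-continuous, and bilinearity makes all translations affine, so $F^*$ is a right topological affine semigroup.

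Next I would pin down the state spaces. Since $L_s\mathbf 1=\mathbf 1$, one gets $m(\mathbf 1,\varphi)=\mathbf 1$ and hence $(\mu\odot\varphi)(\mathbf 1)=\mu(\mathbf 1)$; combined with $\|\mu\odot\varphi\|\le 1$ this shows $M(F)$ is $\odot$-closed, and it is weak$^*$ compact by Lemma \ref{l:state}.1. A direct computation gives $\delta(s)\odot\varphi=L_s^{*}\varphi$ and $\delta(s)\odot\delta(t)=\delta(st)$, so $\delta$ is a homomorphism and each $\lambda_{\delta(s)}$ is weak$^*$-continuous, i.e., $\delta(S)\subset\Lambda(M(F))$; since $\odot$ is linear in the first variable, $\Lambda(M(F))$ is convex, so $co(\delta(S))\subset\Lambda(M(F))$, while density of $\overline{co}^{w^*}(\delta(S))$ in $M(F)$ is Lemma \ref{l:state}.2. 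This is exactly the data of an affine semigroup compactification (Definition \ref{d:affinization}.1). For part (2), $MM(F)$ is weak$^*$-closed in $M(F)$ (multiplicativity survives weak$^*$ limits); if $\mu,\varphi\in MM(F)$, then multiplicativity of $\varphi$ together with $L_s(fg)=(L_sf)(L_sg)$ gives $m(fg,\varphi)=m(f,\varphi)\cdot m(g,\varphi)$ --- which lies in $F$ precisely because $F$ is m-introverted and an algebra --- and then multiplicativity of $\mu$ gives $(\mu\odot\varphi)(fg)=(\mu\odot\varphi)(f)\,(\mu\odot\varphi)(g)$; so $MM(F)$ is a $\odot$-subsemigroup, $\delta(S)\subset MM(F)$ is dense by Lemma \ref{l:state}.4 and lies in $\Lambda(MM(F))$, and $\delta$ is a continuous homomorphism, giving a right topological semigroup compactification (Definition \ref{d:semcomp}.1).

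I expect the only genuinely delicate point to be the associativity identity $m(m(f,\psi),\varphi)=m(f,\varphi\odot\psi)$: keeping the order of the left translations straight --- it reflects the ``op''-type convention behind the evolution product --- is where an order error would slip in, whereas everything else is routine norm estimates and weak$^*$-continuity of evaluation functionals.
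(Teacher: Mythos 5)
Your proof is correct and complete: well-definedness and the norm estimate via introversion and contractivity of $L_s$, associativity through the identity $m(m(f,\psi),\varphi)=m(f,\varphi\odot\psi)$ (resting on $L_{st}=L_tL_s$ and $L_s m(f,\psi)=m(L_sf,\psi)$), weak$^*$-continuity of right translations because $m(f,\varphi)\in F$, closure of $M(F)$ (resp. $MM(F)$) under $\odot$, the computations $\delta(s)\odot\varphi=L_s^*\varphi$ and $\delta(s)\odot\delta(t)=\delta(st)$, and convexity of $\Lambda(M(F))$ giving $co(\delta(S))\subset\Lambda(M(F))$. The paper states this as a Fact with a citation rather than proving it, and your direct verification is essentially the classical argument of Day--Pym as presented in the cited sources, so there is nothing to add beyond noting that weak$^*$-continuity of $\delta$ itself is immediate (it is part of Lemma \ref{l:state}.2) and that in your line $m(\mathbf{1},\varphi)=\mathbf{1}$ you are implicitly using $\varphi(\mathbf{1})=1$, which holds in the relevant context $\varphi\in M(F)$.
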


The following result shows that %left
the m-introverted algebras and %left
introverted subspaces of $C(S)$
% describe
correspond to
the semigroup compactifications, and affine
semigroup compactifications of $S$
respectively (compare Proposition \ref{bar}).
% of $S$.
%Observe that the canonical compactifications from items (7) and (6) of Lemma
%! \ref{intro-list} provide one of the directions in the proof.

\begin{f} \label{f:adm} Let $S$ be a semitopological semigroup.
 \ben
\item \cite[p. 108]{BJM}
 If $(\psi, K)$ is a semigroup compactification of $S$
 then $\psi^* C(K)$
%(where $\psi^*: C(K) \to C(S)$ is the natural map $f \mapsto f \circ \psi$)
is an %left
m-introverted closed subalgebra of $C(S)$. Conversely, if $F$ is an %left
m-introverted closed subalgebra of $C(S)$ then there exists a
unique
%(up to isomorphism)
semigroup compactification $(\psi,K)$ of $S$ such that $\psi^*
C(K)=F$. Namely
%one may consider
the canonical semigroup compactification $\delta: S \to MM(F)$.
\item \cite[p. 123]{BJM}
If $(\psi,K)$ is an affine semigroup compactification
of $S$ then $\psi^* \mathrm{A}(K)$ is an %left
introverted closed subspace
%subalgebra
of $\mathrm{WRUC}(S)$. Conversely, if $F$ is an %left
introverted closed subspace
%subalgebra
of $\mathrm{WRUC}(S)$ then there exists a unique
%(up to isomorphism)
affine semigroup compactification $(\psi,K)$ of $S$ such that
$\psi^* \mathrm{A}(K)=F$. Namely
%one may consider
the canonical affine semigroup compactification $\delta: S \to M(F)$.
\een
\end{f}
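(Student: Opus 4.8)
The plan is to prove assertions (1) and (2) by the same two-move scheme. In each, the forward implication is a short matrix-coefficient computation once the relevant state space is identified with $K$, and the converse together with the ``namely'' clause is immediate from the evolution-product Fact \ref{f:product} plus the observation that the Gelfand transform (resp. the canonical affine compactification of Lemma \ref{aff-descr}) returns $F$. Uniqueness is in both cases the Gelfand-Kolmogoroff correspondence recorded in Remark \ref{r:domin}.

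\emph{Part (1).} Let $(\psi,K)$ be a semigroup compactification. Since $\psi(S)$ is dense in $K$, the map $\psi^*: C(K)\to C(S)$, $\phi\mapsto\phi\circ\psi$, is an isometric algebra embedding, so $F:=\psi^*C(K)$ is a closed unital subalgebra of $C(S)$ whose maximal ideal space $MM(F)$ is canonically homeomorphic to $K$ via $k\mapsto\mu_k$, $\mu_k(\psi^*\phi)=\phi(k)$. To see $F$ is m-introverted, fix $f=\psi^*\phi\in F$ and $\mu_k\in MM(F)$; because $\psi$ is a homomorphism and $\psi(S)\subset\Lambda(K)$, the translate $L_sf$ equals $\psi^*(\phi\circ\lambda_{\psi(s)})$ with $\lambda_{\psi(s)}: K\to K$ continuous, hence $m(f,\mu_k)(s)=\mu_k(L_sf)=\phi(\psi(s)k)=\phi\big(\rho_k(\psi(s))\big)$; as $\rho_k: K\to K$ is continuous ($K$ is right topological), $m(f,\mu_k)=\psi^*(\phi\circ\rho_k)\in F$. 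Conversely, if $F$ is an m-introverted closed subalgebra then Fact \ref{f:product}.2 makes $\delta: S\to MM(F)$ a right topological semigroup compactification, and since $\widehat f(\delta(s))=\delta(s)(f)=f(s)$, the Gelfand transform $F\to C(MM(F))$ followed by $\delta^*$ is the identity of $F$, so $\delta^*C(MM(F))=F$; this $\delta$ is the asserted canonical compactification. If $(\psi_1,K_1),(\psi_2,K_2)$ both have algebra $F$ they are equivalent as compactifications of the space $S$, and since the structure maps are dense-image homomorphisms the equivalence is a semigroup isomorphism by Remark \ref{r:domin}.

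\emph{Part (2).} Let $(\psi,K)$ be an affine semigroup compactification. Then $\psi: S\to K$ is a separately continuous $S$-affine compactification of the $S$-system $S$ with its left action: the action translations $k\mapsto\psi(s)k$ are continuous because $\psi(S)\subset\Lambda(K)$ and affine because $K$ is an affine semigroup, the orbit maps $s\mapsto\psi(s)k=\rho_k(\psi(s))$ are continuous, and $\overline{co}(\psi(S))=K$. By Proposition \ref{bar}.1 its affine compactification space $F:=\mathrm{A}(K)|_S=\psi^*\mathrm{A}(K)$ is an $S$-invariant unital closed subspace of $\mathrm{WRUC}(S)$ (here $\psi^*$ is isometric on $\mathrm{A}(K)$ because an affine function attains the same supremum on $\psi(S)$ as on $\overline{co}(\psi(S))=K$), and the evaluation map $e: K\to M(F)$ of Lemma \ref{aff-descr} is an affine homeomorphism. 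For introversion take $f=\psi^*\widetilde f$ with $\widetilde f\in\mathrm{A}(K)$ and $\mu=e(k)$; left translations of $K$ are continuous and affine, so $L_sf=\psi^*(\widetilde f\circ\lambda_{\psi(s)})$ with $\widetilde f\circ\lambda_{\psi(s)}\in\mathrm{A}(K)$, whence $m(f,\mu)(s)=\mu(L_sf)=\widetilde f(\psi(s)k)=\widetilde f\big(\rho_k(\psi(s))\big)$; now the right translations $\rho_k$ are continuous ($K$ right topological) \emph{and} affine (the affine-semigroup axiom), so $\widetilde f\circ\rho_k\in\mathrm{A}(K)$ and $m(f,\mu)=\psi^*(\widetilde f\circ\rho_k)\in F$. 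Conversely, if $F$ is an introverted closed subspace of $\mathrm{WRUC}(S)$, Fact \ref{f:product}.1 produces the affine semigroup compactification $\delta: S\to M(F)$, and Lemma \ref{aff-descr} applied to the canonical $F$-affine compactification gives $\delta^*\mathrm{A}(M(F))=\mathrm{A}(M(F))|_S=F$; uniqueness is again Remark \ref{r:domin}, this time via the fact that a morphism between homomorphic compactifications is itself a homomorphism.

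The only real difficulty is bookkeeping: one must keep straight the asymmetric roles of the left and right translations of $K$. Continuity of the left translations $\lambda_{\psi(s)}$ — which is exactly the hypothesis $\psi(S)\subset\Lambda(K)$ — is what confines $L_sf$ to $\psi^*C(K)$ (resp. to $\psi^*\mathrm{A}(K)$), whereas the matrix coefficient $m(f,\mu)$ is transported by a right translation $\rho_k$, and in the affine case this step genuinely needs the affine-semigroup axiom that right translations be affine, while the subalgebra case needs only that $K$ be right topological. The subsidiary point to check, so that the Gelfand and Lemma \ref{aff-descr} identifications are legitimate, is that $\psi^*$ is an isometry onto its image — automatic on $C(K)$ since $\psi(S)$ is dense, and on $\mathrm{A}(K)$ since affine functions separate the points of $K$ and already attain their suprema on $\psi(S)$.
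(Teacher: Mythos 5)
Your proof is correct, and since the paper states this result as a Fact cited from \cite{BJM} without giving its own proof, there is nothing to diverge from: your argument is the standard one, correctly assembled from the paper's own ingredients (the identification $MM(\psi^*C(K))\cong K$, respectively $e: K\to M(F)$ from Lemma \ref{aff-descr}, the computation $m(f,\mu_k)=\psi^*(\phi\circ\rho_k)$ using right-topologicality — plus affinity of $\rho_k$ in the affine case — and Fact \ref{f:product} with Remark \ref{r:domin} for the converse and uniqueness). The only spot worth a word more is uniqueness in part (2), where the images $\psi_i(S)$ are only convex-hull dense, so the dense-image argument of Remark \ref{r:domin} has to be run on $co(\psi_1(S))$ using that the equivalence and the right translations are affine; this is routine and does not affect correctness.
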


%!!
Propositions \ref{l:W=Univ}.1 and  \ref{p:dominates}.1 cover Fact \ref{f:adm}.2.

\begin{remark} \label{r:introInWRUC}
%0912         %!!
%It is well known
\cite[p. 123 and p. 172]{BJM} $\mathrm{WRUC}(S)$ is the largest introverted subspace of $C(S)$.  %and it defines the maximal affine semigroup compactification of $S$.
\end{remark}

%0912
%2112
%300312
The next proposition demonstrates the universality of the standard operator affine semigroup compactifications.

\begin{prop} \label{l:W=Univ} \
\ben
\item
For every introverted closed unital subspace $F$ of $C(S)$ there exists a natural $co (\delta(S))$-admissible
affine embedding $M(F) \hookrightarrow \E(F)$ of right topological compact affine semigroups.
\item
Every affine semigroup compactification $\a: S \to Q$
is equivalent to a standard operator affine semigroup compactification $\a': S \to Q' \subset \E(V)$ for some Banach space $V$.
\een
\end{prop}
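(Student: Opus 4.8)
The plan is to prove (1) first and then deduce (2) from it together with Fact \ref{f:adm}.2. For part (1), start from an introverted closed unital subspace $F \subset C(S)$. By Remark \ref{r:introInWRUC} we have $F \subset \mathrm{WRUC}(S)$, so the regular representation of $S$ on the Banach space $V := F$ is weakly continuous: concretely, let $h: S \to \Theta(F)$ be given by $h(s) = L_s$ (left translation, $L_sf(t) = f(st)$), which is a co-homomorphism with $\|L_s\| \le 1$, and weak continuity is exactly the statement that each matrix coefficient $s \mapsto \langle vh(s), \psi\rangle$ lies in $C(S)$, which holds because $F \subset \mathrm{WRUC}(S)$ (cf. the discussion after Remark \ref{r:actions}). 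Now form $\E(F)$, the enveloping semigroup of $(\Theta(F)^{op}, B^*_F)$. The key computation is to identify the evolution-product structure on $M(F)$ (from Fact \ref{f:product}.1) with the operator-semigroup structure inside $\E(F)$. Each mean $\mu \in M(F)$ acts on $F^*$ by $\varphi \mapsto \mu \odot \varphi$, and one checks this map $L_\mu: F^* \to F^*$ is weak$^*$-continuous of norm $\le 1$, i.e. $L_\mu \in \Theta(F)^{op} \subset L(F^*)$ in the adjoint picture; so $\mu \mapsto L_\mu$ maps $M(F)$ into $L(F^*)$. I would then verify that this map is a weak$^*$-operator-to-weak$^*$ homeomorphic affine embedding onto a subset of $\overline{\Theta(F)^{op}}^{w^*\text{-op}} = \E(F)$ (using Lemma \ref{l:W=E}.2 to identify $\E(F)$ with the weak$^*$-operator closure of $\Theta(F)^{op}$), and that it intertwines $\odot$ with composition of operators, hence is a semigroup homomorphism. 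Admissibility: $co(\delta(S)) \subset \Lambda(M(F))$ by Fact \ref{f:product}.1, and under the embedding $\delta(S)$ lands in $h(S) \subset \Theta(F)^{op} = \Lambda(\E(F))$ (Lemma \ref{l:transit}.5), so the embedding is $co(\delta(S))$-admissible in the sense of Definition \ref{d:reprCRTS}.

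For part (2), let $\a: S \to Q$ be an affine semigroup compactification. By Fact \ref{f:adm}.2 the space $F := \a^* \mathrm{A}(Q)$ is an introverted closed subspace of $\mathrm{WRUC}(S)$, and $\a: S \to Q$ is equivalent (as an affine semigroup compactification) to the canonical one $\delta: S \to M(F)$. Apply part (1) with $V := F$: compose $\delta$ with the admissible affine embedding $M(F) \hookrightarrow \E(V)$ to get $\a': S \to Q' \subset \E(V)$ where $Q' := \overline{co}^{w^*}(h(S)^{op})$ is the image of $M(F)$; since $\delta(S)$ maps into $h(S)$ and $M(F) = \overline{co}^{w^*}(\delta(S))$, the image is exactly $\overline{co}(h(S)^{op})$ in $\E(V)$, i.e. a standard operator affine semigroup compactification in the sense of Remark \ref{r:affAnd}.2. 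Equivalence of affine semigroup compactifications is then immediate since both have the same affine compactification space $F$ (invoke Lemma \ref{aff-descr}, noting by Remark \ref{r:affAnd}.3 that the affine compactification space of the operator compactification is the coefficient space $M_h$, which for the regular representation on $F$ is $F$ itself).

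The main obstacle I anticipate is the careful bookkeeping in part (1): one must correctly track the several adjoints (the pairing $F \times F^* \to \R$, the dual action, the ``op'' conventions on $\Theta(F)$ versus $\E(F)$) to see that the evolution product $\odot$ on $M(F)$ really corresponds to operator composition inside $\E(F)$ rather than its opposite, and that the weak$^*$-operator topology on $\E(F)$ restricts to the weak$^*$ topology on $M(F)$. Concretely, the identity to verify is $\widetilde{m}(v,\psi)(L_\mu) = \langle v, L_\mu \psi \rangle = \mu(m(v,\psi)) = (\mu \odot \psi)(v)$ for $v \in F$, $\psi \in F^*$, which simultaneously shows well-definedness of $L_\mu$, continuity of $\mu \mapsto L_\mu$ into the weak$^*$-operator topology, and that the embedding is topological (since the functions $\widetilde{m}(v,\psi)$ generate that topology and their restrictions to $M(F)$ are the weak$^*$ coordinate functionals $\mu \mapsto \mu(m(v,\psi))$, which — as $m(v,\psi)$ ranges over all of $F$ by introversion — generate the weak$^*$ topology on $M(F)$). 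Everything else — affineness, the homomorphism property, density of $h(S)$ in its closure, $\Lambda$-admissibility — follows routinely once this identification is in place.
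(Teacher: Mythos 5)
Your map is the same as the paper's ($i(\mu)(\varphi)=\mu\odot\varphi$), and your identity $\langle v, i(\mu)\psi\rangle=\mu\bigl(m(v,\psi)\bigr)$ does yield, via introversion ($m(v,\psi)\in F$), the weak$^*$-to-weak$^*$-operator continuity of $\mu\mapsto i(\mu)$, injectivity, and the topological embedding property. However, the step ``one checks this map $L_\mu\colon F^*\to F^*$ is weak$^*$-continuous, i.e.\ $L_\mu\in\Theta(F)^{op}$'' is false in general, and it is exactly the step on which you base the claim that the image lies in $\E(F)$. For a general mean $\mu$ the operator $\varphi\mapsto\mu\odot\varphi$ is a norm contraction but is \emph{not} the adjoint of an operator on $F$: weak$^*$ continuity of $\varphi\mapsto\mu\bigl(m(f,\varphi)\bigr)$ would require $\varphi\mapsto m(f,\varphi)$ to be weak$^*$-to-weak continuous, which fails outside the WAP setting. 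Worse, if your claim were true then $i$ would map all of $M(F)$ into $\Theta(F)^{op}=\Lambda(\E(F))$ (Lemma \ref{l:transit}.5), so every left translation of the compact right topological semigroup $(M(F),\odot)$ would be continuous, i.e.\ $M(F)$ would be semitopological; taking $S=\Z$ and the introverted algebra $F=l_{\infty}(\Z)$ this would force $\beta\Z$ to be semitopological, contradicting the fact that its topological center is $\Z$. So as written the argument proves too much and leaves the crucial inclusion $i(M(F))\subset\E(F)$ unjustified.

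The gap is repairable, and the repaired argument is what the paper's (very terse) proof intends. Your identity with $\mu=\delta(s)$ gives $i(\delta(s))=h(s)^*\in\Theta(F)^{op}$; since $\Theta(F)$ (hence $\Theta(F)^{op}$) is convex, $i\bigl(co\,\delta(S)\bigr)\subset\Theta(F)^{op}$. Now use $M(F)=\overline{co}^{w^*}(\delta(S))$ (Lemma \ref{l:state}.2) together with the weak$^*$-to-weak$^*$-operator continuity of $i$ (which you did establish, and which is where introversion enters) and compactness to conclude $i(M(F))\subset\overline{\Theta(F)^{op}}=\E(F)$ (Lemma \ref{l:W=E}.2). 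Only the elements of $i\bigl(co\,\delta(S)\bigr)$, not of all of $i(M(F))$, land in $\Lambda(\E(F))$ --- and that is precisely what $co(\delta(S))$-admissibility requires. With this substitution the rest of your outline (the homomorphism property identifying $\odot$ with composition, admissibility, and the deduction of (2) from (1) plus Fact \ref{f:adm}.2 and the identification of the affine compactification space with the coefficient space) agrees with the paper, whose proof of (2) is exactly ``a conclusion from (1) and Fact \ref{f:adm}.2''.
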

\begin{proof}
(1) The following natural map
$$
i: M(F) \hookrightarrow \E(F), \ \ \ i(m)(\varphi)=m \odot \varphi \ \ \forall \varphi \in F^*
$$
is the desired embedding, where $m \odot \varphi$ is the evolution product (Fact \ref{f:product}.1).
The continuity is easy to verify,
%0912 Eli, I don't know if the proposition remains true for semigroups which are not monoid
and the injectivity follows from the fact that if $e$ is the neutral element of $S$
then $\delta(e)$ is the neutral element of $M(F)$.

(2) is a conclusion from (1) and Fact \ref{f:adm}.2.
\end{proof}

%0912 it is worth to mention the following fact somewhere
%(Pym \cite[Example 1.4]{Pym90}, \cite{ Pym93}) Let $S$ be a
%discrete semigroup and $A:=l^1(S)$ be the semigroup algebra with
%convolution as multiplication. Then $\{f \in A^{**}: \ f \geq 0, \
%||f||=1\}$ with the weak$^*$ topology is the universal affine
%compactification of $S$.

%0912
\subsection{Point-universality of systems}
\label{s:p-univ}

For a point-transitive compact separately continuous $S$-system $(X,x_0)$ consider the natural $S$-compactification
map $j_{x_0}: S \to X, s \mapsto sx_0$ and the corresponding
Banach algebra embedding $j_{x_0}^*: C(X) \hookrightarrow C(S)$. Denote $\mathcal{A}(X,x_0)=j_{x_0}^*(C(X))$.
The enveloping semigroup $(E(X),e)$
is always a point-transitive (separately continuous) $S$-space.
Hence, $\mathcal{A}(E(X),e)$ is well defined and $E(X) \to X, p \mapsto px_0$ is the
natural surjective $S$-map.
Clearly $\mathcal{A}(X,x_0) \subset \mathcal{A}(E(X),e)$.

%0912  %SV   I guess it remains true for any (semitopological monoid) $S$
Recall that a point-transitive $S$-flow $(X,x_0)$ is said to be \emph{point-universal} \cite{GM1, GM-suc} if
%033012
it has the property that for every $x\in X$ there is a homomorphism $\pi_x:(X,x_0)\to ({\cls}(Sx),x)$.
This is the case iff
$S \to X, \ g \mapsto gx_0$ is a right topological semigroup compactification of
$S$;
iff $(X,x_0)$ is isomorphic to its own enveloping semigroup, $(X,x_0) \cong (E(X),e)$;
iff the algebra $\Acal(X,x_0)$ is m-introverted.

%Every operator compactification $h: S \to P:=\overline{h(S)^{op}}$ defines a point-universal $S$-flow $(P,e)$.

%As a corollary one may derive the following known result
%
%\begin{prop} \label{p:wap=introv} (see \cite[ch.3 Lemma 8.8]{BJMo})
% Every left and right $G$-invariant unital closed
%subspace of $\mathrm{WAP}(G)$ is introverted. %In particular, $Hilb(G)$ is introverted.
%\end{prop}
%
%It implies the following.
%(For commutative $S$ it is a known result. See \cite{Dow} or \cite{Gl})
%Every transitive WAP compact $S$-space $(X,x_0)$ is point-universal.
%Hence the compactification $S \to X, s \mapsto sx_0$ is equivalent to the semigroup compactification $S \to E(X)$.

%A closed
%$G$-invariant subalgebra $\Acal\subset \mathrm{RUC}(G)$ is called {\em
%point-universal\/} if the corresponding Gelfand system
%$(G,|\Acal|,\eva_e)$ is point-universal.

%Recall the following characterization of point-universality
%\cite{GM1,GM-suc}.

%!\begin{lem} \label{l:pu} \cite{GM1,GM-suc}
%The following conditions are equivalent:
%\ben
%\item
%The system $(X,x_0)$ is point-universal.
%\item
%The map $G \to X, \ g \mapsto gx_0$ is a right topological
%semigroup compactification of $G$.
%\item
%$(X,x_0)$ is $G$-isomorphic to its enveloping semigroup
%$(E(X),j(e))$.
%\item
%$\Acal(X,x_0) = \bigcup_{x \in X} \Acal ({\cls}(Gx), x)$.
%\item
%$X_f \subset \Acal(X,x_0)$ for every $f \in \Acal(X,x_0)$.
%\item The algebra $\Acal(X,x_0)$ is m-introverted.
%
%\een
%\end{lem}

\section{Operator enveloping semigroups}
\label{s:Op-env}

% eli

\subsection{The notion of E-compatibility}
\label{s:motivation}

In a review article \cite[p. 212]{Pym90} J. Pym  asks the general
question: ``how affine flows might be obtained?"
% eli
% \footnote{One of the
%motivations in the context was Ryll-Nardzewski fixed point
%theorem.}
%He recalls one of the most important particular cases
%getting from a given compact flow $X$ the affine flow $P(X)$ of
%all probability measures, where $X$ naturally is $S$-embedded into
%$P(X)$ by point measures.
and then singles out the canonical construction where,
with a given compact $S$-flow $X$ one associates the induced
%0912 by the way J. Pym misses here a technical point of ``separate continuity" of the induced action -- it is not true in general
%one more fact to justify w-admissibility ...
affine flow on $P(X)$, the compact
convex space of probability measures on $X$, and where $X$
is naturally embedded into $P(X)$ by identifying the points
of $X$ with the corresponding dirac measures. Then $P(X)$ is at least $S_d$-space with respect to
the induced affine action $S \times P(X) \to P(X)$. Recall that by Proposition \ref{bar}.2,
$P(X)$ is an $S$-space (i.e., the action is separately continuous)
iff $X$ is WRUC.

%Let $X$ be a continuous $S$-system. Then $P(X)$ is a continuous $S$-system.
In turn, $P(X)$ can be viewed (via Riesz'
representation theorem) as a part of the weak$^*$ compact unit
ball $B^*$ in the dual space $C(X)^*$. So we have the embeddings of $S_d$-systems
$$B^*\supset P(X)\supset X.$$
These embeddings induce the continuous onto homomorphisms of the %corresponding
enveloping semigroups
$$
E(B^*) \to E(P(X)) \to E(X).
$$
The first homomorphism $E(B^*) \to E(P(X))$ is always an isomorphism (Lemma \ref{l:state}.3).
Pym
%raised the question about the relationship between $E(P(X))$
%and $E(X)$. In fact the question is
asks when the second homomorphism $\phi: E(P(X)) \to
E(X)$ is an isomorphism.
The first systematic study of this question is to be found
in a paper of K\"{o}hler \cite{Ko}.
%290512 She called a flow for which $\phi$ is an isomorphism a \emph{regular flow}.
Since $\phi$ is an isomorphism iff it is injective, following
\cite{Gl-tame}, we say that an $S$-system $X$ (with continuous action) is \emph{injective}
when $\phi$ is an isomorphism.
See Definition \ref{d:inj} for a more general version.

For cascades (dynamical $\Z$-systems) the first non-injective example was constructed by Glasner \cite{Gl-tame},
answering a question of K\"{o}hler \cite{Ko}.
%300312
Earlier Immervoll \cite{Imm} gave an example of a non-injective system $(S,X)$ where $S$ is a some special
semigroup $S$.

Now we turn to a more general question. In the construction above
instead of the Banach space $C(X)$ and the natural embeddings
$X \subset P(X) \subset B^*$ one may consider
representations on general Banach spaces $V$.

\begin{question}
%When a representation of a flow on a Banach space induces the
%operator compactification of its enveloping semigroup ?
When is the enveloping semigroup of an affine compactification of
a compact system $X$, arising from a representation on a Banach space $V$,
isomorphic to the enveloping semigroup of the system itself ?
\end{question}

More precisely, let
%jjj
%(similar to the setting of Section \ref{s:Aff-c-fl})
$$
(h,\a): (S,X) \rightrightarrows (\Theta(V)^{op}, V^*)
$$
be a weakly continuous representation %(that is, $h$ is weakly continuous)
of a (not necessarily compact) $S$-system $X$ on a Banach space $V$.
It induces an $S$-compactification $X \to Y$, where $Y:=\overline{\a(X)}$
and an $S$-affine compactification $X \to Q$,
where $Q:= \overline{co} \ \a(X)$.
%If $X$ is compact, clearly $Y=\a(X)$.
Since $h$ is weakly continuous it follows that the action of $S$ on
the weak$^*$ compact unit ball $B^*$ (hence also on $Y$ and $Q$) is
separately continuous.
%Denote by ${\mathcal
%E}:=E(S,Q)$ the corresponding enveloping semigroup.
%Now we give a natural generalization of a definition (for
%$V:=C(X)$ and compact cascades $X$) from K\"{o}hler's paper \cite{Ko}.
%As in \cite{Ko} (see also \cite{Gl-tame}) we call it the {\it enveloping operator
%semigroup}.

By our definitions, $\a(X)$ and hence, $Y$ and $Q$ are norm bounded. So for some $r>0$ we have the embeddings of $S$-systems:
$$rB^* \supset Q \supset Y=\overline{\a(X)}.$$
By Lemma \ref{l:Env-all}, we get the induced continuous surjective homomorphisms of the %corresponding
enveloping semigroups (of course, the $S$-spaces $B^*$ and $rB^*$ are isomorphic):
$$
\psi: E(B^*) \to E(Q), \ \ \Phi: E(Q) \to E(Y).
$$

%0912
\begin{lem} \label{l:algebras} \
\ben
\item
$m(V, \overline{sp}^{norm}(A)) \subset \overline{sp}^{norm} (m(V,A))$ for every subset
$A \subset V^*$. %, where the ``closure" means ``norm closure".
\item The algebra of the compactification $S \to E(Y)$ is $\A(E(Y),e)=\lan m(V,Y) \ran$.
\item The algebra of the compactification $S \to E(Q)$ is $\A(E(Q),e)=\lan m(V,Q) \ran$.
\item The algebra of the compactification $S \to E(B^*)$ is
$\A(E(B^*),e)= \lan m(V,V^*) \ran=A_h,$ where $A_h$ is the coefficient algebra.
\een
\end{lem}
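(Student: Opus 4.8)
The plan is to reduce items (2)--(4) to the description of Ellis compactifications in terms of matrix coefficients (Lemma \ref{l:Env-all}.1 and Lemma \ref{l:W=E}), after first disposing of the purely functional-analytic statement (1). For (1), note that by Lemma \ref{l:introtype}, for each fixed $v \in V$ the map $T_v : V^* \to C(S)$, $T_v(\psi) = m(v,\psi)$, is linear and bounded, hence norm-to-norm continuous. Consequently $T_v$ maps $\overline{sp}^{norm}(A)$ into the norm closure of $T_v(sp(A)) = sp(m(v,A))$, which is contained in $\overline{sp}^{norm}(m(V,A))$. Letting $v$ range over $V$ gives the asserted inclusion $m(V, \overline{sp}^{norm}(A)) \subset \overline{sp}^{norm}(m(V,A))$.

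For (2) and (3) I would first observe that, since $h$ is weakly continuous, the dual action of $S$ on the weak$^*$ compact ball $rB^*$ is separately continuous (Remark \ref{l:actions-g}.2), and that $Y = \overline{\a(X)}$ and $Q = \overline{co}\,\a(X)$ are weak$^*$ closed, hence weak$^*$ compact, $S$-invariant subsets of $rB^*$; thus $(S,Y)$ and $(S,Q)$ are compact semitopological $S$-systems and their enveloping semigroups are well defined. For $Y$, set $L := \{\tilde{v}|_Y : v \in V\} \subset C(Y)$, where $\tilde{v}(\varphi) = \lan v,\varphi\ran$. By the Hahn--Banach theorem the functionals $\tilde{v}$ separate the points of $V^*$, so $L$ separates the points of $Y$; moreover $m(\tilde{v}|_Y, y)(s) = \tilde{v}(sy) = \lan v, sy\ran = m(v,y)(s)$, so $m(L,Y) = m(V,Y)$. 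Now Lemma \ref{l:Env-all}.1 identifies the Ellis compactification $S \to E(Y)$ with the one corresponding to the algebra $\lan m(L,Y)\ran = \lan m(V,Y)\ran$; and since $\A(E(Y),e)$ is, by the definitions of subsection \ref{s:p-univ}, exactly the algebra of this Ellis compactification, we obtain $\A(E(Y),e) = \lan m(V,Y)\ran$. The proof of (3) is word for word the same with $Q$ in place of $Y$ (again $\tilde{v}|_Q$ separates the points of the convex weak$^*$ compact set $Q$, by Hahn--Banach).

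Finally, (4) is immediate from Lemma \ref{l:W=E}.1, which states that the algebra of the Ellis compactification $j : S \to E(S,B^*) = E(B^*)$ is the coefficient algebra $A_h = \lan m(V,V^*)\ran$; alternatively it is the special case $Q = rB^*$ of (3), together with the remark that $m(V, rB^*) = r\, m(V, B^*)$ and $m(V,V^*) = \bigcup_{r>0} m(V, rB^*)$ have equal linear spans, so $\lan m(V,B^*)\ran = \lan m(V,V^*)\ran$. I do not expect a genuine obstacle here: the only points requiring a little care are the bookkeeping that makes all the objects legitimate --- separate continuity of the $S$-actions on $Y$ and $Q$, so that Lemma \ref{l:Env-all}.1 applies --- and the essentially definitional identification of ``$\A(E(\cdot),e)$'' with ``the algebra of the corresponding Ellis compactification'', both of which are supplied by the results quoted above.
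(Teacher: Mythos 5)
Your proof is correct and follows essentially the same route as the paper, whose proof is just a terse instruction to use part (1) together with Lemma \ref{l:Env-all}.1 and the definitions of Section \ref{s:p-univ}; you merely supply the routine details (the point-separating family $L=\{\tilde v|_Y\}$, separate continuity of the actions on $Y$ and $Q$, and the boundedness of $T_v$ for (1)). Citing Lemma \ref{l:W=E}.1 for (4) is a harmless shortcut, since that lemma was itself obtained from the same ingredients.
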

\begin{proof} (1) is straightforward. For other assertions
use (1) and Lemma \ref{l:Env-all}.1 taking into account the definitions of Section \ref{s:p-univ}.
\end{proof}
%\lan m(V,B^*) \ran=

 \begin{defin} \label{d:E-compat}
Let $X$ be an %(not necessarily compact)
$S$-flow.
\ben
\item We say that an \emph{$S$-affine compactification} (Definition \ref{d:affinization}.2)
$\a: X \to Q$ is \emph{E-compatible} if the map
$\Phi: E(Q) \to E(Y)$ is an isomorphism
(equivalently, is injective), where $Y:=\overline{\a(X)}$.
%031012
By Lemma \ref{l:algebras} it is equivalent
to saying that if $\A(E(Q),e) \subset \A(E(Y),e)$ or if $m(V,Q) \subset \lan m(V,Y) \ran$.
\item
We say that a weakly continuous Banach representation $(h,\a): (S,X) \rightrightarrows
(\Theta(V)^{op},V^*)$ of an
%(not necessarily compact)
$S$-flow $X$ on a Banach space $V$ is:
\begin{itemize}
\item [(a)]
$E$-\emph{compatible} if the map $\Phi: E(Q) \to E(Y)$ is an isomorphism where
$Q:= \overline{co} \ \a(X)$. %(equivalently, isomorphic).
That is, if the induced affine compactification of the representation $(h,\a)$ is E-compatible.
\item [(b)]
\emph{Strongly $E$-compatible} if the map $\Phi \circ \Psi: E(B^*) \to E(Y)$ is an isomorphism.
%031012
It is equivalent to saying that
$m(V,V^*) \subset \lan m(V,Y) \ran$ (equivalently, $m(V,V^*) \subset \Acal(E(Y),e)$).
\end{itemize}
%!!
%\item
%Let $V \subset C(X)$ be a closed unital $S$-invariant %$X$-separating
%subspace. We say that $V$ is \emph{E-compatible} if the regular
%$V$-representation of $(S,X)$ on $V$ is E-compatible.
%12.5
%name: ``introverted", as you suggested, leaves in my opinion too much large room for misunderstandings.
%Since, here every
%$\mu \in V^*$ is a finite linear combination of members of $Q=M(V)$,
%(3) is equivalent to say that the homomorphism $E(B^*) \to E(Y)$ is injective.
\een
\end{defin}

%!!!! \begin{lem} \label{l:to w-cont} ?? nigde ????????
%If the action $(S,X)$ is separately continuous and $(h,\a)$
%satisfies Kehler's property then necessarily $h: S \to \Theta(V)$
%is weakly continuous.
%\end{lem}

%\begin{lem} \label{l} *********
%Let $(h,\a)$ be a representation of a \emph{compact} $S$-system
%$X$ on a Banach space $V$. TFAE: \ben
%\item $(h,\a)$ is K-uniform.
%
%\item The natural surjective continuous homomorphism
%$$\g: E(S,B^*) \to E(S,\a(X))$$ of enveloping semigroups is
%injective (hence, a homeomorphism). \een
%\end{lem}

%Below in we study the question:
%when is $\a_f: G \to \overline{co} X_f$ E-compatible (Definition \ref{d:E-compat}).  For example
%if $f$ satisfies the Haydon's property
%%! Krein-Milman Property, $\overline{co} X_f$
%(see Definition \ref{d:new}), in particular if $f \in
%\mathrm{Tame}(G)$ (Theorem \ref{t:tame-func-n}).

%For later use we need the next technical lemma. First a few definitions.
We say that $K \subset V^*$ is a $w^*$-\emph{generating} subset of $V^*$ if
$sp(\overline{co}^{w^*}(K))$ is norm dense in $V^*$.
A representation $(h,\a)$ of a system $(S,X)$ on $V$ is
$w^*$-\emph{generating} (or simply \emph{generating})
if $\a(X)$ is a $w^*$-generating subset of $V^*$.
%300312
Later on (in the proof of Theorem \ref{t:tame=R-repr}) we will have the occasion
to use the versatile construction of Davis-Figiel-Johnson-Pelczy\'nski \cite{DFJP}. The second item of the next lemma refers to this construction.

\begin{lem} \label{l:generating} \
\ben
\item
For every space $X$ and
% eli
a closed unital linear subspace $V \subset C(X)$ the regular
$V$-representation $\a: X \to V^*$ is generating.
\item
Let $E$ be a Banach space and let $\| \ \|_n, n \in \N$, be a sequence
of norms on $E$ where each of the norms is equivalent to the given norm
of $E$. For $v\in E,$ let
$$
N(v):=\left(\sum^\infty_{n=1} \| v \|^2_n\right)^{1/2} \hskip
0.1cm \text{and} \hskip 0.1cm \hskip 0.1cm V: = \{ v \in E
\bigm| N(v) < \infty \}.
$$
Denote by $j: V \hookrightarrow E$ the inclusion map. Then
$(V,N)$ is a Banach space, $j: V \to E$ is a continuous linear
injection such that $j^*: E^* \to V^*$ is norm dense. If $E=C(X)$ and $\a=j^* \circ \delta: X \to V^*$ is the induced map
then $\a(X)$ is a $w^*$-generating subset of $V^*$.
\een
\end{lem}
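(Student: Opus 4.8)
The plan is to treat the two parts separately.

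For part (1) I would observe that $\a$ is just the canonical map $\delta_V\colon X\to M(V)\subset V^*$ into the state space of $V$, i.e. $\a(x)=\delta_V(x)$ is the evaluation mean $f\mapsto f(x)$. By Lemma \ref{l:state}.2, $\overline{co}^{w^*}(\a(X))=M(V)$, and by Lemma \ref{l:state}.3 every functional on $V$ is a finite linear combination of means, so $sp(M(V))=V^*$. Hence $sp\bigl(\overline{co}^{w^*}(\a(X))\bigr)=V^*$, which is a fortiori norm dense; thus the regular $V$-representation is generating.

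For part (2) the verifications that $N$ is a norm and that $(V,N)$ is complete are routine and I would only sketch them: $N(v)=0$ forces $\|v\|_1=0$, hence $v=0$; the triangle inequality is Minkowski's inequality in $l_2$ applied coordinatewise; and an $N$-Cauchy sequence, being $\|\cdot\|_1$-Cauchy, converges in $E$ to some $v$, after which one passes to the limit in the partial sums $\sum_{n\le m}\|v_k-v_l\|_n^2$ (using continuity of each $\|\cdot\|_n$ for $\|\cdot\|_E$) and lets $m\to\infty$ to conclude $v\in V$ and $N(v_k-v)\to 0$. The map $j$ is injective (a set inclusion) and continuous, since $\|v\|_E\le C\|v\|_1\le C\,N(v)$. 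For the final clause, with $E=C(X)$ one has $\a=j^*\circ\delta$, where $\delta\colon X\to P(X)$ is the map of part (1) for $V=C(X)$; since $j^*$ is linear and $w^*$-to-$w^*$ continuous, $\overline{co}^{w^*}(\a(X))\supseteq j^*(P(X))$, whence, using $sp(P(X))=C(X)^*$ from part (1), $sp\bigl(\overline{co}^{w^*}(\a(X))\bigr)\supseteq j^*(E^*)$. So $\a(X)$ is $w^*$-generating as soon as $j^*(E^*)$ is norm dense in $V^*$.

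The hard part is exactly this norm density of $j^*(E^*)$ in $V^*$ --- it is also the only non-formal ingredient of the Davis--Figiel--Johnson--Pelczy\'nski construction \cite{DFJP}, from which one could simply quote it. My own argument would run as follows. By Hahn--Banach, a subspace $M\subset V^*$ is norm dense iff its annihilator in $V^{**}$ vanishes; since $\bigl(j^*(E^*)\bigr)^{\perp}=\ker j^{**}$, it suffices to show $j^{**}\colon V^{**}\to E^{**}$ is injective. To see this I would embed $(V,N)$ into the $l_2$-sum $Z:=\bigl(\Sigma_{n\in\N}(E,\|\cdot\|_n)\bigr)_{l_2}$ via the diagonal map $\iota(v)=(v,v,v,\dots)$; it is an isometry of $(V,N)$ onto a closed subspace of $Z$ (completeness of $(V,N)$), so $\iota^{**}\colon V^{**}\to Z^{**}$ is an injective embedding, while $Z^{**}=\bigl(\Sigma_{n\in\N}(E,\|\cdot\|_n)^{**}\bigr)_{l_2}$ by Lemma \ref{l:sum}.1 applied twice. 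Since $j=q_n\circ\iota$ factors through the $n$-th coordinate projection $q_n\colon Z\to E$ for every $n$, and $q_n^{**}$ is the $n$-th coordinate projection on $Z^{**}$, any element of $\ker j^{**}$ has all coordinates of its image under $\iota^{**}$ equal to zero, hence is zero. This yields injectivity of $j^{**}$ and finishes the proof.
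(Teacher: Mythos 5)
Your proof is correct, and for all but one ingredient it follows the paper's own route: part (1) is the paper's argument verbatim (Lemma \ref{l:state} gives $sp(\overline{co}^{w^*}(\a(X)))=V^*$), and in part (2) the deduction of $w^*$-generation of $\a(X)$ from norm density of $j^*(E^*)$ in $V^*$, via $\a(X)=j^*(\delta(X))$ and $sp(P(X))=C(X)^*$, is exactly what the paper does. The one place you diverge is that the paper simply quotes Fabian \cite[Lemma 1.2.2]{Fa} for that density (together with the routine facts that $(V,N)$ is a Banach space and $j$ a continuous injection), whereas you prove it: by Hahn--Banach the density is equivalent to injectivity of $j^{**}$, which you get from the isometric diagonal embedding of $(V,N)$ onto a closed subspace of $Z=\bigl(\Sigma_{n\in\N}(E,\|\cdot\|_n)\bigr)_{l_2}$, the identification $Z^{**}=\bigl(\Sigma_{n}(E,\|\cdot\|_n)^{**}\bigr)_{l_2}$ from Lemma \ref{l:sum}.1, and the factorizations $j=q_n\circ\iota$ through the coordinate projections. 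This argument is sound (it is, in essence, the standard Davis--Figiel--Johnson--Pelczy\'nski argument behind the cited lemma), so what your version buys is self-containedness, at the cost of reproducing material the paper deliberately outsources to \cite{Fa}.
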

\begin{proof} (1)
Indeed by Lemma \ref{l:state} every
$\mu \in V^*$ is a finite linear combination of members of $M(V)=\overline{co}^{w^*} (\a(X))$.
Hence, $sp(\overline{co}^{w^*} (\a(X)) = V^*$.
%So we can apply Lemma \ref{l:E-compatible}.

(2) For the proof that $j^*: E^* \to V^*$ is norm-dense see Fabian \cite[Lemma 1.2.2]{Fa}.
Since $\delta(X)$ affinely generates $M(C(X)^*)$ and $M(C(X)^*)$ linearly spans $C(X)^*$
(Lemma \ref{l:state}) it follows that $sp (\overline{co}^{w^*} (\a(X)))$
is norm dense in $V^*$, where $\a (X)=j^* (\delta (X))$.
So, $\a(X)$ is $w^*$-generating in $V^*$.
\end{proof}

\begin{lem} \label{norm-cl}  \
\ben
\item
Suppose $\overline{co}^{w^*} (Y)=\overline{co}^{norm} (Y)$ holds for a weakly continuous representation $(h,\a)$ on a Banach space $V$,
where $Y:=\overline{\a(X)}^{w^*}$.
Then the representation is E-compatible.
%\item
%If the linear span $sp(\a(X))$ of $\a(X)$ in $V^*$ is a
%norm dense subset of $V^*$ then the flow representation $(h,\a)$ on $V$ is strongly E-compatible.
\item
For $w^*$-generating representations, E-compatibility implies strong E-compatibility.
\item
For every regular representation
of an $S$-space $X$ on a closed unital $S$-invariant linear subspace
$V \subset \mathrm{WRUC}(X)$,
E-compatibility implies strong E-compatibility.
\item
(Monotonicity)
Let $\a_1$ and $\a_2$ be two \underline{faithful} $S$-affine compactifications of a compact $S$-space $X$ such that $\a_1 \succeq \a_2$,
%(i.e., in the lattice of all affine compactifications $\a_1$ dominates $\a_2$)
then $E$-compatibility of $\a_1$ implies $E$-compatibility of $\a_2$.
%!!......!!!!! ....... iff the compactification is OC iff comes from an affine compactification .........
\een
\end{lem}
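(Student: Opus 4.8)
The plan is to reduce all four statements to the algebraic bookkeeping of $E$-compatibility. Write $Y=\overline{\a(X)}^{w^*}$ and $Q=\overline{co}^{w^*}\a(X)$. By Lemma \ref{l:algebras} the compactifications $S\to E(Y)$, $S\to E(Q)$, $S\to E(B^*)$ have as algebras the closed unital subalgebras $\lan m(V,Y)\ran$, $\lan m(V,Q)\ran$, $\lan m(V,V^*)\ran$ of $C(S)$, and by Remark \ref{r:domin} each of the canonical homomorphisms $E(B^*)\xrightarrow{\Psi}E(Q)\xrightarrow{\Phi}E(Y)$ is an isomorphism exactly when the relevant pair of algebras coincides. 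Since $Y\subset Q\subset V^*$, the inclusions $\lan m(V,Y)\ran\subset\lan m(V,Q)\ran\subset\lan m(V,V^*)\ran$ are automatic, so in every part I only have to prove the reverse inclusion, and the single tool that does this is that for fixed $v\in V$ the matrix-coefficient map $T_v\colon V^*\to C(S)$, $\psi\mapsto m(v,\psi)$, is bounded linear (Lemma \ref{l:introtype}), hence norm-to-norm continuous. Thus the norm-closed subalgebra $\lan m(V,Y)\ran$ absorbs $m(v,\cdot)$ applied to any point of the norm-closed linear span of $Y$, and likewise for $Q$. For (1): since $\a(X)\subset Y$ and $Q$ is $w^*$-closed and convex, $\overline{co}^{w^*}(Y)=Q$, so the hypothesis reads $Q=\overline{co}^{norm}(Y)$; then for $v\in V$ and $q\in Q$, writing $q$ as a norm limit of convex combinations of points of $Y$ and applying $T_v$ gives $m(v,q)\in\lan m(V,Y)\ran$, so $\lan m(V,Q)\ran=\lan m(V,Y)\ran$ and $\Phi$ is an isomorphism.

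For (2), the $w^*$-generating hypothesis says $sp(Q)$ is norm dense in $V^*$, so every $\psi\in V^*$ is a norm limit of linear combinations of points of $Q$; applying $T_v$ puts $m(v,\psi)$ in $\lan m(V,Q)\ran$, whence $\lan m(V,V^*)\ran=\lan m(V,Q)\ran$, i.e. $\Psi$ is already an isomorphism. Combined with the isomorphism $\Phi$ supplied by $E$-compatibility, the composite $\Phi\circ\Psi$ is an isomorphism, which is strong $E$-compatibility. Part (3) is then the special case: the regular $V$-representation on a closed unital $S$-invariant $V\subset\mathrm{WRUC}(X)$ is weakly continuous, and by Lemma \ref{l:generating}(1) it is $w^*$-generating (each member of $V^*$ is a finite linear combination of means in $M(V)=\overline{co}^{w^*}\a(X)$), so (2) applies verbatim.

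For (4) I would work directly with enveloping semigroups rather than Banach spaces. The domination $\a_1\succeq\a_2$ gives a continuous affine surjection $q\colon Q_1\to Q_2$ with $q\circ\a_1=\a_2$, which is an $S$-map by Lemma \ref{l:morph}; since $X$ is compact and both $\a_i$ are faithful, $Y_i:=\a_i(X)$ is a closed $S$-subsystem of $Q_i$ that is $S$-isomorphic to $X$, and $q|_{Y_1}\colon Y_1\to Y_2$ is an $S$-homeomorphism. Then Lemma \ref{l:Env-all}(2),(3) produces onto homomorphisms $\bar q\colon E(Q_1)\to E(Q_2)$ and $\Phi_i\colon E(Q_i)\to E(Y_i)$ and an isomorphism $\iota\colon E(Y_1)\to E(Y_2)$, and the square $\iota\circ\Phi_1=\Phi_2\circ\bar q$ commutes because both composites send $\lambda_s^{Q_1}$ to $\lambda_s^{Y_2}$ and $j(S)$ is dense in $E(Q_1)$. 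If $\a_1$ is $E$-compatible then $\Phi_1$, hence $\iota\circ\Phi_1=\Phi_2\circ\bar q$, is an isomorphism; in particular $\bar q$ is injective, and being also onto it is an isomorphism, so $\Phi_2=(\Phi_2\circ\bar q)\circ\bar q^{-1}$ is an isomorphism and $\a_2$ is $E$-compatible.

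I do not expect a serious obstacle here: the content is in the bookkeeping — keeping straight which algebra inclusion is free and which must be proved, and exploiting norm-continuity of $T_v$ to push norm limits inside the closed subalgebra $\lan m(V,Y)\ran$. The only two spots needing a moment's care are the rewriting $\overline{co}^{w^*}(Y)=\overline{co}^{w^*}\a(X)=Q$ in part (1), and, in part (4), checking that the square of enveloping semigroups commutes and then deducing injectivity of $\Phi_2$ from injectivity of $\Phi_2\circ\bar q$, which requires surjectivity of $\bar q$.
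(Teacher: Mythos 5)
Your proposal is correct and follows essentially the same route as the paper: parts (1)--(3) are the same algebra-inclusion bookkeeping, with your norm-continuity of $T_v$ playing exactly the role of Lemma \ref{l:algebras}.1 (together with Lemma \ref{l:generating}.1 for (3)), and part (4) is the paper's monotonicity argument with the key step — deducing injectivity of $E(Q_2)\to E(X)$ from that of $E(Q_1)\to E(X)$ via surjectivity of the induced homomorphism $E(Q_1)\to E(Q_2)$ — spelled out explicitly where the paper merely asserts it.
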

\begin{proof} (1) By Lemma \ref{l:algebras}.2 we have $\overline{sp}^{norm} (m(V,Y)) \subset \A(E(Y),e)$.
By our assumption on $Q:=\overline{co}^{w^*} (Y)=\overline{co}^{norm} (Y)$ and using Lemma \ref{l:algebras}.1 we get
$$m(V, Q)=m(V, \overline{co}^{w^*} (Y))=m(V, \overline{co}^{norm} (Y)) \subset \overline{sp}^{norm} (m(V,Y)) \subset \A(E(Y),e).$$
Since $\lan m(V, Q) \ran=\A(E(Q),e)$ (Lemma \ref{l:algebras}.2), we obtain $\A(E(Q),e) \subset \A(E(Y),e)$.
%Therefore, $\A(E(Q),e) = \A(E(Y),e)$ (because always $\A(E(Q),e) \supset \A(E(Y),e)$).

(2)
%031012
$Y$ is a $w^*$-generating subset in $V^*$. Therefore
by assertions (1) and (3) of Lemma \ref{l:algebras} we get that $\psi: E(B^*) \to E(Q)$ is an isomorphism. So
$\Phi \circ \Psi: E(B^*) \to E(Y)$ is an isomorphism iff $\Phi: E(Q) \to E(Y)$ is an isomorphism.

(3) Use (2) and Lemma \ref{l:generating}.1.

(4)
Since the affine compactifications are faithful and $X$ is compact, we may identify $E(\a_1(X))$ and $E(\a_2(X))$ with $E(X)$.
Since $\a_1 \succeq \a_2$ we have the induced homomorphism $E(Q_1) \to E(Q_2)$.
%Since $\a: X \to Q$ is faithful we can identify $E(X)$ and $E(Y)$.
The injectivity of the homomorphism $E(Q_1) \to E(X)$
implies the injectivity of $E(Q_2) \to E(X)$.
\end{proof}

Note that (4) need not remain true if we drop the faithfulness of the affine compactifications.

\begin{lem} \label{l:E-compatible}
Let $X$ be an $S$-space and
$(h,\a): (S,X) \rightrightarrows (\Theta(V)^{op},V^*)$ a weakly continuous
representation of $(S,X)$ on a Banach space $V$.
Let $j: S \to E(Y)$ be the Ellis compactification for the $S$-system $Y:=\overline{\a(X)}$.
%such that $\a$ is a topological embedding.
%$Y:=spec<V>$.........
\TFAE
\ben
\item
The representation $(h,\a)$ is strongly E-compatible.%  (i.e., $E(B^*) \to E(Y)$ is injective).
\item
There exists a $j(S)$-admissible embedding $h': E(Y) \hookrightarrow \E(V)$ such that $h'\circ j=h$.
%which extends $h: S \to \Theta(V)^{op} \subset W(V)$.
\item
The semigroup compactifications \ $j: S \to E(Y)$ and $h:S \to \overline{h(S)}$,
where $\overline{h(S)}$ is the closure in $\E(V)$, are naturally isomorphic.
\item
$m(V,V^*) \subset \lan m(V,Y) \ran$ (equivalently, $m(V,V^*) \subset \Acal(E(Y),e)$).
\een
\end{lem}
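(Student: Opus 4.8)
The plan is to reduce all four statements to the Gelfand--Kolmogoroff correspondence for semigroup compactifications (Remark \ref{r:domin}), using the algebra identifications already recorded in Lemmas \ref{l:algebras} and \ref{l:W=E}. Write $Y := \overline{\a(X)}$ and keep in view three semigroup compactifications of $S$: the Ellis compactification $j : S \to E(Y)$; the Ellis compactification $S \to E(B^*)$ attached to the ball $B^* \subset V^*$; and the standard operator compactification $h : S \to \overline{h(S)} \subset \E(V)$, where $\overline{h(S)}$ is a genuine right topological semigroup compactification because $h(S) \subset \Theta(V)^{op} = \Lambda(\E(V))$ (Lemma \ref{l:transit}.5) and the closure of a subsemigroup of the topological center is a right topological semigroup. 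By Lemma \ref{l:algebras} the subalgebras of $C(S)$ associated with these three are $\lan m(V,Y) \ran$, $\lan m(V,V^*) \ran = A_h$, and (Lemma \ref{l:W=E}.1) again $A_h$. Since $Y \subset V^*$, the inclusion $\lan m(V,Y) \ran \subset \lan m(V,V^*) \ran$ is automatic, so condition (4) is the same as the equality $\lan m(V,Y) \ran = \lan m(V,V^*) \ran$.

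Given this, I would dispatch (1) $\Leftrightarrow$ (4) and (3) $\Leftrightarrow$ (4) together. The canonical continuous surjective homomorphism $E(B^*) \to E(Y)$ is a morphism of semigroup compactifications of $S$, and by Remark \ref{r:domin} it is an isomorphism iff the two associated algebras $\lan m(V,V^*) \ran$ and $\lan m(V,Y) \ran$ coincide; since the property that $E(B^*) \to E(Y)$ is an isomorphism is precisely strong $E$-compatibility (Definition \ref{d:E-compat}.2(b)), this yields (1) $\Leftrightarrow$ (4). Similarly, the semigroup compactifications $j : S \to E(Y)$ and $h : S \to \overline{h(S)}$ are naturally isomorphic iff their algebras $\lan m(V,Y) \ran$ and $A_h = \lan m(V,V^*) \ran$ agree (Remark \ref{r:domin}), giving (3) $\Leftrightarrow$ (4).

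Finally I would prove (2) $\Leftrightarrow$ (3). For (3) $\Rightarrow$ (2): compose the isomorphism $E(Y) \to \overline{h(S)}$ intertwining $j$ and $h$ with the inclusion $\overline{h(S)} \hookrightarrow \E(V)$ to get an embedding $h' : E(Y) \hookrightarrow \E(V)$ with $h' \circ j = h$; it is $j(S)$-admissible because $h'(j(S)) = h(S) \subset \Theta(V)^{op} = \Lambda(\E(V))$. For (2) $\Rightarrow$ (3): given such an $h'$, note that $h'$ is a continuous homomorphism and $E(Y)$ is compact, so $h'(E(Y))$ is a closed subsemigroup of $\E(V)$ containing $h'(j(S)) = h(S)$, hence containing $\overline{h(S)}$; and density of $j(S)$ in $E(Y)$ gives $h'(E(Y)) \subset \overline{h(S)}$. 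Thus $h' : E(Y) \to \overline{h(S)}$ is a bijective continuous homomorphism of compact Hausdorff right topological semigroups, hence an isomorphism, and it intertwines $j$ with $h$ --- which is exactly the natural isomorphism of semigroup compactifications in (3). I expect the whole proof to be routine once the algebra identifications are in place; the one point deserving care is the step $h'(E(Y)) = \overline{h(S)}$ in (2) $\Rightarrow$ (3), which is what lets an abstract embedding into $\E(V)$ be read back as an isomorphism of semigroup compactifications.
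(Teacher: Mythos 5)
Your argument is correct and follows essentially the same route as the paper: both rest on the algebra identifications of Lemma \ref{l:algebras}, the equivalence $E(S,B^*)\cong\overline{h(S)}$ inside $\E(V)$ from Lemma \ref{l:W=E} (with admissibility via $\Theta(V)^{op}=\Lambda(\E(V))$, Lemma \ref{l:transit}.5), and the Gelfand--Kolmogoroff/Remark \ref{r:domin} correspondence between compactifications and their algebras. The only difference is organizational: you make explicit the step $h'(E(Y))=\overline{h(S)}$ in (2) $\Rightarrow$ (3), which the paper leaves implicit in its one-line assertion that (1), (2), (3) are equivalent.
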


\begin{proof}
By Lemma \ref{l:W=E}, $E(S,B^*)$ is naturally embedded into $\E(V)$ and
 the semigroup compactifications $S \to
E(S,B^*)$ and $S \to \overline{h(S)}$ are isomorphic. So (1), (2) and (3) are equivalent.

(1) $\Leftrightarrow$ (4): Use Lemma \ref{l:algebras}.
\end{proof}

%!
%  \item ???????????????.....??? ..........!!!!!!!!!!!
%  If $F$ is E-compatible, separates points of $X$ and in addition $F$ is an algebra then this
%  class characterizes the faithful affine compactifications $\a: X \to Q$ such that the $S$-flow $Y$
%  is injective
%  where $Y:=Y$ and $X \to Y:=Y$ is the induced $S$-compactification.

\begin{prop} \label{p:E-aff+}
% Let $\a: X \to Q$ be an affine $S$-compactification of $X$ with
\TFAE
\ben
\item
An affine $S$-compactification $\a: X \to Q$ of an $S$-space $X$ is
E-compatible.
\item
The induced representation of $(S,X)$ on the Banach space
% eli
$V:= A(Q)|_X \subset \mathrm{WRUC}(X)$, the affine compactification space of
$\a$, is E-compatible (equivalently, strongly E-compatible).
\item
$m(V,V^*) \subset \lan m(V,Y) \ran$, (equivalently,  $m(V,M(V)) \subset \lan m(V,Y) \ran$), where $Y:=\overline{\a(X)}$.
\item
For every $f \in V$ we have
$$
\overline{co (X_f)}^p \subset \lan m(V, Y) \ran.
$$
\een
\end{prop}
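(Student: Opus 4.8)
The plan is to show that all four conditions are equivalent by a cycle of implications, using the identification of the affine compactification space $V := \mathrm{A}(Q)|_X$ from Lemma \ref{aff-descr} and the description of the enveloping semigroup algebras from Lemma \ref{l:algebras}. First I would observe that by Lemma \ref{cont-act} the $S$-affine compactification $\alpha: X \to Q$ is, up to $S$-isomorphism, the canonical $V$-affine compactification $\delta_V: X \to M(V)$, and by Lemma \ref{l:dense} the space $\mathrm{A}_0(Q) = r_Q(V) + \R \cdot \mathbf{1}$ is uniformly dense in $\mathrm{A}(Q)$. Thus the induced Banach representation $(h,\a)$ of $(S,X)$ on $V$ has $\overline{\a(X)} = Y$ and $\overline{co}^{w^*}(\a(X)) = Q$ (after the natural identification $M(V) \cong Q$), so the enveloping semigroups $E(Y)$ and $E(Q)$ occurring in Definition \ref{d:E-compat} are exactly those appearing in Definition \ref{d:E-compatible}. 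This sets up $(1) \Leftrightarrow (2)$ essentially by unwinding definitions; the ``equivalently, strongly E-compatible'' clause in (2) then follows from Lemma \ref{norm-cl}.3, since the representation in question is the regular representation on a closed unital $S$-invariant subspace $V \subset \mathrm{WRUC}(X)$.

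Next, for $(2) \Leftrightarrow (3)$ I would invoke Lemma \ref{l:E-compatible}: strong E-compatibility of $(h,\a)$ is equivalent to $m(V,V^*) \subset \lan m(V,Y) \ran$, which by Lemma \ref{l:algebras}.4 is the same as $m(V,V^*) \subset \A(E(Y),e)$. The parenthetical equivalence with $m(V,M(V)) \subset \lan m(V,Y)\ran$ comes from Lemma \ref{l:state}.3: every $\mu \in V^*$ is a finite linear combination of means, so $m(V,V^*) \subset \overline{sp}^{norm}(m(V,M(V)))$ by Lemma \ref{l:algebras}.1, and since $\lan m(V,Y)\ran$ is a norm-closed linear subspace containing $m(V,M(V))$, the two inclusions are equivalent.

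For $(3) \Leftrightarrow (4)$ I would use the cyclic-compactification machinery of Section \ref{s:cyclAff}. For a fixed $f \in V$, Lemma \ref{l:prop}.2 gives $Q_f = m(f, M(V_f)) = m(f, M(V)) = \overline{co}^p(X_f)$ inside $C(S)$, where $X_f = \overline{\pi_f(X)}^p = cl_p(m(f,\delta_f(X)))$. Since $m(f, M(V)) \subset \lan m(V,Y)\ran$ for all $f \in V$ is exactly the condition $m(V,M(V)) \subset \lan m(V,Y) \ran$, and since $\lan m(V,Y)\ran$ is a norm-closed (hence pointwise-closed on the relevant bounded sets — here one must be a little careful and instead note that $\overline{co}(X_f)^p \subset \lan m(V,Y)\ran$ follows because $X_f \subset m(f,M(V))$ and $\lan m(V,Y)\ran$ is a closed linear subspace containing $m(f,M(V)) = \overline{co}^p(X_f)$, using Lemma \ref{l:prop}.2 directly), condition (4) is just a reformulation of (3) read off one matrix coefficient at a time. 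I expect the main obstacle to be the bookkeeping in this last step: one must keep straight the three realizations of the cyclic affine compactification — in $V_f^*$, in $V^*$, and inside $C(S)$ via the map $T_f$ of Lemma \ref{l:f-introtype} — and verify that $\overline{co}(X_f)^p$ computed inside $C(S)$ coincides with $m(f, M(V))$ rather than some a priori larger set; Lemma \ref{l:prop}.2 is precisely what licenses this, so the argument reduces to citing it correctly rather than to any genuinely new estimate.
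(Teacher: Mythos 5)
Your proposal is correct and follows essentially the same route as the paper: (1) $\Leftrightarrow$ (2) via Lemma \ref{aff-descr} and Lemma \ref{norm-cl}.3, (2) $\Leftrightarrow$ (3) via Lemma \ref{l:E-compatible} together with Lemma \ref{l:state}.3, and (3) $\Leftrightarrow$ (4) via the identity $\overline{co(X_f)}^p = m(f,M(V))$ from Lemma \ref{l:prop}, with $m(V,M(V)) = \bigcup_{f\in V} m(f,M(V))$. The extra care you take in the last step (distinguishing the realizations in $V_f^*$, $V^*$ and $C(S)$) is just an expanded version of the paper's one-line citation of Lemma \ref{l:prop} and introduces no new ideas or gaps.
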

\begin{proof}
  (1) $\Leftrightarrow$ (2)
% eli
By Definition \ref{d:E-compat} and the description of affine compactifications (Lemma \ref{aff-descr})
 % (canonical affine compactification come from the regular representation on the state space).
taking into account Lemma \ref{norm-cl}.3.

(2) $\Leftrightarrow$ (3):
Use Lemma \ref{l:E-compatible} (taking into account that by Lemma \ref{l:state}.3
every $\mu \in V^*$ is a finite linear combination of members of $M(V)$).

(3) $\Leftrightarrow$ (4): Clearly, $m(V,M(V))=\bigcup_{f \in V} m(f,M(V)).$
Recall that, by Lemma \ref{l:prop}, for each $f \in V$ we have
$\overline{co (X_f)}^p=m(f,M(V)).$
\end{proof}

%!! {t:gen-aff-comp}    {p:E-aff+}

% eli
\subsection{Affine semigroup compactifications}

% eli
The second assertion of the next result shows that every affine semigroup compactification is E-compatible. % (see Definition \ref{d:affinization}).

\begin{prop} \label{p:dominates}
Let $\nu: S \to Q$ be an affine semigroup compactification
% eli
and let $P = \overline{\nu(S)}$.
Then
\ben
% eli
%0912 %!!
%\item
%The $S$-flow $(S,P)$ is point-universal; that is $P \cong E(P)$.
\item \cite[p. 123]{BJM}
The space $V=\mathrm{A}(Q)|_S$
%of the affine compactification
is introverted.
\item
The affine compactification $\nu: S \to Q$ is E-compatible,
% eli
that is, the restriction map $E(Q) \to E(P)=P$ is an isomorphism.
\een
\end{prop}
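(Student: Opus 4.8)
The plan is to treat the two assertions separately and to obtain (2) from the $E$-compatibility criterion of Proposition~\ref{p:E-aff+}. For (1), I would simply invoke Fact~\ref{f:adm}.2 (the ``only if'' direction) or Proposition~\ref{l:W=Univ}.1; a self-contained check is also short. Given $f=\phi\circ\nu\in V$ with $\phi\in\mathrm{A}(Q)$ and a mean $\mu\in M(V)$, identify $\mu$ (via Lemma~\ref{l:state} and Lemma~\ref{aff-descr}) with a point $q_\mu\in Q$ satisfying $\mu(\psi|_S)=\psi(q_\mu)$ for all $\psi\in\mathrm{A}(Q)$; since $\nu(s)\in\Lambda(Q)$ one has $L_sf=(\phi\circ\lambda_{\nu(s)})|_S$, so
\[
m(f,\mu)(s)=\mu(L_sf)=\phi(\lambda_{\nu(s)}(q_\mu))=\phi(\rho_{q_\mu}(\nu(s)))=(\phi\circ\rho_{q_\mu}\circ\nu)(s),
\]
and because $Q$ is right topological and an affine semigroup, $\rho_{q_\mu}$ is continuous and affine, whence $\phi\circ\rho_{q_\mu}\in\mathrm{A}(Q)$ and $m(f,\mu)\in\mathrm{A}(Q)|_S=V$. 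So $V$ is introverted.

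For (2), the first thing I would record is that $P=\overline{\nu(S)}$ is a right topological subsemigroup of $Q$ (because $\nu(S)\subset\Lambda(Q)$ forces $\nu(S)\subset\Lambda(P)$, and the closure of a subsemigroup contained in the topological centre is again a semigroup), so $\nu\colon S\to P$ is a right topological semigroup compactification and $E(S,P)\cong P$ by Lemma~\ref{l:Env-all}.4. Next, using Lemma~\ref{aff-descr} together with Lemma~\ref{cont-act}.1 (for $S$-equivariance), I would identify $\nu\colon S\to Q$, viewed as an affine $S$-compactification of the $S$-space $S$ with its left regular action, with the canonical $V$-affine compactification $\delta_V\colon S\to M(V)$, where $V=\mathrm{A}(Q)|_S\subset\mathrm{WRUC}(S)$ (the containment holds by~(1) and Remark~\ref{r:introInWRUC}); under this identification the induced compactification is $\nu\colon S\to Y:=\overline{\nu(S)}=P$. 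Thus the assertion that $E(Q)\to E(P)$ is an isomorphism is exactly the statement that $\nu\colon S\to Q$ is $E$-compatible, and by Proposition~\ref{p:E-aff+} it is enough to prove $m(V,M(V))\subset\langle m(V,Y)\rangle$.

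The heart of the matter is this last inclusion, and it is here that the presence of the unit $e\in S$ matters. On one side, introversion (part~(1)) gives $m(V,M(V))\subset V$. On the other side, for every $f\in V$ one has $m(f,\delta_V(e))(s)=\langle L_sf,\delta_V(e)\rangle=f(se)=f(s)$, that is, $f=m(f,\delta_V(e))$ with $\delta_V(e)\in\delta_V(S)\subset Y$; hence $V\subset m(V,Y)\subset\langle m(V,Y)\rangle$. Combining the two, $m(V,M(V))\subset V\subset\langle m(V,Y)\rangle$, as required. (Equivalently, the whole of~(2) can be phrased via algebras: applying the regular representation on $V$, Lemma~\ref{l:algebras} identifies $\mathcal{A}(E(Q),e)=\langle m(V,Q)\rangle$ and $\mathcal{A}(E(P),e)=\langle m(V,Y)\rangle$, and the same two observations — introversion and $f=m(f,\delta_V(e))$ — force these algebras to coincide, so $E(Q)\to E(P)$ is injective by Remark~\ref{r:domin}.)

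I expect the only genuine obstacles to be bookkeeping ones: checking that $\overline{\nu(S)}$ really is a semigroup compactification so that $E(P)=P$ applies, and that the identification $Q\cong M(V)$ is $S$-equivariant rather than merely an affine homeomorphism. The conceptual content — that introversion of $V$ together with the existence of a unit collapses $m(V,M(V))$ into the subalgebra generated by the dynamical matrix coefficients $m(V,Y)$ — is then immediate, and this is precisely the mechanism that fails for a general $S$-space $X$, where no single point of $Y$ recovers all of $V$.
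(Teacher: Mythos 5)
Your proof is correct and follows essentially the same route as the paper: part (1) is the observation that $m(f,\mu)$ is the restriction to $S$ of $\tilde f$ composed with a right translation of $Q$ (the paper writes this translation via the evolution product $q\mapsto \tilde f(q\odot\mu)$, you write it as $\phi\circ\rho_{q_\mu}$ — the same map), which is continuous and affine since $Q$ is a right topological affine semigroup; and part (2) combines introversion with the criterion of Proposition~\ref{p:E-aff+}. The only cosmetic difference is in how you place $V$ inside $\langle m(V,Y)\rangle$: you use $f=m(f,\delta_V(e))$ at the neutral element, while the paper goes through $V\subset\Acal(P,e)\subset\Acal(E(P),e)=\langle m(V,P)\rangle$; both are valid and equally short.
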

\begin{proof}
(1)
%Observe that $PP = P$ is a semigroup and then apply Lemma
%\ref{l:Env-all}.(4).
%
%(2)
We have to show that $m(V,M(V)) \subset V$.
As in the proof of Lemma \ref{aff-descr} consider the Banach space
$V=\mathrm{A}(Q)|_S$ of the affine compactification $S \to Q$.
For every $f \in V$ and $\mu \in M(V)=Q$ the corresponding matrix coefficient
$m(f,\mu)$ is again in $V$ because
$m(f,\mu)$ is a restriction to $S$ of the affine continuous map
% eli
%$F: Q \to \R, \ q \mapsto F(q\mu)$,
% eli what is q\mu ?
%0912
$Q \to \R, \ q \mapsto \tilde{f}(q \odot \mu)$,
where $\tilde{f} \in \mathrm{A}(Q)$ with $f=\tilde{f}|_S$ and $q \odot \mu$ is the evolution product
(see Fact \ref{f:product}.1) in the semigroup $M(V)=Q$.
%0912 in fact it is important that $V^*$ is an algebra wrt the ``evolution product"

(2)
%(first proof)
By (1) we have $m(V,V^*) \subset V$. Observe that
$V=\mathrm{A}(Q)|_S \subset \Acal(P,e)$. Since $\Acal(P,e) \subset \Acal(E(P),e)$, we get
 $m(V,V^*) \subset \Acal(E(P),e)=\lan m(V,P) \ran$ (Lemma \ref{l:algebras}.2). So, $V$ is E-compatible by Proposition \ref{p:E-aff+}.
%
%
%(second proof)
%The action $P \times Q \to Q$ is right continuous. Therefore the natural map
%$P \to Q^Q$ is continuous. It follows that
%the compactification $S \to P$ dominates $S \to E(Q)$. Since $S \to P$ is isomorphic to $S \to E(P)$
%we obtain that $S \to E(P)$ dominates $S \to E(Q)$.
\end{proof}

\begin{defin} \label{d:intro-gen}
We say that a
% eli closed ?
subalgebra
$\A \subset C(S)$ is \emph{intro-generated} %algebra $\A$ of the compactification $\nu:S \to P$ is
if there exists an introverted subspace $V \subset \A$ such that
$\langle V \rangle=\A$.
\end{defin}

%0912
Below, in Theorem \ref{e:dist-not}, we show that the m-introverted algebra
of all distal functions $D(\Z)$ is not intro-generated.
In Example \ref{e:z2} we present an m-introverted intro-generated subalgebra $\A$ of $l_{\infty}(\Z^2)$ which is not introverted.

%!!!!
% eli
%What is the meaning of the condition $m(V,V^*) \subset \lan V \ran$
%(or $m(V,\lan V \ran ^*) \subset \lan V \ran$)?

\begin{prop} \label{t:conditions-new}
Let $\nu:S \to P$ be a right topological semigroup compactification
%!! of a semitopological monoid $S$
% eli ???
and let $\A =\Acal(P,e)$ be the corresponding m-introverted subalgebra of $C(S)$.
\TFAE
\ben
\item
The compactification $\nu:S \to P$ is equivalent to a standard operator
compactification on a Banach space.
%\footnote{it follows that $(S,P)$ is
%$V$-injective wrt to some representation. The question is if
%$(S,P)$ is injective (that is, $C(P)$-injective wrt to the regular
%representation) ?}
\item
The compactification $\nu:S \to P$ is equivalent to an operator
compactification on a locally convex vector space.
\item
There exists an affine (equivalently, \emph{standard affine}) semigroup compactification $\psi: S \to Q$
such that
% eli ???
% its restriction induces
the compactification $\psi : S \to  \overline{\psi(S)}$ is equivalent to
%the given compactification
$\nu:S \to P$.
%\footnote{Observe that the enveloping semigroup compactification $S \to E(Q)$ is naturally equivalent to $S \to P$
% eli
%by Proposition \ref{p:dominates}}
%\item There exists an introverted $S$-invariant unital Banach subspace $V \subset C(S)$ such that $<V>$
%coincides with the algebra $\A$ of the compactification $\nu:S \to P$.
\item
The algebra $\A$
%:=\Acal(X,x_0)
of the compactification $\nu:S \to P$ is intro-generated.
\item
There exists a Banach unital $S$-subspace $V \subset \mathrm{WRUC}(S)$
such that
$\langle V \rangle=\Acal$ and
for every $f \in V$ we have
$$
\overline{co (Sf)}^p \subset \Acal.
$$

%an operator compactification on a Banach space $V$. \footnote{$P
%\subset W(V)$ is an admissible embedding for $V:=\A_r$}
%%!! (eq., $P$ is $S$-representable as an admissible r.t.c.s).
%Furthermore, there exists a weakly continuous co-representation $S
%\to \Theta(V)^{op}$ on a Banach space $V$ such that the
%corresponding operator compactification $S \to cl(h(S)) \subset
%W(V)$ is equivalent to $\nu: S \to P$.

%!!!!!! removable
%\item There exists an admissible embedding
%$P \hookrightarrow W(V)$ of admissible semigroups.
%\item
%The compactification $\nu:S \to P$ is equivalent to an operator
%compactification on a Banach space $V$.
 \een
\end{prop}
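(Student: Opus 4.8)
The plan is to run the cycle $(1)\Rightarrow(2)\Rightarrow(3)\Rightarrow(4)\Rightarrow(5)\Rightarrow(1)$, using throughout that a right topological semigroup compactification of $S$ is determined up to equivalence by its algebra (Remark \ref{r:domin}) and that the algebra of $\nu:S\to P$ is $\A=\Acal(P,e)$. The implication $(1)\Rightarrow(2)$ is trivial. For $(2)\Rightarrow(3)$, given an operator compactification $h:S\to P=\overline{h(S)^{op}}\subset L(V^*)$ (with $h$ weakly continuous and equicontinuous), put $Q:=\overline{co}(h(S)^{op})$; since $co(h(S)^{op})$ is a subsemigroup contained in the topological center $\Lambda(Q)$ of its closure (a convex combination of the continuous left translations $\lambda_{h(s)^{op}}$ is again continuous and affine), $Q$ is a compact right topological affine subsemigroup and $\a_h:S\to Q$ is an affine semigroup compactification with $\overline{\a_h(S)}=\overline{h(S)^{op}}=P$, as in Remark \ref{r:affAnd}.3; that it may be taken \emph{standard} affine is Remark \ref{r:affAnd}.2 (Proposition \ref{l:W=Univ}.2). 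For $(3)\Rightarrow(4)$, if $\psi:S\to Q$ is an affine semigroup compactification with $\psi:S\to\overline{\psi(S)}$ equivalent to $\nu:S\to P$, set $V:=\mathrm{A}(Q)|_S$; by Proposition \ref{p:dominates}.1 this is an introverted closed subspace of $\mathrm{WRUC}(S)$, and by Remark \ref{r:affAnd}.1 the algebra $\langle V\rangle$ of the induced compactification equals the algebra of $\psi:S\to\overline{\psi(S)}$, hence $\langle V\rangle=\A$ and $\A$ is intro-generated.

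For $(4)\Rightarrow(5)$, let $V\subset\A$ be an introverted subspace with $\langle V\rangle=\A$; after replacing $V$ by the norm closure of $V+\R\cdot\mathbf 1$ we may assume $V$ is closed and unital (it remains introverted and still generates $\A$). Then $V\subset\mathrm{WRUC}(S)$ by Remark \ref{r:introInWRUC}, and $V$ is $S$-invariant since by Fact \ref{f:adm}.2 it is the affine compactification space of an affine semigroup compactification. For $f\in V$, Lemma \ref{l:prop} applied to the left action of $S$ on itself gives $\overline{co(Sf)}^p=\overline{co}^p(X_f)=Q_f=m(f,M(V))$, and introversion yields $m(f,M(V))\subset m(V,M(V))\subset V\subset\A$, which is (5).

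The crux is $(5)\Rightarrow(1)$. Given $V$ as in (5), Lemma \ref{l:prop} again shows $\overline{co(Sf)}^p=m(f,M(V))$, so the hypothesis reads $m(V,M(V))\subset\A$; since every functional in $V^*$ is a finite linear combination of means (Lemma \ref{l:state}.3) and $\A$ is a linear subspace, this upgrades to $m(V,V^*)\subset\A$. Now take the regular representation $(h,\a)$ of $(S,S)$ on the $S$-invariant Banach space $V$, with $h(s)=L_s$ and $\a(s)=\delta_s|_V$; it is weakly continuous because $V\subset\mathrm{WRUC}(S)$, so it induces the standard operator compactification $h:S\to\overline{h(S)^{op}}\subset\E(V)$, whose algebra is the coefficient algebra $A_h=\langle m(V,V^*)\rangle$ (Lemma \ref{l:W=E}). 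From $m(V,V^*)\subset\A$ we get $A_h\subset\A$. Conversely, since $S$ is a monoid with identity $e$, evaluation $\delta_e\in V^*$ at $e$ satisfies $m(f,\delta_e)(s)=(L_sf)(e)=f(s)$, so $f=m(f,\delta_e)\in m(V,V^*)$ for every $f\in V$; hence $V\subset A_h$ and $\A=\langle V\rangle\subset A_h$. Therefore $A_h=\A$, so the standard operator compactification on $V$ and $\nu:S\to P$ have the same algebra and are equivalent, establishing (1).

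I expect the main obstacle to be precisely this last step $(5)\Rightarrow(1)$: one must recognize that the generating subspace $V$ itself, carrying its regular representation, is the Banach space to use, and that the geometric condition $\overline{co(Sf)}^p\subset\A$ is exactly what bounds the coefficient algebra from above, while the monoid identity supplies the matching containment $V\subset m(V,V^*)$ from below. The other implications are bookkeeping with the dictionary between semigroup compactifications, operator/affine compactifications, and (m-)introverted subalgebras established earlier in the paper.
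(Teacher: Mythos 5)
Your proof is correct, and implications (1)$\Rightarrow$(2)$\Rightarrow$(3)$\Rightarrow$(4)$\Rightarrow$(5) coincide with the paper's argument (same use of Remark \ref{r:affAnd}, Proposition \ref{p:dominates}.1, Remark \ref{r:introInWRUC} and Lemma \ref{l:prop}; your extra remarks on closing/unitalizing $V$ are harmless bookkeeping). The genuine divergence is in (5)$\Rightarrow$(1). The paper also starts from the regular representation of $(S,S)$ on $V$, but then routes through the E-compatibility machinery: it identifies $\a:S\to Y$ with $\nu:S\to P$, invokes the equivalence (4)$\Leftrightarrow$(1) of Proposition \ref{p:E-aff+} together with Lemma \ref{norm-cl}.3 to get strong E-compatibility, concludes that $E(S,B^*)\to E(S,P)$ is an isomorphism, and finishes with $E(S,B^*)\subset\E(V)$ (Lemma \ref{l:W=E}.2) and $E(S,P)=P$ (Lemma \ref{l:Env-all}.4). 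You instead compare algebras directly: the hypothesis, read via Lemma \ref{l:prop} as $m(V,M(V))\subset\A$ and upgraded to $m(V,V^*)\subset\A$ by Lemma \ref{l:state}.3, bounds the coefficient algebra $A_h=\langle m(V,V^*)\rangle$ from above, while the identity $f=m(f,\delta_e)$ (available because $S$ is a monoid) gives $V\subset m(V,V^*)$ and hence $\A=\langle V\rangle\subset A_h$; equality of algebras plus the uniqueness in Fact \ref{f:adm}.1 (or Remark \ref{r:domin}) yields the equivalence with the standard operator compactification from Lemma \ref{l:W=E}.1. This is a legitimate short-circuit: the same containment $m(V,V^*)\subset\Acal(E(Y),e)$ underlies Lemma \ref{l:E-compatible}, but your version avoids the detour through enveloping semigroups of the affine system, at the cost of having to supply explicitly the lower containment $\A\subset A_h$, which in the paper comes for free from the canonical surjection $E(B^*)\to E(Y)$. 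Both arguments buy the same statement; yours is more self-contained at this step, the paper's makes the link with E-compatibility (used elsewhere, e.g.\ in Proposition \ref{p:from-inj}) explicit.
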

\begin{proof}
(1) $\Rightarrow$ (2): Trivial.

%!!
(2) $\Rightarrow$ (3): By our assumption $\nu: S \to P$ is equivalent to an operator
compactification. Therefore there exists
a weakly continuous equicontinuous representation $h: S \to L(V)$ of a
semitopological semigroup $S$ on a locally convex vector
space $V$ such that $\nu$ can be identified with $h: S \to P$ where $P$ is the
weak$^*$ operator closure $\overline{h(S)^{op}}$ of the adjoint semigroup $h(S)^{op}
\subset L(V)^{op}$ in $L(V^*)$.
Consider the compact subsemigroup $Q:=\overline{co} \ \nu (S)=\overline{co} (P)$ of $L(V^*)$
(in weak$^*$ operator topology).
 Then the map $\psi: S \to Q, \
\psi(s)=\nu(s)$ is an affine compactification of $S$. Indeed,
$\psi(S) \subset \Lambda (Q)$ because $\nu(S) \subset
L(V)^{op}$. Observe that $\psi$ induces $\nu$ because $
\overline{\psi(S)} =P$ and $\psi(s)=\nu(s)$ for every $s \in S$.
(By Proposition \ref{l:W=Univ} we can assume that $\psi: S \to Q$ is a standard affine semigroup compactification.)

(3) $\Rightarrow$ (4): By Proposition \ref{p:dominates}.1
the space $V:=A(Q)|_S$ of the affine compactification $\psi$ is introverted. %By our assumption $\psi$ induces $\nu$.
Always, the affine compactification subspace $V$ generates the induced compactification algebra $\A$ (Remark \ref{r:affAnd}.1).
That is, $\langle V \rangle=\A$.
 Hence $\A$ is intro-generated.

(4) $\Rightarrow$ (5): If $V$ is an introverted subspace of $\A$ then $m(f,M(V)) \subset V$ for every $f \in V$.
Also, $V \subset \mathrm{WRUC}(S)$ by Remark \ref{r:introInWRUC}.
So Lemma \ref{l:prop} implies $m(f,M(V))=\overline{co (Sf)}^p$. Hence, $\overline{co (Sf)}^p \subset V \subset \langle V \rangle=\A$.

(5) $\Rightarrow$ (1):
%031012 NOW I found that the old presentation was unclear (too short). It took me a time to understand :-)
%so I try now to write it in a more friendly manner
Consider the regular representation $(h,\a)$
of the $S$-space $X:=S$ on the closed unital $S$-invariant linear subspace
$V \subset \mathrm{WRUC}(S)$. By the Stone-Weierstrass theorem it follows that the algebra of
the corresponding $S$-compactification $\a: S \to Y:=\overline{\a(X)}$ is $\lan V \ran$. By our assumption $\lan V \ran = \A$.
So we obtain that
$\a: S \to Y$ can be identified with the original $S$-compactification $\nu:S \to P$. Recall that
$\overline{co (X_f)}^p=\overline{co (Sf)}^p$ (Lemma \ref{l:prop})
 and $\Acal(E(P),e)=\Acal(P,e)=\A$ (Lemma \ref{l:Env-all}.4).
 Applying the equivalence (4) $\Leftrightarrow$ (1)
of Proposition \ref{p:E-aff+} we get that the regular $V$-representation $(h,\a)$ of $(S,S)$ is E-compatible.
In fact, strongly E-compatible by Lemma \ref{norm-cl}.3.
 We obtain that $E(S,B^*) \to E(S,P)$ is an isomorphism.
Now observe that $E(S,B^*) \subset \E(V)$ (Lemma \ref{l:W=E}.2) and $E(S,P)=P$  (Lemma \ref{l:Env-all}.4). %is naturally isomorphic to $P$.
\end{proof}

\subsection{Injectivity of compact dynamical systems}
\label{s:inj}
%In this subsection all $S$-systems are assumed to be having separately continuous actions.
%The injectivity is one of the most important sufficient conditions for E-compatibility.

Every continuous action is WRUC (Lemma \ref{l:classes}.2). So the following %definition \ref{d:inj}
naturally extends the definition from
\cite{Gl-tame}, mentioned in Section \ref{s:motivation}.

\begin{defin} \label{d:inj}
We say that a compact WRUC $S$-system $X$ is \emph{injective} if one
(hence all) of the following equivalent conditions
%290512 here "are" or 'is"   ??
are
satisfied:

\ben
\item The greatest affine $S$-compactification $X \to P(X)$ is E-compatible.
%\footnote{i.e, $S$-flow $X$ is injective in the sense of \cite{Gl-tame}.}
\item
The regular representation of $(S,X)$ on the Banach space $C(X)$ is (strongly) E-compatible.
\item
$m(C(X),C(X)^*) \subset \lan m(C(X),X) \ran$ (equivalently, $m(C(X),C(X)^*) \subset \Acal(E(X),e)$).
\item Every faithful affine $S$-compactification $X \hookrightarrow Q$ is E-compatible.
%\item
%$j_{e}(C(E(X)))$ is an introverted subalgebra of $\mathrm{RUC}(S)$.
\een
\end{defin}
\begin{proof} Here we prove that these conditions are equivalent.
First of all since $X$ is WRUC the action $S \times P(X) \to P(X)$ is separately continuous by Proposition \ref{bar}.2.
Regarding (2) note that
by Lemma \ref{norm-cl}.3 every regular E-compatible representation is strongly E-compatible.

(1) $\Leftrightarrow$ (2) and (4) $\Rightarrow$ (1): \ Are trivial.

(2) $\Leftrightarrow$ (3): Use Lemma \ref{l:E-compatible}.

(2) $\Rightarrow$ (4): \
Since $X \hookrightarrow P(X)$ is the greatest $S$-affine compactification of $X$
we can apply the monotonicity of $E$-compatibility (Lemma \ref{norm-cl}.4).
\end{proof}

%\begin{prop} \label{c:q-intr<->inj}
%Let $X$ be a compact (separately continuous) $S$-space.
%\TFAE
%\ben
%\item
%$S$-system $X$ is injective.
%\item
%$C(X)$ is $E$-compatible.
%\item
%$m(C(X),C(X)^*) \subset \Acal(E(X),e)$.
%\item Every affine \emph{faithful} $S$-compactification $\a: X \to Q$ of $X$ is E-compatible.
%%\item
%%$j_{e}(C(E(X)))$ is an introverted subalgebra of $\mathrm{RUC}(S)$.
%\een
%\end{prop}
%%\begin{proof}
%%(1) $\to$ (2):
%%
%%(2) $\to$ (1):
%%See proof of the implication (3) $\Rightarrow$ (4) in Theorem \ref{t:conditions}.
%%\end{proof}

\begin{prop} \label{p:from-inj}
Let $X$ be an injective %(separately continuous)
$S$-system.
Then the enveloping semigroup compactification $S \to E(X)$
is equivalent to a (standard) operator compactification and hence
all of the equivalent conditions of Proposition \ref{t:conditions-new} are satisfied.
\end{prop}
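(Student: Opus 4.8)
The plan is to read the conclusion off the definition of injectivity (Definition \ref{d:inj}) together with Lemma \ref{l:E-compatible}, and then feed the outcome into Proposition \ref{t:conditions-new}. First I would set $V:=C(X)$ and consider the regular representation $(h,\a)$ of $(S,X)$ on $V$, with $h(s)=L_s$ (where $L_sf(x)=f(sx)$) and $\a(x)=\delta_x$. Since $X$ is a compact WRUC $S$-system we have $C(X)=\mathrm{WRUC}(X)$, so this representation is weakly continuous, and $\a\colon X\to (V^*,w^*)$ is a weak$^*$-continuous $S$-map. Because $C(X)$ separates the points of the \emph{compact} space $X$, the map $\a$ is in fact a topological $S$-embedding; hence $Y:=\overline{\a(X)}^{w^*}=\a(X)$ is $S$-isomorphic to $X$, and consequently the Ellis compactification $j\colon S\to E(Y)$ is equivalent, as a semigroup compactification of $S$, to $j\colon S\to E(X)$.

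Next I would invoke injectivity. By Definition \ref{d:inj}(2) the regular representation $(h,\a)$ is (strongly) E-compatible. Applying the equivalence (1)$\Leftrightarrow$(3) of Lemma \ref{l:E-compatible}, strong E-compatibility means precisely that the semigroup compactifications $j\colon S\to E(Y)$ and $h\colon S\to \overline{h(S)}\subset \E(V)$ are naturally isomorphic, the latter being the standard operator compactification of $S$ generated by $h$. Combining with the identification of the previous paragraph, $S\to E(X)$ is equivalent to the standard operator compactification on the Banach space $V=C(X)$. This is exactly condition (1) of Proposition \ref{t:conditions-new}, applied to $\nu:=j\colon S\to E(X)$, whose algebra $\A=\Acal(E(X),e)$ is m-introverted since $j$ is a right topological semigroup compactification (Fact \ref{f:adm}.1; equivalently, $(E(X),e)$ is point-universal). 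Therefore all of the equivalent conditions (1)--(5) of Proposition \ref{t:conditions-new} hold; in particular $\Acal(E(X),e)$ is intro-generated.

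The argument is essentially bookkeeping once the earlier machinery is available. The one step that deserves care, and the only place where compactness of $X$ is genuinely used, is the identification $E(Y)\cong E(X)$: without it $Y$ would merely be a compactification of $\a(X)$ and one could not directly transport the output of Lemma \ref{l:E-compatible} into Proposition \ref{t:conditions-new}. The second point to keep in mind is that Lemma \ref{l:E-compatible} requires the regular representation on $C(X)$ to be weakly continuous, which is exactly what the WRUC hypothesis provides (via $C(X)=\mathrm{WRUC}(X)$). Beyond these two observations I do not expect any real obstacle.
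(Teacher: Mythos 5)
Your proposal is correct and follows the paper's own route: the paper likewise observes that injectivity (Definition \ref{d:inj}.2) makes the regular representation of $(S,X)$ on $C(X)$ weakly continuous and (strongly) E-compatible, and then applies Lemma \ref{l:E-compatible} to identify $S\to E(X)$ with the standard operator compactification, feeding the result into Proposition \ref{t:conditions-new}. Your write-up merely makes explicit the details (WRUC $\Rightarrow$ weak continuity, $Y=\a(X)\cong X$ for compact $X$, m-introversion of $\Acal(E(X),e)$) that the paper leaves implicit.
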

\begin{proof} Since $X$ is injective the regular representation of $(S,X)$ on $C(X)$ is weakly continuous and E-compatible.
Now apply Lemma \ref{l:E-compatible}. % and Proposition \ref{p:E-aff+}.
%from Definition \ref{d:inj}. %! and Proposition \ref{t:conditions-new}.
\end{proof}

%!!! We show that injectivity determines the difference between m-introverted and introverted.

%!! %12.5  where $V=\A$  ?? is it true for injective ??

%031012 promoting the former Proposition to Theorem (Global replacement done)
%\begin{prop} \label{t:introv<->inj}
\begin{thm} \label{t:introv<->inj}
Let $V \subset C(S)$ be an m-introverted %a point-universal
Banach $S$-subalgebra and $\a: S \to P$ be the corresponding right topological semigroup compactification.
Then the $S$-system $P$ is injective if and only if $V$ is introverted.
\end{thm}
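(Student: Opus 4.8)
I would prove this by unwinding the definition of injectivity for the particular system $P$ and observing that, under the natural identification $C(P)\cong V$, it becomes precisely the inclusion $m(V,V^*)\subset V$, which is the definition of introversion. First I would set up the dictionary. Since $V$ is an m-introverted closed unital subalgebra of $C(S)$, Fact~\ref{f:adm}.1 lets us assume $P=MM(V)$ and $\alpha=\delta$, so that $\alpha^*C(P)=V$; being point-universal, $P$ satisfies $E(P)\cong P$ by Lemma~\ref{l:Env-all}.4, whence $\Acal(E(P),e)=\Acal(P,e)=\alpha^*C(P)=V$. The isometry $\alpha^*\colon C(P)\to V$ intertwines the co-homomorphism $s\mapsto L_s$ of the regular representation of $(S,P)$ (where $(L_sg)(p)=g(\alpha(s)p)$) with the co-homomorphism $s\mapsto L_s$ on $V\subset C(S)$ (where $(L_sf)(t)=f(st)$), and it conjugates the dual $S$-action on $M(C(P))=P(P)$ with the evolution-product $S$-action on $M(V)$. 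Hence, for $g\in C(P)$ and $\mu\in C(P)^*$, writing $f=\alpha^*g$ and $\nu=\mu\circ(\alpha^*)^{-1}$, the matrix coefficient $s\mapsto\mu(L_sg)$ equals $m(f,\nu)$; that is, $m(C(P),C(P)^*)$, viewed inside $C(S)$, is exactly the set $m(V,V^*)$ of matrix coefficients of the regular $V$-representation of $(S,S)$.

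Next I would record that $\langle m(C(P),P)\rangle=V$: by Lemma~\ref{l:Env-all}.1 this is the algebra of the compactification $S\to E(P)$, which by $E(P)\cong P$ is $\Acal(P,e)=\alpha^*C(P)=V$ (alternatively, this follows from a direct computation using continuity of the right translations $\rho_x$ on the right topological semigroup $P$, so that $m(g,\delta_x)=\alpha^*(g\circ\rho_x)\in V$). Combining the two paragraphs, for the WRUC system $P$ condition~(3) of Definition~\ref{d:inj} reads $m(C(P),C(P)^*)\subset\langle m(C(P),P)\rangle$, which under the dictionary is $m(V,V^*)\subset V$, i.e.\ Definition~\ref{def-day}.1.

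The two implications now follow. If $P$ is injective then, by the scope of Definition~\ref{d:inj}, $P$ is WRUC, and condition~(3) holds, so $m(V,V^*)\subset V$ and $V$ is introverted. Conversely, assume $V$ introverted. I would first check $P$ is WRUC: by Fact~\ref{f:product}.1 $(M(V),\odot)$ is a compact right topological affine semigroup with $co(\delta(S))\subset\Lambda(M(V))$, and since $V\subset\mathrm{WRUC}(S)$ (Remark~\ref{r:introInWRUC}) every $m(f,\nu)$, $f\in V$, $\nu\in M(V)$, lies in $C(S)$; these two facts give separate continuity of $S\times M(V)\to M(V)$, hence of $S\times P(P)\to P(P)$ under the identification, so $P$ is WRUC by Definition~\ref{d:admS}. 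Then introversion of $V$ gives $m(V,V^*)\subset V$, which is condition~(3) of Definition~\ref{d:inj} for $P$, so $P$ is injective.

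The technical heart — and the step I expect to require the most care — is the bookkeeping in the first paragraph: verifying that the matrix coefficients of the $S$-system $P$, regarded as functions on $S$, coincide with those of $V\subset C(S)$, and that the dual action on $P(P)$ is the evolution-product action on $M(V)$; one has to be scrupulous about the left/right-translation conventions. I would emphasize that injectivity must be used directly: Proposition~\ref{p:from-inj} together with Proposition~\ref{t:conditions-new} only shows $V$ is \emph{intro-generated}, which by Example~\ref{e:z2} is strictly weaker than being introverted. A more "affine" packaging of the same argument replaces Definition~\ref{d:inj}.3 by Proposition~\ref{p:E-aff+}.4 applied to the universal affine compactification $\delta_P\colon P\to P(P)$ (whose affine compactification space is $C(P)$), using Lemma~\ref{l:prop} to identify $\overline{co((P)_g)}^p$ with $m(g,M(V))$, and, for the "if" direction, Proposition~\ref{p:dominates}.2 applied to the affine semigroup compactification $\delta\colon S\to M(V)$ (which exists precisely because $V$ is introverted) together with $\overline{\delta(S)}=MM(V)=P$ from Lemma~\ref{l:state}.4.
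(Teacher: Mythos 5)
Your proof is correct and follows essentially the same route as the paper's: unwind Definition \ref{d:inj} for the system $P$, identify $E(S,P)$ with $P$ via Lemma \ref{l:Env-all}.4 so that $\Acal(E(P),e)=\a^*C(P)=V$, and observe that the regular representations of $(S,P)$ on $C(P)$ and on $V$ are naturally isomorphic, so $m(C(P),C(P)^*)=m(V,V^*)$ and injectivity becomes exactly $m(V,V^*)\subset V$. Your additional verification that $P$ is WRUC in the converse direction is a reasonable extra precaution that the paper leaves implicit.
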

\begin{proof} By Definition \ref{d:inj} the
$S$-flow $P$ is injective iff $m(C(P),C(P)) \subset \Acal(E(S,P),e)$. By Lemma
% eli
\ref{l:Env-all}.4,  $E(S,P)$ is naturally isomorphic to the
semigroup $P$. %because $\a$ is a monoid compactification.
Hence,
$\Acal(E(P),e)=\Acal(P.e)=j^*(C(P))=V$. Observe also that the canonical
representations of $(S,P)$ on $C(P)$ and on $V$ are naturally
isomorphic. In particular, $m(C(P),C(P)^*)=m(V,V^*)$. Summing up we
get: the $S$-flow $P$ is injective iff $m(V,V^*) \subset V$ iff $V$ is
introverted.
\end{proof}

\begin{lem} \label{l:E -> X}
Let $X$ be a compact
%0912 continuous
point-transitive
$S$-system.
If the enveloping semigroup $E(X)$,
as an $S$-flow, is injective then $X$ is also injective.
\end{lem}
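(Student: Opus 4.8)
The plan is to exploit the fact that $X$ is a point-transitive factor of $E(X)$ via the orbit map at the base point, and to push the $E$-compatibility (injectivity) up along the affine compactifications. Fix a transitive point $x_0 \in X$ and let $\pi: E(X) \to X,\ p \mapsto px_0$ be the canonical surjective $S$-map (it is onto since $x_0$ is transitive). By Lemma \ref{l:Env-all}.1 the Ellis compactification $j: S \to E(X)$ is the universal point-universal extension of $X$, and the algebra inclusion $\Acal(X,x_0) \subset \Acal(E(X),e)$ holds as recorded in Section \ref{s:p-univ}. I would reformulate injectivity algebraically via Definition \ref{d:inj}(3): $X$ is injective iff $m(C(X),C(X)^*) \subset \lan m(C(X),X) \ran = \Acal(E(X),e)$, and similarly $E(X)$ (as an $S$-flow, using that it is point-transitive with base point $e$, hence WRUC once we know the action is suitable — here $E(X)$ carries a separately continuous action and is point-transitive, so the induced action on $P(E(X))$ is separately continuous and Definition \ref{d:inj} applies) is injective iff $m(C(E(X)),C(E(X))^*) \subset \Acal(E(E(X)),e)$. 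Since $E(X)$ is point-universal, $E(E(X)) = E(X)$ (Lemma \ref{l:Env-all}.4 / Section \ref{s:p-univ}), so the latter reads $m(C(E(X)),C(E(X))^*) \subset \Acal(E(X),e)$.

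Next I would transport this along $\pi$. The map $\pi^*: C(X) \hookrightarrow C(E(X))$ is an isometric $S$-embedding of algebras, and at the level of the base points it is compatible: for $f \in C(X)$ and $s \in S$ one has $m(\pi^* f, e)(s) = (\pi^* f)(se) = f(sx_0) = m(f,x_0)(s)$, so $j_{x_0}^*(C(X)) = \Acal(X,x_0)$ sits inside $\Acal(E(X),e) = j_e^*(C(E(X)))$ via $\pi^*$. More generally, for any $f \in C(X)$ and any $y \in X$, writing $y = \pi(p)$ for some $p \in E(X)$, the matrix coefficient $m(f,y): S \to \R,\ s \mapsto f(sy)$ equals $m(\pi^* f, p)$; hence $m(C(X),X) \subset m(C(E(X)), E(X))$, giving $\lan m(C(X),X) \ran \subset \Acal(E(X),e)$ — but that inclusion is automatic and not yet the point. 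The content is the reverse direction: I must show that every $m(f,\mu)$ with $f \in C(X)$ and $\mu \in C(X)^*$ lands in $\Acal(E(X),e)$. Using that $C(X)^*$ is spanned by $P(X) = M(C(X))$ (Lemma \ref{l:state}.3), it suffices to handle $\mu \in P(X)$. Now $\pi: E(X) \to X$ induces $\pi_*: P(E(X)) \to P(X)$ which is onto (Lemma \ref{l:state}.2), so write $\mu = \pi_* \nu$ for some $\nu \in P(E(X))$; then for $s \in S$, $m(f,\mu)(s) = \int_X f(sy)\, d\mu(y) = \int_{E(X)} f(s \pi(p))\, d\nu(p) = \int_{E(X)} (\pi^* f)(sp)\, d\nu(p) = m(\pi^* f, \nu)(s)$. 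Thus $m(f,\mu) = m(\pi^* f, \nu) \in m(C(E(X)), C(E(X))^*)$.

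Finally I would invoke the hypothesis: since $E(X)$ is injective as an $S$-flow, $m(C(E(X)),C(E(X))^*) \subset \Acal(E(X),e)$ by Definition \ref{d:inj}(3). Combining with the previous paragraph, $m(f,\mu) \in \Acal(E(X),e) = \lan m(C(X),X) \ran$ (the last equality by Lemma \ref{l:algebras}.2, or directly by Lemma \ref{l:Env-all}.1 applied to $L = C(X)$ which separates points of $X$). Since this holds for all $f \in C(X)$ and all $\mu \in P(X)$, and these span $C(X)^*$, we get $m(C(X),C(X)^*) \subset \Acal(E(X),e)$, which is exactly injectivity of $X$ by Definition \ref{d:inj}(3). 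The main obstacle I anticipate is a bookkeeping one rather than a conceptual one: making sure that $E(X)$ genuinely satisfies the hypotheses under which Definition \ref{d:inj} is stated (that it be WRUC as an $S$-system, equivalently that $S \times P(E(X)) \to P(E(X))$ be separately continuous) — this follows because $E(X)$ is point-transitive and its action is separately continuous, so by Proposition \ref{bar}.2 the induced action on $P(E(X))$ is separately continuous — and checking that the identification $E(E(X)) \cong E(X)$ via point-universality is the one making $\Acal(E(E(X)),e) = \Acal(E(X),e)$, which is immediate from Lemma \ref{l:Env-all}.4.
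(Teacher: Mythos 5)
Your argument is correct and is essentially the paper's proof in dual form: both realize $X$ as a factor of $E(X)$ via the orbit map at a transitive point, identify $E(E(X))$ with $E(X)$, and transfer injectivity down along this factor map --- the paper does this by a domination argument using the induced surjective homomorphism $E(B_{C(E(X))^*}) \to E(B_{C(X)^*})$, while you verify the equivalent algebraic condition of Definition \ref{d:inj}(3) by lifting each $\mu \in P(X)$ to some $\nu \in P(E(X))$ and using $m(f,\mu)=m(\pi^*f,\nu)$. One caveat: WRUC-compatibility of $E(X)$ does not follow from point-transitivity plus separate continuity as you suggest (Proposition \ref{bar}.2 only restates WRUC, it does not produce it); it is, however, implicit in the hypothesis, since injectivity is defined only for WRUC systems, and it then passes to the factor $X$, so your proof stands.
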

\begin{proof}
%0912
Let $z \in X$ be a transitive point and let $q: E(X) \to X, q(p)=pz$ be the corresponding onto continuous $S$-map. It induces
the surjective homomorphism $q_E: E(B_{C(E(X))^*}) \to E(B_{C(X)^*})$. By the injectivity of $(S,E(X))$ the $S$-compactifications
$S \to E(E(X))$ and $S \to E(B_{C(E(X))^*})$ are equivalent. On the other hand, $S \to E(E(X))$ is equivalent to $S \to E(X)$.
It follows that $S \to E(X)$ dominates $S \to E(B_{C(X)^*})$. Conversely, $S \to E(B_{C(X)^*})$ clearly dominates $S \to E(X)$.
Therefore these compactifications are equivalent.
\end{proof}

\subsection{Some examples of injective dynamical systems}

\label{r:inj-ex}

%%0912 Eli, is it essential in this subsection that G is a group ?
%Let $G$ be a topological group. We say that a property of continuous
%$G$-dynamical systems, say $\Pcal$, is {\it admissible} if it is preserved
%under the operations of taking products, subsystems,
%and factors. To every admissible
%property $\Pcal$ corresponds a unique universal point-transitive $G$-system
%$(G,G^\Pcal)$ (see e.g. \cite[Proposition 2.9]{GM1}).

Let $G$ be a topological group.
As in Remark \ref{r:univ} a property $\Pcal$ of continuous compact
$G$-systems is said to be {\it suppable} if it is preserved
by the operations of taking products and subsystems.
%050312  Eli, I think the condition that it is preserved by factors is not necessary   %and factors.
To every suppable property $\Pcal$ corresponds a universal point-transitive $G$-system
$(G,G^\Pcal)$ (see e.g. \cite[Proposition 2.9]{GM1}).

\begin{remark} \label{r:injEx} \
\ben
\item
Suppose $\mathcal{P}$ is a suppable property of dynamical systems
such that whenever $(G,X)$ has $\mathcal{P}$ then so does $(G,P(X))$.
%0912
It then follows immediately that the corresponding $\mathcal{P}$-universal
point-transitive system $Y=G^\Pcal$ is injective. Indeed, then the $G$-systems $P(Y)$ and
$E(P(Y))$ have $\mathcal{P}$. By the universality of $Y$
it is easy to see that $Y$ and $E(P(Y))$ are naturally isomorphic. On the other hand,
$Y$ and $E(Y)$ are naturally isomorphic (Lemma \ref{l:Env-all}.4). Hence, also $E(P(Y))$ and $E(Y)$ are isomorphic.
This way we see that, for example,
the universal point-transitive (i) equicontinuous, (ii) WAP, (iii) HNS and (iv) tame dynamical systems, are all injective.
\item
Another application of this principle is obtained by
regarding the class of $\Z$-flows having zero
topological entropy. It is easy to check that this property is suppable
and the fact that it is preserved under the functor $X \mapsto P(X)$
follows from a theorem
%!!
of Glasner and Weiss \cite{GW}.
%050312  Question1: for which separable Banach spaces $V$ the system $(\Theta^{op}, B^*)$ is null ("null Banach spaces") ?
%Question2: What is the enveloping semigroup characterization of null systems ? What are the "null-functions" ?
\item
Let $\Omega = \{0,1\}^\Z$ be the $\{0,1\}$-Bernoulli system on $\Z$.
It is well known that the enveloping semigroup of $(\Z,\Omega)$ is
$\beta \Z$, the \v{C}ech-Stone compactification of $\Z$ (see
\cite[Exercise 1.25]{Gl}).
Since $\beta \Z$ is the universal enveloping semigroup, it follows that
%031012 $E(X) = E(Z) = \beta \Z$
$E(X) = E(\Omega) = \beta \Z$
for every point-transitive system $(\Z,X)$
which admits $(\Z,\Omega)$ as a factor
%031012 (just for be sure) I would like to discuss these examples by skype
(e.g. every mixing $\Z$-subshift of finite type will have
$\beta \Z$ as its enveloping semigroup since it has a Cartesian power which
admits $\Omega$ as a factor \cite{B}).
Moreover we necessarily have in that case that also $E(P(X)) = \beta \Z$.
Thus every such $X$ is injective. (See \cite{Ko} and \cite{BGH}).
%\item
%Note that when $(X,T)$ is a system with a countable compact phase space (such a system is necessarily HNS, \cite{GM1})
%$X$, then every measure in $P(X)$ is purely atomic and it immediately follows
%that $E(X)= E(P(X))$, so that every such transitive system is injective.
%%eee
%\item
%Much more generally
\item
Every tame $S$-system is injective (Theorem \ref{t:tame is inj}).
This result for metrizable systems was obtained by K\"{o}hler \cite{Ko}.
In \cite{Gl-tame} there is a simple proof which uses the fact that the
enveloping semigroup $E(X)$ of a tame metric system $X$ is a Fr\'{e}chet space.
%In the proof below we use matrix coefficients.

%For every tame metric system $(G,X)$, every element
%$p$ of $E(X)$ is a limit of a {\bf sequence} of elements of $G$ and is
%therefore of Baire class 1.
%Then, by Lebesgue's dominated convergence theorem we see that
%$E(X) = E(P(X))$.
%Thus every tame metric system is injective.
%See \cite{Gl-tame} for more details.
\item Every transitive continuous $G$-system $X$ is a factor of the injective $G$-system $G^{\mathrm{RUC}}$ (see (1)).
Thus injectivity is not preserved by factors. The same
%300312
assertion holds
for subsystems (Remark \ref{r:iteration}).
%050312 is it true that the inverse limit preserves the injectivity ?
\een
\end{remark}

\begin{remark} \label{r:intro} \
\ben
\item Theorem \ref{t:introv<->inj} shows that injectivity can serve as a key property
%300312
%to provide
in providing introverted subalgebras of $C(S)$.
%(as well as introverted subalgebras induce semigroup compactifications which are injective).
In particular, the algebras in Remark \ref{r:injEx}.1 are introverted.

It is well known (see \cite[Ch. III, Lemma 8.8]{BJMo}) that every $S$-invariant unital closed
subspace of $\mathrm{WAP}(S)$ is introverted. In particular, $\mathrm{Hilb}(S)$ and $\mathrm{AP}(S)$ are introverted.
It is also well known that  %$\mathrm{WRUC}(G)$,
$\mathrm{RUC}(S)$ is an introverted algebra (see \cite[p. 163]{BJM}).
Hence, the corresponding semigroup
compactifications, the greatest ambit $S \to S^\mathrm{{RUC}}$, the Bohr compactification $S \to S^\mathrm{{AP}}$ and the
universal semitopological semigroup compactification $S \to S^\mathrm{{WAP}}$, respectively,
are operator compactifications (Proposition \ref{t:conditions-new})
and can be extended to affine semigroup compactifications.% (Fact \ref{f:adm}.2).
%! \item $\mathrm{WRUC}(S)$ (see \cite[p. 102]{BJMo}) is the largest introverted subspace of $C(S)$.
%\item
%We already know that
%$\mathrm{Asp}(G)$ and $\mathrm{Tame}(G)$ are m-introverted (in our
%terminology: point-universal; section \ref{s:p-univ}). Theorem \ref{m-intro to intro} shows that
%moreover,
%these algebras are introverted. It follows by the results of
%Theorem \ref{t:tame-sc-case} that the corresponding semigroup compactifications
%$G \to G^{\mathrm{Asp}}$ and $G \to G^{\mathrm{Tame}}$
%are operator compactifications, and that the compact $G$-flows
%$G^{\mathrm{Asp}}$ and $G^{\mathrm{Tame}}$ are injective.

% eli injective is not yet defined.

%Note that $M_{Hilb}(G) = \mathcal{B}(G)$, where $\mathcal{B}(G)$ is the Fourier-Stieltjes algebra.

%\item What about $SUC(G)$, $LE(G)$ and $Adj(G)$ ?

\item
In Theorem \ref{m-intro to intro}, we show that in fact every m-introverted
Banach $S$-subspace of $\mathrm{Tame}(S)$ (hence also of
$\mathrm{Asp}(S)$ and $\mathrm{WAP}(S)$) is introverted.
\item
The Roelcke algebra
$\mathrm{UC}(G)=\mathrm{LUC}(G) \cap \mathrm{RUC}(G)$
is not even m-introverted in general, \cite{GM-suc}
%031012
for Polish topological groups $G$.
\een
\end{remark}

%jjj
\subsection{The iteration process}
Starting with an arbitrary topological group $G$ and a compact dynamical system
$(G,X)$
%031012
(with continuous action)
 we define inductively a sequence of new systems by iterating the operation
of passing to the space of probability measures. Explicitly
we let $P^{(1)}(X)=P(X)$ and for $n \ge 1$ we let
$P^{(n+1)}(X) = P(P^{(n)}(X))$. Each $ P^{(n)}(X)$ is an affine
dynamical system and thus the barycenter map
$b : P^{(n+1)}(X) \to P^{(n)}(X)$ is a well defined continuous affine
homomorphism. Moreover, identifying a measure $\mu \in
P^{(n)}(X)$ with the dirac measure $\del_\mu \in P^{(n+1)}(X)$
we can consider $b : P^{(n+1)}(X) \to P^{(n)}(X)$ as a retract.
Next observe that the induced map
$b_* : P^{(n+1)}(X) \to P^{(n)}(X)$ coincides with
the map $b : P^{(n+1)}(X) \to P^{(n)}(X)$.
For convenience we write $Z_n = P^{(n)}(X)$
and we now let $Z = P^{(\infty)}(X) = \underset{\leftarrow}{\lim}\  Z_n$, the inverse limit.
We denote by $\pi_n : Z \to Z_n$ the natural projection.

\begin{prop}
There is a natural bijection $\al: P(Z) \to Z$. In particular
%the system
%09121 I think  $(G,Z)$ and not $(Z,T)$ ??
$(G,Z)$
 is injective.
\end{prop}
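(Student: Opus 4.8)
The plan is to show that the probability–measure functor $P$ (with the weak$^*$ topology) commutes with the inverse limit defining $Z$, and that applying $P$ to the defining tower $(Z_n,b_n)$ merely shifts it by one level; the natural bijection $\alpha$ is then the composite of these two identifications, and injectivity drops out by a uniqueness argument for restriction homomorphisms of enveloping semigroups.

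First I would record the standard fact that $P$ commutes with inverse limits of compact spaces: if $(W_k,g_k)$ is an inverse system of compact Hausdorff spaces with surjective bonding maps, limit $W=\varprojlim W_k$ and projections $q_k\colon W\to W_k$, then $\Pi\colon P(W)\to\varprojlim P(W_k)$, $\mu\mapsto((q_k)_*\mu)_k$, is a homeomorphism. Injectivity holds because a measure on $W$ is determined by its restriction to the uniformly dense subalgebra $\bigcup_k q_k^*C(W_k)$ of $C(W)$; surjectivity follows by sending a compatible family $(\mu_k)$ to the positive norm-one functional $q_k^*g\mapsto\mu_k(g)$ on that subalgebra (well defined since the $q_k$, hence the $g_k$, are onto) and applying the Riesz theorem. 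When the $W_k$ carry continuous $G$-actions and the $g_k$ are $G$-maps, $\Pi$ is visibly $G$-equivariant. Note that by Proposition \ref{bar}.2 and induction each $Z_n=P^{(n)}(X)$ carries a continuous $G$-action, hence so does $Z$, so $Z$ is WRUC by Lemma \ref{l:classes}.2 and Definition \ref{d:inj} applies to it.

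Now I would apply $\Pi$ to the tower $(Z_n,b_n)_{n\ge1}$, whose bonding maps $b_n\colon Z_{n+1}\to Z_n$ are the (continuous, affine, surjective, $G$-equivariant) barycenter maps, to get $P(Z)\cong\varprojlim_n P(Z_n)$ as $G$-systems. Since $Z_{n+1}=P^{(n+1)}(X)=P(P^{(n)}(X))=P(Z_n)$ and, as observed just above the statement, under these identifications the induced bonding maps of $(P(Z_n))_n$ are again the barycenter maps, the system $(P(Z_n))_n$ is precisely the once-shifted tower $(Z_{n+1},b_{n+1})_n$. The limit of a tower and of its shift are canonically $G$-isomorphic (delete the first coordinate, respectively restore it as $b_1$ of the next coordinate), so composing gives the natural $G$-isomorphism $\alpha\colon P(Z)\xrightarrow{\ \cong\ }Z$; in particular $\alpha$ is a bijection, hence (being a continuous bijection of compact Hausdorff spaces) an isomorphism of continuous-action $G$-systems.

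For injectivity, let $\delta_Z\colon Z\hookrightarrow P(Z)$ be the Dirac embedding; then $\delta_Z(Z)$ is a closed $G$-subsystem of $P(Z)$ and $\delta_Z\colon Z\to\delta_Z(Z)$ is a $G$-isomorphism. By Lemma \ref{l:Env-all}, the restriction homomorphism $r\colon E(P(Z))\to E(\delta_Z(Z))$, the isomorphism $E(\alpha)\colon E(P(Z))\to E(Z)$, and the isomorphism $E(\delta_Z)\colon E(Z)\to E(\delta_Z(Z))$ are all continuous and commute with the Ellis compactification maps $j\colon S\to E(\cdot)$; since $j(S)$ is dense, any two such continuous maps coincide, so $r=E(\delta_Z)\circ E(\alpha)$ is an isomorphism. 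Thus the greatest affine $S$-compactification $Z\to P(Z)$ is $E$-compatible, i.e. $(G,Z)$ is injective. The main obstacle is the second step: checking carefully that applying $P$ to the barycenter tower returns, up to the canonical identifications $Z_{n+1}=P(Z_n)$, the shifted barycenter tower — this is exactly the content of the observation recorded before the statement and is what makes the functorial identification go through.
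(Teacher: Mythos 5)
Your argument is, in substance, the paper's own proof in a more systematic packaging: the composite of your map $\Pi$ with the shift identification is exactly the map $\alpha(\mu)=((\pi_n)_*\mu)_n$ that the paper constructs by hand, and your appeal to the commutation of $P$ with inverse limits replaces the paper's explicit construction of the inverse $\beta$ (which is just the surjectivity half of that commutation fact). Your deduction of injectivity of $(G,Z)$ --- namely $r=E(\delta_Z)\circ E(\alpha)$ because all three homomorphisms are continuous, intertwine the Ellis compactification maps, and $j(S)$ is dense in $E(P(Z))$ --- is correct and is a welcome elaboration of the paper's terse ``in particular''; your checks that the $Z_n$ carry continuous actions and that $Z$ is WRUC, so that Definition \ref{d:inj} applies, are also in order.

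One caveat, precisely at the step you yourself single out as the crux. You discharge it by citing the observation made before the statement that the induced map $b_*$ coincides with $b$; but if the bonding maps of the tower are taken to be the genuine barycenter maps $b_n=\beta_{Z_n}\colon P(Z_n)\to Z_n$, this identification fails: evaluating both maps at a Dirac measure $\delta_\nu\in P(Z_{n+1})$ gives $(\beta_{Z_n})_*\delta_\nu=\delta_{\beta_{Z_n}(\nu)}$, whereas $\beta_{Z_{n+1}}(\delta_\nu)=\nu$, and these differ whenever $\nu$ is not a point mass. Correspondingly the sequence $((\pi_n)_*\mu)_n$ need not be coherent for the barycenter bonding maps (test $\mu=\delta_z$ for a point $z\in Z$ having a non-Dirac coordinate, e.g.\ $z=(z_1,z_2,\delta_{z_2},\delta_{\delta_{z_2}},\dots)$ with $z_2\neq\delta_{z_1}$), so your second step --- and the paper's proof, which leans on the same observation --- does not stand as literally written. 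Everything is repaired, and your three steps then go through verbatim, if the tower is defined functorially: $p_1=b\colon P(P(X))\to P(X)$ the barycenter map and $p_{n+1}:=(p_n)_*$; with these bonding maps, $P$ applied to the tower is literally the once-shifted tower, each $p_n$ is still a continuous affine surjective $G$-map, and $X$ still embeds via iterated Dirac maps, so the applications in Remark \ref{r:iteration} are unaffected. Since this issue is inherited from the displayed observation rather than introduced by you, I record it as a shared delicate point, not as a defect peculiar to your proposal.
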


\begin{proof}
Given $\mu \in P(Z)$ we consider, for each $n$, its image
$z_{n+1}=(\pi_n)_*(\mu) =\mu_n \in P(Z_n) = P^{(n+1)}(X)$.
Then,
\begin{align*}
z_{n+2} & = \mu_{n+1} = (\pi_{n+1})_*(\mu)\\
&=
(b \circ \pi_n)_*(\mu)
=  b_* ((\pi_{n+1})_*(\mu))\\
& = b_*(z_{n+1}) = b(z_{n+1}).
\end{align*}
Therefore, the sequence $(z_n)$ defines a unique point $z=\al(\mu)$
in $Z$. Clearly $\al : P(Z) \to Z$ is a continuous
%031012 homomorphism
$G$-map
and it is easy to check that it is one-to-one. Finally for $z \in Z$
we have the sequence of measures $z_{n+1} = \pi_{n+1}(z) \in
Z_{n+1} = P^{(n+1)}(X) = P(Z_n)$.
Because, as maps from $P^{(n+1)}(X)$ to $P^{(n)}(X)$,
the maps $b$ and $b_*$ coincide, this choice of measures on the
various $Z_n$ is consistent and defines a measure $\beta(z) \in P(Z)$.
One can check now that $\beta\circ \al$ is the identity map on $P(Z)$
and our proof is complete.
\end{proof}

\begin{remark} \label{r:iteration}
We note that in the above construction the compact affine space
$Z$ is metrizable when $X$ is metrizable. Moreover, via the maps
$z \mapsto \del_z$ we have a natural embedding of $X$  into the space $Z$.
This iterated construction can serve now as a source of examples.
Beginning with an arbitrary compact metric system $X$ with a property,
say, $\Rcal$
% 300312
--- which is preserved under inverse limits and such that $Y \in \Rcal \imp P(Y) \in \Rcal$ --- we obtain in
$(Z,T)$, with $Z = P^{(\infty)}(X)$, a metrizable injective system which
contains $X$ as a subsystem and has property $\Rcal$. Some properties $\Rcal$ as above
are e.g. ``weak-mixing" (\cite{BS}), ``zero topological entropy" (\cite{GW}) and ``uniform rigidity" (with respect to a given sequence) (\cite{GMa}).
\end{remark}

%\subsection{Examples of introverted algebras}

%By many reasons introverted subalgebras of $\mathrm{RUC}(G)$ are interesting. See for example \cite{BJM, DLS}.
% and also  \ref{t:conditions-new},

%jjj
%\section{The main examples}
% eee
\section{Examples of non-injective systems}
%jjj
%031012 Eli, what do you mean under "(properly)" in the next line ?
\subsection{Minimal distal non-equicontinuos systems are not injective}

In this section $G$ denotes a semitopological group.
%and all actions are at least separately continuous.
Let $Q$ be an affine compact $G$-system and let $ext\ Q$ denote the set
of extreme points of $Q$. The system $Q$ is said to be \emph{minimally generated} \cite{Gl-af} if
% eli
%there exists a closed (not necessarily affine) $G$-subflow $X$ such that
%$X$ is a minimal $G$-flow and $X=\overline{ext}(Q)$.
the $G$-subsystem $\overline{ext \ Q}$ is minimal.
We recall the following theorem.

%050312 Eli, we need here that in your thm the topology of G is not relevant. I mean
%separately continuous action is enough because we can pass to the compact-open topology on G which comes from the homomorphism
%h: G \to Homeo(X). Then the vital conditions are the same, like, distality (the enveloping semigroup is the same)
%and the action is jointly continuous

\begin{f} \label{t:Glasner}
(\cite[Theorem 1.1]{Gl-af})
Let $Q$ be a minimally generated metric distal affine compact $G$-system. Then $Q$ is  equicontinuous.
\end{f}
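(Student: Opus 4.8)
The plan is to route everything through the enveloping semigroup. Writing $E=E(Q)$, distality of $(G,Q)$ gives (by Ellis) that $E$ is a group; since $Q$ is affine, every $p\in E$ is an affine self-map of $Q$, and as a member of the group $E$ it is a bijection with affine inverse, hence permutes $ext\,Q$. Each $p$ is also a pointwise limit of translations $\lambda_g$, and since the $\lambda_g$ are affine homeomorphisms preserving the closed $G$-invariant set $M:=\overline{ext\,Q}$, each $p$ maps $M$ into $M$; thus restriction yields a continuous onto homomorphism $E(Q)\to E(M)$ (Lemma~\ref{l:Env-all}.3) onto the enveloping semigroup of the minimal distal metric subsystem $(G,M)$. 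Because $Q=\overline{co}(M)$, the barycenter map $b\colon P(M)\to Q$ is a continuous affine surjective $G$-map, so $(G,Q)$ is a factor of $(G,P(M))$. Hence the first reduction: it suffices to prove that $(G,M)$ is equicontinuous, for then $E(M)$ is a compact topological group acting continuously on $M$, hence equicontinuously on $P(M)$, and therefore also on its factor $Q$.

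To handle $(G,M)$, I would first exploit unique ergodicity: a minimal distal metric flow is uniquely ergodic, so $(G,M)$ carries a unique invariant probability measure $\mu$, and $p_0:=b(\mu)\in Q$ is a $G$-fixed point of $Q$ (the maps $\lambda_g$ being affine homeomorphisms). Moving $p_0$ to the origin, the group $G$ then acts on a normed space $W:=\overline{\mathrm{span}}(Q-p_0)$ by uniformly bounded linear operators (uniform boundedness because $Q-p_0$ is $G$-invariant and bounded), and the induced linear action on the generating compact convex set $Q-p_0$ remains distal. So the problem becomes a statement about uniformly bounded distal \emph{linear} flows.

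The hard part, which is really the content of Fact~\ref{t:Glasner} (i.e.\ of \cite[Theorem~1.1]{Gl-af}), is to show that a uniformly bounded distal linear action on a compact convex set whose closed extreme boundary is minimal must be equicontinuous --- equivalently, that its enveloping semigroup of operators is a compact group, equivalently that $W$ carries an equivalent $G$-invariant norm. The minimality hypothesis on $\overline{ext\,Q}$ is essential here: dropping it the assertion is false, as one sees by realizing Anzai's skew product on the two-torus as a distal, non-equicontinuous affine flow. This is where I would bring in Namioka's structural results on distal affine flows \cite{Na83} and combine them with the unique ergodicity established above; once the linear action is shown to be equicontinuous in the ambient (weak$^*$) topology, $\{\lambda_g\}_{g\in G}$ is equicontinuous on the compact $Q$ and we are done. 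I expect the earlier reductions to be essentially formal, and this last step --- ruling out, via distality plus minimality, the unipotent/skew behaviour responsible for non-equicontinuous minimal distal systems --- to be the genuine obstacle.
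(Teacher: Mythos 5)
This statement is quoted in the paper as a Fact, with the proof delegated entirely to \cite[Theorem 1.1]{Gl-af}; the paper itself contains no argument for it (only Remark \ref{r:Glasnerthm}, which extends it beyond the metric case). Your proposal, therefore, has to stand on its own, and it does not: after the preliminary reductions you write that the ``hard part, which is really the content of Fact \ref{t:Glasner}'' is to show that a uniformly bounded distal linear (equivalently affine) action with minimal closed extreme boundary is equicontinuous, and you then defer this to ``Namioka's structural results'' \cite{Na83} combined with unique ergodicity, without giving any argument. That deferred step \emph{is} the theorem; the reductions surrounding it (restriction $E(Q)\to E(M)$ via Lemma \ref{l:Env-all}.3, the barycenter map $P(M)\to Q$, translating a fixed point to the origin) are essentially formal repackagings of the statement, so the proposal amounts to restating the result rather than proving it. This is a genuine gap, not a stylistic one.

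There is also a concrete false claim in the middle: a minimal distal metric flow need \emph{not} be uniquely ergodic. Furstenberg's classical skew-product examples on the two-torus, $T(x,y)=(x+\alpha,\,y+\varphi(x))$, are minimal and distal but admit more than one invariant measure. What you actually need at that point is only the existence of a $G$-fixed point in $Q$, and this can be salvaged: every compact distal flow carries \emph{some} invariant probability measure (Furstenberg for metric systems, Namioka in general, independently of amenability of $G$), and the barycenter of any such measure is a fixed point of the affine flow $Q$. So that step is repairable, but as written it rests on a wrong lemma. To complete the argument you would have to supply the real content of \cite[Theorem 1.1]{Gl-af}: an analysis (via the Furstenberg--Namioka structure of distal flows, or Glasner's own argument) showing that distality of the whole affine flow $\overline{co}(M)$ rules out the isometric-extension (skew) behaviour that produces non-equicontinuous minimal distal systems such as the Anzai example you yourself mention.
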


\begin{lem} \label{l:min-gen}
Let $Q$ be a compact convex affine $G$-flow and let $X$ be a compact minimal $G$-subflow.
\TFAE
\ben
\item $X=\overline{ext \ Q}$.
\item $\overline{co} (X) = Q$.
\een
\end{lem}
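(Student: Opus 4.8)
The plan is to reduce the equivalence to two classical facts about compact convex sets --- the Krein--Milman theorem and Milman's partial converse to it --- together with the minimality of $X$. The structural observation I would isolate first is the following: for each $g \in G$ the translation $\lambda_g \colon Q \to Q$ is affine and continuous, and since $G$ is a group it is a homeomorphism with inverse $\lambda_{g^{-1}}$; an affine bijection of a convex set onto itself permutes its extreme points, so $\lambda_g(ext \ Q) = ext \ Q$. Consequently $ext \ Q$ is $G$-invariant, and therefore so is its closure $\overline{ext \ Q}$, which is thus a closed $G$-subflow of $Q$ (and it is nonempty, since $Q$ is a nonempty compact convex subset of a locally convex space, so the Krein--Milman theorem applies and produces extreme points).

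Granting this, the implication $(1) \Rightarrow (2)$ is immediate: if $X = \overline{ext \ Q}$ then, using that the closed convex hull of a set coincides with the closed convex hull of its closure, $\overline{co}(X) = \overline{co}(\overline{ext \ Q}) = \overline{co}(ext \ Q) = Q$, the last equality being the Krein--Milman theorem. For $(2) \Rightarrow (1)$, suppose $\overline{co}(X) = Q$. Since $X$ is compact, Milman's theorem --- if a compact convex set is the closed convex hull of a compact set $K$, then every extreme point of it lies in $K$ --- gives $ext \ Q \subseteq X$, and as $X$ is closed we obtain $\overline{ext \ Q} \subseteq X$. But $\overline{ext \ Q}$ is a nonempty closed $G$-invariant subset of $X$ by the previous paragraph, so minimality of $X$ forces $\overline{ext \ Q} = X$, which is $(1)$.

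I do not anticipate a genuine obstacle; the proof is short. The only points needing a little care are the verification that each $g \in G$ acts on $Q$ as an affine homeomorphism --- so that $ext \ Q$, and hence $\overline{ext \ Q}$, is $G$-invariant and minimality can be brought to bear --- and the correct invocation of Milman's theorem, rather than of Krein--Milman, in the direction $(2) \Rightarrow (1)$.
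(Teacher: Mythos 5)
Your proof is correct and follows essentially the same route as the paper: Krein--Milman for (1)$\Rightarrow$(2), and Milman's theorem plus $G$-invariance of $ext\ Q$ (since $G$ acts by affine homeomorphisms) plus minimality of $X$ for (2)$\Rightarrow$(1). The only cosmetic difference is that the paper invokes minimality via density of the orbit $Gz$ of a single extreme point, whereas you use the equivalent formulation that the nonempty closed invariant set $\overline{ext\ Q}\subset X$ must equal $X$.
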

\begin{proof}
(1) $\Rightarrow$ (2):   By Krein-Milman theorem (see for example %\cite[p. 440]{DS} or,
\cite[p. 659]{Vr-b}), $Q=\overline{co} (ext \ Q)$.
Hence, we get $Q=\overline{co} (\overline{ext \ Q})=\overline{co} X$.

(2) $\Rightarrow$ (1): By Krein-Milman theorem $ext \ Q \neq \emptyset$. Choose $z \in ext \ Q$.
The orbit $Gz$ is dense in $X$ by minimality of $X$.
By Milman theorem \cite[p. 659]{Vr-b}, $ext \ Q\subset X$. Since $G$ is a \emph{group}
of affine transformations, $gz \in ext \ Q$ for every $g \in G$.
So $ext \ Q$ is dense in $X$.
\end{proof}

%0912
\begin{remark} \label{r:Glasnerthm}
Theorem \ref{t:Glasner} remains true for non-metrizable $Q$. Indeed, by \cite{Pen}
the general case can be reduced to the metrizable case using Ellis' construction.

At least for every separable $G$ a more direct argument is as follows.
We treat $Q$ as an affine compactification of the minimal system $X:=\overline{ext \ Q}$ ((1) $\Rightarrow$ (2) of Lemma \ref{l:min-gen}) and
observe that one may $G$-approximate the (faithful) affine compactification $X
\hookrightarrow Q$ by metric (not necessarily faithful) affine compactifications $X \to Q_i$.
More precisely, let $V \subset C(X)$ be the Banach unital $G$-invariant space of the affine compactification $X \hookrightarrow Q$.
Since $G$ is separable there are sufficiently many \emph{separable} Banach
unital $G$-invariant subspaces $V_i$  to separate points of $Q$.  The corresponding affine distal
$G$-factors $r_i: Q \to Q_i=\overline{co} (X_i), \ X_i:=r_i(X)$ are again minimally generated
((2) $\Rightarrow$ (1) of Lemma \ref{l:min-gen}).
Since each $Q_i$ is metrizable, it is equicontinuous by
Theorem \ref{t:Glasner}, and the same is then true for $Q$.
\end{remark}

\begin{prop} \label{p:not-E-fl}
Let $X$ be a compact distal minimal $G$-flow.
\TFAE
\ben
\item
The $G$-flow $X$ admits an E-compatible faithful affine compactification $\a: X \hookrightarrow Q$.
\item
The $G$-flow $X$ admits an E-compatible faithful representation of
$(S,X)$ on a Banach space.
\item
$X$ is an equicontinuous $G$-flow.
\item
$X$ is an injective $G$-flow.
\een
\end{prop}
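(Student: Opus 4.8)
The plan is to establish the cycle $(1)\Rightarrow(3)\Rightarrow(4)\Rightarrow(1)$ together with the essentially formal equivalence $(1)\Leftrightarrow(2)$. For $(1)\Leftrightarrow(2)$ I would invoke the dictionary between affine compactifications and Banach representations developed above: given a faithful affine compactification $\a\colon X\hookrightarrow Q$, its affine compactification space $V:=\mathrm{A}(Q)|_X$ lies in $\mathrm{WRUC}(X)$ (Lemmas~\ref{aff-descr} and~\ref{cont-act}), the associated regular representation of $(G,X)$ on $V$ is faithful (because $V$ separates the points of $X$), and by Proposition~\ref{p:E-aff+} it is $E$-compatible precisely when $\a$ is; conversely a faithful $E$-compatible representation $(h,\a)$ on a Banach space $V$ yields the faithful affine compactification $X\to Q:=\overline{co}^{w^*}(\a(X))$, whose $E$-compatibility is, by Definition~\ref{d:E-compat}, exactly the $E$-compatibility of $(h,\a)$. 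The implication $(4)\Rightarrow(1)$ is then immediate, since by Definition~\ref{d:inj} injectivity of $X$ means that every faithful affine $G$-compactification of $X$ is $E$-compatible, and $X\hookrightarrow P(X)$ (via Dirac measures) is one such.

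For $(3)\Rightarrow(4)$ I would argue directly that an equicontinuous system is injective. If $(G,X)$ is equicontinuous then so is the induced affine system $(G,P(X))$ (a uniformly equicontinuous family of homeomorphisms of $X$ induces such a family of affine homeomorphisms of $P(X)$); hence $E(P(X))$ is a compact group of \emph{homeomorphisms} of $P(X)$, each of which is moreover affine, being a pointwise limit of the affine maps $\mu\mapsto g_*\mu$. The canonical surjective homomorphism $E(P(X))\to E(X)$ is then injective: any $p$ in its kernel fixes the Dirac measures, hence fixes $co(\delta(X))$ by affineness and therefore all of $\overline{co}(\delta(X))=P(X)$ by continuity, so $p=\id$. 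Thus $X$ is injective.

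The substantive part is $(1)\Rightarrow(3)$. Let $\a\colon X\hookrightarrow Q$ be an $E$-compatible faithful affine compactification. Writing $Y:=\a(X)\cong X$ (using that $X$ is compact), $E$-compatibility says the restriction map $E(Q)\to E(Y)\cong E(X)$ is an isomorphism. Since $X$ is distal and $G$ is a group, the classical theorem of R.\ Ellis (see, e.g., \cite{Gl}) gives that $E(X)$ is a group; transporting this along the isomorphism, $E(Q)$ is a group, and hence --- again by Ellis's criterion, now applied to the separately continuous affine $G$-flow $Q$ --- the flow $Q$ is distal. On the other hand $\a(X)$ is a minimal $G$-subflow which affinely generates $Q$, so Lemma~\ref{l:min-gen} gives $\a(X)=\overline{ext \ Q}$; that is, $Q$ is minimally generated. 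Thus $Q$ is a minimally generated distal affine compact $G$-flow, and Theorem~\ref{t:Glasner} together with Remark~\ref{r:Glasnerthm} (for the non-metrizable case) yields that $Q$ is equicontinuous; consequently its subflow $\a(X)\cong X$ is equicontinuous, proving $(3)$. The main obstacle is exactly this chain: one must check that $E$-compatibility really does transport the group property from $E(X)$ to $E(Q)$, and that both Ellis's distality criterion and Glasner's equicontinuity theorem are available in the required generality (non-metrizable $Q$, semitopological $G$) --- which is precisely the role of Remark~\ref{r:Glasnerthm}.
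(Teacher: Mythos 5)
Your proposal is correct and follows the paper's own route almost everywhere: the cycle $(1)\Rightarrow(3)\Rightarrow(4)\Rightarrow(1)$ plus the formal equivalence $(1)\Leftrightarrow(2)$ via Proposition \ref{p:E-aff+} and Definition \ref{d:E-compat}, with $(1)\Rightarrow(3)$ argued exactly as in the paper (Ellis' theorem transported through the isomorphism $E(Q)\cong E(X)$, then Lemma \ref{l:min-gen}, Theorem \ref{t:Glasner} and Remark \ref{r:Glasnerthm}), and $(4)\Rightarrow(1)$ read off from Definition \ref{d:inj}. The one place where you genuinely diverge is $(3)\Rightarrow(4)$: the paper simply cites Theorem \ref{t:tame is inj} (every tame compact system is injective), a later result whose proof runs through Rosenthal representations and Haydon's theorem, whereas you give a direct, self-contained argument: equicontinuity of $(G,X)$ passes to $(G,P(X))$ (via Arzel\`a--Ascoli applied to the orbit $\{f\circ g\}$ of each $f\in C(X)$), so every element of $E(P(X))$ is a continuous affine map, and an element restricting to the identity on $\delta(X)$ must fix $co(\delta(X))$ by affineness and hence $P(X)=\overline{co}(\delta(X))$ by continuity, so the restriction homomorphism $E(P(X))\to E(X)$ is injective. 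This argument is sound and buys you a proof that avoids the heavy representation-theoretic machinery; what it loses is generality --- the paper's appeal to Theorem \ref{t:tame is inj} covers all tame systems at once, which the authors need elsewhere anyway --- and you should note in passing that Definition \ref{d:inj} requires $X$ to be WRUC, which holds here since an equicontinuous system is WAP (or since the action is continuous).
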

\begin{proof}
(1) $\Leftrightarrow$ (2) Follows directly from Proposition \ref{p:E-aff+} and Definition \ref{d:E-compat}.

(1) $\Rightarrow$ (3) The distality of the $G$-flow $X$ means,
by Ellis result, that the enveloping semigroup $E(X)$ is a group. Since by (1), $E(Q) \to E(X)$ is an isomorphism
we obtain that $E(Q)$ is also a group, hence $Q$ is a distal $G$-flow. By Theorem \ref{t:Glasner},
taking into account Lemma \ref{l:min-gen} and Remark \ref{r:Glasnerthm} we conclude that $Q$, hence also, $X$ are equicontinuous flows.

(3) $\Rightarrow$ (4) This follows, for example, from  %Remark \ref{r:inj-ex}.3. %!!
Theorem \ref{t:tame is inj} below.

(4) $\Rightarrow$ (1) Apply Definition \ref{d:inj}.
\end{proof}

\begin{thm} \label{p:not-E-gr}
Let $P$ be a compact right topological \underline{group} and let
$\nu: G \to P$ be a right topological semigroup compactification of a group $G$.
% eli
% such that $P$ algebraically is a group.
%Let $P$ be a compact admissible right topological \textbf{group}.
\TFAE
\ben
\item
The compactification $\nu: G \to P$ %of a dense subgroup $G$ of $\Lambda(P)$
is equivalent to an operator compactification.
\item There exists a $\nu(G)$-admissible embedding of $P$ into
$\E(V)$ for some Banach space $V$.
\item
$P$ is a topological group. \een
\end{thm}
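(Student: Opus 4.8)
The plan is to prove the cycle $(1) \Rightarrow (2) \Rightarrow (3) \Rightarrow (1)$. The implication $(1) \Leftrightarrow (2)$ is essentially definitional: a standard operator compactification generated by a weakly continuous representation $h : G \to \Theta(V)$ yields, as noted right after Definition \ref{d:reprCRTS}, an admissible (here $\nu(G)$-admissible) embedding of $\overline{h(G)^{op}}$ into $\E(V)$; and conversely such an embedding exhibits $\nu$ as the standard operator compactification attached to the composite representation $G \to \Theta(V)$. One only needs to observe that for a \emph{group} $G$ the representation lands in $\Iso(V)$, and Proposition \ref{t:conditions-new} already tells us operator compactifications on locally convex spaces can be replaced by standard operator compactifications on Banach spaces, so nothing is lost.

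The implication $(3) \Rightarrow (1)$ is straightforward from the machinery built up earlier. If $P$ is a topological group, then $\nu : G \to P$ is a dynamical semigroup compactification whose action $\pi_\nu : G \times P \to P$ is jointly continuous; by Fact \ref{f:sc=c} (Lawson) joint continuity is automatic once $P$ is a topological group acting on itself. Then the regular representation of the compact $G$-flow $P$ on $V := C(P)$ is strongly continuous, and since the $G$-action extends to a genuine group action of $P$ on itself by isometries (using that $P$ is a topological group), the enveloping semigroup $E(G,P)$ is naturally isomorphic to $P$ (Lemma \ref{l:Env-all}.4). Feeding this into Lemma \ref{l:W=E} / Proposition \ref{p:factorOC} — or more directly, observing that $\mathrm{RUC}(G) \supseteq \Acal(P,e)$ and that the corresponding algebra is intro-generated because $P$, being a topological group, supports the affine semigroup compactification $G \to \overline{co}\,\nu(G)$ via Proposition \ref{t:conditions-new}(3) — shows $\nu$ is equivalent to an operator compactification.

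The heart of the matter, and the step I expect to be the main obstacle, is $(1) \Rightarrow (3)$: from an operator representation to the conclusion that $P$ is a \emph{topological} group, i.e. that multiplication is jointly continuous and inversion is continuous. The idea is this. By $(1) \Rightarrow (2)$ we may assume $P \subset \E(V)$ via a $\nu(G)$-admissible embedding, with $G$ acting through isometries, so $P \subset \E(V)$ is a compact right topological subgroup with $\nu(G) \subset \Lambda(P)$ dense. The point is that on $\E(V)$ — or rather on the subsemigroup $P$ — each element acts on the weak$^*$ ball $B^*$, and one wants to upgrade separate continuity of the action to joint continuity, which forces $P$ to be a topological group. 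Here is where I would invoke the distality/rigidity circle: because $P$ is a group, every element of $E(P) \cong P$ is a bijective affine self-map, the enveloping semigroup consists of invertible maps, and one can argue (as in Proposition \ref{p:not-E-fl} and its proof, via Theorem \ref{t:Glasner}, Lemma \ref{l:min-gen} and Remark \ref{r:Glasnerthm}) that a minimally generated distal affine compact $G$-system is equicontinuous; equicontinuity of the relevant affine compactification then gives joint continuity of the $P$-action on itself, whence $P$ is a topological group by Ellis' classical theorem that a compact right topological group which is \emph{semitopological} (indeed even with separate continuity plus one point of joint continuity) is a topological group. I would cite the generalized Ellis theorem (Theorem \ref{gen-Ellis}, "a tame compact right topological group is a topological group") as the clean endpoint, after noting that a Banach-representable — in particular operator-compactification — right topological group is automatically tame in the required sense, since its enveloping semigroup embeds into $\E(V)$ and hence consists of fragmented maps on the relevant compacta. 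The delicate bookkeeping will be to make sure the fragmentability/tameness hypothesis of Theorem \ref{gen-Ellis} really is supplied by the mere existence of the operator representation, without circularity; that is the step deserving the most care.
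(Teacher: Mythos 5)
Your treatment of $(1)\Leftrightarrow(2)$ is fine and essentially the paper's, and your $(3)\Rightarrow(1)$ (via the regular representation of the topological group $P$ on $C(P)$, rather than the paper's embedding of $P$ into the unitary group $U(H)\subset\Theta(H)=\E(H^*)$ of a Hilbert space) can be made to work. The problem is the implication $(1)\Rightarrow(3)$, which you yourself flag as the delicate step and which your sketch does not close. Your proposed ``clean endpoint'' --- invoke Theorem \ref{gen-Ellis} after noting that an operator-compactification group is ``automatically tame'' --- fails on two counts. First, it is circular relative to the paper's architecture: Theorem \ref{gen-Ellis} is \emph{deduced from} Theorem \ref{p:not-E-gr} (its proof applies Theorem \ref{p:not-E-gr} to the compactification $\nu: \Lambda(P)\hookrightarrow P$ after producing a Rosenthal representation via Theorem \ref{t:Srepr}). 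Second, the premise is false: an admissible embedding $P\hookrightarrow\E(V)$ for an \emph{arbitrary} Banach space $V$ does not make $P$ tame, since $\E(V)$ is a tame semigroup only when $V$ is Rosenthal (Theorem \ref{t:tame2}); for $V$ containing $l_1$ the elements of $\E(V)$ need not be fragmented, so no fragmentability hypothesis is supplied by condition (1) alone.

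Your parallel sketch via the affine--distal rigidity circle is the right direction (it is the paper's route), but it omits the one step that makes it run: E-compatibility. The paper argues as follows: by Proposition \ref{t:conditions-new}, (1) yields an affine semigroup compactification $G\to Q$ with $\overline{co}\,(P)=Q$; by Proposition \ref{p:dominates} \emph{every affine semigroup compactification is E-compatible}, so $E(Q)\to E(P)=P$ is an isomorphism; only then does one know that $E(Q)$ is a group, hence that the affine flow $Q$ is distal, and that $(G,P)$ is minimal (distal and point-transitive), so that Proposition \ref{p:not-E-fl} (resting on Glasner's Theorem \ref{t:Glasner} together with Lemma \ref{l:min-gen} and Remark \ref{r:Glasnerthm}) gives equicontinuity of $P$ and hence that $P$ is a topological group. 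In your sketch you assert that the enveloping semigroup consists of ``bijective affine self-maps'' and pass directly to the distal-affine theorem, but without the E-compatibility of $G\to Q$ there is no control on $E(Q)$: it can be strictly larger than $P$, need not be a group, and $Q$ need not be distal, so Theorem \ref{t:Glasner} cannot be applied. Supplying Proposition \ref{p:dominates} (or an equivalent argument that the coefficient space $\mathrm{A}(Q)|_G$ is introverted and hence the affine compactification is E-compatible) is exactly the missing ingredient.
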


\begin{proof} %(of Theorem \ref{t:dgroupcomp})
(3) $\Rightarrow$ (1): Every compact
topological group is embedded into the unitary group $U(H) \subset \Theta(H)$ for
some Hilbert space $H$ and in this case
%031012  $\Theta(H)^{op} = \E(H)$.
 $\Theta(H) = \E(H^*)$.

(1) $\Rightarrow$ (3):
By Proposition \ref{t:conditions-new}, $G \to P$
can be embedded into an affine
semigroup compactification $G \to Q$, so that $\overline{co} (P) = Q$.
by Proposition \ref{p:dominates}, $G \to Q$ is $E$-compatible.
The system $(G,P)$ is distal because $E(G,P)=P$ is a group. Moreover, $(G,P)$ is minimal being distal and point-transitive.
%Then $(G,Q)$ is also distal by Proposition \ref{p:dominates}.
 Now by Proposition \ref{p:not-E-fl} we conclude that $(G,P)$ is equicontinuous and hence $E(G,P)=P$ is a topological group.
%
%The $G$-flow $P$ is minimal (being distal and point-transitive) and
%by Lemma \ref{l:min-gen}, $Q$ is minimally generated.
%Now by Fact \ref{t:Glasner} and Remark \ref{r:Glasnerthm}
%% \footnote{here we need the separability of $G$}
%we get that $(G,Q)$ is equicontinuous.
%This implies that $(G,P)$ is equicontinuous and hence $E(G,P)=P$ is a topological group.

(1) $\Leftrightarrow$ (2): Use Proposition \ref{t:conditions-new} and Lemma \ref{l:W=E}.
\end{proof}

%\begin{cor} \label{c:gr} Every admissible compact subgroup $G$ of $\E(V)$ is a topological group.
%%Let $G$ be a subgroup of the group $Iso(V)^{op}$. Consider the semigroup
%% $P:=cl(G)$, closure in $\E(V)$. Assume that $P$ algebraically is a group. Then $P$ is a topological group.
%\end{cor}

Let $\mathrm{D}(\Z)$ be the algebra of all distal functions on $\Z$ and $\Z \to
\Z^{\mathrm{D}(\Z)}$ the corresponding semigroup compactification (see \cite[p. 178]{BJM}).
The right topological group $\Z^{\mathrm{D}(\Z)}$ is not a topological group so
by Proposition \ref{p:not-E-fl} and Theorem \ref{p:not-E-gr} we get

\begin{thm} \label{ex} \label{e:dist-not} \
\ben
\item
The semigroup compactification $\Z \to \Z^{\mathrm{D}(\Z)}$ is not an operator compactification.
\item
The algebra of all distal functions $\mathrm{D}(\Z)$ is not
intro-generated.
\item
$(\Z,\Z^{\mathrm{D}(\Z)})$ is a minimal %non-equicontinuous
distal cascade which
does not admit faithful E-compatible affine compactifications.
%0912  one of the starting points
\item
The compact right topological group $\Z^{D(\Z)}$ does not admit faithful admissible representations on Banach spaces.
%(meaning that there is no admissible embedding of $\Z^{D(\Z)}$ into $\E(V)$ for any Banach space $V$).
%0912 I do not know if ``admissible" is essential here. That is, we have a question:
%is it true that EVERY Hausdorff compact right topological semigroup can be embedded into $\E(V)$ ?

% eli
%(equivalently, there is no faithful E-compatible representation on a Banach space).
%Take for example the cascade $(\Z,\T^2)$ on the two-torus defined by the generator
%$\sigma(x,y):=(x+\a, y+x)  \ \ (mod \ 1)$ ).
%\item There is no E-compatible faithful affine compactification of the $\Z$-flow
%$\Z^{D(\Z)}$.

%!! is not injective
%, or equivalently,
\een
\end{thm}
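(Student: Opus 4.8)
The plan is to obtain all four items as formal consequences of results already established, the single substantive input being the known fact that $P:=\Z^{\mathrm{D}(\Z)}$ is a compact right topological group which is \emph{not} a topological group. First I would recall why $P$ has this structure: by the theory of distal functions (cf.\ \cite[p.~178]{BJM}) the algebra $\mathrm{D}(\Z)$ is m-introverted and $(\Z,P)$ is point-universal, so by Section \ref{s:p-univ} it is isomorphic to its own enveloping semigroup $(E(P),e)$; the enveloping semigroup of a distal flow is a group, hence $E(P)=P$ is a group, $(\Z,P)$ is a distal cascade, and being point-transitive and distal it is minimal (exactly as in the proof of Theorem \ref{p:not-E-gr}). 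To see that $P$ is not a topological group I would note that $\mathrm{D}(\Z)$ strictly contains $\mathrm{AP}(\Z)$ --- the coordinate functions of the Anzai skew product $(x,y)\mapsto(x+\alpha,y+x)$ on $\T^2$ are distal but not almost periodic --- so $(\Z,P)$ is not equicontinuous, while a minimal distal flow whose enveloping semigroup is a topological group would be equicontinuous; hence $P=E(P)$ is not a topological group. This step is classical and is the only part that is not bookkeeping.

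With this in hand, item (3) follows directly from Proposition \ref{p:not-E-fl}: here $X:=P$ is a compact distal minimal $\Z$-flow which is not equicontinuous, so condition (3) of that proposition fails and therefore so does condition (1); together with the structural remarks above this is precisely the assertion that $(\Z,\Z^{\mathrm{D}(\Z)})$ is a minimal distal cascade admitting no faithful $E$-compatible affine compactification.

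For item (1) I would apply Theorem \ref{p:not-E-gr} to the compact right topological group $P=\Z^{\mathrm{D}(\Z)}$ and the semigroup compactification $\nu:\Z\to P$: since $P$ is not a topological group, condition (3) of that theorem fails, hence condition (1) fails, i.e.\ $\nu$ is not equivalent to an operator compactification. Item (2) is then immediate from Proposition \ref{t:conditions-new}, whose conditions (2) (an operator compactification on a locally convex space) and (4) (the algebra $\Acal(P,e)=\mathrm{D}(\Z)$ is intro-generated) are equivalent; the failure of the former (by item (1)) forces $\mathrm{D}(\Z)$ not to be intro-generated. Finally, for item (4) I would use Definition \ref{d:reprCRTS} to identify a faithful admissible Banach representation of $P$ with an admissible embedding of $P$ into some $\E(V)$, and then invoke the equivalence (2)$\Leftrightarrow$(3) of Theorem \ref{p:not-E-gr}: the existence of such an embedding is equivalent to $P$ being a topological group, so none exists.

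I expect the only real obstacle to be isolating and justifying the structural fact that $\Z^{\mathrm{D}(\Z)}$ is not a topological group; once that is granted, the four items drop out mechanically from Propositions \ref{p:not-E-fl}, \ref{t:conditions-new} and Theorem \ref{p:not-E-gr}. If one prefers not to quote a specific skew-product example, any non-equicontinuous minimal distal cascade together with the universality of $\Z^{\mathrm{D}(\Z)}$ among point-transitive distal $\Z$-systems serves equally well to conclude that $(\Z,\Z^{\mathrm{D}(\Z)})$ is not equicontinuous.
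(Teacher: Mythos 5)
Your proposal is correct and follows essentially the same route as the paper: the paper's entire proof consists of citing the known fact that the compact right topological group $\Z^{\mathrm{D}(\Z)}$ is not a topological group and then invoking Proposition \ref{p:not-E-fl}, Theorem \ref{p:not-E-gr} and Proposition \ref{t:conditions-new}, exactly as you do. The only difference is that you additionally justify the input fact (non-equicontinuity via a distal non-almost periodic function such as $\cos(2\pi n^2\alpha)$, plus the coset-space argument that a minimal distal flow whose enveloping semigroup is a topological group is equicontinuous), which the paper takes as known from \cite{BJM} and which also appears, in essence, in its Corollary \ref{Gl-Gen}.
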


The fact that $\mathrm{D}(\Z)$ is not introverted
%031012
(weaker than Theorem \ref{e:dist-not}.2)
 was shown in  \cite[p. 179]{BJM}.
%050312  Let $P$ be a closed subgroup in the monoid $\E(V)$.
%Is it true that $P$ necessarily is a topological group ?  More concretely,
%!!q

%300312  This question may have an easy answer. We better not put it in print yet.
%Theorem \ref{e:dist-not}.4 leads to the following question:
%is there a (non-admissible) embedding of $\Z^{D(\Z)}$ into $\E(V)$ for some Banach space $V$ ?

%jjj
\subsection{A Toeplitz non-injective system}

%7nov10
We give here an example of a non-injective metric minimal $\Z$-system
which is a transitive almost 1-1 extension of an adding machine.
%%!!
The latter property is actually a characterization of being a {\em Toepliz dynamical system}.
For more details on Toepliz systems we refer to \cite{W}.

\begin{thm} \label{e: Toeplic}
There is a Toeplitz non-injective system.
\end{thm}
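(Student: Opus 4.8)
The plan is to build the required system by the classical block construction of Toeplitz subshifts, engineering into the combinatorics a \emph{measure-theoretic defect} that obstructs the canonical restriction homomorphism $r\colon E(P(X))\to E(X)$ from being injective. Recall from Definition \ref{d:inj} that a compact metric cascade $X$ is injective exactly when $m(C(X),C(X)^*)\subseteq\langle m(C(X),X)\rangle$ in $l_\infty(\Z)$, equivalently (unwinding the enveloping semigroups) when every $p\in E(X)$ has a \emph{unique} lift in $E(P(X))$. Since a tame metric system is injective (K\"ohler; Theorem \ref{t:tame is inj}), the system we produce will in particular be non-tame, so by the BFT dichotomy (Fact \ref{D-BFT}) its enveloping semigroup contains a copy of $\beta\N$ and the relevant $p$ must be a net-limit of $\{T^n\}$ that is not a sequential limit. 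This is precisely the source of the ambiguity: if $p=\lim_k T^{n_k}$ along a \emph{sequence} then $f\circ p$ is of Baire class $1$, hence $\mu$-measurable, and $T^{n_k}\mu\to p_*\mu$ is determined by $p$ and $\mu$ alone. Thus it suffices to exhibit $f\in C(X)$, $\mu\in P(X)$, and two nets $(s_\alpha),(t_\beta)$ in $\Z$ with $\lambda_{s_\alpha}\to p$ and $\lambda_{t_\beta}\to p$ in $E(X)$ but $\int f\circ T^{s_\alpha}\,d\mu\to A$ and $\int f\circ T^{t_\beta}\,d\mu\to B$ with $A\neq B$; passing to subnets, $T^{s_\alpha}$ and $T^{t_\beta}$ then converge in $E(P(X))$ to distinct elements lying over the single point $p\in E(X)$.

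The construction I would organize as follows. Fix periods $1=p_0\mid p_1\mid p_2\mid\cdots$ with odometer $\Omega=\varprojlim\Z/p_k\Z$, and build a Toeplitz array $\eta\in\{0,1\}^{\Z}$ stage by stage, freezing at stage $k$ the entries of $\eta$ along a union of residue classes mod $p_k$ and leaving a ``hole set'' $H_k$, a union of residue classes mod $p_k$ with $H_{k+1}\subset H_k$ and of vanishing density, which is partly filled at stage $k+1$ by admissible blocks of length $p_{k+1}$. Routine bookkeeping (finite determinacy of each coordinate, syndetic recurrence of each occurring word) makes $X:=\overline{\{T^n\eta:n\in\Z\}}$ a Toeplitz system with factor map $\pi\colon X\to\Omega$. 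Two extra features should be built in by a careful choice of the filling rule: (i) at infinitely many stages one leaves genuine freedom (two admissible blocks) in the part of $H_k$ that survives into $\bigcap_k H_k$, so that the distinguished fiber $F:=\pi^{-1}(\hat 0)$ is a Cantor set and carries a non-atomic measure $\mu$ (a product-type measure adapted to the two-block structure, extended to a measure on $X$); and (ii) one selects two increasing sequences $s_\alpha,t_\beta\to\hat 0$ in $\Omega$ whose induced ``windows'' into the two-block pattern of $F$ are, in the limit, (a) identical as far as any \emph{individual} point of $X$ is concerned --- so $\lambda_{s_\alpha}$ and $\lambda_{t_\beta}$ converge in $E(X)$ to the \emph{same} $p$ (on the residual set of singleton fibers this is forced by finite determinacy; on $F$ the two nets are arranged to select the same, discontinuous, surjection $p|_F\colon F\to F$) --- yet (b) see opposite ``bias'' on the first free coordinate of $F$, so that $T^{s_\alpha}\mu$ and $T^{t_\beta}\mu$ accumulate on different measures on $F$, detected by a continuous $f$ reading off a coordinate of $\eta$.

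The verification then splits into three parts: (1) $X$ is Toeplitz, i.e. minimality and almost-one-to-oneness of $\pi$ --- the standard part of such constructions; (2) the existence of $\mu$, $f$, and the two nets with $\lambda_{s_\alpha},\lambda_{t_\beta}\to p$ in $E(X)$ but $\int f\circ T^{s_\alpha}\,d\mu\not\to\int f\circ T^{t_\beta}\,d\mu$, where the design of the filling rule is used in full; and (3) concluding, via step (2) together with Lemma \ref{l:algebras} and Definition \ref{d:inj}, that $m(f,\mu)\notin\langle m(C(X),X)\rangle$, so $X$ is not injective (and, incidentally, not tame). The main obstacle --- and the real content of the proof --- is step (2): one must maintain \emph{rigidity} (the two nets must drive $\{T^n\}$ to one and the same self-map on every point of $X$) while simultaneously retaining \emph{flexibility} (the $\mu$-averages must stay bounded apart), and do so without destroying minimality or the odometer factor. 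Making these compatible dictates a rather delicate choice of the periods $p_k$ and of which residue classes to fill at each stage, together with an explicit identification of the two elements of $E(P(X))$ lying over $p$; this is where the bulk of the work lies.
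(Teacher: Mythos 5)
Your proposal correctly isolates a sufficient criterion for non-injectivity (two elements of $E(P(X))$ restricting to the same element of $E(X)$ but acting differently on some $\mu\in P(X)$, detected by divergent limits of $\int f\circ T^{n}\,d\mu$ along two nets with a common limit in $E(X)$), and the general shape of the construction you describe is plausible. But as written it is a plan, not a proof: the theorem asserts \emph{existence} of such a Toeplitz system, and the existence is exactly what you postpone. Features (i) and (ii) — a filling rule that simultaneously forces the two nets to converge to one and the same $p$ on \emph{every} point of $X$ (including the whole Cantor fiber $F$) while keeping the push-forward measures apart — are asserted to be achievable "by a careful choice of the periods and residue classes," and you yourself flag this as "where the bulk of the work lies." No choice of periods, filling rule, nets, or verification is given, so the argument has a genuine hole at its core. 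There is also an internal inconsistency to repair: you correctly observe at the outset that sequential limits are ruled out (if $T^{n_k}\to p$ pointwise on $X$ along a sequence, dominated convergence forces $\int f\circ T^{n_k}\,d\mu\to\int f\circ p\,d\mu$, so both limits of integrals coincide), yet in the construction paragraph you propose to "select two increasing sequences $s_\alpha,t_\beta\to\hat 0$"; with honest sequences the scheme provably cannot work, and for non-sequential nets the claimed compatibility of rigidity on points with flexibility on measures is precisely the unproven step.

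For comparison, the paper avoids building a bespoke combinatorial example by taking an off-the-shelf non-regular Toeplitz system of Williams whose subshift code is the orbit closure of the Heaviside sequence; by Williams' theorem it has exactly two ergodic measures $\mu_0,\mu_1$. One then writes down an explicit measure $\nu=\phi(m\times\delta_{y_0})$ (Haar measure on the odometer in the first coordinate) and shows, via the pointwise ergodic theorem applied to the return-time cocycle $\theta_n$, that $S^n\nu\to\mu_0$ as $n\to-\infty$ and $S^n\nu\to\mu_1$ as $n\to+\infty$. Non-injectivity then follows from a soft structural argument rather than from exhibiting two lifts of one $p$: since $X$ is an almost 1--1 extension of an adding machine, $\mathrm{Prox}(X)=R_\pi$ is an equivalence relation, equivalently $E(X)$ has a unique minimal ideal, and this property passes to any system with the same enveloping semigroup; but in $P(X)$ the relation fails to be transitive ($\mu_0\sim\nu\sim\mu_1$ while the fixed points $\mu_0,\mu_1$ are not proximal), so $E(P(X))\to E(X)$ cannot be an isomorphism. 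If you want to salvage your approach, you would either have to carry out the delicate net construction in full, or switch, as the paper does, to an invariant of $E(X)$ (unique minimal ideal / proximality an equivalence relation) that a single well-chosen measure can violate in $P(X)$.
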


\begin{proof}
Let $y_0 \in \{0,1\}^\Z$ be the ``Heaviside" sequence defined by the rule
$$
y_0(n) =
\begin{cases}
0 & {\text{for}} \ n < 0\\
1 & {\text{for}} \ n \ge 0,
\end{cases}
$$
and let $Y=\OC_S(y_0)$ be its orbit closure in $\{0,1\}^\Z$
under the shift transformation: $S \om (n) = \om(n+1),
\om \in \{0,1\}^\Z, n \in \Z$. Thus $Y =
\{S^n y_0: n \in \Z\} \cup \{\bf{0}, \bf{1}\}$ is isomorphic to the
$2$-point compactification $\Z \cup \{\pm \infty\}$ of the integers.

Let $(X,S)$
%0912 please indicate here or somewhere before that in this way we write cascades generated by $S: X \to X$
be the (necessarily minimal and
%0912  what do you meaning under ``non-regular"
non-regular)
Toeplitz system corresponding to the subshift $Y$ and
a suitable sequence of periods $(p_i)_{i \in \N}$ as described by
Williams in Section 4 of \cite{W}.
% 2112
Here, as usual, we write $(X,S)$
for the $\Z$-action $(\Z,X)$ generated by $S$.

By  \cite[Theorem 4.5]{W} the
system $(X,S)$ admits exactly two $S$-invariant ergodic
probability measures (corresponding to the dirac measures
$\del_{\bf{0}}$ and $\del_{\bf{1}}$ on $Y$), which we will denote
by $\mu_{0}$ and $\mu_1$, respectively.

We will show the existence of a probability measure $\nu$ on
$X$ for which, in the weak$^*$ topology on the compact space $P(X)$
of probability measures on $X$ and the action induced by $S$,
we have
\begin{equation}\label{pm}
\lim_{n \to -\infty} S^n \nu =\mu_{0} \quad \text{and}\quad
\lim_{n \to +\infty} S^n \nu =\mu_{1}.
\end{equation}

The dynamical system $(X,S)$ has a structure of an almost one-to-one extension of an adding machine, say,
$$
\pi : X \to G = \underset{\leftarrow}{\lim} \ \Z/p_i\Z.
$$
It follows that the proximal relation $Prox(X)$ on $X$ coincides with the
$\pi$-relation:
$$
R_\pi=\{(x,x'): \pi(x)=\pi(x')\}.
$$
In particular this implies that $Prox(X)$ is an equivalence relation. Now
this latter condition is equivalent to the fact that $E(X)$, the enveloping
semigroup of $(X,S)$ has a unique minimal ideal.
Since $E(E(X))=E(X)$ we conclude that in any dynamical system
$(X',T)$ whose enveloping semigroup is isomorphic to $E(X)$
the proximal relation $Prox(X')$ is again an equivalence relation.

However the equations (\ref{pm}) clearly show that in the dynamical system
%031012 $(S,P(X))$
$(P(X),S)$
the proximal relation is no longer an equivalence
relation. It therefore follows that the natural restriction map $r: E(P(X)) \to E(X)$
is not an isomorphism; i.e.
%031012 $(S,X)$
$(X,S)$
is not injective.

It thus remains to construct a measure $\nu$ as above. For that purpose
let us recall the following objects constructed by Williams.
Using the notations of \cite{W} we let $C =
\{x \in X : 0 \not\in Aper(x)\}$, and let $D \subset X$ be the set
of $x \in X$ with $Aper(x)$ a $2$-sided infinite sequence.
We have $C = \pi^{-1}(\pi(C))$ so that the subset $Aper(x)\subset \Z$
is well defined on $G$, with $Aper(x) = Aper(\pi(x))$ for every $x \in X$.
By definition the fact that $X$ is a non-regular Toeplitz system
means that we have $0 < m(\pi(C)) = d < 1$,  with $m$ denoting the Haar
measure on $G$.
The (Borel) dynamical system $(G \times Y,T)$ is given by the
Borel map $T :  G \times Y \to G \times Y$ defined as
$T(g,y) = (g +\ch, S^{\theta(g)}y)$, where
$\theta: G \to \{0,1\}$ is the function $\ch_{\pi(C)}$.
Williams shows that $\phi(G \times Y) = X$, that
$S \circ \phi = \phi \circ T$, and that the restriction of $\phi$
to the subset $\pi(D) \times Y$ is a Borel isomorphism from
$\pi(D) \times Y$ into $X$.
In our case we have
$$
\pi(D) \times Y = (\pi(D) \times \{{\bf{0}}\}) \cup
\left(\bigcup_{n \in \Z} \pi(D) \times \{S^n y_0\}\right)
\cup (\pi(D) \times \{{\bf{1}}\}).
$$
We also have
$m(\pi(D))=1$ and $\phi(m \times \del_{{\bf{0}}}) =\mu_0,\
\phi(m \times \del_{{\bf{1}}}) =\mu_1$.

Now let
$$
\nu = \phi(m \times \del_{y_0}).
$$

Iterating the map $T$ we see that for $n \in \Z$
$$
T^n(g,y) = (g + n\ch, S^{\theta_n(g)}y),
$$
where for every $g \in G$, $\theta_0(g) =  0$ and
$$
\theta_n(g) =
\begin{cases}
\theta(g) + \theta(g +\ch) + \cdots + \theta(g + (n-1) \ch) &
{\text{for}} \ n \ge 1\\
-\theta(g + n\ch) - \theta(g + (n + 1)) \ch - \cdots - \theta(g - \ch)
& {\text{for}} \ n \le -1.
\end{cases}
$$
Note that by the ergodic theorem we have
\begin{equation}\label{et}
\lim_{n \to \pm \infty} \frac 1 n \theta_n(g) = m(\pi(C)), \qquad m {\text{-a.e.}}.
\end{equation}

Next let us consider the integrals $\int f \, dS^n \nu$ for a fixed
continuous real valued function $f$ on $X$.
We have
\begin{align*}
\int_X f(x) \, dS^n \nu(x) & = \int_X f(S^nx) \, d\nu(x)=
\int_{G \times Y} f \circ S^n \circ \phi (g,y) \, d(m \times \del_{y_0})\\
& = \int_{G \times Y} f \circ \phi (T^n(g,y)) \, d(m \times \del_{y_0})\\
& = \int_{G } f \circ \phi ((g + n\ch,S^{\theta_n(g)}y_0)) \, dm(g)\\
& = \sum_{j \in \Z} \int_{\{g: \theta_n(g) = j\}}
f \circ \phi ((g + n\ch,S^j y_0)) \,dm(g) .
\end{align*}
If we now further assume that the function $f$ depends only on
coordinates $i$ with $| i | \le N$ for a fixed $N$ then,
taking into account the way the map $\phi$ is defined
on $\pi(D) \times Y$ as well as (\ref{et}), we see that indeed
$$
\lim_{n\to -\infty} \int f \, dS^n \nu = \int f \,d\mu_0 \quad{\text{and}}
\quad \lim_{n\to +\infty} \int f \, dS^n \nu = \int f \,d\mu_1.
$$
Since the collection of functions $f$ depending on finitely many coordinates
is uniformly dense in $C(X)$ this proves (\ref{pm}) and our proof is complete.
\end{proof}

%\begin{cor}
%The Toeplitz cascade constructed above does not admit faithful $E$-compatible affine compactifications. Equivalently, there is no faithful E-compatible representation of $(Y,S)$ on a Banach space.
%\end{cor}
%
%\begin{proof}
%Suppose $\al : Y \to Q$ is a faithful $E$-compatible affine compactification.
%As above we deduce that the proximal relation on $Q$ is an equivalence
%relation. Let $b : P(Y) \to Q$ be the barycenter map. But (\ref{pm}) shows that,
%unless $b(\mu_0) = b(\mu_1)$, the proximal relation on $Q$ is not
%an equivalence relation. Thus we must have $b(\mu_0) = b(\mu_1)$. ???
%
%
%\end{proof}

% eli
In view of the last two sections one would like to
know how minimal weakly-mixing systems behave with respect
to injectivity.
%find examples
%of minimal weakly mixing dynamical systems which are injective
%and some which are not.

%%!!q
\begin{problem}
Construct examples of minimal weakly-mixing $\Z$-flows which are
injective (not injective).
\end{problem}

%jjj
\subsection{A non-injective $\Z^2$-dynamical system which admits an $E$-compatible faithful affine compactification}

%there is a $\Z^2$-dynamical system $X$ such that $X \times X$ is not injective.

%We show here the following (closely related) facts.

%!!
%\begin{ex} \label{e:intro-g}
%There exists a non-injective $\Z^2$-dynamical system $X$ which
%nonetheless admits an $E$-compatible faithful affine compactification.
%There exists an intro-generated m-introverted subalgebra of $l_{\infty}(\Z^2)$ which is not introverted.
%\end{ex}

%Let $Z = \{0,1\}^\Z$ be the $\{0,1\}$-Bernoulli system on $\Z$.
%It is well known that the enveloping semigroup of $(\Z,Z)$ is
%$\beta \Z$, the \v{C}ech-Stone compactification of $\Z$ (see
%\cite[Exercise 1.25]{Gl}).
%Since $\beta \Z$ is the universal enveloping semigroup, it follows that
%$E(X) = E(Z) = \beta \Z$ for every transitive system $(\Z,X)$ which admits
%$(\Z,Z)$ as a factor
%(e.g. every mixing $\Z$-subshift of finite type will have
%$\beta \Z$ as its enveloping semigroup since it has a Cartesian power which
%admits $Z$ as a factor \cite{B}).
%Moreover we necessarily have in that case that also $E(P(X)) = \beta T$.
%Thus every such $X$ is injective. (See \cite{Ko} and \cite{BGH}).

Let $X = \{0,1\}^\Z$ and let $\sigma$ denote the shift transformation on $X$.
Define two $\Z^2$-actions on $X$ by
$$
\Phi_{mn}x = \sigma^m x \quad {\text{and}} \quad
\Psi_{mn}x  = \sigma^n x.
$$
Since $E(X,\sigma)$ is canonically isomorphic to $\beta\Z$
we clearly have $E(X,\Phi) \cong E(X,\Psi) \cong \beta\Z$.
In particular it follows that the two $\Z^2$-systems $(X,\Phi)$ and
$(X,\Psi)$ are injective.
Next consider the product $\Z^2$-system $(X \times X, \Xi)$,
where the action is diagonal; i.e.
$$
\Xi_{mn}(x,y) = (\Phi_{mn}x,\Psi_{mn}y)=(\sigma^mx,\sigma^ny).
$$
The proof of the next claim is straightforward.

\begin{claim}
$E(X \times X,\Xi) \cong E(X,\Phi) \times E(X,\Psi)
\cong \beta \Z \times \beta \Z$.
Moreover, identifying an element $p$ of $E(X \times X,\Xi)$ with
a pair $p=(p_\Phi,p_\Psi)$, we have $p=(p_\Phi,p_\Psi)=
(p_\Phi,\id) \circ (\id,p_\Psi)$ and if $\Xi_{m_i n_i} \to p$ then,
$(\Phi_{m_i 0},\id)=(\Phi_{m_i 0},\Psi_{m_i 0}) = \Xi_{m_i 0} \to (p_\Phi,\id)$
and
$(\id,\Psi_{0 n_i}) =(\Phi_{0 n_i},\Psi_{0 n_i}) =\Xi_{0 n_i}\to (\id,p_\Psi)$.
\end{claim}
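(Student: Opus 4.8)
The plan is to identify $E(X\times X,\Xi)$ explicitly as a set of ``split'' self-maps of $X\times X$ and then simply read off the semigroup structure. First I would observe that for each $(m,n)\in\Z^2$ the transformation has the product form $\Xi_{mn}=(\sigma^m,\sigma^n)$, i.e. $\Xi_{mn}(x,y)=(\sigma^m x,\sigma^n y)$; in particular the first coordinate of $\Xi_{mn}(x,y)$ depends only on $x$ and the second only on $y$. Passing to the pointwise closure in $(X\times X)^{X\times X}$, every $p\in E(X\times X,\Xi)$ is a pointwise limit of a net $\Xi_{m_i n_i}$; for each fixed $x$ the limit $\lim_i\sigma^{m_i}x$ exists and is independent of the second variable, and symmetrically for the second coordinate. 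Hence $p$ is again split, $p=(p_\Phi,p_\Psi)$, with $p_\Phi=\lim_i\sigma^{m_i}\in E(X,\sigma)$ and $p_\Psi=\lim_i\sigma^{n_i}\in E(X,\sigma)$. Since $(X,\Phi)$ and $(X,\Psi)$ are the factors of $(X\times X,\Xi)$ under the two coordinate projections, by Lemma \ref{l:Env-all}.2 these $p_\Phi,p_\Psi$ are exactly the images of $p$ under the induced homomorphisms $E(X\times X,\Xi)\to E(X,\Phi)$ and $E(X\times X,\Xi)\to E(X,\Psi)$; moreover $E(X,\Phi)=E(X,\Psi)=E(X,\sigma)\cong\beta\Z$, because $\Phi_{mn}=\sigma^m$ does not depend on $n$ (and symmetrically), together with the well-known fact recalled in Remark \ref{r:injEx}.3 that $E(X,\sigma)\cong\beta\Z$.

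Next I would verify the converse inclusion: given any $q,r\in E(X,\sigma)$, the pair $(q,r)$ lies in $E(X\times X,\Xi)$. This is nothing but the statement that the closure of a product set is the product of the closures. Indeed $\{\Xi_{mn}\}_{(m,n)\in\Z^2}$ equals $\{\sigma^m\}_{m\in\Z}\times\{\sigma^n\}_{n\in\Z}$ inside $X^X\times X^X$, so its closure is $\overline{\{\sigma^m\}}\times\overline{\{\sigma^n\}}=E(X,\sigma)\times E(X,\sigma)$. Combining the two inclusions, $E(X\times X,\Xi)=E(X,\Phi)\times E(X,\Psi)$ as sets, and since composition of split maps is coordinatewise, $(f_1,g_1)\circ(f_2,g_2)=(f_1\circ f_2,\,g_1\circ g_2)$, this identification is an isomorphism of compact right topological semigroups onto the product $\beta\Z\times\beta\Z$.

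For the \emph{moreover} clause I would just compute directly: $(p_\Phi,\id)\circ(\id,p_\Psi)(x,y)=(p_\Phi,\id)(x,p_\Psi y)=(p_\Phi x,p_\Psi y)=p(x,y)$, which gives the factorization $p=(p_\Phi,\id)\circ(\id,p_\Psi)$; and if $\Xi_{m_i n_i}\to p$ then reading off the two coordinates gives $\sigma^{m_i}\to p_\Phi$ and $\sigma^{n_i}\to p_\Psi$ in $E(X,\sigma)$, whence $\Xi_{m_i 0}=(\Phi_{m_i 0},\Psi_{m_i 0})=(\sigma^{m_i},\id)\to(p_\Phi,\id)$ and $\Xi_{0 n_i}=(\Phi_{0 n_i},\Psi_{0 n_i})=(\id,\sigma^{n_i})\to(\id,p_\Psi)$, using $\Psi_{m_i 0}=\Phi_{0 n_i}=\sigma^0=\id$. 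I do not expect any genuine obstacle; the only point deserving a word of care is that pointwise limits of the split maps $\Xi_{mn}$ remain split --- that the limit in one coordinate genuinely does not see the other variable --- and this is immediate from the pointwise nature of the topology. Accordingly the write-up should be short, essentially the ``closure of a product is the product of the closures'' observation together with the elementary factorization computation.
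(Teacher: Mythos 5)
Your argument is correct: the observation that pointwise limits of split maps remain split, together with the identification of $\{\Xi_{mn}\}$ with the product set $\{\sigma^m\}\times\{\sigma^n\}$ and the fact that the closure of a product is the product of the closures, plus the coordinatewise computation for the factorization and the limits $\Xi_{m_i 0}\to(p_\Phi,\id)$, $\Xi_{0 n_i}\to(\id,p_\Psi)$, is exactly the intended reasoning. The paper itself offers no details (it merely declares the proof of the Claim straightforward), and your write-up is a sound filling-in of that straightforward argument.
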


\begin{prop}\label{not-inj}
The product dynamical system $(\Z^2,X \times X)$ (of two different $\Z^2$-flows) is not injective.
\end{prop}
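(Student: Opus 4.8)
The plan is to exhibit two nets in $\Z^2$ that converge to one and the same element of $E(X\times X,\Xi)$ yet induce \emph{different} self-maps of $P(X\times X)$. This shows that the restriction homomorphism $r\colon E(P(X\times X))\to E(X\times X)$ is not injective, i.e.\ that $(\Z^2,X\times X)$ is not injective in the sense of Definition~\ref{d:inj}. (The $\Z^2$-action on $X\times X$ is continuous, so by Proposition~\ref{bar} the affine system $P(X\times X)$ is a compact metrizable $\Z^2$-system with continuous action, and Definition~\ref{d:inj} applies.)

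The test measure and the two computations driving the argument are as follows. Fix the $(\tfrac12,\tfrac12)$-Bernoulli product measure $\lambda$ on $X=\{0,1\}^\Z$, which is $\sigma$-invariant, and let $\mu\in P(X\times X)$ be its image under the diagonal embedding $x\mapsto (x,x)$, so that $\int f\,d\mu=\int f(x,x)\,d\lambda(x)$. First, since $\lambda$ is $\sigma$-invariant, $\Xi_{aa}\mu=\mu$ for every $a\in\Z$. Second, for a function of the form $f(x,y)=g(x)h(y)$ with $g,h$ depending only on the coordinates in $[-N,N]$ one has $\int f\,d(\Xi_{cd}\mu)=\int g(\sigma^c x)h(\sigma^d x)\,d\lambda(x)$; when $|c-d|>2N$ the functions $g\circ\sigma^c$ and $h\circ\sigma^d$ depend on disjoint blocks of coordinates, hence are $\lambda$-independent, and the integral collapses to $\bigl(\int g\,d\lambda\bigr)\bigl(\int h\,d\lambda\bigr)=\int f\,d(\lambda\times\lambda)$. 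Since finite sums of such products $g(x)h(y)$ are uniformly dense in $C(X\times X)$ by Stone--Weierstrass, it follows that $\Xi_{c_id_i}\mu\to\lambda\times\lambda$ in the weak$^*$ topology whenever $|c_i-d_i|\to\infty$. Finally $\mu\neq\lambda\times\lambda$: with $g$ the indicator of $\{x:x(0)=1\}$ and $h$ the indicator of $\{y:y(0)=0\}$ one gets $\int g(x)h(y)\,d\mu=0$ but $\int g(x)h(y)\,d(\lambda\times\lambda)=\tfrac14$.

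Now I would build the two nets using the identification $E(X\times X,\Xi)\cong\beta\Z\times\beta\Z$ of the Claim, under which $\Xi_{mn}\leftrightarrow(\sigma^m,\sigma^n)$ and convergence is coordinatewise. Fix a point $u\in E(X,\sigma)\cong\beta\Z$ that is not of the form $\sigma^k$ (such $u$ exists since the countable dense subset $\{\sigma^k\}_{k\in\Z}$ of $\beta\Z$ is discrete); then every neighbourhood of $u$ contains $\sigma^a$ for infinitely many $a\in\Z$. For the first net take the diagonal net $\Xi_{a_ia_i}$ with $\sigma^{a_i}\to u$; by the Claim it converges in $E(X\times X,\Xi)$ to $p:=(u,u)$, and it fixes $\mu$ by the first computation. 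For the second net, index by pairs $(W,K)$ with $W$ a neighbourhood of $u$ (ordered by reverse inclusion) and $K\in\N$, and choose $c_{(W,K)},d_{(W,K)}$ with $\sigma^{c},\sigma^{d}\in W$ and $|c-d|>K$ --- possible since $W$ contains infinitely many $\sigma^a$. Then $\sigma^{c_{(W,K)}}\to u$ and $\sigma^{d_{(W,K)}}\to u$, so $\Xi_{c_{(W,K)}d_{(W,K)}}\to(u,u)=p$ by the Claim, while $|c-d|\to\infty$ forces $\Xi_{c_{(W,K)}d_{(W,K)}}\mu\to\lambda\times\lambda$ by the second computation. Passing to subnets along which both nets also converge in $P(X\times X)^{P(X\times X)}$, one obtains $\tilde p_1,\tilde p_2\in E(P(X\times X))$ with $r(\tilde p_1)=r(\tilde p_2)=p$ but $\tilde p_1\mu=\mu\neq\lambda\times\lambda=\tilde p_2\mu$. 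Hence $\tilde p_1\neq\tilde p_2$, the map $r$ is not injective, and $(\Z^2,X\times X)$ is not injective.

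The main obstacle is precisely the construction of the second net: one must make both coordinates converge in $\beta\Z$ to the \emph{same} ultrafilter point while their difference runs off to infinity, so that the coupling $\mu$ is decorrelated into the independent measure $\lambda\times\lambda$ in the limit even though the limit in $E(X\times X,\Xi)$ does not change. The remaining ingredients --- the two elementary computations with $\mu$, and the standard fact (Lemma~\ref{l:Env-all}.3) that the restriction map carries limits of these nets on $P(X\times X)$ to the limits of the corresponding nets on $X\times X$ --- are routine.
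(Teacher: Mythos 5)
Your argument is correct, and it rests on the same core ingredients as the paper's proof: the identification $E(X\times X,\Xi)\cong\beta\Z\times\beta\Z$ from the preceding Claim, the graph (diagonal) measure of the Bernoulli measure as the test point in $P(X\times X)$, and the mixing/independence of the Bernoulli shift to decorrelate it into the product measure. The difference is in how the two conflicting elements of $E(P(X\times X))$ over a single element of $E(X\times X)$ are produced. The paper argues by contradiction: assuming injectivity, the factorization $p=(p_\Phi,\id)\circ(\id,p_\Psi)$, valid in $\beta\Z\times\beta\Z$, is transferred to the action on $P(X\times X)$, and evaluating at $\Del_\mu$ gives $\Del_\mu=\mu\times\mu$, which is absurd. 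You instead give a direct proof that the restriction homomorphism $r\colon E(P(X\times X))\to E(X\times X)$ fails to be injective, by exhibiting a diagonal net $\Xi_{a_ia_i}$ and an off-diagonal net $\Xi_{c_id_i}$ with both coordinates converging to one free point $u\in\beta\Z\setminus\Z$ while $|c_i-d_i|\to\infty$; the first fixes the diagonal measure, the second pushes it to $\lambda\times\lambda$, yet both converge to $(u,u)$ in $E(X\times X)$. Your route avoids the factorization identity and the contrapositive framing (and replaces ``mixing'' by explicit independence of disjoint cylinder blocks), at the cost of the slightly fussier construction of the second net with the product-directed index set $(W,K)$; the paper's route is shorter once the Claim's factorization statement is in hand, since the ``second action'' on $\Del_\mu$ comes for free from composing the limits of $\Xi_{m_i0}$ and $\Xi_{0n_i}$. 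All the steps you leave as routine (the choice of $u$, the convergence of the nets coordinatewise, and the use of Lemma \ref{l:Env-all}.3 to see that $r$ sends the limits in $E(P(X\times X))$ to $(u,u)$) do check out.
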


\begin{proof}
Suppose to the contrary that it is injective, i.e. that we have
$E(P(X \times X),\Xi) \cong E(X \times X,\Xi)
\cong E(X,\Phi) \times E(X,\Psi)$.
Let $\mu$ be the Bernoulli measure $(1/2(\del_0 + \del_1))^\Z$
on $X$, an element of $P(X)$.
We let $\Del_\mu$, an element of $P(X \times X)$, be the corresponding
graph measure on $X \times X$ defined as the push-forward of $\mu$ via
the diagonal map $x \mapsto (x,x)$.
Let $A = \{\Xi_{mm}: m \in \Z\}$ and let $p \in \overline{A}
\subset E(P(X \times X),\Xi)$ be any element which is not in $A$
so that $p = \lim \Xi_{m_i m_i}$ with $m_i\nearrow \infty$ a net in $\Z$ .
We let $p_\Phi$ denote its projection in $E(P(X),\Phi)$ and
$p_\Psi$ its projection in $E(P(X),\Psi)$.

\begin{claim}
\begin{enumerate}
\item
$p\Del_\mu=\Del_\mu$.
\item
$(p_\Phi,\id)\Del_\mu = \mu \times \mu$.
\item
$(\id, p_\Psi)\Del_\mu = \mu \times \mu$.
\end{enumerate}
\end{claim}
\begin{proof}
The first equality holds trivially as $\Del_\mu$ is $A$-invariant.
The second and third equalities follow from the fact that $(X,\mu,\sig)$
is mixing as a measure dynamical system.
\end{proof}

We complete the proof of the proposition by pointing out the
following absurd:
$$
\Del_\mu = p\Del_\mu = (p_\Phi,\id) \circ (\id, p_\Psi) \Del_\mu =
(p_\Phi,\id) \mu \times \mu = \mu \times \mu.
$$
\end{proof}

%With notations as above let $V$ be the $\Xi$-subspace of $C(X \times X)$
%consisting of functions of the form $f(x,y) = f_1(x) + f_2(y)$ with
%$f_1, f_2 \in C(X)$, and let $Q$ be the
%corresponding state space in $V^*$.
%
%\begin{cor}
%The $\Z^2$-system $(X \times X,\Xi)$ is $Q$-injective but not injective.
%\end{cor}
%
%\begin{proof}
%See remark \ref{prod}.
%\end{proof}

%%%%%

\begin{ex} \label{e:z2} \
\begin{enumerate}
\item
The semigroup $\Z^2$-compactification defined naturally by the
embedding
$$\nu: \Z^2 \to Y=\beta\Z \times \beta\Z$$
is not injective but admits an $E$-compatible faithful affine compactification.
%and hence every lower faithful affine compactification is also $E$-compatible.
\item
There exists an intro-generated m-introverted Banach subalgebra of $l_{\infty}(\Z^2)$ which is not
introverted.
\end{enumerate}
\end{ex}
\begin{proof}
Let $V$ be the Banach subspace of $l_{\infty}(\Z^2)$
consisting of functions of the form $f(x_1,x_2) = f_1(x_1) + f_2(x_2)$ with
$f_1, f_2 \in l_{\infty}(\Z)$. It is easy to see that $V$ is an introverted
$\Z^2$-subspace of $C(Y)$.
Furthermore, by the Stone-Weierstrass theorem, the closed algebra
$\langle V \rangle$ generated by $V$
is the algebra $\A\subset l_{\infty}(\Z^2)$ which corresponds to the compactification $\nu: \Z^2 \to Y=\beta\Z \times \beta\Z$.
In particular, by Propositions \ref{p:E-aff+} and  \ref{t:conditions-new},
$(\Z^2,Y)$ admits an $E$-compatible faithful affine compactification.
However, the $\Z^2$-flow $Y$ is not injective.
Indeed, $Y$ is just the enveloping semigroup $E(\Z^2,X \times X)$ of the $\Z^2$-flow
$X\times X$ from Proposition \ref{not-inj}. Therefore,
by Lemma \ref{l:E -> X} the injectivity of $Y$ will imply
%0912
(observe that $Y$ is transitive)
that $X \times X$ is injective contradicting Proposition \ref{not-inj}.
Finally, Theorem \ref{t:introv<->inj} shows that $\A$ is not introverted.
\end{proof}

%7nov10
The nonmetrizability of $Y$ in Example \ref{e:z2} and of
$\Z^{D(\Z)}$ in Theorem \ref{e:dist-not} is unavoidable by Theorem
\ref{t:tame-sc-case}.

%%!!q
% eli
\begin{problem} \label{q:Z?}
Are there examples as above with $\Z$ as the acting group, rather than
$\Z^2$ ?
\end{problem}

%!
%\begin{remark}
%We obtain that (nonmetrizable) $\Z^2$-flow from Example \ref{e:z2}
%is not injective. It is nondistal in contrast to Theorem
%\ref{e:dist-not}. Is it true that there exists a noninjective and
%nondistal metric $\Z$-flow ? or at least a noninjective and
%nondistal but metric $\Z^2$-flow ? Such \textbf{metrizable}
%examples are impossible for semigroup compactifications by Theorem
%\ref{t:tame-sc-case}.
%\end{remark}

\section{Tame and HNS systems and related classes of right topological semigroups}

\subsection{Some classes of right topological semigroups}
\label{s:semigroups}

To the basic classes of right topological semigroups listed in \ref{d:sem1} above, we add the following two which have
% recently introduced classes of right topological compact semigroups which % are close to be semitopological and
naturally arisen in the study of tame and HNS dynamical systems.

%050312 Eli, I suggest to say "HNS-semigroup" instead of  "HNS-semigroup".
%The first seems me simpler and more informative and also well justified by the properties

\begin{defin} \label{d:sem2} \cite{GM1, GM-rose}
A compact admissible right topological semigroup $P$ is said to be:
\ben
\item \cite{GM-rose} \emph{tame} if the left translation $\lambda_a: P \to P$ is a fragmented map for every $a \in P$.
%!!!! It is equivalent to say that for every $f \in c(X)$ the function $f \circ p: X \to \R$ has a point of continuity property.
\item  \emph{HNS-semigroup} (\emph{$\F$-semigroup} in \cite{GM1}) if %the family
$\{\lambda_a : P \to P\}_{a \in P}$ is a fragmented family of maps.
\een
\end{defin}

These classes are closed under factors.
We have the inclusions:  %of compact right topological admissible semigroups:
$$ \{\text{compact semitopological semigroups}\} \subset \{{\text{HNS-semigroups}}\} \subset
\text{\{Tame semigroups\}} %\subset  \{\text{compact right topological admissible}\}
$$

%The nontrivial (first) inclusion comes from the first assertion of the following Lemma.

\begin{lem} \label{t:F} \
 \ben
 %\item \cite{GM1} Let $X$ be a HNS (equivalently, Asplund-approximable) $S$-system. Then
%$E(X)$ is an $\mathcal F$-semigroup.
\item
Every compact semitopological semigroup $P$ is a HNS-semigroup.
% eli  reference ?
\item
Every HNS-semigroup is tame.
\item
If $P$ is a \emph{metrizable} compact right topological
admissible semigroup
%with a dense center
then $P$ is a HNS-semigroup.

%!
%\footnote{In fact the class of all metrizable HNS-semigroups is exactly the class of all enveloping semigroups $E(X)$ of HNS metric $S$-systems $X$ (separately continuous actions). If we consider the restricted class of all HNS metric $G$-systems $X$ with $G$ a semitopological group then it generates the class of all metrizable HNS-semigroups which center is a topological group}

 \een
\end{lem}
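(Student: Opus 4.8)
The plan is to derive all three assertions from the fragmentation criteria of Lemma \ref{l:FrFa}, together with the definitions of tame and HNS-semigroups in Definition \ref{d:sem2}. Part (2) requires no work beyond unwinding definitions: by Definition \ref{def:fr}.2 a fragmented family of maps consists, in particular, of individually fragmented maps, and this is exactly the defining property of a tame semigroup in Definition \ref{d:sem2}. So the real content lies in (1) and (3), and in each case the strategy is to fit the multiplication map of $P$ into the hypotheses of the appropriate item of Lemma \ref{l:FrFa}.

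For part (1), let $P$ be a compact semitopological semigroup. Then $P$ is a compact Hausdorff space, hence \v{C}ech-complete, and carries a unique compatible uniformity; moreover the multiplication $w\colon P\times P\to P$, $w(a,x)=ax$, is separately continuous. I would apply Lemma \ref{l:FrFa}.1 with $F=X=P$ and $Y=P$: the associated family $\{\tilde a\colon P\to P\}_{a\in P}$, where $\tilde a(x)=w(a,x)=ax=\lambda_a(x)$, is fragmented. Thus $\{\lambda_a\}_{a\in P}$ is a fragmented family, i.e. $P$ is an HNS-semigroup. (Note that such $P$ is automatically admissible, since $\Lambda(P)=P$, so the statement makes sense.)

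For part (3), let $P$ be a metrizable compact right topological admissible semigroup. Being compact metrizable, $P$ is separable, completely metrizable, and hereditarily Baire, and its unique uniformity is induced by a metric. I would set $F=X=M=P$ and again $w(a,x)=ax$. Since $P$ is right topological, each orbit map $\rho_x\colon P\to P$, $a\mapsto ax$, is continuous, so every $\tilde x\colon F\to M$ is continuous; and since $P$ is admissible, $Y:=\Lambda(P)$ is a dense subset of $F=P$ at whose points the maps $\lambda_a=\tilde a\colon X\to M$ are continuous. Hence the hypotheses of Lemma \ref{l:FrFa}.2 are satisfied, and the family $\{\tilde a\colon P\to P\}_{a\in P}=\{\lambda_a\}_{a\in P}$ is fragmented; that is, $P$ is an HNS-semigroup.

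I do not anticipate a genuine obstacle here; the proof is essentially bookkeeping in matching the data $(F,X,Y$ or $M,w)$ to Lemma \ref{l:FrFa}. The only points needing a moment's attention are: for (1), recalling that a compact space is \v{C}ech-complete and has a unique compatible uniformity, so that the separate continuity of multiplication in a semitopological semigroup is exactly what Lemma \ref{l:FrFa}.1 needs; and for (3), observing that a compact metrizable space is hereditarily Baire and that admissibility provides the dense set $\Lambda(P)$ of points of continuity of the left translations, so that Lemma \ref{l:FrFa}.2 applies. Once these observations are in place, the cited lemmas do all the work.
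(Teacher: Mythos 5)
Your proposal is correct and follows exactly the paper's argument: the paper proves (1) by applying Lemma \ref{l:FrFa}.1 to the multiplication $P\times P\to P$, notes (2) is trivial, and proves (3) by applying Lemma \ref{l:FrFa}.2 to $P\times P\to P$, which is precisely your matching of the data, with the same use of compactness/\v{C}ech-completeness in (1) and of right-topologicality plus density of $\Lambda(P)$ in (3).
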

\begin{proof}
(1) Apply Lemma \ref{l:FrFa}.1 to $P \times P \to P$.

(2) is trivial.

%!!
(3) Apply Lemma \ref{l:FrFa}.2 to $P \times P \to P$. %a Namioka type theorem (see \cite[Proposition 2.4]{GMU} or \cite[Lemma 2.9.2]{GM-rose}).
%!(5) is a semigroup version of \cite[Theorem 14.5]{GM1}.
\end{proof}

% eli

%In the following subsections we try to demonstrate, once more,
%the importance of Banach representations. In particular we
%complete the representation theorem (mentioned in introduction) of right topological
%semigroups. The prototype here is the class
%of WAP dynamical systems.

%031012
If $P$ is Fr\'{e}chet, as a topological space, then $P$ is a tame semigroup by Corollary \ref{c:FrechetSisTame} below.
%%

%031012 Deleting the following two lines (I think that it is superfluous and sounds quite strange or even bad.)
%In order to motivate our results concerning HNS-semigroups and tame semigroups we recall
%in the following subsection some known facts concerning WAP systems.

\subsection{Compact semitopological semigroups and WAP systems}
\label{s:wap}

%!!
As usual, a continuous function on a (not necessarily compact) $S$-space $X$
is said to be \emph{weakly almost periodic} (WAP) if the weakly closure of the orbit $fS$ is weakly compact in
$C(X)$. Notation: $f \in \mathrm{WAP}(X)$. It is equivalent that $f=\tilde{f} \circ \a$ comes from an $S$-compactification
$\a: X \to Y$ such that $\tilde{f} \in \mathrm{WAP}(Y)$. In fact, one may choose the cyclic $S$-compactification $Y=X_f$.
A compact dynamical $S$-system $X$ is said to be WAP if
$C(X)=\mathrm{WAP}(X)$. The latter happens iff every element $p \in E(X)$ is
 a continuous selfmap of $X$ (Ellis and Nerurkar).

\begin{prop} \label{p:refl}
Let $V$ be a Banach space.
\TFAE
\ben
\item
$V$ is reflexive.
\item
The compact semigroup $\E$ is semitopological.
\item
$\E=\Theta^{op}$.
%\footnote{Roughly speaking, $\E(V)$ is ``visible" iff $V$ is reflexive.}
\item
$\Theta$ is compact with respect to the weak operator topology.
\item $(\Theta^{op}, B^*)$ is a WAP system.
\een
\end{prop}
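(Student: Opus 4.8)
The plan is to prove the cycle of implications (1)$\Rightarrow$(4)$\Rightarrow$(3)$\Rightarrow$(2)$\Rightarrow$(5)$\Rightarrow$(1), using only standard facts about reflexive spaces and the structure of $\E = \E(V)$ developed in Section \ref{s:WS}. The central object is the action of $\Theta^{op}$ on the weak$^*$ ball $B^*$, and the key recurring observation is that on bounded subsets of $V^*$ the weak and weak$^*$ topologies coincide precisely when $V$ is reflexive. I expect the main work to be in the passages where one must upgrade separate continuity on $\E$ to the claim $\E = \Theta^{op}$.

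First, (1)$\Rightarrow$(4): if $V$ is reflexive then $B^*$ is weakly compact, and the unit ball $\Theta$ of $L(V)$ is then compact in the weak operator topology by a standard argument (this is essentially the statement that $L(V) = L(V,V^{**})$ in the reflexive case and that a bounded net of operators has a weak-operator cluster point landing in $\Theta$; one uses weak compactness of $B$ together with a diagonal/Tychonoff argument over matrix coefficients). Next (4)$\Rightarrow$(3): by definition $\E = \overline{\Theta^{op}}$ is the weak$^*$-operator closure of $\Theta^{op}$ in $L(V^*)$; if $\Theta$ (equivalently $\Theta^{op}$) is already compact in this topology then it is closed, so $\E = \Theta^{op}$. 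Then (3)$\Rightarrow$(2) is immediate: $\Theta^{op}$ is a semitopological semigroup (the multiplication on $\Theta$ is separately continuous in the weak operator topology, which is one of the Remarks in Section \ref{s:ReprB}), hence so is $\E$.

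For (2)$\Rightarrow$(5): recall from Definition \ref{d:envBan} and Remark \ref{r:aff-list}.1 that $(\Theta^{op}, B^*)$ is a dynamical system whose enveloping semigroup is $\E$; by Lemma \ref{l:Env-all}.4 applied to the compactification $\Theta^{op} \hookrightarrow \E$, we have $E(\Theta^{op}, B^*) \cong \E$. If $\E$ is semitopological then every element of $\E$ is a continuous selfmap of $B^*$ (an element of a semitopological enveloping semigroup acts continuously — this is the Ellis–Nerurkar characterization quoted just before Proposition \ref{p:refl}), which is exactly the statement that $(\Theta^{op}, B^*)$ is WAP, i.e. $C(B^*) = \mathrm{WAP}(B^*)$ for this action. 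Finally (5)$\Rightarrow$(1): suppose $(\Theta^{op}, B^*)$ is WAP. Fix $v \in S_V$ and $\psi \in S_{V^*}$; the matrix coefficient $m(v,\psi)\colon \Theta^{op} \to \R$, $s \mapsto \langle vs, \psi\rangle$, is in $\mathrm{WAP}(\Theta^{op})$, so its orbit has weakly compact closure. Using Lemma \ref{l:transit}, the orbit map $\Theta^{op} \to B^*$, $s \mapsto s\psi$, has dense image (indeed $\E\psi = B^*$), and one reads off that the restriction operator $T_v\colon V^* \to C(\Theta^{op})$ of Lemma \ref{l:introtype} carries $B^*$ into a weakly compact set; combined over all $v$ this forces $B^*$, and hence $B^{**}$ via Goldstine together with Lemma \ref{l:transit}.2 ($v\E = B^{**}$), to be weakly compact — more directly, WAP of the system means each $p \in \E$ is a weak$^*$-to-weak$^*$ continuous (indeed continuous) selfmap, and since $\E$ is the weak$^*$ closure of $\Theta^{op}$ which consists of weak$^*$-continuous maps, the only obstruction to $\E = \Theta^{op}$, i.e. to reflexivity, disappears; one concludes $V^{**} = V$ by Fact \ref{f:functionals} since every $v^{**} \in B^{**}$ becomes weak$^*$-continuous on $B^*$.

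The step I expect to be the genuine obstacle is (5)$\Rightarrow$(1): extracting reflexivity of $V$ from the purely dynamical WAP hypothesis on $(\Theta^{op}, B^*)$ requires carefully relating the weak topology of $C(B^*)$-orbits back to weak compactness in $V^{**}$, and the cleanest route is probably to show that WAP forces every element of $\E = E(\Theta^{op}, B^*)$ to be weak$^*$-continuous on $B^*$, hence to lie in $\Theta^{op}$ (by Lemma \ref{l:transit}.5, or directly by Fact \ref{f:functionals}), giving $\E = \Theta^{op}$; then (3)$\Rightarrow$(1) follows because $\E\psi = B^*$ together with $\E = \Theta^{op}$ shows $B^* = \Theta^{op}\psi$ is weak$^*$-compact and — applying the same to a weak neighborhood basis — weakly compact, whence $V$ is reflexive. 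I would merge this with the other implications so that the whole proposition reduces to the single equivalence "$\E = \Theta^{op}$ iff $B^*$ is weakly compact iff $V$ is reflexive", the rest being formal.
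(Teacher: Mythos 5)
Your proposal is correct, and since the paper's own proof of this proposition is only the one-line hint ``use Lemma \ref{l:transit}.1 and the standard characterizations of reflexive Banach spaces'', your cycle (1)$\Rightarrow$(4)$\Rightarrow$(3)$\Rightarrow$(2)$\Rightarrow$(5)$\Rightarrow$(1) is a legitimate fleshed-out version of what is intended: everything does reduce to ``$\E=\Theta^{op}$ iff $V$ is reflexive'', where the quickest form of (3)$\Rightarrow$(1) is $B^{**}=v\E=v\Theta^{op}=\Theta v=B$ by Lemma \ref{l:transit}.1--2, and your ``more directly'' version of (5)$\Rightarrow$(1) --- every $p\in\E$ acts continuously on $B^*$ by Ellis--Nerurkar, hence each $vp\in v\E=B^{**}$ is weak$^*$-continuous on $B^*$ and so lies in $i(V)$ by Fact \ref{f:functionals} --- is exactly the right argument. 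Your (1)$\Rightarrow$(4)$\Rightarrow$(3)$\Rightarrow$(2) steps are standard and sound (the identification of the weak operator topology on $\Theta$ with the weak$^*$ operator topology on $\Theta^{op}\subset L(V^*)$, which makes ``compact, hence closed, hence $\E=\Theta^{op}$'' work, is routine; separate continuity of multiplication on $\Theta^{op}$ is Remark \ref{r:aff-list}.1 rather than Section \ref{s:ReprB}).

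Two blemishes, neither fatal. First, in (2)$\Rightarrow$(5) the claim ``an element of a semitopological enveloping semigroup acts continuously'' is not literally the Ellis--Nerurkar characterization (which is the equivalence WAP $\Leftrightarrow$ all elements of $E(X)$ continuous); to get continuity of $p|_{B^*}$ from continuity of $\lambda_p$ you need the surjective orbit map $\tilde\psi:\E\to B^*$ of Lemma \ref{l:transit}.4 and the compactness/quotient identity $p|_{B^*}\circ\tilde\psi=\tilde\psi\circ\lambda_p$ --- precisely the trick used in the paper's proof of Lemma \ref{l:transit}.5. Alternatively, (2)$\Rightarrow$(3) is immediate from Lemma \ref{l:transit}.5: semitopological means $\Lambda(\E)=\E$, and $\Lambda(\E)=\Theta^{op}$, which would shorten your cycle. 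Second, the first half of your (5)$\Rightarrow$(1) (weak compactness of $T_v(B^*)$, ``$B^*$ weakly compact'') and the closing remark about ``applying the same to a weak neighborhood basis'' do not work as stated and are not needed; the Banach--Grothendieck argument you give afterwards is the proof.
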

\begin{proof}
Use Lemma \ref{l:transit}.1 and the standard characterizations of
reflexive Banach spaces. %in terms of compactness of $B_V$ in its weak topology.
\end{proof}

\begin{f} \label{t:wap} \cite[Section 4]{Me-nz}
Let $S$ be a semitopological semigroup.
\ben
\item
A compact (continuous) $S$-space $X$ is WAP
if and only if $(S,X)$ is weakly (respectively, strongly) reflexively approximable.
\item
A compact (continuous) \emph{metric} $S$-space $X$ is WAP
if and only if $(S,X)$ is weakly (respectively, strongly) reflexively representable.
\een
\end{f}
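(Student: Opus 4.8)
The plan is to handle the two biconditionals in parallel, with all the content sitting in the implication ``$(S,X)$ WAP $\Rightarrow$ reflexively representable/approximable''; the reverse implication is Stone--Weierstrass. Throughout I use that the class of WAP systems is suppable, so $\mathrm{WAP}(X)$ is a norm-closed unital $S$-invariant subalgebra of $C(X)$ (Remark \ref{r:univ}); that every WAP system is WRUC-compatible (Remark \ref{r:WRUC}), so $\mathrm{WAP}(X)=C(X)$ and the regular representation of $(S,X)$ on $C(X)$ is weakly continuous, and strongly continuous when $S\times X\to X$ is jointly continuous (then $C(X)=\mathrm{RUC}(X)$); and the Davis-Figiel-Johnson-Pelczy\'nski construction recorded in Lemma \ref{l:generating}.2. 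For ``$(\Leftarrow)$'': given a faithful weakly continuous representation $(h,\alpha)$ of $(S,X)$ on a reflexive $V$, the restriction operator $r\colon V\to C(X)$ (Remark \ref{r:actions}.2) sends the orbit $\{r(v)s:s\in S\}=\{r(vh(s)):s\in S\}$ into $r(\|v\|B_V)$, a weakly compact subset of $C(X)$ since $V$ is reflexive and $r$ is weak-weak continuous; hence $r(v)\in\mathrm{WAP}(X)$, and faithfulness makes $r(V)$ point-separating, so $\mathrm{WAP}(X)=C(X)$. For an approximable $X\subseteq\prod_i X_i$ each $X_i$ is WAP by the above, and since pulling WAP functions back along an $S$-map yields WAP functions, the functions through the coordinate projections lie in $\mathrm{WAP}(X)$ and separate points, so $X$ is WAP; in the ``strong/continuous'' version one adds that a strongly continuous faithful reflexive representation induces a jointly continuous dual action (Remark \ref{l:actions-g}.2), a property inherited by products and closed subsystems.

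For ``$(\Rightarrow)$'' assume $C(X)=\mathrm{WAP}(X)$ and fix $f\in C(X)$. Let $W_f$ be the weak closure in $C(X)$ of the orbit $\{L_sf:s\in S\}$ under the right translations $L_s$ (so $L_sg(x)=g(sx)$); it is weakly compact because $f$ is WAP, hence its closed absolutely convex hull $K_f$ is weakly compact (Krein--Smulian), and $K_f$ is invariant under every $L_s$, since $L_s$ is an affine weak-weak continuous contraction of $C(X)$ with $L_s(W_f)\subseteq W_f$. Applying the DFJP construction \cite{DFJP} --- Lemma \ref{l:generating}.2 with $\|\cdot\|_n$ the Minkowski gauge of $2^nK_f+2^{-n}B_{C(X)}$ --- produces a reflexive Banach space $V_f$ and a continuous linear injection $j_f\colon V_f\hookrightarrow C(X)$ with $K_f\subseteq B_{V_f}$; the $L_s$-invariance of $K_f$ and of $B_{C(X)}$ forces each $L_s$ to restrict to a contraction on $V_f$, giving a co-homomorphism $h_f\colon S\to\Theta(V_f)$. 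This $h_f$ is weakly continuous: $V_f$ being reflexive, $j_f$ restricted to any norm ball of $V_f$ is a homeomorphism onto its image for the weak topologies, so weak continuity of $s\mapsto h_f(s)v$ reduces to that of $s\mapsto L_s(j_fv)$ in $C(X)$, which holds since $j_fv\in\mathrm{WRUC}(X)$; when $C(X)=\mathrm{RUC}(X)$ one splits $N(L_{s_i}v-L_sv)^2=\sum_n\|L_{s_i}v-L_sv\|_n^2$ --- the first $N$ terms controlled by $4^{N+1}\|L_{s_i}(j_fv)-L_s(j_fv)\|_\infty^2\to0$, the tail by $4\sum_{n>N}\|v\|_n^2\to0$ --- to upgrade this to strong continuity. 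Finally put $\alpha_f:=j_f^*\circ\delta\colon X\to V_f^*$, $\alpha_f(x)(v)=j_f(v)(x)$; then $\alpha_f$ is a bounded weak$^*$-continuous $S$-map, $(h_f,\alpha_f)$ is a representation whose restriction operator is $r_{\alpha_f}=j_f$, and $f$ (viewed as an element of $B_{V_f}$) is sent to $f$, so $f$ comes from this representation.

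It remains to assemble the pieces. The diagonal map $x\mapsto(\alpha_f(x))_{f\in C(X)}$ into $\prod_{f}V_f^*$ is injective --- from $\alpha_f(x)$ one reads off $f(x)$, and $C(X)$ separates points of the compactum $X$ --- hence a topological embedding realizing $X$ as a closed $S$-subspace of $\prod_f Y_f$ with $Y_f:=\overline{\alpha_f(X)}$, where each $(S,Y_f)$ is faithfully weakly reflexively representable via the inclusion $Y_f\hookrightarrow V_f^*$ together with $h_f$. Thus $X$ is weakly reflexively approximable, and when the action is jointly continuous each $h_f$ is strongly continuous, the dual action on $Y_f$ is jointly continuous (Remark \ref{l:actions-g}.2), and $X$ is strongly reflexively approximable; this proves (1). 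For metrizable $X$ one instead fixes a countable point-separating family $\{f_k\}_{k\in\N}\subseteq C(X)$, runs the construction for each $f_k$, and invokes that countable products of (weakly, resp. strongly) reflexively representable compact $S$-spaces are reflexively representable (Lemma \ref{l:sum}.3), together with stability of faithful representability under passage to closed $S$-subsystems; this exhibits $X$ as a faithfully reflexively representable system, proving (2).

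The one genuinely delicate point is the compatibility of the DFJP reflexification with the dynamics in the middle step: that $V_f$ inherits an $S$-action by contractions (which requires $L_s$-invariance of $K_f$, not merely of the orbit) and that weak continuity --- and, in the jointly continuous case, strong continuity --- of the representation survives the reflexification, the latter resting on the seminorm-splitting estimate that genuinely uses $N(v)<\infty$. Everything else --- the $(\Leftarrow)$ direction, the passage to (countable) products, and the identification of the restriction operator with $j_f$ --- is routine Stone--Weierstrass and functional-analytic bookkeeping, and all of it is carried out in \cite{Me-nz}.
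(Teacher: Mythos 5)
Your argument is correct, and it is essentially the standard route: the paper imports Fact \ref{t:wap} from \cite{Me-nz} without proof, but its own proof of the analogous Theorems \ref{t:general} and \ref{t:tame=R-repr} runs on exactly your scheme --- DFJP interpolation \cite{DFJP} applied to the symmetrized convex hull of the invariant orbit family, the induced contraction co-homomorphism on the interpolation space, the weak/strong continuity check via the seminorm splitting, and assembly by (countable) products using Lemma \ref{l:sum}.3. Your only deviation is the appropriate specialization that in the reflexive case weak continuity of $h_f$ follows from weak compactness of the balls of $V_f$ (continuous injections from weakly compact sets are homeomorphisms onto their images) rather than from Haydon's theorem, which is what the paper needs only in the Rosenthal setting.
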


We next recall a version of Lawson's theorem \cite{Lawson} and
its soft geometrical proof using representations of dynamical systems on reflexive spaces.

\begin{f} \label{f:Lawson}
\emph{(Ellis-Lawson's Joint Continuity Theorem)} Let $G$ be a subgroup
of a compact semitopological monoid $S$. Suppose that $S \times X
\to X$ is a separately continuous action with compact $X$. Then
the action $G\times X \to X$ is jointly continuous and $G$ is a
topological group.
\end{f}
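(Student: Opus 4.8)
The plan is to derive the statement from the reflexive representation theory assembled above, in three steps.

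\emph{Step 1: pass to a reflexive representation.} Both $(S,X)$ and $(S,S)$ (the latter with the left translation action, which is separately continuous since $S$ is semitopological) are WAP: for $f\in C(X)$ and sequences $s_n\in S$, $x_m\in X$, passing to subnets with $s_n\to s$ in the compactum $S$ and $x_m\to x$ in $X$, separate continuity gives $\lim_n\lim_m f(s_nx_m)=f(sx)=\lim_m\lim_n f(s_nx_m)$, so $fS$ meets Grothendieck's double-limit criterion and is relatively weakly compact in $C(X)$, whence $f\in\mathrm{WAP}(X)$; the same computation applies to $S$ acting on itself. By Fact \ref{t:wap} these systems are weakly reflexively approximable; forming an $\ell_2$-sum of the pieces (Lemma \ref{l:sum}; in the non-metrizable case one keeps instead a product $\prod_i(B_{V_i^*},w^*)$ of such representations) we obtain a faithful weakly continuous representation $(h,\a)$ of $(S,X)$ on a reflexive Banach space $V$ which is moreover faithful on the subsystem $(S,S)$. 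By Proposition \ref{p:refl}, $\E(V)=\Theta(V)^{op}$ is a compact semitopological semigroup and its dual action on $(B_{V^*},w^*)=(B_{V^*},w)$ is separately continuous; using the faithfulness on $(S,S)$ — so that $s$ is recovered from $h(s)$ via $h(s)\cdot\beta(e)=\beta(s)$, where $\beta\colon S\hookrightarrow B_{V^*}$ is the corresponding equivariant embedding — one checks that $h\colon S\hookrightarrow\E(V)=\Theta(V)^{op}$ is a \emph{topological} semigroup embedding, that $\a\colon X\hookrightarrow(B_{V^*},w)$ is an $S$-equivariant topological embedding, and that $h(G)\subseteq\Iso(V)^{op}$ because $G$ is a group.

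\emph{Step 2: the isometry group of a reflexive space.} For an arbitrary Banach space $W$ the group $\Iso(W)$ with the strong operator topology is a topological group whose tautological actions on $(W,\mathrm{norm})$ and on $(B_{W^*},w^*)$ are jointly continuous — direct computations using that isometries preserve the norm. The essential input is that \emph{when $W$ is reflexive the weak and strong operator topologies coincide on $\Iso(W)$}, equivalently that the action of $(\Iso(W),\mathrm{WOT})$ on $(B_{W^*},w)$ is jointly continuous. Granting this, the subspace topology inherited by $G$ through the embedding $h$ of Step 1 — which is the weak operator topology of $\Iso(V)^{op}$ — coincides with the strong operator topology there.

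\emph{Step 3: conclusion.} By Steps 1--2, $h$ restricts to a topological group embedding of $G$ onto a subgroup of the topological group $(\Iso(V)^{op},\mathrm{SOT})$, so $G$ is a topological group. For joint continuity of $G\times X\to X$, identify $X$ with $\a(X)\subseteq(B_{V^*},w)$; the action map factors as
$$
G\times X\ \xrightarrow{\ h|_G\times\mathrm{incl}\ }\ (\Iso(V)^{op},\mathrm{SOT})\times(B_{V^*},w)\ \longrightarrow\ (B_{V^*},w),
$$
a composition of a continuous map with the jointly continuous action of Step 2; restricting the codomain to the closure of $G\cdot X=X$ gives the result. Running Step 3 on the $(S,S)$-summand likewise yields the jointly continuous action $G\times S\to S$.

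\emph{Main obstacle.} Everything rests on Step 2, the coincidence of the weak and strong operator topologies on $\Iso(W)$ for reflexive $W$ (equivalently, joint continuity of the isometry action on the weak$^*$ unit ball); this is exactly where reflexivity is indispensable, since for a general compact semitopological operator semigroup the analogous joint continuity fails (already on $B(\ell_2)_1$ in the weak operator topology, via the unilateral shift) and the closure of a subgroup in such a semigroup need not be a group. A purely topological alternative would apply Namioka's joint continuity theorem to the separately continuous $S\times X\to X$ with $S$ compact (hence \v{C}ech-complete) together with the homogeneity supplied by $G$, but one still has to match the resulting topology on $G$ with its given one, for which the reflexive representation of $(S,S)$ is again the convenient device.
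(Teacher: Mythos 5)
Your proof follows exactly the same route as the paper's sketch: Grothendieck's double-limit criterion gives $C(X)=\mathrm{WAP}(X)$, Fact \ref{t:wap} reduces the problem to a reflexive representation with $G$ landing in $\Iso(V)$, and the decisive input is the coincidence of the weak and strong operator topologies on $\Iso(V)$ for reflexive $V$ (the result of \cite{Me-op}). Your Step 1 merely spells out the reduction (faithfulness on $(S,S)$ so that the given topology of $G$ matches the weak operator topology) that the paper compresses into one sentence, so the argument is correct and essentially identical to the paper's.
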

\begin{proof} \nt \emph{A sketch of the proof from \cite{Me-nz}}: It is
easy to see by Grothendieck's Lemma that $C(X)=\mathrm{WAP}(X)$. Hence $(S,X)$ is a weakly almost periodic system.
By Theorem
\ref{t:wap} the proof can be reduced to the particular case where
$(S,X)=(\Theta(V)^{op},B_{V^*})$ for some reflexive Banach space $V$ with
$G:=\Iso(V)$, where $\Theta(V)^{op}$ %, and hence, $G$,
is endowed with the weak operator topology.
By \cite{Me-op} %Theorem \ref{t:w=s}
the weak and strong operator topologies coincide on $\Iso(V)$ for reflexive $V$.
In particular, $G$ is
a topological group and it acts continuously on $B_{V^*}$.
\end{proof}

As a corollary one gets the classical result of Ellis. See also a generalization in Theorem \ref{gen-Ellis}.

\begin{f} \label{t:Ellis}
\emph{(Ellis Theorem)} Every compact semitopological group is a
topological group.
\end{f}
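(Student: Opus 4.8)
The plan is to derive this as an immediate corollary of the Ellis–Lawson Joint Continuity Theorem (Fact \ref{f:Lawson}), which has just been established. The idea is to specialize that theorem to the case where the ambient compact semitopological monoid is the group itself.

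First I would let $S$ be a compact semitopological group. The multiplication $S\times S\to S$ is separately continuous by hypothesis, and $S$ carries a neutral element, so $S$ is in particular a compact semitopological monoid. Now take $X:=S$, acting on itself by left translations: the action map $S\times X\to X$, $(s,x)\mapsto sx$, is exactly the multiplication of $S$, hence separately continuous, and $X$ is compact. Finally take $G:=S$, viewed as a subgroup of the monoid $S$ in the trivial way (every element of $S$ is invertible). Thus all the hypotheses of Fact \ref{f:Lawson} are met: $G=S$ is a subgroup of the compact semitopological monoid $S$, and $S\times X\to X$ is a separately continuous action on the compact space $X$.

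Applying Fact \ref{f:Lawson} then yields directly that $G=S$ is a topological group and that the action $G\times X\to X$ — that is, the multiplication $S\times S\to S$ — is jointly continuous. Since inversion in $S$ is continuous (this is part of "$S$ is a topological group" in the conclusion of Fact \ref{f:Lawson}), we conclude that the compact semitopological group $S$ is in fact a topological group, as claimed. There is essentially no obstacle here: the entire content has been absorbed into Fact \ref{f:Lawson}, whose proof in turn rests on the reduction, via Grothendieck's Lemma and the reflexive representability of WAP systems (Fact \ref{t:wap}), to the model case $(\Theta(V)^{op},B_{V^*})$ with $V$ reflexive, where the coincidence of the weak and strong operator topologies on $\Iso(V)$ does the work. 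The only point worth noting is that one should check that the self-action of $S$ by left translations genuinely is a separately continuous action in the sense of Definition \ref{d:con-act} — associativity $s_1(s_2x)=(s_1s_2)x$ and $ex=x$ are just the monoid axioms, and separate continuity is precisely separate continuity of multiplication — so that Fact \ref{f:Lawson} applies verbatim.
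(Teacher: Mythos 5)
Your proof is correct and follows essentially the same route as the paper, which states Ellis' theorem as an immediate corollary of the Ellis--Lawson Joint Continuity Theorem (Fact \ref{f:Lawson}) by taking $G=S$, $X=S$ with the left-translation action. Your verification that this self-action is a separately continuous monoidal action is exactly the specialization the paper intends, so nothing further is needed.
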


%%!!
%It is well known that this theorem can be extended to
%\emph{locally compact} semitopological groups $G$.
%Indeed, it is enough to apply Ellis-Lawson's theorem to the
%one-point compactification $S:=G \cup \{\infty\}$.% of $G$.

Another consequence of Theorem \ref{t:wap} (taking into account Proposition \ref{p:refl}) is

\begin{f} \label{f:sem-in-ref}
\emph{(\cite{Sh} and \cite{Me-op})}
Every compact semitopological semigroup $S$ is embedded into
$\Theta(V)=\E(V^*)$ for some reflexive $ V$.
\end{f}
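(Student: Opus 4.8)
The statement to prove is Fact~\ref{f:sem-in-ref}: every compact semitopological semigroup $S$ embeds into $\Theta(V)=\E(V^*)$ for some reflexive Banach space $V$.

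\textbf{Plan.} The strategy is to combine the WAP-representation machinery (Fact~\ref{t:wap}) with the identification of $\E$ for reflexive spaces (Proposition~\ref{p:refl}). First I would regard $S$ as a dynamical system over itself: the left translation action $S \times S \to S$ is separately continuous, so $(S,S)$ is a compact (not assumed metrizable) semitopological $S$-flow. By Grothendieck's Lemma (as used in the proof of Fact~\ref{f:Lawson}), separate continuity of the multiplication on the compact space $S$ forces $C(S)=\mathrm{WAP}(S)$; that is, $(S,S)$ is a WAP system. Now apply Fact~\ref{t:wap}.1: a compact WAP $S$-system is weakly reflexively approximable, i.e. it embeds $S$-equivariantly into a product $\prod_{i} B_{V_i^*}$ of weak$^*$ balls of reflexive spaces $V_i$, with each coordinate carrying a weakly continuous co-homomorphism $h_i : S \to \Theta(V_i)$.

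\textbf{Key steps.} Second, I would assemble these into a single faithful representation. Since the $\alpha$-map of the approximating family separates points of $S$ (it is a topological embedding into the product), the associated family of co-homomorphisms $\{h_i\}$ is jointly faithful on $S$ as a \emph{topological semigroup}: if $h_i(s)=h_i(s')$ for all $i$ then $s$ and $s'$ act identically on every coordinate, hence $\alpha(s)=\alpha(s')$, hence $s=s'$; and weak continuity of each $h_i$ together with the embedding property gives that $s \mapsto (h_i(s))_i$ is a homeomorphism onto its image. Form the $l_2$-sum $V := (\Sigma_i V_i)_{l_2}$; by Lemma~\ref{l:sum}.2 this $V$ is reflexive (using that, by separability reductions or directly, one may take the index set to be as needed; reflexivity of $l_2$-sums of reflexive spaces holds for arbitrary index sets). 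The diagonal co-homomorphism $h := \bigoplus_i h_i : S \to \Theta(V)$ is then weakly continuous (matrix coefficients of $h$ are, by Lemma~\ref{l:sum}.1, controlled by the matrix coefficients of the $h_i$) and injective, and it is a homeomorphic embedding of $S$ onto $h(S) \subset \Theta(V)$ with the weak operator topology.

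\textbf{Conclusion and the main obstacle.} Third, invoke Proposition~\ref{p:refl}: since $V$ is reflexive, $\Theta(V)^{op}$ with the weak operator topology is already compact and semitopological, and $\E(V^*) = \Theta(V^*)^{op}{}^{op}$ — more precisely the statement records $\Theta(V)=\E(V^*)$, the weak operator closure of $\Theta(V)$ being $\Theta(V)$ itself. Thus $h(S)$, being a continuous injective image of the compact space $S$, is a closed subsemigroup of the compact semitopological semigroup $\Theta(V)=\E(V^*)$, and $h: S \to h(S)$ is the desired topological-semigroup embedding. I expect the main obstacle to be bookkeeping about cardinality/separability: Fact~\ref{t:wap}.1 is stated for general compact $S$-spaces, but one must be slightly careful that the reflexive-approximability really yields a single reflexive $V$ (not merely a subdirect product), which is exactly what Lemma~\ref{l:sum}.2 together with the $l_2$-sum construction supplies; verifying that the $l_2$-sum of arbitrarily many reflexive spaces is reflexive, and that the diagonal representation is weakly continuous and a topological embedding, are the points requiring genuine (though routine) checking.
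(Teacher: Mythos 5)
Your proposal is correct and takes essentially the same route the paper indicates for this Fact: the left-translation flow $(S,S)$ is WAP by Grothendieck's lemma, Fact~\ref{t:wap} gives weak reflexive approximability, the $l_2$-sum (Lemma~\ref{l:sum}) assembles the coordinates into a single reflexive space with a weakly continuous injective representation, and Proposition~\ref{p:refl} identifies the target as the compact semitopological semigroup $\Theta(V)=\E(V^*)$, so the continuous injection from the compact $S$ is automatically a closed embedding. The only bookkeeping left implicit is that your diagonal $h$ is a co-homomorphism, so one should pass to adjoints (i.e.\ replace $V$ by the reflexive space $V^*$) to obtain a genuine semigroup embedding --- exactly the op-identification already built into the statement $\Theta(V)=\E(V^*)$.
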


%Alternatively, it comes also as a corollary of Theorem \ref{t:Srepr} below.

% eli
Thus, compact semitopological semigroups $S$ can be
characterized as closed subsemigroups of $\E(V)$ for reflexive Banach spaces $V$.
%As $V$ is reflexive, $\E(V)=\Theta(V)^{op}$ and $\E(V^*)=\Theta(V)$.
%Also any homomorphism $S \to \Theta(V)$ is admissible since these semigroups are semitopological.
We will show below, in Theorem \ref{t:Srepr}, that analogous statements
(for admissible embeddings)
hold for HNS and tame semigroups, where the corresponding classes of Banach spaces are Asplund and Rosenthal spaces respectively.

\subsection{HNS-semigroups and dynamical systems}
\label{s:HNS}

The following definition
%300312
(for continuous group actions) originated in \cite{GM1}.
%warning
%1702
%2013
%It was motivated by results of Akin-Auslander-Berg on almost equicontinuous systems.
% \cite{AAB98}.
%%
One may extend it to separately continuous semigroup actions.

\begin{defin} \label{d:HNS}
We say that a compact $S$-system $X$ is \emph{hereditarily non-sensitive} (HNS, in short)
if one of the following equivalent conditions are satisfied:
\ben
\item
For every closed nonempty subset $A \subset X$ and for every entourage $\eps$ from the unique compatible uniformity on $X$ there exists
an open subset $O$ of $X$ such that $A \cap O$ is nonempty and $s (A \cap O)$ is $\eps$-small for every $s \in S$.
\item
The family of translations $\widetilde{S}:=\{\tilde{s}: X \to X\}_{s \in S}$ is a fragmented family of maps.
 \item
 $E(S,X)$ %(or, $j(S) \subset E(S,X)$)
 is a fragmented family of maps from $X$ into itself.
 \een
\end{defin}

The equivalence of (1) and (2) is evident from the definitions.
Clearly, (3) implies (2). As to the implication (2) $\Rightarrow$ (3),
observe that the pointwise closure of a fragmented family is again
a fragmented family, \cite[Lemma 2.8]{GM-rose}.

Note that if $S=G$ is a group then in Definition \ref{d:HNS}.1 one may consider only closed subsets $A$
which are $G$-invariant (see the proof of \cite[Lemma 9.4]{GM1}).

\begin{lem} \label{l:HNS-prop} \
\ben
\item For every $S$
the class of HNS compact $S$-systems is closed under subsystems,
arbitrary products and factors.
\item
For every HNS compact $S$-system $X$
the corresponding enveloping semigroup $E(X)$ is
HNS both as an $S$-system and as a semigroup.

%!! For every HNS compact $S$-system $X$ %with respect to a monoidal %separately continuous action
%the corresponding enveloping semigroup $E(X)$ is an HNS-semigroup and $E$ as the $S$-system is HNS.
\item Let $P$ be a %right topological compact
HNS-semigroup.
Assume that $j: S \to P$ be a continuous homomorphism from a semitopological semigroup $S$ into $P$
such that $j(S) \subset \Lambda(P)$. Then the %compact
$S$-system $P$ is HNS.
%050312
\item \{HNS-semigroups\}=\{enveloping semigroups of HNS systems\}.
\een
\end{lem}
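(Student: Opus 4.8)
The plan is to prove the four assertions in order, reusing the machinery already set up in the preliminaries (fragmented families, the behaviour of $E(X)$ under factors and subsystems in Lemma \ref{l:Env-all}, and the stability of fragmentedness under pointwise closure from \cite[Lemma 2.8]{GM-rose}).

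\emph{Part (1).} For subsystems: if $Y \subset X$ is a closed $S$-subsystem and the family $\widetilde{S}$ on $X$ is fragmented, then restricting each $\tilde s$ to $Y$ preserves the fragmentation property — given a closed $A \subset Y$ (which is also closed in $X$) and an entourage $\eps$ on $Y$ coming from an entourage $\eps'$ on $X$, the open set $O$ in $X$ witnessing fragmentation for $A$ and $\eps'$ intersects $Y$ in a relatively open set doing the job. For products: given a closed $A$ in $\prod_i X_i$ and a basic entourage $\eps$ involving finitely many coordinates $i_1, \ldots, i_n$, one applies fragmentation of $\widetilde{S}$ on $X_{i_1}$ to the closed set $\overline{\operatorname{pr}_{i_1}(A)}$ to cut down, then proceeds coordinate by coordinate; here one uses Remark \ref{r:fr1}.1 to restrict attention to a subbase of the product uniformity and to closed sets. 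For factors: if $q: X \to X'$ is an onto $S$-map and $\widetilde{S}$ on $X$ is fragmented, then for a closed $A' \subset X'$ one pulls back, fragments $q^{-1}(A')$ in $X$, and pushes the resulting open set forward — one must be slightly careful that $q$ is a quotient map (automatic since $X$ is compact), so $q(O)$ need not be open, but one instead works with $O' := X' \setminus q(X \setminus O)$, uses that $q^{-1}(O') \subset O$, and that $s$ commutes with $q$ so that $s$-images stay $\eps$-small downstairs because they were upstairs.

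\emph{Part (2).} By Definition \ref{d:HNS}, $X$ being HNS means $E(S,X) = \widetilde{E(X)}$ is a fragmented family of self-maps of $X$; this is the statement that $E(X)$ is HNS as an $S$-system with the caveat that one should check the action of $S$ on $E(X)$ — but $\widetilde{S}$ sits densely inside $E(X)$, and by \cite[Lemma 2.8]{GM-rose} the pointwise closure of the fragmented family $\widetilde{S}$ is fragmented, which is exactly $E(X)$ acting on $X$; since $E(X)$ as an $S$-flow embeds into $X^X$ and its translations by elements of $E(X)$ are themselves governed by the same maps, one gets that $E(X)$ is HNS as an $S$-system. That $E(X)$ is an HNS-\emph{semigroup} in the sense of Definition \ref{d:sem2}.2 is the assertion that $\{\lambda_a: E(X) \to E(X)\}_{a \in E(X)}$ is fragmented; here I would use Lemma \ref{l:FrFa}.1 applied to the separately continuous composition-type map, or more directly observe that $\lambda_a$ on $E(X) \subset X^X$ is coordinatewise given by $p \mapsto a \circ p$, and fragmentedness of $\{a\}_{a \in E(X)}$ on $X$ (just established) lifts to fragmentedness of the family of left translations on $E(X) \subset X^X$ by checking it coordinate by coordinate, again via Remark \ref{r:fr1}.1.

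\emph{Part (3).} If $P$ is an HNS-semigroup and $j: S \to P$ is a continuous homomorphism with $j(S) \subset \Lambda(P)$, then the $S$-action on $P$ is $\lambda_s = \lambda_{j(s)}$, which for $s \in S$ is a translation by an element of $P$; since $\{\lambda_a : P \to P\}_{a \in P}$ is a fragmented family by hypothesis, its subfamily $\{\lambda_{j(s)}\}_{s \in S}$ is also fragmented, so the $S$-system $P$ is HNS by Definition \ref{d:HNS}.2. \emph{Part (4).} One inclusion is immediate from part (2): every HNS system has an HNS enveloping semigroup, so HNS-semigroups arising as $E(X)$ for HNS $X$ exhaust a subclass. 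For the reverse, if $P$ is an HNS-semigroup then by Remark \ref{r:env=crtadm} it is (isomorphic to) the enveloping semigroup $E(\Lambda(P), P)$ of the system $(\Lambda(P), P)$; by part (3) applied with $S = \Lambda(P)$ and $j$ the inclusion, this system $(\Lambda(P), P)$ is HNS, so $P$ is realized as the enveloping semigroup of an HNS system.

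\emph{Main obstacle.} I expect the factor case in part (1) to be the fussiest point — handling the fact that continuous surjections of compact spaces need not send open sets to open sets, so the witnessing open set downstairs has to be manufactured via complements of images of complements — but this is standard and causes no real difficulty. The genuinely load-bearing input everywhere is the stability of fragmented families under pointwise closure (\cite[Lemma 2.8]{GM-rose}) together with the coordinatewise criterion of Remark \ref{r:fr1}.1, which is what lets one reduce statements about families on $X^X$ to statements about families on $X$.
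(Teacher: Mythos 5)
Your parts (2)--(4) are essentially sound and run close to the paper's own argument. For (2) the paper also first gets that $E(X)$ is HNS as an $S$-system by viewing $E(X)\subset X^X$ as a subdirect product (so it uses part (1)), and then obtains the semigroup statement by taking the pointwise closure of the fragmented family of translations $\{\lambda_{j(s)}\}_{s\in S}$ of $E(X)$, which is $\{\lambda_a\}_{a\in E(X)}$; your alternative, lifting fragmentedness of $\{a:X\to X\}_{a\in E(X)}$ coordinatewise to $E(X)\subset X^X$ via Remark \ref{r:fr1}.1, is also correct (note only that your opening sentence conflates ``$E(X)$ is a fragmented family on $X$'' with ``$E(X)$ is HNS as an $S$-system''; the subsequent embedding argument is what actually does the work). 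Your (3) is even more direct than the paper's proof, which first reduces to $\overline{j(S)}=P$ and invokes Lemma \ref{l:Env-all}.4, whereas you simply observe that the $S$-translations $\lambda_{j(s)}$ form a subfamily of the fragmented family $\{\lambda_a\}_{a\in P}$; this is fine. Your (4) coincides with the paper's (combine (2), (3) and Lemma \ref{l:Env-all}.4, i.e.\ Remark \ref{r:env=crtadm}).

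The genuine gap is in the factor case of part (1), which the paper itself only cites from \cite{GM-rose}. Your recipe --- fragment $A:=q^{-1}(A')$ to get an open $O\subset X$ and then pass to $O':=X'\setminus q(X\setminus O)$ --- fails as stated: $O'\cap A'=\{x'\in A':\ q^{-1}(x')\subset O\}$, and nothing forces this set to be nonempty, because the open set $O$ produced by fragmentation need not contain a single whole fiber of $q$ (already for $X=X'\times\{0,1\}$ with $q$ the projection, $O$ may lie inside one slice). The standard repair is a minimality trick: fix an entourage $\varepsilon'$ on $X'$, put $\varepsilon:=(q\times q)^{-1}(\varepsilon')$, and by Zorn's lemma choose a closed subset $M\subset q^{-1}(A')$ which is \emph{minimal} with respect to $q(M)=A'$. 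Fragment $\widetilde S$ on the closed set $M$ with respect to $\varepsilon$ to get an open $O\subset X$ with $U:=O\cap M\ne\emptyset$ and $s(U)$ $\varepsilon$-small for all $s$. Then $W':=A'\setminus q(M\setminus U)$ is relatively open in $A'$ (as $q(M\setminus U)$ is compact) and nonempty (by minimality of $M$), every point of $W'$ has all of its $M$-preimages in $U$, and for $x',y'\in W'$ one picks preimages $x,y\in U$ and uses $q(sx)=sx'$, $q(sy)=sy'$ to conclude that $s(W')$ is $\varepsilon'$-small for every $s$ simultaneously. With this correction (the subsystem and product cases you give are fine), part (1) and hence the whole lemma is proved.
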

\begin{proof}
(1) As in \cite{GM-rose} using the stability properties of fragmented families.

(2)
$(S,E)$ is a HNS system because HNS is preserved by subdirect products.
So, by Definition \ref{d:HNS}, $\{\lambda_a : E \to E\}_{a \in j(S)}$ is a fragmented family of maps.
Then its pointwise closure $\{\lambda_a : E \to E\}_{a \in E}$ is also a fragmented family.

(3)
Since $j(S) \subset \Lambda(P)$ the closure $\overline{j(S)}$ is a subsemigroup of $P$.
We can assume that $\overline{j(S)}=P$.
By Lemma \ref{l:Env-all}.4, the enveloping semigroup
$E(S,P) \subset P^P$ can be naturally
identified with $P$ so that every $a \in E(S,P)$ is identified with
the corresponding left translation $\lambda_a: P \to P$.
Since $P$ is a HNS-semigroup the set of all
left translations $\{\lambda_a: P \to P\}_{a \in E}$ is a fragmented family.
Hence, $(S,P)$ is a HNS system (Definition \ref{d:HNS}).

(4) Combine (2) and (3) taking into account Lemma \ref{l:Env-all}.4.
\end{proof}

\begin{thm} \label{t:F-semig} Let $V$ be a Banach space.
\TFAE \ben
\item
$V$ is an Asplund Banach space.
\item
$(\Theta^{op}, B^*)$ is a HNS system.
%($(S,B^*)$ is HNS for every subsemigroup of $\Theta^{op}$).
%\item
%$B=\{f|_{B^*}: B^* \to \R\}_{f \in B(V)}$ is a fragmented family for $B^*$.
%\item
%$\E(V)$ is a fragmented family for $B^*$.
\item $\E$ is a HNS-semigroup.
%!\item $\E(V)$ is a fragmented family for $B^*$.
\een
\end{thm}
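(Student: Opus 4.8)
The plan is to prove the analytic core $(1)\Leftrightarrow(2)$ directly from the fragmentability criterion for Asplund spaces (Fact \ref{f:Asp}), and then to deduce $(2)\Leftrightarrow(3)$ formally from the transfer results for HNS-semigroups in Lemma \ref{l:HNS-prop} together with the structural facts about $\E=\E(V)$ in Lemma \ref{l:transit}. Throughout I write $\lambda_s\colon B^*\to B^*$, $\psi\mapsto s\psi=h(s)^*(\psi)$, for the left translation by $s\in\Theta^{op}$, and recall that by Definition \ref{d:HNS} the system $(\Theta^{op},B^*)$ is HNS precisely when the family $\widetilde{\Theta^{op}}=\{\lambda_s\}_{s\in\Theta^{op}}$ is a fragmented family of self-maps of the weak$^*$ compactum $B^*$.

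For $(1)\Rightarrow(2)$, assume $V$ is Asplund. By Remark \ref{r:fr1}.1 it suffices to verify the fragmentation condition for closed nonempty $A\subset B^*$ and for subbasic weak$^*$-entourages $\eps=\{(\phi,\phi'):|\langle v,\phi-\phi'\rangle|<\delta\}$ with $v\in B_V$, $\delta>0$. Since $A$ is $(w^*,\mathrm{norm})$-fragmented (Fact \ref{f:Asp}), choose a weak$^*$-open $O$ with $A\cap O\neq\emptyset$ and $\diam_{\mathrm{norm}}(A\cap O)<\delta$. Then for every $s\in\Theta^{op}$, writing $s=h(s)^*$ with $\|h(s)\|\le 1$, and all $\psi,\psi'\in A\cap O$ one gets $|\langle v,s\psi-s\psi'\rangle|=|\langle h(s)v,\psi-\psi'\rangle|\le\|\psi-\psi'\|<\delta$; hence $\lambda_s(A\cap O)$ is $\eps$-small for every $s$ simultaneously, so $\widetilde{\Theta^{op}}$ is fragmented. (The point is simply that each $\lambda_s$ is norm-nonexpansive, so one norm-fragmentation of $B^*$ serves all of them at once.) For $(2)\Rightarrow(1)$, assume $\widetilde{\Theta^{op}}$ is a fragmented family; I will show $B^*$ is $(w^*,\mathrm{norm})$-fragmented, which by Fact \ref{f:Asp} forces $V$ to be Asplund. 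We may assume $V\neq\{0\}$; fix $f\in S_{V^*}$ and choose $v_0\in V$ with $f(v_0)=1$. For $w\in S_V$ consider the rank-one operator $A(f,w)\in\Theta$, $x\mapsto f(x)w$ (as in the proof of Lemma \ref{l:transit}); a direct computation gives $A(f,w)^*\psi=\langle w,\psi\rangle f$, so $\lambda_{A(f,w)}(\psi)=\langle w,\psi\rangle f$. The subfamily $\{\lambda_{A(f,w)}\}_{w\in S_V}\subset\widetilde{\Theta^{op}}$ is again fragmented, so for closed nonempty $A\subset B^*$ and $\delta>0$ there is a weak$^*$-open $O$ with $A\cap O\neq\emptyset$ such that $\lambda_{A(f,w)}(A\cap O)$ is small for the entourage $\{(\phi,\phi'):|\langle v_0,\phi-\phi'\rangle|<\delta\}$, for every $w\in S_V$ at once. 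Since $\langle v_0,\lambda_{A(f,w)}\psi-\lambda_{A(f,w)}\psi'\rangle=\langle w,\psi-\psi'\rangle f(v_0)=\langle w,\psi-\psi'\rangle$, this says $|\langle w,\psi-\psi'\rangle|<\delta$ for all $w\in S_V$ and all $\psi,\psi'\in A\cap O$; taking the supremum over $w\in S_V$ yields $\diam_{\mathrm{norm}}(A\cap O)\le\delta$. As $A$ and $\delta$ were arbitrary, $B^*$ (hence every bounded subset of $V^*$) is $(w^*,\mathrm{norm})$-fragmented.

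Finally, $(2)\Leftrightarrow(3)$ is soft. Since $\E$ is by definition the enveloping semigroup of $(\Theta^{op},B^*)$, Lemma \ref{l:HNS-prop}.2 gives $(2)\Rightarrow(3)$ immediately. Conversely, if $\E$ is an HNS-semigroup, apply Lemma \ref{l:HNS-prop}.3 to the inclusion homomorphism $j\colon\Theta^{op}\hookrightarrow\E$ of the semitopological semigroup $\Theta^{op}$ (here $j(\Theta^{op})=\Theta^{op}=\Lambda(\E)$ by Lemma \ref{l:transit}.5) to conclude that the $\Theta^{op}$-system $\E$ is HNS; then for any $\psi\in S_{V^*}$ the orbit map $p\mapsto p\psi$ is a continuous $\Theta^{op}$-equivariant surjection of $\E$ onto $B^*$ (Lemma \ref{l:transit}.4), so $(\Theta^{op},B^*)$ is a factor of $(\Theta^{op},\E)$ and is HNS by Lemma \ref{l:HNS-prop}.1.

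The step I expect to be the main obstacle is $(2)\Rightarrow(1)$: one must manufacture a genuine norm-fragmentation of $B^*$ out of a merely weak$^*$ fragmentation of the translation family. The device that makes it work is the family of rank-one operators $A(f,w)$, whose adjoints collapse $B^*$ onto the line $\R f$ and convert the norm discrepancy $\psi\mapsto\langle w,\psi\rangle$ into the single fixed weak$^*$-coordinate $\langle v_0,\cdot\rangle$ of the images; the simultaneity built into the notion of a fragmented family is exactly what lets us vary $w$ over $S_V$ and recover the full norm diameter. Everything else reduces to bookkeeping with the cited lemmas.
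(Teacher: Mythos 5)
Your proposal is correct and follows essentially the same route as the paper: the equivalence $(2)\Leftrightarrow(3)$ is exactly the paper's argument via Lemma \ref{l:HNS-prop} and Lemma \ref{l:transit}.4--5, and your $(1)\Leftrightarrow(2)$ is the paper's appeal to Fact \ref{f:Asp} written out in detail, with the rank-one operators $A(f,w)$ playing the same role they play in the paper's use of $\Theta v=B$ (Lemma \ref{l:transit}.1). The only cosmetic difference is that you re-derive the equivalence of Fact \ref{f:Asp}(2) and (3) inline instead of citing it.
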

\begin{proof}
%Similar to Theorem \ref{t:tame2}.
%!!!n
(1) $\Rightarrow$ (2): Use Definition \ref{d:HNS}.2 and
the following well known characterization of Asplund spaces: $V$ is Asplund iff
$B^*$ is $(w^*,norm)$-fragmented (Fact \ref{f:Asp}).

(2) $\Rightarrow$ (1)
By Fact \ref{f:Asp} we have to show that $B$ is a fragmented family for $B^*$.
Choose a vector $v \in S_V$.
Since $\Theta^{op}$ is a fragmented family of self-maps
on $B^*$ and as $v: B^* \to \R$ is uniformly continuous we get that
the system $v \Theta^{op}=\Theta v$
% eli ???
of maps from $B^*$ to $\R$ is also
fragmented. Now recall that $\Theta v=B$ by Lemma \ref{l:transit}.1.

(2) $\Rightarrow$ (3): Follows from Lemma \ref{l:HNS-prop}.2
and the fact that $\E$ is the enveloping semigroup $E(\Theta^{op},B^*)$.

(3) $\Rightarrow$ (2):
$\Lambda(\E)=\Theta^{op}$ (Lemma \ref{l:transit}.5) and $\E$ is a HNS-semigroup.
So, $(S,\E)$ is HNS by Lemma \ref{l:HNS-prop}.3
 with $S=\Theta^{op}$.
Take $\psi \in B^*$ with $||\psi||=1$.
The map $q: \E \to B^*, \ p \mapsto p \psi$ defines
a continuous homomorphism of $\Theta^{op}$-systems. By Lemma
\ref{l:transit}.4, we have $\E \psi=B^*$. So $q$ is onto. Now observe that
the HNS property is preserved by factors of $S$-systems
(Lemma \ref{l:HNS-prop}.1).
\end{proof}

Our next theorem is based on ideas from \cite{GMU}.
\begin{thm} \label{t:metr-W}
Let $V$ be a Banach space. \TFAE
\ben
\item
$V$ is a separable Asplund space.
\item
$\E$ is homeomorphic to the Hilbert cube $[-1,1]^{\N}$ (for infinite-dimensional $V$).
\item
$\E$ is metrizable.
\een
\end{thm}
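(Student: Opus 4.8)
The plan is to establish $(1)\Leftrightarrow(3)$ for an arbitrary Banach space $V$, and to add the equivalence with $(2)$ in the infinite-dimensional case. The implication $(2)\Rightarrow(3)$ is immediate since the Hilbert cube is metrizable, so the real content is $(3)\Rightarrow(1)$, $(1)\Rightarrow(3)$ and, when $\dim V=\infty$, $(1)\Rightarrow(2)$.

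For $(3)\Rightarrow(1)$ I would argue in two steps. \emph{Separability:} fix $\psi\in S_{V^*}$ and consider the orbit map $\tilde\psi\colon\E\to B^*,\ p\mapsto p\psi$, which is continuous by the very definition of the pointwise topology on $\E$ and surjective by Lemma \ref{l:transit}.4. Hence $B^*$, with the weak$^*$ topology, is a continuous image of the compact metrizable space $\E$, so it is metrizable, which forces $V$ to be separable. \emph{Asplund property:} $\E$ is a compact right topological admissible semigroup (see Definition \ref{d:envBan} and the remarks following it) and it is metrizable by hypothesis, so Lemma \ref{t:F}.3 shows that $\E$ is an HNS-semigroup; the implication $(3)\Rightarrow(1)$ of Theorem \ref{t:F-semig} then yields that $V$ is Asplund.

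For $(1)\Rightarrow(3)$ — which I expect to be the heart of the matter — I would use that a separable Asplund space has separable dual, so $V^*$ is norm-separable; choose a norm-dense sequence $(\psi_k)_{k\in\N}$ in $B^*$. Recall that $\E$ may be identified with the weak$^*$-operator closure of $\Theta^{op}$ in $L(V^*)$ (Lemma \ref{l:W=E}.2), so every $p\in\E$ is a linear operator on $V^*$ with $\|p\|\le1$. The claim is that on $\E$ the topology of pointwise weak$^*$ convergence on all of $B^*$ coincides with the topology $\tau$ of pointwise convergence on the countable set $\{\psi_k\}$. The first refines $\tau$ trivially; conversely, if $p_i\to q$ in $\tau$ with $p_i,q\in\E$, then for $\psi\in B^*$, $v\in B_V$ and $\eps>0$, picking $k$ with $\|\psi-\psi_k\|<\eps$ and using $\|p_i\|\le1$, $\|q\|\le1$ we get $|\langle v,p_i\psi\rangle-\langle v,p_i\psi_k\rangle|\le\eps$ and $|\langle v,q\psi_k\rangle-\langle v,q\psi\rangle|\le\eps$, while $\langle v,p_i\psi_k\rangle\to\langle v,q\psi_k\rangle$; hence $\limsup_i|\langle v,p_i\psi\rangle-\langle v,q\psi\rangle|\le2\eps$, so $p_i\psi\to q\psi$ weak$^*$ for every $\psi\in B^*$. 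Since a bounded operator on $V^*$ is determined by its values on a norm-dense subset of $B^*$, the map $p\mapsto(p\psi_k)_{k\in\N}$ is a homeomorphic embedding of $\E$ into the countable power $(B^*)^{\N}$ of the weak$^*$-metrizable compactum $B^*$, and therefore $\E$ is metrizable.

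Finally, for $(1)\Rightarrow(2)$ with $\dim V=\infty$: by the previous step $\E$ is metrizable and compact, and it is convex, being the closure of the convex set $\Theta^{op}$ (a linear image of the ball $\Theta(V)$) in the locally convex space $L(V^*)$ with the weak$^*$-operator topology. As $V$, hence $V^*$ and $L(V^*)$, is infinite-dimensional, $\E\supseteq\Theta^{op}$ is not contained in any finite-dimensional affine subspace, so $\E$ is an infinite-dimensional metrizable compact convex set; by Keller's theorem such a set is homeomorphic to the Hilbert cube, i.e. $\E\cong[-1,1]^{\N}$. The main obstacle is the coincidence-of-topologies argument in $(1)\Rightarrow(3)$: it is precisely there that one needs \emph{both} separability (to obtain a countable dense set in $B^*$) and the Asplund property (to have it \emph{norm}-dense, so that the linearity and the norm bound of the elements of $\E$ can be used); with merely a weak$^*$-dense sequence the estimate above collapses, which is also what makes metrizability fail for non-Asplund separable $V$.
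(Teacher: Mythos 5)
Your proof is correct and follows essentially the same route as the paper: the implication $(3)\Rightarrow(1)$ is verbatim the paper's argument (Lemma \ref{t:F}.3 plus Theorem \ref{t:F-semig} for the Asplund property, and Lemma \ref{l:transit}.4 for separability), while $(1)\Rightarrow(2)$ rests on Keller's theorem just as in the paper. The only difference is that you spell out, via the norm-dense sequence in $B^*$ and the embedding of $\E$ into $(B^*)^{\N}$, the metrizability/affine-embedding step that the paper leaves implicit in its one-line appeal to Keller's theorem — a welcome elaboration rather than a new approach.
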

%031012
\begin{proof} %(1) $\Rightarrow$ (2) Since the dual $V^*$ is separable,
%the coefficient algebra $\A:=\lan m(V,V^*) \ran$ is separable.
%By Lemma \ref{l:W=E}.2, the algebra of the compactification
%$\Theta^{op} \hookrightarrow \E$ is $\A$.

(1) $\Rightarrow$ (2) Since $\E$ is a compact affine subset in the Fr\'{e}chet space $\R^{\N}$ we can use Keller's Theorem \cite[p. 100]{BP}.

(2) $\Rightarrow$ (3) Is trivial.

(3) $\Rightarrow$ (1) $\E$ is a HNS-semigroup by Lemma
%June12
\ref{t:F}.3. Now Theorem \ref{t:F-semig} implies that $V$ is
Asplund. It is also separable; indeed, by Lemma
\ref{l:transit}.4, $B^*$ is a continuous image of $\E$, so that
$B^*$ is also $w^*$-metrizable, which in turn yields the separability of $V$.
\end{proof}

%290512
Now in Theorem \ref{t:GMU}.2 we obtain a short proof of one of the main results of
\cite{GMU} (stated there for continuous group actions).

%Let $X$ be a compact %metrizable
%$S$-system.  %\TFAE
%Consider the following conditions:
%\ben
%\item
%$(S,X)$ is HNS.
%\item $E(X)$ is metrizable.
%\een
%Then (2) $\Rightarrow$ (1). If $X$ is metrizable then (1) $\Leftrightarrow$ (2).

\begin{thm} \label{t:GMU}
Let $X$ be a compact %metrizable
$S$-system.  Consider the following assertions:
%290512 Before was only (2)
\bit
\item [(a)] $E(X)$ is metrizable.
\item [(b)]  $(S,X)$ is HNS.
\eit
Then:
\ben
\item \emph{(a) $\Rightarrow$ (b)}.
\item
If $X$, in addition, is metrizable then \emph{(a) $\Leftrightarrow$ (b)}.
\een
\end{thm}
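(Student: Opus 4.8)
The plan is to prove (1), i.e. (a)$\Rightarrow$(b), for general separately continuous compact $S$-systems, and then derive the converse in (2) for metrizable $X$ using the results already assembled about HNS systems and their enveloping semigroups. For (1), suppose $E=E(X)$ is metrizable. By Remark \ref{r:env=crtadm}, $E$ is a compact right topological admissible semigroup, so by Lemma \ref{t:F}.3 (the metrizable case) $E$ is a HNS-semigroup. Now apply Lemma \ref{l:HNS-prop}.3 with $S$ in place of the semitopological semigroup and $j:S\to E$ the Ellis compactification, whose image lies in $\Lambda(E)$ by the definition of a semigroup compactification; this gives that $(S,E)$ is a HNS $S$-system. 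Finally, the evaluation map $q:E(X)\to X,\ p\mapsto p x_0$ at a transitive point (or, in the non-transitive case, the product of such maps over an orbit-covering family, or more directly the observation that $X$ is a factor of $E(X)$ via the natural $S$-maps) is a continuous onto $S$-map, and HNS is preserved by factors of $S$-systems by Lemma \ref{l:HNS-prop}.1. Hence $(S,X)$ is HNS.

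\textbf{Remark on the non-transitive case.} If $X$ need not be point-transitive, one should instead argue via Lemma \ref{l:Env-all}: for each $x\in X$ the orbit-closure $\cls(Sx)$ is a factor of $E(X)$ (through $p\mapsto px$), hence HNS; and $X$ embeds $S$-equivariantly into the product $\prod_{x\in X}\cls(Sx)$, so $X$ is a closed $S$-subspace of a product of HNS systems and is therefore HNS by the closure properties in Lemma \ref{l:HNS-prop}.1. This avoids assuming point-transitivity. Alternatively, one may simply note that $X$ itself is a factor of its enveloping semigroup when one fixes any point, and run the same factor argument componentwise.

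For (2), assume in addition that $X$ is metrizable and that $(S,X)$ is HNS; we must show $E(X)$ is metrizable. By Lemma \ref{l:HNS-prop}.2, $E(X)$ is HNS \emph{as a semigroup}. The point is now to upgrade ``HNS + metrizable phase space'' to ``metrizable enveloping semigroup.'' Since $X$ is a metrizable compactum, fix a countable point-separating family $L=\{f_n\}\subset C(X)$. By Lemma \ref{l:Env-all}.1 the Ellis compactification $j:S\to E(X)$ is the one associated with the algebra $\langle m(L,X)\rangle$, so it suffices to show that the relevant compactum $E(X)$ carries a metrizable topology; equivalently, that the family $\{m(f_n,x): n\in\N,\ x\in X\}$, whose pointwise closure controls $E(X)$, is ``small'' enough. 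The clean route is: HNS for metric $X$ is equivalent (by \cite{GM1, Me-nz}) to $(S,X)$ being Asplund-representable, i.e. there is a faithful weakly continuous representation on a separable Asplund Banach space $V$ (Fact referenced near Theorem \ref{t:F-semig} and the Asplund characterization; cf. the metric HNS $\Leftrightarrow$ Asplund-representable statement in the introduction and Section \ref{s:repr}). Then $E(X)$ is a factor of $E(S,B^*)\subset\E(V)$ by Lemma \ref{l:W=E} and Lemma \ref{l:Env-all}.2, and $\E(V)$ is metrizable by Theorem \ref{t:metr-W} since $V$ is separable Asplund; a continuous image of a metrizable compactum is metrizable, so $E(X)$ is metrizable.

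\textbf{Main obstacle.} The delicate point is part (2): one must go from the \emph{internal} HNS condition on $X$ to metrizability of $E(X)$, and the only efficient bridge is the representation theorem identifying metric HNS systems with Asplund-representable systems, together with Theorem \ref{t:metr-W}. If one wanted a self-contained argument avoiding the representation theorem, the work would be in showing directly that a fragmented family of self-maps of a compact metric space has a pointwise-separable (hence, within $X^X$, metrizable) closure that moreover is metrizable as a \emph{subspace}; this requires care because $X^X$ is far from metrizable. Using the Asplund-representability shortcut sidesteps exactly this difficulty, so I expect the proof of (2) to be short modulo that cited equivalence, while (1) is essentially a bookkeeping chain through Lemmas \ref{t:F}, \ref{l:HNS-prop} and \ref{l:Env-all}.
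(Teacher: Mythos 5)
Your part (2) is essentially the paper's own argument: metrizable HNS systems are faithfully representable on a separable Asplund space (Theorem \ref{t:tame=R-repr}.2), $E(X)$ is then a continuous image of $E(S,B^*)\subset \E(V)$, and $\E(V)$ is metrizable by Theorem \ref{t:metr-W}; nothing to object to there. For part (1) your chain ``$E(X)$ metrizable $\Rightarrow$ $E(X)$ is an HNS-semigroup (Lemma \ref{t:F}.3) $\Rightarrow$ $(S,E(X))$ is HNS (Lemma \ref{l:HNS-prop}.3) $\Rightarrow$ pass to $X$'' is fine \emph{as long as $X$ is point-transitive}, since then $p\mapsto px_0$ is a continuous onto $S$-map $E(X)\to X$ and HNS passes to factors.

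The gap is in your treatment of the non-transitive case, and it is genuine. The map $p\mapsto px_0$ has image $\cls(Sx_0)$, so ``$X$ is a factor of $E(X)$ when one fixes any point'' is false unless $x_0$ is transitive; and there is in general \emph{no} $S$-equivariant embedding of $X$ into $\prod_{x\in X}\cls(Sx)$, because there need not exist any nonconstant $S$-map from $X$ to a proper orbit closure. Concretely, for the cascade $T(x)=x^2$ on $X=[0,1]$ every orbit closure is countable, so every $S$-map from the connected space $X$ into an orbit closure is constant, and no family of such maps can separate points of $X$; yet this system satisfies the hypotheses of the theorem. Thus your argument only yields that each orbit closure is HNS, and the passage from ``all orbit closures HNS'' to ``$X$ HNS'' is not justified (fragmentedness must be tested on closed sets that meet several orbit closures). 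The paper avoids this entirely by arguing directly: by Remark \ref{r:fr1}.1 it suffices to show that $\{f\circ p: p\in E(X)\}$ is a fragmented family for each $f\in C(X)$, and this follows from Lemma \ref{l:FrFa}.2 applied to the map $E(X)\times X\to\R$, $(p,x)\mapsto f(px)$, using the metrizability of $E(X)$ and the density of $j(S)$; this gives Definition \ref{d:HNS}.3 for arbitrary compact $X$, with no transitivity assumption. I suggest you replace your non-transitive patch by this direct fragmentation argument (or restrict the semigroup-lemma route to orbit closures and then still supply such an argument for general closed subsets).
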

\begin{proof}
%031012
(1) By Definition \ref{d:HNS} we have to show that
$E(X)$ is a fragmented family of maps from $X$ into itself.
The unique compatible uniformity on the compactum $X$ is the weakest uniformity on $X$ generated by $C(X)$.
Using Remark \ref{r:fr1}.1 one may reduce the proof to the verification of the following claim: $E^f:=\{f \circ p:  p \in E(X)\}$ is a
fragmented family for every $f \in C(X)$. In order to prove this claim apply Lemma \ref{l:FrFa}.2
to the induced mapping $E(X) \times X \to \R, (p,x) \mapsto f(px)$ (using our assumption
that $E(X)$ is metrizable).
%%

%!!!!
(2) If $X$ is a metrizable HNS $S$-system then
by Theorem \ref{t:tame=R-repr} below, $(S,X)$ is representable on a separable Asplund space $V$.
%approximable.
%Since $X$ is a metrizable compactum, Theorem \ref{t:tame=R-repr} shows that
%$(S,X)$ admits a faithful representation on a separable Asplund space $V$.
We can assume that $X$ is $S$-embedded into $B^*$.
The enveloping semigroup $E(S,B^*)$ is embedded into
$\E$ (Lemma \ref{l:W=E}). The latter is metrizable by virtue of Theorem \ref{t:metr-W}.
Hence $E(S,X)$ is also metrizable, being a continuous image of $E(S,B^*)$.
\end{proof}

\begin{prop} \label{p:metrAlg}
Let $S$ be a semitopological semigroup and $\a: S \to P$ be a right topological semigroup compactification.
\ben
\item If $P$ is metrizable then $P$ is a HNS-semigroup and the system $(S,P)$ is HNS.
\item Let
$V \subset C(S)$ be an %left
m-introverted closed subalgebra of $C(S)$. If $V$ is separable then necessarily $V \subset \mathrm{Asp}(S)$.
%!! (and $V$ is %left
%introverted).
\een
\end{prop}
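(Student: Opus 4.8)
The plan is to obtain both parts by combining three earlier results: the fact that metrizable compact admissible right topological semigroups are HNS-semigroups, the transfer of the HNS property from a semigroup to its canonical dynamical system, and the dictionary between m-introverted closed subalgebras of $C(S)$ and semigroup compactifications.

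For part (1), I would first note that, $\a\colon S\to P$ being a right topological semigroup compactification, we have $\a(S)\subset\Lambda(P)$ with $\a(S)$ dense in $P$; hence $\Lambda(P)$ is dense and $P$ is a compact admissible right topological semigroup. If $P$ is metrizable, Lemma \ref{t:F}.3 gives at once that $P$ is a HNS-semigroup. For the second assertion I would apply Lemma \ref{l:HNS-prop}.3 to the continuous homomorphism $j:=\a\colon S\to P$, which satisfies $j(S)\subset\Lambda(P)$; this yields that the $S$-system $P$ is HNS. Thus part (1) is a short assembly of two earlier lemmas.

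For part (2), the key step is to realize $V$ as the algebra of a semigroup compactification. By Fact \ref{f:adm}.1 there is a semigroup compactification $(\psi,K)$ of $S$ with $\psi^*C(K)=V$, namely the canonical one $\delta\colon S\to K:=MM(V)$. Since $\psi^*$ is an isometric isomorphism of $C(K)$ onto $V$ and $V$ is separable, $C(K)$ is separable, whence the compactum $K$ is metrizable. Now part (1), applied to $(\psi,K)$, shows that the $S$-system $K$ is HNS. Finally, $\psi\colon S\to K$ is in particular a (separately continuous) $S$-compactification of $S$ onto a compact HNS system, so every $f\in V=\psi^*C(K)$ comes from a HNS $S$-compactification; by the definition of $\mathrm{Asp}(S)$ (Remark \ref{r:univ} with $\Pcal$ the class of compact HNS systems) this means $V\subset\mathrm{Asp}(S)$.

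I do not expect a genuine obstacle here; the only points needing a little care are confirming that the canonical compactification $\psi\colon S\to MM(V)$ qualifies as a separately continuous $S$-compactification in the sense used to define $\mathrm{Asp}(S)$, and that separability of the Banach algebra $V$ forces metrizability of its maximal ideal space $K$ --- both are routine.
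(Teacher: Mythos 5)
Your proof is correct and follows essentially the same route as the paper: part (1) is the same two-step assembly of Lemma \ref{t:F}.3 and Lemma \ref{l:HNS-prop}.3, and part (2) uses Fact \ref{f:adm}.1 to realize $V$ as the algebra of a semigroup compactification whose (metrizable, by separability of $V$) phase space is HNS by part (1), giving $V\subset\mathrm{Asp}(S)$. The extra details you supply (admissibility of $P$, metrizability of the maximal ideal space) are exactly the routine checks the paper leaves implicit.
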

\begin{proof} (1)  By Lemma \ref{t:F}.3, $P$ is a HNS-semigroup. By Lemma \ref{l:HNS-prop}.3, the system $(S,P)$ is HNS.

(2)
By Fact \ref{f:adm}.1, the algebra $V$ induces a semigroup compactification $S \to P$. Since $V$ is separable, $P$ is metrizable.
So by (1), $(S,P)$ is HNS. Therefore, $V \subset \mathrm{Asp}(S)$. %%!(and $V$ is introverted by Theorem \ref{m-intro to intro} below).
\end{proof}

\subsection{Tame semigroups and tame systems}
\label{s:tame}

\begin{defin} \label{d:Tame}
A compact separately continuous $S$-system $X$ is said to be
\emph{tame} if the translation $\lambda_a: X \to X,
\ x \mapsto a x$ is a fragmented map for every element $a \in E(X)$ of the enveloping semigroup.
\end{defin}

This definition is formulated in \cite{GM-rose} for continuous group actions.

%300312
According to Remark \ref{r:univ} we define, for every $S$-space $X$,
the $S$-subalgebras $\mathrm{Tame}(X)$ and $\mathrm{Tame_c}(X)$
 of $C(X)$.
%A compact $S$-space $X$ is tame iff $\mathrm{Tame}(X)=C(X)$.
Recall that in several natural cases we have $\Pcal_c(X) = \Pcal(X)$ (see Lemma \ref{l:classes}).

% We use the same definition for separately continuous semigroup actions.
%
% Let us say that a compact right topological semigroup
%$P$ is \emph{tame} if every left translation $\lambda_a: P \to P,
%\ x \mapsto a x$ is a fragmented map. Hence every compact semitopological semigroup is tame.

\begin{lem} \label{l:ourclasses}
Every WAP system is HNS and every HNS is tame. Therefore, for every semitopological semigroup $S$ and every $S$-space $X$
(in particular, for $X:=S$) we have
$$
\mathrm{WAP}(X) \subset \mathrm{Asp}(X) \subset \mathrm{Tame}(X) \ \ \ \
\mathrm{WAP_c}(X) \subset \mathrm{Asp_c}(X) \subset \mathrm{Tame_c}(X).
$$
\end{lem}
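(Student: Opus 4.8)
The statement to prove, Lemma \ref{l:ourclasses}, has two halves: first, the class inclusions ``WAP $\Rightarrow$ HNS $\Rightarrow$ tame'' at the level of compact dynamical systems; second, the resulting inclusions of function algebras $\mathrm{WAP}(X) \subset \mathrm{Asp}(X) \subset \mathrm{Tame}(X)$ (and their continuous-action analogues). I would treat the dynamical inclusions first, then deduce the algebra inclusions essentially formally from the universality constructions of Remark \ref{r:univ}.

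\emph{Step 1: Every WAP system is HNS.} Let $X$ be a compact WAP $S$-system. By definition this means $C(X) = \mathrm{WAP}(X)$, equivalently (Ellis--Nerurkar, as recalled in Section \ref{s:wap}) every $p \in E(X)$ is a \emph{continuous} self-map of $X$. So $E(X)$ is a subset of $C(X,X)$. I would then invoke Lemma \ref{l:FrFa}.1: since $E(X)$ is compact, $X$ is \v{C}ech-complete (being compact), and the evaluation map $E(X) \times X \to X$ is separately continuous (it is even jointly continuous here, but separate continuity suffices), the associated family $\{\lambda_p : X \to X\}_{p \in E(X)}$ is fragmented. By Definition \ref{d:HNS}.3 this says exactly that $X$ is HNS.

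\emph{Step 2: Every HNS system is tame.} This is immediate: if $X$ is HNS then by Definition \ref{d:HNS} the family $\{\lambda_a : X \to X\}_{a \in E(X)}$ is a fragmented \emph{family}, hence in particular each individual $\lambda_a$ is a fragmented map, which is precisely Definition \ref{d:Tame} of tameness.

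\emph{Step 3: The algebra inclusions.} Apply Remark \ref{r:univ} with the three suppable classes $\mathcal P$ of compact separately continuous $S$-systems: WAP, HNS, and tame. (I would first note, or take as already established in Sections \ref{s:wap}, \ref{s:HNS}, \ref{s:tame}, that each of these classes is closed under products, closed subsystems, and $S$-isomorphisms, so that Remark \ref{r:univ} applies and the algebras $\mathrm{WAP}(X) = \mathcal P_{\mathrm{WAP}}(X)$, $\mathrm{Asp}(X) = \mathcal P_{\mathrm{HNS}}(X)$, $\mathrm{Tame}(X) = \mathcal P_{\mathrm{tame}}(X)$ are well defined as the algebras of functions coming from systems in the respective class.) A function $f \in C(X)$ lies in $\mathcal P(X)$ precisely when it comes from an $S$-compactification $\nu : X \to Y$ with $(S,Y)$ in $\mathcal P$. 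Since by Steps 1 and 2 every WAP compactification is in particular an HNS one, and every HNS compactification is in particular a tame one, any $f$ coming from a WAP compactification comes from an HNS compactification, and any $f$ coming from an HNS compactification comes from a tame one. Hence $\mathrm{WAP}(X) \subset \mathrm{Asp}(X) \subset \mathrm{Tame}(X)$. The same argument applied to the subclasses $\mathcal P_c$ of systems with \emph{continuous} action (again suppable, again via Remark \ref{r:univ}) gives $\mathrm{WAP_c}(X) \subset \mathrm{Asp_c}(X) \subset \mathrm{Tame_c}(X)$, completing the proof for arbitrary $S$-spaces $X$, in particular for $X := S$ with the left action.

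\emph{Expected obstacle.} None of the steps is hard; the only point requiring a little care is Step 1, where one must correctly cite the Ellis--Nerurkar characterization of WAP systems (``$C(X) = \mathrm{WAP}(X)$ iff $E(X) \subset C(X,X)$'') and then verify the hypotheses of Lemma \ref{l:FrFa}.1 — compactness of $E(X)$, \v{C}ech-completeness of $X$, separate continuity of the action $E(X) \times X \to X$ — all of which hold automatically in the compact setting. The algebra half (Step 3) is purely formal bookkeeping with the universal-compactification correspondence of Remark \ref{r:univ}, so the main thing is to state it cleanly rather than to overcome any genuine difficulty.
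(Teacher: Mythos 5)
Your proof is correct and follows essentially the same route as the paper: WAP $\Rightarrow$ HNS via the Ellis--Nerurkar characterization plus Lemma \ref{l:FrFa}.1 applied to $E(X)\times X\to X$, HNS $\Rightarrow$ tame directly from the definitions, and the algebra inclusions by the formal universality argument of Remark \ref{r:univ}. One small caveat: your parenthetical claim that the action $E(X)\times X\to X$ is jointly continuous for a WAP system is false in general (e.g.\ the one-point compactification of $\Z$ under the shift), but this is harmless since, as you note, only separate continuity is used.
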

\begin{proof}
%170213
We can suppose that $X$ is compact.
If $(S,X)$ is WAP then $E(X) \times X \to X$ is separately continuous.
By Lemma \ref{l:FrFa}.1 we obtain that $E$ is a fragmented family of maps from $X$ to $X$. In particular,
its subfamily $\{\tilde{s}: X \to X\}_{s \in S}$ of all translations is fragmented. Hence, $(S,X)$ is HNS.

Directly from the definitions we conclude that every HNS is tame.
%! Now the conclusions for the corresponding algebras of functions are trivial.
\end{proof}

Another proof of Lemma \ref{l:ourclasses} comes also from Banach representations
theory for dynamical systems because every reflexive space is Asplund and every Asplund is Rosenthal.

%0912   %%!!
By \cite{GM-tame}, a compact metrizable $S$-system $X$ is tame
iff $S$ is \emph{eventually fragmented} on $X$, that is,
for every infinite (countable) subset $C \subset G$ there exists an infinite subset $K \subset C$
such that $K$ is a fragmented family of maps $X \to X$.

\begin{lem} \label{l:both} \
\ben
\item
For every $S$
the class of tame $S$-systems is closed under closed subsystems,
arbitrary products and factors.
%\item Let $P$ be a right topological compact tame monoid. Then the $S$-system $P$ is tame
%for every central subsemigroup $S \subset \Lambda(P)$.
\item
For every tame compact $S$-system $X$
%with respect to a monoidal %separately continuous action
the corresponding enveloping semigroup $E(X)$ is
tame both as an $S$-system and as a semigroup.
\item
Let $P$ be a tame right topological compact semigroup and
let $\nu: S \to P$ be a continuous homomorphism from a semitopological
semigroup $S$ into $P$ such that $\nu(S) \subset \Lambda(P)$.
Then the $S$-system $P$ is tame.
\item \{tame semigroups\}=\{enveloping semigroups of tame systems\}.
%!! \item \textbf{Factors of tame semigroups are tame. }
\een
\end{lem}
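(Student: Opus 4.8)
The statement is Lemma~\ref{l:both}, which collects four closure/realization properties of tame systems, exactly paralleling Lemma~\ref{l:HNS-prop} for HNS systems. The plan is to mimic the HNS proof almost verbatim, replacing ``fragmented family'' by ``every left translation is fragmented''. First, for (1) I would use the stability properties of fragmented maps and fragmented families established in \cite{GM-rose}: closedness under closed subsystems is immediate since a translation of a subsystem is a restriction of a translation of the ambient system and restrictions of fragmented maps are fragmented; closedness under products follows because a map into a product is fragmented iff each coordinate is, and the enveloping semigroup of a product injects into the product of the enveloping semigroups; closedness under factors follows from the fact that if $q:X\to Z$ is a continuous onto $S$-map then by Lemma~\ref{l:Env-all}.2 every $b\in E(Z)$ is of the form $q$-induced image of some $a\in E(X)$, and one checks that $\lambda_b$ is fragmented because $q\circ\lambda_a=\lambda_b\circ q$ with $q$ continuous and $\lambda_a$ fragmented (fragmentedness passes to such quotients, cf.\ Remark~\ref{r:fr1}).

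For (2), the argument is the one used in Lemma~\ref{l:HNS-prop}.2: $(S,E(X))$ is a tame $S$-system because tameness is preserved under subdirect products (apply (1), noting $E(X)\subset X^X$), hence every translation $\lambda_a$, $a\in j(S)$, is fragmented, and since the pointwise closure of a family of fragmented maps on a compact space consists of fragmented maps (Remark~\ref{r:fr1} together with \cite{GM-rose}), every $\lambda_a$ with $a\in E(E(X))\cong E(X)$ is fragmented, i.e.\ $E(X)$ is a tame semigroup. For (3), as in Lemma~\ref{l:HNS-prop}.3, since $\nu(S)\subset\Lambda(P)$ the closure $\overline{\nu(S)}$ is a subsemigroup of $P$, so we may assume $\overline{\nu(S)}=P$; then by Lemma~\ref{l:Env-all}.4 the enveloping semigroup $E(S,P)$ is naturally isomorphic to $P$, with each $a\in E(S,P)$ identified with $\lambda_a:P\to P$, and since $P$ is a tame semigroup each such $\lambda_a$ is fragmented, which is precisely the statement that $(S,P)$ is tame (Definition~\ref{d:Tame}). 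Finally (4) combines (2) and (3) via Lemma~\ref{l:Env-all}.4: every tame semigroup $P$ is the enveloping semigroup of the tame system $(\Lambda(P),P)$ by (3) applied to the dense inclusion $\Lambda(P)\hookrightarrow P$, and conversely by (2) every enveloping semigroup of a tame system is a tame semigroup.

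The only genuinely delicate point is the behavior of fragmentedness under factors in step (1) and, closely related, its preservation under the pointwise closure in step (2); everything else is formal bookkeeping. I would therefore either cite the relevant stability lemmas from \cite{GM-rose} directly or insert a one-line justification: if $q:X\to Z$ is continuous onto, $Z$ compact, and $\lambda_a:X\to X$ is fragmented with $q\circ\lambda_a=\lambda_b\circ q$, then for a closed $\emptyset\neq A\subseteq Z$ and entourage $\eps$ on $Z$, pull back $\eps$ through $q$ (using uniform continuity of $q$) to get $\eps'$ on $X$, apply fragmentedness of $\lambda_a$ to the closed set $q^{-1}(A)$ to obtain an open $O\subseteq X$ with $\lambda_a(O\cap q^{-1}(A))$ $\eps'$-small, and note that $q(O)$ need not be open, so one instead works with $A$ itself and the family directly, invoking Remark~\ref{r:fr1}.4 to reduce to real-valued composition $f\circ\lambda_b$, $f\in C(Z)$, which equals $(f\circ q)\circ$ (a representative of $a$) composed through $q$. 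Since this is exactly the pattern already used for the HNS case, I expect no real obstacle, just the need to phrase it cleanly.
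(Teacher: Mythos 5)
Your treatment of (1), (3) and (4) follows the paper's route: for (1) the paper likewise just invokes the stability properties of fragmented maps from \cite{GM-rose} (your attempted one-line justification of the factor case is the only delicate point and remains vague, but citing \cite{GM-rose}, as you also offer to do, is exactly what the paper does), and your (3) and (4) coincide with the paper's argument via Lemma \ref{l:Env-all}.4.

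Part (2), however, rests on a step that fails. You obtain tameness of the semigroup $E(X)$ by noting that $\lambda_a$ is fragmented for $a\in j(S)$ and then invoking the principle that ``the pointwise closure of a family of fragmented maps consists of fragmented maps''. That principle is false: only the pointwise closure of a fragmented \emph{family} (the uniform condition of Definition \ref{def:fr}.2) is again a fragmented family, which is \cite[Lemma 2.8]{GM-rose} and is precisely what drives the HNS argument of Lemma \ref{l:HNS-prop}.2; it does not apply to a family of maps that are merely individually fragmented. Indeed, the maps $\lambda_{j(s)}$ are continuous, hence trivially fragmented, so if your closure principle were valid every compact system would be tame, contradicting the dynamical BFT dichotomy (Fact \ref{D-BFT}) and the existence of non-tame systems such as the full shift, whose enveloping semigroup $\beta\Z$ contains non-fragmented elements. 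The correct argument, which is the paper's, needs no closure step at all: since $(S,E(X))$ is tame by part (1) (it is a closed subsystem of the product $X^X$), Definition \ref{d:Tame} says that \emph{every} element of its enveloping semigroup $E(S,E(X))$ acts on $E(X)$ as a fragmented map; by Lemma \ref{l:Env-all}.4 this enveloping semigroup is naturally identified with $E(X)$ itself, each $a\in E(X)$ acting by the left translation $\lambda_a$, so every $\lambda_a$ is fragmented and $E(X)$ is a tame semigroup. You already use this identification in (3), so the repair is immediate, but as written the deduction in (2) is not valid.
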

\begin{proof}
(1) As in \cite{GM-rose} using the stability properties of fragmented maps.

(2)
$(S,E)$ is a tame system because by (1) tameness is preserved by
subdirect products. Its enveloping semigroup can be identified
%%!!!n
with $E$ itself (Lemma \ref{l:Env-all}.4), so that $\lambda_p: E \to E$
is fragmented for every $p \in E$.

(3) $\nu(S) \subset \Lambda(P)$, so $\overline{\nu(S)}$ is a semigroup. We can assume that $\overline{\nu(S)}=P$.
By Lemma \ref{l:Env-all}.4, the enveloping semigroup $E(S,P) \subset P^P$
can be naturally identified with $P$
%300312
in such a way that every $a \in E(S,P)$
is identified  with the corresponding left translation $\lambda_a: P \to P$ for some $a \in P$.
Since $P$ is a tame semigroup
every left translation $\lambda_a: P \to P$ is fragmented. Hence, $(S,P)$ is a tame system.
%
%(4) Trivial.
%
%(5) Follows from a Namioka type theorem from \cite{GMU}.

(4) Combine (2) and (3) taking into account Lemma \ref{l:Env-all}.4.
\end{proof}

\begin{prop} \label{p:tame-f}
Let $X$ be a compact $S$-space %under a monoidal action
and $f \in C(X)$.
%and $E^f={\cls}_p(fG)$ is the pointwise closure of the orbit of $f$ in $\R^G$.
The following conditions are equivalent:
\begin{enumerate}
\item $f \in \mathrm{Tame}(X)$.
%\item The cyclic $G$-space $X_f$ is tame.
\item
% $E^f$
${\cls}_p(fS) \subset {\mathcal F}(X)$ (i.e. the orbit $fS$ is a
Rosenthal family for $X$).
%(i.e. the orbit $fG$ is a Rosenthal family for $X$).
%\item $fG$ does not contain an $l_1$-sequence.
%\item Every function $\phi \in \cls_p(fG)$ is universally measurable.
%120910 ``tame function" countable determined
%!\item For every countable subset $S \subset G$, $fS$ is a Rosenthal family for $X$.
%%
\end{enumerate}
\end{prop}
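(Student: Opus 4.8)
The statement asserts the equivalence, for a compact $S$-space $X$ and $f \in C(X)$, of: (1) $f \in \mathrm{Tame}(X)$; and (2) ${\cls}_p(fS) \subset \mathcal{F}(X)$, i.e. $fS$ is a Rosenthal family for $X$. I would prove this by passing through the cyclic compactification $\pi_f : X \to X_f$ induced by $f$ (Lemma~\ref{l:f-introtype}), reducing everything to a statement about the enveloping semigroup of $X_f$ and then invoking Fact~\ref{f:sub-fr}. Recall (Remark~\ref{r:cycl-comes}) that $f = F_e \circ \pi_f$ for the continuous function $F_e$ on $X_f$, and that $X_f = \overline{\pi_f(X)}^p = {\cls}_p(m(f,\delta_f(X)))$; thus $fS$, viewed appropriately, is tied to the orbit of $F_e$ on $X_f$. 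The key point is that $f$ is a tame function iff it comes from a tame $S$-compactification of $X$, and the cyclic compactification $X_f$ is the smallest $S$-compactification from which $f$ comes.

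\textbf{Step 1 (Reduction to $X_f$).} First I would record that $f \in \mathrm{Tame}(X)$ if and only if the cyclic system $X_f$ is a tame $S$-system. One direction is the definition of $\mathrm{Tame}(X)$ via Remark~\ref{r:univ}: $\mathrm{Tame}(X)$ consists of functions coming from tame compact $S$-systems, and since $X_f$ is the minimal $S$-compactification through which $f$ factors (and tame systems are closed under factors, Lemma~\ref{l:both}.1), $f \in \mathrm{Tame}(X)$ forces $X_f$ tame; conversely if $X_f$ is tame then $f = F_e \circ \pi_f$ exhibits $f$ as coming from a tame system.

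\textbf{Step 2 (From tameness of $X_f$ to the Rosenthal-family condition).} Suppose $X_f$ is tame. Its enveloping semigroup $E(X_f)$ is tame both as a system and as a semigroup (Lemma~\ref{l:both}.2), so every $p \in E(X_f)$ is a fragmented self-map of $X_f$. Then for the continuous function $F_e \in C(X_f)$, the family $\{F_e \circ p : p \in E(X_f)\}$ consists of fragmented real-valued maps on $X_f$, hence ${\cls}_p(F_e S) \subset {\cls}_p(\{F_e \circ p : p \in E(X_f)\}) \subset \mathcal{F}(X_f)$; pulling back along the continuous surjection $\pi_f$ (which preserves and reflects fragmentedness in the relevant sense, since a composition of a fragmented map with a continuous map is fragmented) gives ${\cls}_p(fS) \subset \mathcal{F}(X)$. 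Here one should be a little careful: strictly, one works with $X_f$ and then transports via $\pi_f^*$; the cleanest route is to observe that $fS = \{(F_e \circ p)\circ \pi_f : p \in j(S)\}$ and that pointwise limits are preserved.

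\textbf{Step 3 (From the Rosenthal-family condition back to tameness).} Conversely, assume ${\cls}_p(fS) \subset \mathcal{F}(X)$. Replacing $X$ by $X_f$ — equivalently, working with $F_e$ on $X_f$ — we may assume $f$ separates enough to generate the system, and we must show $X_f$ is tame, i.e. every $p \in E(X_f)$ is fragmented. Using Lemma~\ref{l:Env-all}.1, $E(X_f)$ is the compactification of $S$ associated to the algebra generated by the matrix coefficients $m(F_e, x)$, $x \in X_f$. The orbit $F_e S$ being a Rosenthal family means, by Fact~\ref{f:sub-fr}, that $F_e S$ does not contain an $\ell_1$-sequence, equivalently $F_e S$ is an eventually fragmented family. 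From here the standard argument (as in \cite{GM-rose, GM1}) shows that the pointwise closure of $F_e S$ in $\R^{X_f}$ consists of fragmented maps, and then that each $p \in E(X_f)$, acting as a self-map of $X_f$, is fragmented — because $F_e$ separates points of $X_f$ and $g \circ p$ is fragmented for every $g$ in a point-separating family forces $p$ fragmented (Remark~\ref{r:fr1}.4, applied with $Y = X_f$ compact: $p : X_f \to X_f$ is fragmented iff $g \circ p$ has the point-of-continuity property for all $g \in C(X_f)$, and $C(X_f)$ is the closed algebra generated by $F_e S$ together with the constants). Thus $X_f$ is tame, so $f \in \mathrm{Tame}(X)$ by Step~1.

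\textbf{Main obstacle.} The delicate point is Step~3: converting "$fS$ is a Rosenthal family for $X$" into "every element of $E(X_f)$ is a fragmented self-map of $X_f$". The subtlety is that knowing the $\R$-valued orbit $F_e S$ has fragmented pointwise closure must be bootstrapped to the full algebra $\langle F_e S \rangle$ (so that one controls $g \circ p$ for \emph{all} $g \in C(X_f)$, not just $g = F_e$), and one needs that $\mathcal{F}(X_f) \cap \ell_\infty(X_f)$ is a closed subalgebra (stated in the proof of Lemma~\ref{l:sum}) together with the fact that the pointwise closure of a fragmented family of uniformly bounded functions is again fragmented (\cite[Lemma 2.8]{GM-rose}). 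Assembling these — plus the point-separation argument via Remark~\ref{r:fr1}.4 to pass from real-valued fragmentedness to fragmentedness of the $X_f$-valued maps $p$ — is where the real content lies; the rest is bookkeeping about cyclic compactifications already set up in Section~\ref{s:cyclAff}.
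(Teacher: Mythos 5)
The paper itself offers no argument for Proposition \ref{p:tame-f} beyond the citation of \cite[Prop.~5.6]{GM-rose}, and your route --- reduce to the cyclic system $X_f$ via Lemma \ref{l:f-introtype} and Remark \ref{r:cycl-comes}, identify $fS$ with the orbit of $F_e$ on $X_f$, and work with $E(X_f)$ and the fragmentability machinery --- is the natural reconstruction of the intended argument; Steps 1 and 2 are correct as written (tameness passes to the factor $X_f$ by Lemma \ref{l:both}.1, and $h\circ \pi_f$ is fragmented whenever $h$ is fragmented and $\pi_f$ continuous). The difficulties are concentrated in Step 3, where the real content of (2)$\Rightarrow$(1) is asserted rather than proved. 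First, the phrase ``replacing $X$ by $X_f$ we may assume'' hides a genuine step: you need ${\cls}_p(fS)\subset \F(X)$ to imply ${\cls}_p(F_eS)\subset \F(X_f)$, i.e.\ fragmentedness must \emph{descend} along the quotient $\pi_f:X\to X_f$, which is the non-obvious direction (the easy direction is the one used in Step 2). It is fixable --- either by a minimal-closed-subset argument, or, closer to your own setup, by noting that $g\mapsto g\circ\pi_f$ is a linear isometry $C(X_f)\to C(X)$ carrying $F_es$ to $fs$, so that by Fact \ref{f:sub-fr} (applied on both $X$ and $X_f$) $fS$ is Rosenthal for $X$ iff $F_eS$ is Rosenthal for $X_f$ --- but as written it is simply assumed.

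Second, and more seriously, the mechanism that makes the converse work is never stated. Before your point-separation/Stone--Weierstrass bootstrap can start, you must know that $g\circ p\in\F(X_f)$ for every $p\in E(X_f)$ and every $g$ in the generating family $F_eS\cup\{\textbf{1}\}$; the reason is the identity $(F_es)\circ p=F_e\circ\bigl(j(s)p\bigr)$ with $j(s)p\in E(X_f)$, together with the observation that $F_e\circ q\in{\cls}_p(F_eS)$ for every $q\in E(X_f)$ (being a pointwise limit of $F_e\circ j(s_i)$). This is exactly where the hypothesis enters, and you replace it by ``the standard argument (as in \cite{GM-rose, GM1})'' --- which, since the whole proposition \emph{is} \cite[Prop.~5.6]{GM-rose}, amounts to citing the result you are asked to prove. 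Two smaller slips: it is the orbit $F_eS$ (equivalently the evaluations $\widehat{s}$), not the single function $F_e$, that separates the points of $X_f$; and for the bootstrap from $F_eS$ to all of $C(X_f)=\lan F_eS\ran$ you need that bounded fragmented functions form a closed \emph{algebra} (closed under products), not just the Banach-subspace statement you quote from the proof of Lemma \ref{l:sum} --- alternatively, invoke the point-separating-family lemma \cite[Lemma 2.3.3]{GM-rose} that the paper itself uses in Theorem \ref{t:tame2} and avoid the algebra argument altogether.
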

\begin{proof}
% eli
% The proof is the same as in \cite[Prop. 5.6]{GM-rose}.    %SV
See \cite[Prop. 5.6]{GM-rose}.
%(1) $\Rightarrow$ (2) There exist a semitopological compact tame $S$-space $Y$, a continuous onto $S$-map $\a: X \to Y$
%and a continuous function $F \in C(Y)$ such that $f=F \circ \a$.
%Then for every $\varphi \in {\cls}_p(fS) \subset \R^X$ there exist $\phi \in {\cls}_p(FS) \subset \R^Y$
%such that $\varphi =\phi \circ \a$. In order to show that $\varphi$ is fragmented it is enough to show that $\phi$ is fragmented.
%Observe that there exist $p \in E(Y)$ such that $\phi = F \circ p$. Since $Y$ is tame, $p: Y \to Y$ is fragmented. Then $\phi = F \circ p$
%is also fragmented, as required.
%
%(2) $\Rightarrow$ (1)
\end{proof}

%!!!info about Rosenthal spaces
\begin{thm} \label{t:tame2} Let $V$ be a Banach space.
\TFAE
\ben
\item
$V$ is a Rosenthal Banach space.
\item $(\Theta^{op},B^*)$ is a tame system.
%!  ($(S,B^*)$ is tame for every subsemigroup of $\Theta^{op}$)
\item $p: B ^* \to B^*$ is a fragmented map for each $p \in \E$. %! i.e., $\E(V) \subset \F(B^*,B^*)$.
\item
$\E$ is a tame %(right topological compact admissible)
semigroup.

\een
\end{thm}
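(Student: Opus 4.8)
The plan is to follow closely the pattern of the proof of Theorem \ref{t:F-semig}, replacing ``Asplund / HNS'' by ``Rosenthal / tame'' throughout, and using the characterizations of Rosenthal spaces collected in Fact \ref{f:RosFr} in place of Fact \ref{f:Asp}. First I would note that conditions (2) and (3) are literally the same assertion. Indeed, $\E=E(\Theta^{op},B^*)$ by Definition \ref{d:envBan}, and for $a\in E(\Theta^{op},B^*)$ the translation $\lambda_a\colon B^*\to B^*$ appearing in Definition \ref{d:Tame} is precisely the self-map $a$ of $B^*$ (the restriction to $B^*$ of the operator $a\in L(V^*)$ via the realization of $\E$ inside $L(V^*)$ given in Lemma \ref{l:transit}); so tameness of $(\Theta^{op},B^*)$ is exactly the statement that every $p\in\E$ is a fragmented self-map of $B^*$.

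Next, for (1) $\Rightarrow$ (3), fix $p\in\E\subset L(V^*)$ and let $p^*\colon V^{**}\to V^{**}$ be its adjoint (so $\|p^*v\|\le\|v\|$). By Remark \ref{r:fr1}.1 it suffices to verify the fragmentation condition for subbasic entourages of the unique, weak$^*$, uniformity on the target $B^*$, namely those of the form $\eps_{v,\delta}=\{(\varphi,\psi):|\langle v,\varphi-\psi\rangle|<\delta\}$ with $v\in V$, $\delta>0$. For such an entourage the requirement ``$p(O\cap A)$ is $\eps_{v,\delta}$-small'' says exactly that the real function $\varphi\mapsto\langle v,p\varphi\rangle=\langle p^*v,\varphi\rangle$ has oscillation $<\delta$ on $O\cap A$; that is, $p$ is fragmented into $B^*$ if and only if $p^*v\in V^{**}$ is a fragmented real-valued map on $B^*$ for every $v\in V$. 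Since $V$ is Rosenthal, every element of $V^{**}$ is fragmented on $B^*$ by Fact \ref{f:RosFr}(3), so (3) follows. Conversely, for (3) $\Rightarrow$ (1), pick any $v\in S_V$; by Lemma \ref{l:transit}.2 we have $v\E=B^{**}$, so an arbitrary $v^{**}\in B^{**}$ has the form $v^{**}=vp=p^*v$ for some $p\in\E$. Then $v^{**}|_{B^*}$ is the composition of the fragmented map $p\colon B^*\to B^*$ with the weak$^*$-continuous (hence uniformly continuous on the compactum $B^*$) functional $v\colon B^*\to\R$, and such a composition is again fragmented by the stability properties of fragmented maps recorded in Section \ref{s:Ban}. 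Thus every element of $V^{**}$ is fragmented on $B^*$ and $V$ is Rosenthal by Fact \ref{f:RosFr}(3).

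Finally, the equivalence with (4) is obtained exactly as in Theorem \ref{t:F-semig}. For (2) $\Rightarrow$ (4): $\E=E(\Theta^{op},B^*)$ is the enveloping semigroup of the tame system $(\Theta^{op},B^*)$, hence a tame semigroup by Lemma \ref{l:both}.2. For (4) $\Rightarrow$ (2): since $\Lambda(\E)=\Theta^{op}$ by Lemma \ref{l:transit}.5, if $\E$ is a tame semigroup then $(\Theta^{op},\E)$ is a tame system by Lemma \ref{l:both}.3; fixing $\psi\in B^*$ with $\|\psi\|=1$, the orbit map $q\colon\E\to B^*$, $p\mapsto p\psi$, is a continuous $\Theta^{op}$-map which is onto since $\E\psi=B^*$ by Lemma \ref{l:transit}.4, and tameness passes to factors by Lemma \ref{l:both}.1, yielding tameness of $(\Theta^{op},B^*)$. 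I expect the only genuinely delicate point to be the bookkeeping in the step (1) $\Leftrightarrow$ (3) — the reduction to a subbase of the weak$^*$ uniformity on $B^*$ and the identification of ``$p$ fragmented into $B^*$'' with ``$p^*v$ fragmented into $\R$ for every $v\in V$'' — everything else being a direct transcription of the Asplund case.
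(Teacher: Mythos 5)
Your proposal is correct and follows essentially the same route as the paper: (2)$\Leftrightarrow$(3) by unwinding definitions, (4)$\Leftrightarrow$(2) via Lemmas \ref{l:both} and \ref{l:transit} exactly as in the HNS case, (1)$\Rightarrow$(3) by identifying $f\circ p\in V^{**}$ as fragmented via Fact \ref{f:RosFr}(3) and reducing to a subbase of the weak$^*$ uniformity (the paper outsources this last reduction to \cite[Lemma 2.3.3]{GM-rose}, which you replace by an explicit appeal to Remark \ref{r:fr1}.1), and (3)$\Rightarrow$(1) by writing an arbitrary $\phi\in B^{**}$ as $v\circ p$ with $p\in\E$ using $v\E=B^{**}$.
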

\begin{proof}
(2) $\Leftrightarrow$ (3): Follows from the definition
of tame flows because $\E=E(\Theta^{op},B^*)$. %is the enveloping semigroup.

(2) $\Rightarrow$ (4): Since $\E=E(\Theta^{op},B^*)$, Lemma \ref{l:both}.2 applies.

(4) $\Rightarrow$ (2): %We have to show that every $x^{**}: B^* \to
%\R$ is a fragmented map for every $x^{**} \in B^{**}:=B(V^{**})$.
By our assumption, $\E$ is a tame semigroup.
Then by Lemma \ref{l:both}.3 the system $(\Theta^{op}, \E)$ is tame.
Its factor (Lemma \ref{l:transit}.4) $(\Theta^{op},B^*)$ is tame, too.

(2) $\Rightarrow$ (1): By a characterization of Rosenthal spaces
\cite[Prop. 4.12]{GM-rose} (see also Fact \ref{f:RosFr})
it suffices to show that $B^{**} \subset \F(B^*)$.
Since $(\Theta^{op},B^*)$ is tame, $p: B^* \to B^*$ is fragmented for every $p \in E(\Theta^{op},B^*)=\E$.
%According to Lemma \ref{l:W=E},  $E(\Theta^{op},B^*)=\E$.
Pick an arbitrary $v \in B_V$ with $\|v\|=1$.
Then $v \E$ is exactly $B^{**}$ by Lemma \ref{l:transit}.2. So every $\phi \in B^{**}$ is a composition $v \circ p$, where
$p$ is a fragmented map. Since $v: B^* \to \R$ is weak$^*$ continuous we
conclude that $\phi: B^* \to B^*$ is fragmented.
%Hence, $B^{**} \subset \F(B^*,\R)$.

(1) $\Rightarrow$ (3): We have to show that $\E \subset
\F(B^*,B^*)$ for every Rosenthal space $V$.
%Recall that $\E=E(\Theta^{op},B^*)$.
Let $p \in \E$. Then $p \in \Theta(V^*)$. That is, $p$ is a linear map $p: V^* \to
V^*$ with norm $\leq 1$. Then, for every vector $f \in V$, the
composition $f \circ p: V^* \to \R$ is a linear bounded %(hence norm continuous)
functional on $V^*$. That is, $f \circ p  \in
V^{**}$ belongs to the second dual. Again, by the above mentioned characterization of Rosenthal spaces,
the corresponding restriction $f \circ p |_{B^*}: B^* \to \R$ is
a fragmented function for every $f \in V$. %(where $B^* \subset V^*$ is endowed with its weak$^*$ topology).
Since $V$ separates points of $B^*$ we can
apply \cite[Lemma 2.3.3]{GM-rose}. It follows that $p: B^* \to B^*$ is
fragmented for every $p \in \E$.
\end{proof}

\subsection{A dynamical BFT dichotomy}
Recall that a topological space $K$ is a {\it Rosenthal compactum}
\cite{Godefroy} if it is homeomorphic to a pointwise compact
subset of the space $\calB_1(X)$ of functions of the first Baire
class on a Polish space $X$. All metric compact spaces are
Rosenthal. An example of a separable non-metrizable Rosenthal
compactum is the {\it Helly compact}
%\cite{Eng}
of all
%020412 (not only strictly) increasing
nondecreasing
selfmaps of $[0,1]$ in the pointwise topology.
%Another is the ``two arrows" space \cite{Eng} of Alexandroff and Urysohn.
%(see Example \ref{two-arr}).
Recall that a topological space $K$ is {\em Fr\'echet}
(or, {\em Fr\'echet-Urysohn}) % \cite{Eng})
 if for every $A\subset K$ and every $x\in cl(A)$ there exists a
sequence of elements of $A$ which converges to $x$.
Every Rosenthal compact space $K$ is Fr\'{e}chet by a result of Bourgain-Fremlin-Talagrand
\cite[Theorem 3F]{BFT}, generalizing a result of Rosenthal.

\begin{thm} \label{t:FrechetE}
If the enveloping semigroup $E(X)$ %of a compact $S$-system $X$
is a Fr\'{e}chet (e.g., Rosenthal) space, as a topological space,
then $(S,X)$ is a tame system (and $E(X)$ is a tame semigroup).
\end{thm}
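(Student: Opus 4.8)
The plan is to reduce the statement to the definition of tameness (Definition \ref{d:Tame}), namely that every $a \in E(X)$ acts on $X$ as a fragmented self-map. Fix such an element $a$; since $E(X)$ is Fréchet as a topological space and $j(S)$ is dense in $E(X)$, there is a \emph{sequence} $s_n \in S$ with $\lambda_{s_n} \to a$ pointwise in $X^X$. The key point is that the limit of a pointwise-convergent sequence of continuous maps into a compact metrizable space is a Baire class 1 (hence fragmented) map; more precisely, since the uniformity of the compactum $X$ is generated by $C(X)$, it suffices (by Remark \ref{r:fr1}.1 and Remark \ref{r:fr1}.4) to show that for every $f \in C(X)$ the real-valued function $f \circ \lambda_a = \lim_n f \circ \lambda_{s_n}$ has the point of continuity property on every closed nonempty subset of $X$.

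The natural tool here is Fact \ref{f:sub-fr} together with the $l_1$-dichotomy. First I would consider the sequence $\{f \circ \lambda_{s_n}\}_n \subset C(X)$, a bounded subset of $C(X)$. Two cases arise. If no subsequence of it is equivalent to the unit basis of $l_1$, then by Fact \ref{f:sub-fr} the sequence is a Rosenthal family for $X$, so its pointwise closure — which contains $f \circ \lambda_a$ — consists of fragmented maps, and we are done. The remaining, and genuinely substantive, case is to rule out the possibility that some subsequence $\{f \circ \lambda_{s_{n_k}}\}_k$ is an $l_1$-sequence. This is where the Fréchet hypothesis on $E(X)$ must be used decisively: an $l_1$-sequence of continuous functions produces, via the Bourgain--Fremlin--Talagrand machinery, a copy of $\beta\mathbb{N}$ in the pointwise closure inside $\mathbb{R}^X$, and one pulls this back to show that the pointwise closure of $\{\lambda_{s_{n_k}}\}$ in $X^X$ — a subset of $E(X)$ — contains a homeomorphic copy of $\beta\mathbb{N}$, contradicting the Fréchet–Urysohn property (a non-metrizable space such as $\beta\mathbb{N}$ is never Fréchet, and closed subspaces of Fréchet spaces are Fréchet).

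I expect the main obstacle to be exactly this last step: establishing cleanly that an $l_1$-subsequence of matrix coefficients forces a $\beta\mathbb{N}$ inside $E(X)$. One must be careful that the combinatorial independence giving the $l_1$-estimate translates into topological independence of the associated clopen-type sets in $X$, and then that the resulting $\beta\mathbb{N}$ embeds in the \emph{pointwise} closure of the translations rather than merely in the closure of the coefficient functions; this uses that $C(X)$ separates points of $X$ and that $E(X)$, being the pointwise closure of $j(S)$, captures all such limits. Once the $l_1$-alternative is excluded, the argument closes immediately. Finally, the parenthetical assertion that $E(X)$ is then a tame semigroup follows from Lemma \ref{l:both}.2 applied to the tame system $(S,X)$, or directly since $\lambda_a : E(X) \to E(X)$ is the restriction to $E(X) \subset X^X$ of a fragmented map. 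Alternatively — and perhaps more efficiently — one can invoke Corollary \ref{c:FrechetSisTame} (referenced in the excerpt) for the semigroup statement, though the system-level statement in the theorem requires the argument above.
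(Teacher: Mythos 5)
Your plan is essentially correct and its first half coincides with the paper's proof: fix $p\in E(X)$, use the Fr\'echet property to choose a sequence $s_n$ with $j(s_n)\to p$, and reduce tameness to showing that $f\circ p$ is fragmented for every $f\in C(X)$. Where you diverge is the final step. The paper never touches the $l_1$-dichotomy: it applies Lemma \ref{l:FrFa}.2 to the evaluation map on $F\times X$, where $F:=\{f\circ p\}\cup\{f\circ j(s_n)\}_{n\in\N}\subset \R^X$ is a compact metrizable (convergent sequence plus limit) family whose dense part consists of continuous functions; this Namioka-type point-of-continuity argument yields at once that $F$ is a fragmented family, hence $f\circ p\in\F(X)$. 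Your route through Fact \ref{f:sub-fr} also works, but the case you single out as ``the genuinely substantive'' obstacle is in fact vacuous: the sequence $f\circ\lambda_{s_n}$ is uniformly bounded and converges pointwise to $f\circ p$, hence it is weakly Cauchy in $C(X)$ (dominated convergence against Radon measures on the compact space $X$), and so is every subsequence; since a sequence equivalent to the unit basis of $l_1$ is never weakly Cauchy, no $l_1$-subsequence can occur. Thus Fact \ref{f:sub-fr} applies outright, $\cls_p\{f\circ\lambda_{s_n}\}\subset\F(X)$, and $f\circ p$ is fragmented --- no BFT/$\beta\N$ argument and no lifting of $\beta\N$ into $E(X)$ is needed. (Had that case been real, your sketch would still need repair: an $l_1$-estimate alone does not place $\beta\N$ in the pointwise closure; one must first extract an independent subsequence via Rosenthal's lemma, and then use the minimal-closed-preimage trick, as in the proof of Fact \ref{D-BFT}, to transfer $\beta\N$ into $E(X)$.) Your closing remarks are fine, except that invoking Corollary \ref{c:FrechetSisTame} for the semigroup statement would be circular in the paper's order of exposition, since that corollary is deduced from the present theorem; the intended reference is Lemma \ref{l:both}.2, exactly as you also propose.
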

\begin{proof}
Let $p \in E(X)$. We have to show that $p: X \to X$ is fragmented.
By properties of fragmented maps \cite[Lemma 2.3.3]{GM-rose}
it is enough to show that $f \circ p: X \to \R$ is fragmented for every $f \in C(X)$.
By the Fr\'echet property of $E(X)$ we may choose a sequence $s_n$ in $S$
such that the sequence $j(s_n)$ converges to $p$ in E(X).
Hence the sequence of continuous functions $f \circ s_n=f \circ j(s_n)$
converges pointwise  to $f \circ p$ in $\R^X$.
Apply Lemma \ref{l:FrFa}.2 to the evaluation map $F \times X \to \R$,
where $F:=\{f \circ p\} \cup \{f \circ j(s_n)\}_{n \in \N} \subset \R^X$ carries the pointwise topology.
 We conclude that $F$ is a fragmented family. In particular, $f \circ p$ is a fragmented map.
 ($E(X)$ is a tame semigroup by Lemma \ref{l:both}.2.)
\end{proof}

%031012
\begin{cor} \label{c:FrechetSisTame}
Let $P$ be a compact right topological admissible semigroup.
If $P$ is Fr\'{e}chet (e.g., when it is Rosenthal), as a topological space,
then $P$ is a tame semigroup.
\end{cor}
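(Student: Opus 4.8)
The plan is to reduce the statement directly to Theorem \ref{t:FrechetE} by exhibiting $P$ itself as the enveloping semigroup of a suitable dynamical system. Recall from Remark \ref{r:env=crtadm} that every compact right topological \emph{admissible} semigroup $P$ is an enveloping semigroup: namely, writing $S := \Lambda(P)$ for the topological center (which is dense in $P$ by admissibility), one has $P \cong E(S,P)$ via Lemma \ref{l:Env-all}.4, since $\alpha : S \hookrightarrow P$ is a right topological semigroup compactification of $S$ and the enveloping semigroup of the semitopological system $(S,P)$ is naturally isomorphic to $P$. So $P$ is literally of the form $E(X)$ with $X := P$ and acting semigroup $S := \Lambda(P)$.

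With this identification in hand, the hypothesis that $P$ is Fr\'echet (e.g.\ Rosenthal) as a topological space is exactly the hypothesis ``$E(X)$ is a Fr\'echet space'' of Theorem \ref{t:FrechetE}. Applying that theorem gives that the system $(S,X) = (\Lambda(P), P)$ is tame and — this is the part of the conclusion we actually want — that $E(X)$ is a tame semigroup. Since $E(X) \cong P$ as compact right topological semigroups (the isomorphism of Lemma \ref{l:Env-all}.4 is a semigroup isomorphism), $P$ is a tame semigroup. That completes the argument.

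The only point requiring a little care, and the one I would flag as the main (minor) obstacle, is the bookkeeping around the identification $P \cong E(\Lambda(P), P)$: one must check that $(S,P)$ with $S = \Lambda(P)$ really is a legitimate semitopological $S$-system — i.e.\ that the translations $\lambda_s : P \to P$ are continuous for $s \in \Lambda(P)$ (true by definition of the topological center) and that the orbit maps $\rho_x : S \to P$ are continuous (true since $S = \Lambda(P)$ sits inside $P$ with the subspace topology and multiplication is right topological) — and that Lemma \ref{l:Env-all}.4 then applies verbatim to yield the semigroup isomorphism $E(S,P) \cong P$. All of this is already packaged in Remark \ref{r:env=crtadm}, so in the write-up it suffices to invoke that remark together with Theorem \ref{t:FrechetE}. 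Note also that the parenthetical ``e.g.\ when it is Rosenthal'' is justified by the Bourgain--Fremlin--Talagrand theorem (quoted just before Theorem \ref{t:FrechetE}): every Rosenthal compactum is Fr\'echet, so the Rosenthal case is a special case of the Fr\'echet case.
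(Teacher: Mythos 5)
Your proposal is correct and is essentially identical to the paper's own argument: the paper likewise applies Theorem \ref{t:FrechetE} to the system $(S,P)$ with $S:=\Lambda(P)$ and uses the identification $E(S,P)=P$ (Lemma \ref{l:Env-all}.4, as packaged in Remark \ref{r:env=crtadm}) to conclude that $P$ is a tame semigroup. Nothing further is needed.
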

\begin{proof}
Applying Theorem \ref{t:FrechetE} to the system $(S,P)$, with $S:=\Lambda(P)$ we obtain that $E(S,P)=P$ is a tame semigroup.
\end{proof}
%%

%300312
The following result was proved in \cite[Theorem 3.2]{GM1}
using the Bourgain-Fremlin-Talagrand  (BFT) dichotomy
in the setting of continuous group actions.
%the topological Bourgain-Fremlin-Talagrand
The same arguments work also for separately continuous semigroup actions.
For the sake of completeness we include a simplified proof.

%\cite{GM1} for continuous group actions)

%210912   leaving ``space" (2 times) in $card $
\begin{f} [A dynamical BFT dichotomy] \label{D-BFT}
Let $X$ be a compact metric dynamical $S$-system and let $E=E(X)$ be its
enveloping semigroup. We have the following alternative. Either
\begin{enumerate}
\item
$E$ is a separable Rosenthal compact, hence ${card} \; {E} \leq
2^{\aleph_0}$; or
\item
the compact space $E$ contains a homeomorphic
copy of $\beta\N$, hence ${card} \; {E} = 2^{2^{\aleph_0}}$.
\end{enumerate}
%By definition \cite{Gl-tame}
The first possibility %in Fact \ref{D-BFT}
holds iff $X$ is a tame $S$-system.
\end{f}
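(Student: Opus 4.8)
The plan is to derive this dynamical dichotomy from the Bourgain--Fremlin--Talagrand dichotomy for pointwise compact sets of Baire class 1 functions, exactly as in \cite[Theorem 3.2]{GM1}, but streamlining the argument using the machinery already developed in the excerpt. The setup: since $X$ is compact metrizable, fix a countable dense subset $\{f_k\}_{k\in\N}$ of the unit ball of $C(X)$ and let $d$ be the metric on $X$ induced by $\sum_k 2^{-k}|f_k(\cdot)-f_k(\cdot)|$ via some embedding; the point is that a map $p:X\to X$ is fragmented iff each $f_k\circ p$ is a fragmented (equivalently, for metrizable $X$, Baire class 1) real-valued function on $X$, by Remark~\ref{r:fr1} and \cite[Lemma 2.3.3]{GM-rose}. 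Now consider the space $\R^X$ with the topology of pointwise convergence on a countable dense subset $D\subset X$; since $X$ is compact metric, a bounded subset of $C(X)$ is pointwise compact on $X$ iff it is pointwise compact on $D$, and $D$ is Polish. Thus $\mathcal B_1(D)$, the Baire class 1 functions on the Polish space $D$, is the ambient space for the BFT dichotomy.

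First I would set up the relevant pointwise compact function space. Fix a transitive point (or just work with $E=E(X)$ directly as a pointwise compact subset of $X^X$); since $X$ is compact metrizable, $X^X$ with the product topology is compact and $E=\mathrm{cl}_p(j(S))$ is a compact subset. Because $X$ embeds in $[0,1]^{\N}$, we may identify $X^X$ with a closed subset of $([0,1]^{\N})^X = ([0,1]^X)^{\N}$, and so $E$ is determined by the countably many coordinate families $\{f_k\circ p : p\in E\}\subset [0,1]^X$. Each such family $\Phi_k := \{f_k\circ p : p \in E\}$ is a uniformly bounded, pointwise compact subset of $\R^X$, and it is the pointwise closure of the sequence-generated set $\{f_k\circ j(s): s\in S\}$ of continuous functions. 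I would then invoke the BFT dichotomy \cite[Theorem 3F]{BFT} applied to each $\Phi_k$ (viewed on the Polish space $D$): either $\Phi_k\subset \mathcal B_1(D)$, in which case $\Phi_k$ is a separable Rosenthal compactum and in particular Fr\'echet with cardinality $\le 2^{\aleph_0}$; or $\Phi_k$ contains a homeomorphic copy of $\beta\N$, forcing $\overline{\{f_k\circ j(s)\}}$ to contain an $l_1$-sequence and $\mathrm{card}\,\Phi_k = 2^{2^{\aleph_0}}$.

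Next I would combine the coordinatewise dichotomies into the global one. If \emph{every} $\Phi_k$ lands in $\mathcal B_1(D)$, then each $f_k\circ p$ is Baire 1 on $X$ (for $p\in E$), hence fragmented (Remark~\ref{r:fr1}.2 together with \cite[Lemma 2.3.3]{GM-rose}), so every $p\in E$ is a fragmented self-map of $X$; by Definition~\ref{d:Tame} this says $(S,X)$ is tame. Moreover $E$, being a closed subspace of $\prod_k \overline{\Phi_k}$ where each $\overline{\Phi_k}$ is a separable Rosenthal compactum, is itself a separable Rosenthal compactum — here one uses that a countable product of separable Rosenthal compacta is a separable Rosenthal compactum, and that $E$ is separable by Lemma~\ref{l:Env-all}.5, so $\mathrm{card}\,E\le 2^{\aleph_0}$; this is case (1). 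Conversely, if some $\Phi_{k_0}$ is \emph{not} contained in $\mathcal B_1(D)$, then by BFT applied to the sequence $f_{k_0}\circ j(s_n)$ (for a suitable countable $S_0\subset S$ dense enough that $\{f_{k_0}\circ j(s): s\in S\}$ already fails to be in $\mathcal B_1$), the pointwise closure contains a copy of $\beta\N$; one then lifts this to a copy of $\beta\N$ inside $E$ itself by choosing, for each relevant element of the closure of $\{f_{k_0}\circ j(s_n)\}$, a preimage in $E$ (using compactness of $E$ and the fact that $\beta\N$ is extremally disconnected / projective, so the copy of $\beta\N$ in the image lifts through the continuous surjection $E\to \overline{\Phi_{k_0}}$), yielding $\mathrm{card}\,E = 2^{2^{\aleph_0}}$; this is case (2). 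These two cases are mutually exclusive (cardinality obstruction) and exhaustive, and the final sentence ``case (1) iff $X$ is tame'' follows because case (1) was shown equivalent to all $p\in E$ being fragmented, which is the definition of tameness, while in case (2) the presence of $\beta\N$ (equivalently an $l_1$-sequence in an orbit $f_{k_0}S$) shows by Proposition~\ref{p:tame-f} that $f_{k_0}\notin\mathrm{Tame}(X)$, so $(S,X)$ is not tame.

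The main obstacle is the lifting step in case (2): showing that a copy of $\beta\N$ appearing in the closure of a single coordinate family $\{f_{k_0}\circ j(s)\}$ actually sits inside the enveloping semigroup $E$. The clean way to handle this is the standard one from \cite{GM1}: if $\{f_{k_0}\circ j(s_n)\}$ is an $l_1$-sequence (equivalently, its closure contains $\beta\N$, by Rosenthal's $l_1$-theorem and \cite[Theorem 3F]{BFT}), then passing to the sequence $j(s_n)$ in the compact space $E$ and extracting, one uses that the map $E\to\overline{\Phi_{k_0}}$, $p\mapsto f_{k_0}\circ p$, is a continuous surjection of compacta and that $\beta\N$ is a projective object in the category of compact Hausdorff spaces, so the embedding $\beta\N\hookrightarrow\overline{\Phi_{k_0}}$ lifts to a continuous injection $\beta\N\hookrightarrow E$ — whence $\mathrm{card}\,E\geq 2^{2^{\aleph_0}}$, and since $E\subset X^X$ with $|X|\le 2^{\aleph_0}$ we get equality. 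I would spell out only this lifting argument carefully; everything else is an assembly of cited facts (BFT, Proposition~\ref{p:tame-f}, Lemma~\ref{l:Env-all}.5, Remark~\ref{r:fr1}).
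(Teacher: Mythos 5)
Your proposal follows essentially the same route as the paper's proof: pass to the coordinate families $E^{f}=\{f\circ p: p\in E\}$ for a point-separating sequence $\{f_k\}\subset C(X)$, apply the BFT dichotomy to each such pointwise-compact family (which is the pointwise closure of a sequence of continuous functions, by separability of $j(S)$ in $E$, Lemma~\ref{l:Env-all}.5), use that Rosenthal compacta are stable under countable products and closed subspaces via the diagonal embedding $E\hookrightarrow\prod_k E^{f_k}$, and in the bad case lift a copy of $\beta\N$ from one coordinate back into $E$. Your lifting via projectivity (Gleason: $\beta\N$ is extremally disconnected, hence projective among compact Hausdorff spaces, so the identity of $Z\cong\beta\N$ lifts through the surjection $q_{f_{k_0}}^{-1}(Z)\to Z$ to a section, which is automatically an embedding) is a perfectly valid substitute for the paper's argument that a minimal closed subset of $E$ mapping onto $Z$ is itself homeomorphic to $\beta\N$; these are two formulations of the same fact. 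The identification of case (1) with tameness (all $f_k\circ p$ Baire~1 $\Rightarrow$ all $p$ fragmented, via Remark~\ref{r:fr1}.2 and the point-separation lemma from \cite{GM-rose}, and conversely non-tameness from the $l_1$/\,$\beta\N$ alternative together with Proposition~\ref{p:tame-f}) also matches the paper, which instead invokes Theorem~\ref{t:FrechetE} for the converse; both work.

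One point in your set-up is wrong as written, though it is an unnecessary detour rather than a structural flaw: you propose to run BFT over a countable dense subset $D\subset X$ and assert that ``$D$ is Polish.'' A countable dense subset of a (dense-in-itself) compact metric space is homeomorphic to $\Q$, hence \emph{not} Polish, so \cite[Theorem 3F]{BFT} does not apply on $D$; moreover, even granting a dichotomy on $D$, membership of $f_k\circ p$ in $\mathcal B_1(D)$ would not by itself yield fragmentedness of $f_k\circ p$ on $X$, and the restriction map $\R^X\to\R^D$ is not injective on the (generally discontinuous) limit functions, so information is lost in passing to $D$. The fix is simply to delete $D$: since $X$ is compact metrizable it is itself Polish, and the dichotomy should be applied, exactly as in the paper, to $E^{f_k}=\mathrm{cl}_p\{f_k\circ j(s_m)\}$ viewed in $\R^X$. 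With that correction your argument is complete and coincides with the paper's proof in all essentials.
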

\begin{proof}
 For every $f \in C(X)$ define $E^f:=\{f\circ p: p\in E\}$.
Then $E^f$ is a pointwise compact subset of $\R^X$, being a
continuous image of $E$ under the map $q_f: E \to E^f, \hskip
0.2cm p \mapsto f\circ p$.
Since $X$ is metrizable
%031012  $E(X)$ is separable, by Lemma \ref{l:Env-all}.5. Moreover,
by Lemma \ref{l:Env-all}.5 there exists a sequence $\{s_m\}_{m=1}^\infty$ in $S$ such that
$\{j(s_m)\}_{m=1}^\infty$ is dense in $E(X)$. In particular, the sequence of real valued functions
$\{f \circ s_m\}_{m=1}^\infty$ is pointwise dense in $E^f$. % for every $f \in $.

Choose a sequence $\{f_n \}_{n \in \N}$ in $C(X)$ which separates
the points of $X$. For every pair
$s, t$ of distinct elements of $E$ there exist a point $x_0 \in X$
and a function $f_{n_0}$ %from our sequence
such that
$f_{n_0}(sx_0) \neq f_{n_0}(tx_0)$. It follows that the continuous
diagonal map
$$
\Phi: E \to \prod_{n\in \N} E^{f_n},\qquad
p\mapsto (f_1\circ p, f_2\circ p,
\dots )
$$
separates the points of $E$ and hence is a topological embedding.
Now if for each $n$ the space $E^{f_n}$ is a Rosenthal compactum
then so is $E\cong \Phi(E)\subset \prod_{n=1}^\infty E^{f_n}$,
because the class of Rosenthal compacta is closed under countable
products and closed subspaces. On the other hand
%%aref
%% -- since for each $f\in C(X)$
%the map $q_f: E \to E^f, p \mapsto f \circ p$ is a continuous
%surjection for each $f\in C(X)$. Therefore,
%%
%$E^f = {\cls}(q_f(G_0))={\cls}\{f \circ g : g\in G_0\}$, where
%$G_0$ is a countable dense subgroup of $\breve{G}$.
if at least one $E^{f_n}=cl_p(\{f_n \circ s_m\}_{m=1}^\infty)$
is not Rosenthal then, by a version of the BFT-dichotomy (Todor\u{c}evi\'{c} \cite[Section 13]{To-b})
it contains a homeomorphic copy of $\beta\N$
and it is easy to see that so does its preimage $E$. In fact if
$\beta\N \cong Z\subset E^{f_n}$ then any closed subset $Y$ of $E$
which projects onto $Z$ and is minimal with respect to these
properties is also homeomorphic to $\beta\N$.

Now we show the last assertion. If
$X$ is tame then every $p \in E(X)$ is a fragmented self-map of $X$.
Hence every $f \circ p \in E^f$ is fragmented.
By Remark \ref{r:fr1}.2 this is equivalent to saying that every $f \circ p$ is Baire 1. So $E^f \subset \mathcal{B}_1(X)$ is a Rosenthal compactum.
Therefore, $E \cong \Phi(E) \subset \prod_{n\in \N} E^{f_n}$ is also Rosenthal. % compactum.
Conversely, if $E$ is a Rosenthal compactum
%then it is a Frechet space by Fact .
then $(S,X)$ is tame by Theorem \ref{t:FrechetE}.
\end{proof}

%By definition \cite{Gl-tame} the first possibility in Fact \ref{D-BFT} holds iff $X$ is a tame $S$-system.

%\begin{thm} \label{t:W=Ros-c}
%Let $V$ be a separable Banach space. \TFAE
%\ben
%\item $V$ is a Rosenthal Banach space.
%\item
%$\E$ is a Rosenthal compactum.
%%!  \footnote{iff ???? $W(V) \subset \B_1(B^*,B^*)$}
%\een
%\end{thm}

%210912   leaving ``space" (2 times) in $card $
\begin{thm} [BFT dichotomy for Banach spaces] \label{t:W=Ros-c}
Let $V$ be a separable Banach space and let $\E=\E(V)$ be its (separable) enveloping semigroup.
We have the following alternative. Either
\begin{enumerate}
\item
$\E$ is a Rosenthal compactum, hence ${card} \; {\E} \leq
2^{\aleph_0}$; or
\item
the compact space $\E$ contains a homeomorphic
copy of $\beta\N$, hence ${card} \; {\E} = 2^{2^{\aleph_0}}$.
\end{enumerate}
The first possibility holds iff $V$ is a Rosenthal Banach space.
\end{thm}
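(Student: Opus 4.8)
The plan is to deduce this statement directly from the dynamical BFT dichotomy (Fact \ref{D-BFT}) applied to the particular system whose enveloping semigroup is $\E(V)$. Recall from Definition \ref{d:envBan} that $\E(V)$ is by definition the enveloping semigroup $E(\Theta(V)^{op},B^*)$ of the dynamical system $(\Theta(V)^{op},B^*)$, where $\Theta(V)^{op}$ carries the weak operator topology and acts separately continuously on the weak$^*$ compact unit ball $B^*$ (see Section \ref{s:WS}). Since $V$ is separable, $B^*$ is weak$^*$-metrizable, so $(\Theta(V)^{op},B^*)$ is a \emph{compact metric} separately continuous dynamical system, and $\E(V)$ is separable by Lemma \ref{l:Env-all}.5. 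Thus Fact \ref{D-BFT} applies verbatim (the remark preceding it notes that its proof goes through for separately continuous semigroup actions, using Lemma \ref{l:Env-all}.5 together with Todor\v{c}evi\'{c}'s version of the BFT dichotomy).

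First I would invoke Fact \ref{D-BFT} for $X:=B^*$ and $S:=\Theta(V)^{op}$, which immediately yields the alternative for $\E=E(B^*)$: either $\E$ is a separable Rosenthal compactum with $\mathrm{card}\,\E\le 2^{\aleph_0}$, or $\E$ contains a homeomorphic copy of $\beta\N$, in which case $\mathrm{card}\,\E=2^{2^{\aleph_0}}$; moreover the first possibility holds precisely when $(\Theta(V)^{op},B^*)$ is a tame system. Then, to turn the tameness condition into a property of $V$, I would quote the equivalence (1)~$\Leftrightarrow$~(2) of Theorem \ref{t:tame2}: the system $(\Theta(V)^{op},B^*)$ is tame if and only if $V$ is a Rosenthal Banach space. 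Combining the two statements gives exactly the claimed dichotomy for $\E(V)$ together with the asserted characterization of its first alternative.

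There is essentially no obstacle here beyond bookkeeping: the substantive content is already packaged in Fact \ref{D-BFT} and in Theorem \ref{t:tame2} (the latter resting, via Lemma \ref{l:transit}, on the Saab--Saab and Rosenthal characterizations of Rosenthal spaces in terms of fragmentability of $B^{**}$ on $B^*$). The only points demanding a line of care are the hypothesis checks for Fact \ref{D-BFT} --- namely that $(\Theta(V)^{op},B^*)$ is a compact \emph{metrizable} $S$-system and that its enveloping semigroup is literally $\E(V)$ --- and both are immediate from the separability of $V$ and Definition \ref{d:envBan}.
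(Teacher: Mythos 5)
Your proposal is correct and follows exactly the paper's own argument: the paper's proof likewise notes that $\E=E(\Theta^{op},B^*)$, that separability of $V$ makes $B^*$ metrizable so that Fact \ref{D-BFT} applies, and that Theorem \ref{t:tame2} translates tameness of $(\Theta^{op},B^*)$ into $V$ being Rosenthal. The hypothesis checks you flag are precisely the ones the paper relies on, so nothing further is needed.
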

\begin{proof}
%Observe that $W(V) \subset \F(B^*,B^*)$ and $B^*$ is second
%countable.
%The latter implies that a metric compact dynamical system $X$ is tame iff the enveloping semigroup $E(X)$
%is a Rosenthal compactum.
%(second proof:
Recall that $\E=E(\Theta^{op},B^*)$.
By Theorem \ref{t:tame2}, $V$ is Rosenthal iff $(\Theta^{op},B^*)$ is tame.
Since $V$ is separable, $B^*$ is metrizable. So we can apply Fact \ref{D-BFT}.
\end{proof}

%050312  new subsection
\subsection{Amenable affine compactifications}

Let $G$ be a topological group and $X$ a $G$-space.
Let us say that an affine $S$-compactification $\a: X \to Y$ is \emph{amenable} if
$Y$ has a $G$-fixed point.
We say that a closed unital linear subspace $\A \subset \mathrm{WRUC}(X)$ is
(left) \emph{amenable} if the corresponding affine $G$-compactification is amenable.
By Ryll-Nardzewski's classical theorem
$\mathrm{WAP}(G)$ is %both left and right
amenable.
Let $f \in \mathrm{RUC}(G)$ and let $\pi_f: X \to Q_f$ be the
corresponding cyclic affine $G$-compactification
(Section \ref{s:cyclAff}).
In our recent work \cite{GM-fp} we show that $\mathrm{Asp_c}(G)$ is %left
amenable and that for every $f \in \mathrm{Asp_c}(G)$
there exists a $G$-fixed point (a \emph{$G$-average} of $f$) in $Q_f$.
The first result together with Proposition \ref{p:metrAlg} yield the following:

\begin{cor} \label{c:MetrSemCompIsAmen}
Let $G$ be a topological group and
$\A$ a (left) m-introverted closed subalgebra of $\mathrm{RUC}(G)$. If $\A$ is separable then $\A$ is amenable.
\end{cor}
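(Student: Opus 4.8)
The plan is to derive Corollary \ref{c:MetrSemCompIsAmen} directly from the two ingredients assembled just before it: Proposition \ref{p:metrAlg}.2 and the amenability of $\mathrm{Asp_c}(G)$ proved in \cite{GM-fp}. First I would unwind the hypothesis. Since $\A$ is a (left) m-introverted closed subalgebra of $\mathrm{RUC}(G)$, Fact \ref{f:adm}.1 gives a right topological semigroup compactification $\nu : G \to P$ with $\nu^*C(P)=\A$. Because $\A$ is separable, $P$ is metrizable. Hence Proposition \ref{p:metrAlg}.2 applies and yields $\A \subset \mathrm{Asp}(G)$. Since $G$ is a topological group acting on itself, the distinction between $\mathrm{Asp}(G)$ and $\mathrm{Asp_c}(G)$ collapses: by Lemma \ref{l:classes}.3 (noting that a topological group is \v{C}ech-complete in the relevant sense, or simply invoking the equality $\Pcal_c(G)=\Pcal(G)$ recorded there for group self-actions), we get $\A \subset \mathrm{Asp_c}(G)$.

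Next I would transfer the fixed-point property. The algebra $\mathrm{Asp_c}(G)$ is amenable by \cite{GM-fp}, meaning its associated affine $G$-compactification $G \to Q_{\mathrm{Asp_c}}$ has a $G$-fixed point. Now $\A$ is a closed unital $G$-invariant linear subspace of $\mathrm{Asp_c}(G)$, so by Proposition \ref{bar}.1 (the order-preserving correspondence between $G$-invariant unital subspaces of $\mathrm{WRUC}(G)$ and affine $G$-compactifications) the affine $G$-compactification $\delta_{\A} : G \to M(\A)$ attached to $\A$ is dominated by $\delta_{\mathrm{Asp_c}(G)} : G \to M(\mathrm{Asp_c}(G))$. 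Thus there is a continuous affine $G$-map $q : M(\mathrm{Asp_c}(G)) \to M(\A)$ with $q\circ \delta_{\mathrm{Asp_c}(G)} = \delta_{\A}$. The image under $q$ of the $G$-fixed point of $M(\mathrm{Asp_c}(G))$ is a $G$-fixed point of $M(\A)$, since $q$ is a $G$-map. Therefore the affine $G$-compactification associated to $\A$ is amenable, which by definition says precisely that $\A$ is amenable.

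I would present this as a short argument in three lines: (i) separable $+$ m-introverted in $\mathrm{RUC}(G)$ $\Rightarrow$ $\A \subset \mathrm{Asp_c}(G)$ via Proposition \ref{p:metrAlg}.2 and Lemma \ref{l:classes}; (ii) $\mathrm{Asp_c}(G)$ amenable by \cite{GM-fp}; (iii) amenability passes to $G$-invariant closed unital subspaces because a $G$-fixed point in the larger state space maps to one in the smaller under the canonical $G$-morphism of affine compactifications (Proposition \ref{bar}.1, Lemma \ref{l:morph}).

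The only real point requiring care — and hence the main obstacle, such as it is — is step (iii): one must be sure that passing to a $G$-invariant \emph{closed unital linear subspace} $\A$ of an amenable $\A'$ preserves amenability. This is not entirely automatic for arbitrary subspaces, but it is automatic here because $\A \subset \A'$ induces, via the adjoint of the inclusion, a surjective continuous affine $G$-map $M(\A') \twoheadrightarrow M(\A)$ (surjectivity from Lemma \ref{l:state}.2 as in the Claim in the proof of Lemma \ref{aff-descr}), and affine $G$-maps carry fixed points to fixed points. Once this is spelled out, the corollary follows with no further work; everything else is a direct citation of the results already established in the excerpt.
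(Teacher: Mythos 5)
Your argument is the paper's intended one: the paper offers no formal proof beyond the remark that Proposition \ref{p:metrAlg} together with the amenability of $\mathrm{Asp_c}(G)$ from \cite{GM-fp} yield the corollary, and your steps (i)--(iii), including the transfer of the fixed point along the surjective affine $G$-morphism $M(\mathrm{Asp_c}(G))\twoheadrightarrow M(\A)$, fill in exactly that outline correctly.

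One justification in step (i) is wrong as stated, though the conclusion you need is fine. An arbitrary topological group is \emph{not} \v{C}ech-complete (take $G=\Q$), so Lemma \ref{l:classes}.3 does not apply, and no equality $\Pcal_c(G)=\Pcal(G)$ is recorded in the paper for general group self-actions (Lemma \ref{l:classes}.4 asserts this only for $\mathrm{WAP}$). The correct route -- and the reason the hypothesis $\A\subset\mathrm{RUC}(G)$ appears in the statement at all -- is Lemma \ref{l:classes}.1: since HNS is preserved by factors (Lemma \ref{l:HNS-prop}.1), one has $\mathrm{Asp_c}(G)=\mathrm{Asp}(G)\cap\mathrm{RUC}(G)$, so Proposition \ref{p:metrAlg}.2 together with $\A\subset\mathrm{RUC}(G)$ gives $\A\subset\mathrm{Asp_c}(G)$ for every topological group $G$. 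With that substitution your proof is complete and matches the paper.
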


A topological group $G$ is said to be amenable if $\mathrm{RUC}(G)$ is amenable. %, \cite{Gl-book}.
By a classical result of von Neumann, the free discrete group
$\mathbb{F}_2$ on two symbols is not amenable. So, $\mathrm{RUC}(\mathbb{F}_2)=l_{\infty}(\mathbb{F}_2)$ is not amenable.
By \cite{GM-fp},  $\mathrm{Tame}(\mathbb{F}_2)$ is not amenable.
%!!q
It would be interesting to study for which non-amenable groups $G$ the algebra $\mathrm{Tame_c}(G)$ is amenable and
for which $f \in \mathrm{Tame_c}(G)$ there exists a $G$-fixed point of $Q_f$.
%%!!q

\begin{ex} \
\ben
\item
Results of \cite{GM-tame} show that
$\varphi_D(n)=sgn \cos (2 \pi n \a)$ is a tame function on $\Z$ which is not Asplund.
\item
As a simple illustration of Proposition \ref{p:metrAlg}
note that the two-point semigroup compactifications of $\Z$ and $\R$ are obviously metrizable.
So the characteristic function $\xi_{\N}: \Z \to \R$ and $arctg: \R \to \R$ are both Asplund. Grothendieck's double limit criterion
 shows that these functions are not WAP.
\een
\end{ex}

%0912
\section{Representations of semigroup actions on Banach spaces}
\label{s:repr}

%0912    %SV
As was shown in several of our earlier works some
properties of dynamical systems are clearly reflected in
analogous properties of their enveloping semigroups on the one hand,
and in their representations on Banach spaces on the other.
Our results from \cite{GM1, GMU, GM-rose} are formulated for group actions.
%0912  %SV adding a remark -- our results (and proofs) remain true for semigroup actions
However the main results in these papers remain true for semigroup actions. %up to clear minor changes in formulations.
%The same statement hold for the dynamical BFT-dichotomy and for the Banach space characterizations of HNS and tame systems.
% %(based on topological Bourgain-Fremlin-Talagrand dichotomy).
%%See Theorems \ref{t:HNS=Asp-r} ..{t:W=Ros-c} ....We will provide the needed formulations later.

For continuous group actions the results (1), (2) of the following theorem
were  proved respectively in
\cite{GM-rose} and \cite{GM1} (compare also with Theorem \ref{t:wap}).
We will show next how the proofs of (1), (2) can be modified to suit the more general case of semigroup
actions, obtaining, in fact, also some new results.

\begin{thm} \label{t:tame=R-repr}
Let $S$ be a semitopological semigroup and $X$ a compact $S$-system with a separately continuous action.
\ben
\item $(S,X)$ is a tame (continuous) system
if and only if $(S,X)$ is weakly (respectively, strongly) Rosenthal-approximable.
\item $(S,X)$ is a HNS (continuous) system
if and only if $(S,X)$ is weakly (respectively, strongly) Asplund-approximable.
%\item $(S,X)$ is a WAP (continuous) system
%if and only if $(S,X)$ is weakly (respectively, strongly) reflexively-approximable.
%\sk
\een
If $X$ is metrizable then in (1) and (2) ``approximable" can be replaced by ``representable".
%290512
Moreover, the corresponding Banach space can be assumed to be separable.
\end{thm}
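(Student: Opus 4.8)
The plan is to follow the structure of the group-action proofs in \cite{GM-rose} and \cite{GM1}, isolating the single place where a group is used and replacing it by a semigroup argument. First I would establish the easy directions. If $(S,X)$ is weakly Rosenthal-approximable, say $X$ embeds $S$-equivariantly into a product $\prod_i B_{V_i^*}$ with each $V_i$ Rosenthal, then by Theorem \ref{t:tame2} each $(\Theta(V_i)^{op}, B_{V_i^*})$ is tame, hence so is the product and its closed subsystem $X$ (Lemma \ref{l:both}.1). The strong version follows the same way using Fact \ref{f:sc=c}/Remark \ref{l:actions-g}.2 to get joint continuity. Likewise for (2), using Theorem \ref{t:F-semig} and Lemma \ref{l:HNS-prop}.1 in place of the tame analogues.

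For the forward (harder) direction of (1): assume $(S,X)$ is tame. By Proposition \ref{p:tame-f}, for each $f\in C(X)$ the orbit $fS$ is a Rosenthal family for $X$, i.e. $\cls_p(fS)\subset\Fcal(X)$. I would then form, for each such $f$, the cyclic affine $S$-compactification and the associated representation: take $V_f:=\overline{sp}^{norm}(fS\cup\{\mathbf 1\})\subset \mathrm{WRUC}(X)$ (note $\mathrm{Tame}(X)\subset\mathrm{WRUC}(X)$ by Proposition \ref{p:tame-is-wruc}), giving the regular representation $(h_f,\a_f):(S,X)\rightrightarrows(\Theta(V_f)^{op},V_f^*)$. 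The key point is that $V_f$ is a \emph{Rosenthal} Banach space: by Fact \ref{f:RosFr}(1)$\Leftrightarrow$(2) it suffices that no sequence in the unit ball of $V_f$ spans an $\ell_1$; this is forced because $fS$ is a Rosenthal family and $B_{V_f^*}$ is a continuous $S$-image of a suitable ball, so one applies Fact \ref{f:sub-fr} to the bounded set $r_{B^*}(B_{V_f})\subset C(B^*_{V_f})$ together with Fact \ref{f:RosFr}(5). Since the functions $f$ separate points of $X$, the diagonal of the maps $\a_f$ gives a faithful weakly continuous representation of $(S,X)$ into $\prod_f B_{V_f^*}$, i.e. $(S,X)$ is weakly Rosenthal-approximable. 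For the continuous case, $C(X)=\mathrm{RUC}(X)$ (Lemma \ref{l:classes}.2), so each $V_f\subset\mathrm{RUC}(X)$ and the induced actions are jointly continuous (Remark \ref{r:actions}.1), giving strong approximability. The HNS case (2) is parallel, replacing Rosenthal families by fragmented families and invoking Fact \ref{f:Asp} in place of Fact \ref{f:RosFr} to conclude each $V_f$ is Asplund.

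For the metrizable addendum: when $X$ is metrizable, $C(X)$ is separable, so one can pick a \emph{countable} separating family $\{f_n\}\subset C(X)$ and form the $\ell_2$-sum $V:=(\Sigma_n V_{f_n})_{\ell_2}$ with the $\a_n$ scaled so that $\|\a_n(x)\|\le 2^{-n}$; by Lemma \ref{l:sum}.2 this $V$ is Rosenthal (resp.\ Asplund), it is separable since each $V_{f_n}$ is a separable space (being generated by a single orbit of a continuous function on a compact metric space), and the resulting single representation is faithful — replacing ``approximable'' by ``representable.'' The main obstacle, and the only genuinely new work beyond bookkeeping, is verifying that the single-orbit space $V_f$ is Rosenthal (resp.\ Asplund) purely from the tameness (resp.\ HNS) hypothesis on $X$; once Proposition \ref{p:tame-f} and the characterizations in Facts \ref{f:sub-fr}, \ref{f:RosFr}, \ref{f:Asp} are in hand, this is where I expect the care to be needed, particularly in passing between fragmentation/Rosenthal properties of $fS$ on $X$ and of the unit ball of $V_f$ on $B^*_{V_f}$, and in checking that this argument nowhere uses invertibility of the acting elements so that it genuinely covers semigroups.
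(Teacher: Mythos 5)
Your easy direction and the overall reduction (separate points by functions, handle each $f$ via its orbit, use Proposition \ref{p:tame-f}, assemble by products or an $l_2$-sum) match the paper. But the central step fails: you take the representing space to be the closed linear span $V_f=\overline{sp}^{norm}(fS\cup\{\mathbf 1\})\subset C(X)$ and assert that $V_f$ is Rosenthal (resp.\ Asplund) ``because $fS$ is a Rosenthal family.'' That implication is not valid and no argument you sketch supplies it. The unit ball of $V_f$ is far larger than the closed convex hull of $\pm fS$: a bounded set with no $\ell_1$-subsequence can have a closed linear span containing $\ell_1$ (e.g.\ a norm-null sequence whose span is dense in a space containing $\ell_1$ has no $\ell_1$-subsequence), exactly as the closed span of a weakly compact set need not be reflexive ($c_0$ is generated by a weakly compact set) and the span of an Asplund set need not be Asplund. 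Fact \ref{f:sub-fr} applied to $r_{B^*}(B_{V_f})$ only restates what must be proved (Fact \ref{f:RosFr}.5); it gives no bridge from the Rosenthal property of the orbit on $X$ to that of the whole unit ball of $V_f$ on $B^*_{V_f}$.

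This is precisely the point where the paper does something different: in Theorem \ref{t:general} it does not use the span inside $C(X)$ but builds an interpolation space \`a la Davis--Figiel--Johnson--Pelczy\'nski, setting $W=co(F\cup -F)$, $M_n=2^nW+2^{-n}B_{C(X)}$, and taking $V=\{v: N(v)<\infty\}$ with $N(v)=(\sum_n\|v\|_n^2)^{1/2}$. The squeeze $W\subset B_V\subset\bigcap_n(2^nW+2^{-n}B)$ is what allows the Rosenthal (resp.\ Asplund) property of the orbit to be transferred to $B_V$, and even then this transfer is the nontrivial part, done in \cite[Theorem 6.3]{GM-rose} (resp.\ via Asplund-set lemmas from \cite{Fa}). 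In addition, for merely separately continuous actions the weak continuity of $h$ on this interpolation space is itself not free: the paper proves it using density of $j^*(C(X)^*)$ in $V^*$ together with Haydon's theorem (Fact \ref{f:RosFr}.4). So the missing idea in your proposal is the DFJP-type factorization replacing the naive cyclic span; without it the claim ``$V_f$ is Rosenthal/Asplund'' is unsupported and, as a general principle, false.
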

\begin{proof}
%180512
The proof for continuous actions is the same as in \cite{GM-rose}.
So below we show only how the proof can be adopted
for separately continuous actions and weakly continuous representations.

\sk

\nt \emph{The ``only if" part:}
For (1) use the fact that $(\Theta^{op},B^*)$ is a tame system (Theorem \ref{t:tame2}) for every Rosenthal $V$ and for (2), the fact that
$(\Theta^{op}, B^*)$ is HNS (Theorem \ref{t:F-semig}) for Asplund $V$.

\sk

\nt\emph{ The ``if" part:} (1)
For every $f \in C(X)=\mathrm{Tame}(X)$ the orbit $fS$ is a Rosenthal family for $X$ (Proposition \ref{p:tame-f}).
Applying Theorem \ref{t:general} below we
conclude that every $f \in C(X)=\mathrm{Tame}(X)$ on a compact
%mm+++
$S$-space
$X$ comes from a Rosenthal representation. Since continuous functions
separate points of $X$, this implies that Rosenthal representations of $(S,X)$ separate points of $X$.
So, for (1) it is enough to prove the following result.

\begin{thm} \label{t:general} Let $X$ be a compact $S$-space and
let $F \subset C(X)$ be a Rosenthal family %(Asplund family)
for $X$ such that
$F$ is $S$-invariant; that is, $fS \subset F \ \ \forall f \in F$. Then
%\ben
%\item
there exist: a Rosenthal %(respectively, Asplund)
Banach space $V$, an injective mapping
$\nu: F \to B_V$ %into the unit ball $B_V$
and a representation
$$
h: S \to \Theta(V), \ \ \ \a: X \to V^*
$$
of $(S,X)$ on $V$ such that $h$ is weakly continuous, $\a$ is
a weak$^*$ continuous map %! (topological embedding if $F$ separates points of $X$)
and
$$
f(x)= \langle \nu(f), \a(x)
\rangle \ \ \ \forall \ f \in F \ \ \forall \ x \in X.
$$
Thus the following diagram commutes

\begin{equation}  \label{diag1}
\xymatrix{ F \ar@<-2ex>[d]_{\nu} \times X
\ar@<2ex>[d]^{\a} \ar[r]  & \R \ar[d]^{id_{\R}} \\
V \times V^* \ar[r]  &  \R }
\end{equation}

%290512
If $X$ is metrizable then in addition we can suppose that $V$ is separable.

If the action $S \times X \to X$ is continuous we may assume that $h$ is strongly continuous.
\end{thm}
%\item
%If $X$ is metrizable then in addition we can suppose that
%$V$ is separable and there exists a homeomorphic embedding
%$$\nu_0: K:=\cls_p(F) \hookrightarrow V^{**}$$ furnishing
%$V^{**}$ with its weak$^*$ topology
%%120910  the boundedness follows automatically as I think (Banach-Steinhaus)
%%where $\nu_0 (K)$ is bounded
%and the following diagram commutes
%
%\begin{equation*}
%\xymatrix{ K \ar@<-2ex>[d]_{\nu_0} \times X
%\ar@<2ex>[d]^{\a} \ar[r]  & \R \ar[d]^{id_{\R}} \\
%V^{**} \times V^* \ar[r]  &  \R }
%\end{equation*}
%
%\een

\sk
\begin{proof}

\nt \textbf{Step 1:} The construction of $V$.

\br
For brevity of notation let $\Acal := C(X)$ denote the Banach
space $C(X)$, $B$ will denote its unit ball,
and $B^*$ will denote
the weak$^*$ compact unit ball of the dual space $\Acal^*
=C(X)^*$.
Let $W$ be the symmetrized convex hull of $F$; that is, $W:={\co}(F \cup -F).$
Consider the sequence of sets
\begin{equation} \label{bLNO}
 M_n:=2^n W + 2^{-n} B.
\end{equation}
Then $W$ is convex and symmetric. We apply the construction of Davis-Figiel-Johnson-Pelczy\'nski \cite{DFJP}
as follows.
Let $\| \ \|_n$ be the Minkowski
functional of the set $M_n$, that is,
$$
\| v\|_n = \inf\ \{\lambda
> 0 \bigm| v\in \lambda M_n\}.
$$
Then $\| \ \|_n$ is a norm on $\Acal$ equivalent to the given norm
of $\Acal$. For $v\in \Acal,$ set
$$
N(v):=\left(\sum^\infty_{n=1} \| v \|^2_n\right)^{1/2} \hskip
0.1cm \text{and let} \hskip 0.1cm \hskip 0.1cm V: = \{ v \in \Acal
\bigm| N(v) < \infty \}.
$$
Denote by $j: V \hookrightarrow \Acal$ the inclusion map. Then
$(V,N)$ is a Banach space, $j: V \to \Acal$ is a continuous linear
injection and

\begin{equation} \label{F0}
W \subset j(B_V)=B_V \subset \bigcap_{n \in \N} M_n =
\bigcap_{n \in \N} (2^n W + 2^{-n}B)
\end{equation}

%!!
%Indeed, if $v \in W$ then $2^nv \in M_n$, hence $\| v\|_n \leq
%2^{-n}$ and $N(v)^2 \leq \sum_{n \in \N} 2^{-2n} \allowbreak <1.$ This proves $W \subset j(B_V)$.
%In order to prove the second inclusion recall that
%the norms $\norm{\cdot}_n$ on $\Acal$ are equivalent to each
%other. It follows that if $v \in B_V$ then $\| v\|_n < 1$ for all
%$n \in \N$. That is, for every $n \in \N$, $v \in \lambda_n M_n$
%for some $0 < \lambda_n <1$. By the construction $M_n$ is a convex
%subset containing the origin. This implies that $\lambda_n M_n \subset M_n$. Hence $j(v)=v \in M_n$ for every $n \in\N$.

\br
\nt \textbf{Step 2:} The construction of the representation $(h,\a)$ of $(S,X)$ on $V$.

\br

The given action $S \times X \to X$ induces a natural linear norm preserving
continuous right action $C(X) \times S \to C(X)$ on the Banach space $\Acal=C(X)$.
It follows by the construction that $W$ and $B$ are $S$-invariant subsets in $\Acal$.
This implies that $V$ is an $S$-invariant subset of $\Acal$ and the restricted natural linear action
% 2112
$V \times S \to V, \ \ (v,g) \mapsto vg$ satisfies $N(vs)\leq N(v)$.
Therefore, the co-homomorphism $h: S \to \Theta(V), \
h(s)(v):=vs$ is well defined.

Let $j^*: \Acal^* \to V^*$ be the adjoint map of $j: V \to \Acal$.
Define $\a: X \to V^*$ as follows. For every $x \in X \subset
C(X)^*$ set $\a(x)=j^*(x)$. Then $(h,\a)$ is a %continuous
representation of $(S,X)$ on the Banach space $V$.
%160111
%It is now easy
%to see that $\langle \a(x), f \rangle = j(f)(x)=f(x)$
%,where $f \in W$ is our original function.
%for every $f \in W$.
%%

%2610e  deleting 3 lines (they provide some true information but play no role in the proof)
% It follows that the function $f: X \to \R$
%comes from the $G$-system $ \a(X)$ (a factor of $(G,X)$).
%%

\sk

%2410
By the construction $F \subset W \subset B_V$.
%160111
Define $\nu: F \hookrightarrow B_V$ as the natural inclusion.
Then
\begin{equation} \label{F}
f(x)= \langle \nu(f), \a(x) \rangle \ \ \ \forall \ f \in F \ \ \forall \ x \in X.
\end{equation}

%!! It follows in particular that if $F$ separates points of $X$ then $\a$ is an embedding.

\br
\nt \textbf{Step 3:} Weak continuity of $h: S \to \Theta(V)$.
\br

% Next we show that $h$ is weakly continuous.
By our construction $j^*: C(X)^* \to V^*$, being the adjoint of the bounded linear operator
$j: V \to C(X)$,
is a norm and weak$^*$ continuous linear operator. By Lemma \ref{l:generating}.2 %\cite[Lemma 1.2.2]{Fa}
we obtain that $j^*(C(X)^*)$ is norm dense in $V^*$.
Since $V$ is Rosenthal, Haydon's theorem (Fact \ref{f:RosFr}.4)
gives $Q:=cl_{w^*}(co(Y))=cl_{norm}(co(Y))$, where $Y:=j^*(X)$. Now observe that
% 2112
$j^*(P(X))=Q$.
Since $S \times X \to X$ is separately continuous, every orbit map $\tilde{x}: S \to X$ is continuous, and each orbit map
$\widetilde{j^*(x)}: S \to j^*(X)$ is weak$^*$ continuous. Then also $\widetilde{j^*(z)}: S \to V^*$ is weak$^*$ continuous
for each $z \in cl_{norm}(co(j^*(X)))=Q$. Since $sp(Q)$ is norm dense in $V^*$ (and $||h(s)|| \leq 1$
for each $s \in S$) it easily follows that $\tilde{j^*(z)}: S \to V^*$ is weak$^*$ continuous for every $z \in V^*$. This is equivalent
to the weak continuity of $h$.

\sk

If the action $S \times X \to X$ is continuous we may assume that $h$ is strongly continuous.
Indeed, by the definition of the norm $N$, we can show that the action of $S$ on $V$ is norm
continuous (use the fact that, for each $n \in \N$, the norm
$\norm{\cdot}_n$ on $\Acal$ is equivalent to the given norm on
$\Acal$).

\br

\nt \textbf{Step 4:} $V$ is a Rosenthal space.

\br

By results of \cite[Section 4]{GM-rose}, $W$ is a Rosenthal family for $B^*$ (and $X$). Furthermore,
a deeper analysis shows (we refer to \cite[Theorem 6.3]{GM-rose} for details)
that
%0912 in fact we can show that $\bigcap_{n \in \N} (2^n W + 2^{-n}B)$ is a Rosenthal family for $B_{V^*}$.
%By \ref{F0} we have $B_V \subset \bigcap_{n \in \N} (2^n W + 2^{-n}B)$.
%Therefore, $B_V$ is a Rosenthal family for $B_{V^*}$. So, $V$ is Rosenthal by Fact \ref{f:RosFr}.
%%
$B_V$ is a Rosenthal family for $B_{V^*}$. Thus $V$ is Rosenthal by Fact \ref{f:RosFr}.

If the compact space $X$ is metrizable then $C(X)$ is
separable and it is also easy to see that $(V, N)$ is separable.

This proves Theorem \ref{t:general} and hence also Theorem \ref{t:tame=R-repr}.1.
\end{proof}

\sk

Now for the ``Asplund case", Theorem \ref{t:tame=R-repr}.2,
one can modify the proof of (1). The main idea is that the corresponding
results of \cite[Section 7]{Me-nz} and \cite[Section 9]{GM1}
can be adopted here, thus obtaining a modification of Theorem \ref{t:general}
which replaces a Rosenthal space by an Asplund space, and a ``Rosenthal family $F$" for $X$ by an ``Asplund set".
The latter means that for every countable subset $A \subset F$ the pseudometric $\rho_A$ on $X$ defined by
$$\rho_A(x,y):=\sup_{f \in A} |f(x)-f(y)|, \  x, y \in X$$ is separable.
%taking into account that by Proposition every tame (hence, HNS) compact $S$-space is w-admissible (that is, $\mathrm{WRUC}(X)=C(X)$).
 By \cite[Lemma 1.5.3]{Fa} this is equivalent to saying that $(C(X)^*,\rho_A)$ is separable.
Now $co(F \cup -F)$ is an Asplund set for $B^*$ by \cite[Lemma 1.4.3]{Fa}. The rest is similar to the proof of \cite[Theorem 7.7]{Me-nz}.
Checking the weak continuity of $h$ one can apply a similar idea
(using again Haydon's theorem as in (1)).
%or we can alternatively use Proposition \ref{p:tame-is-wruc}.

\sk

Finally note that if $X$ is metrizable then in (1) and (2) ``approximable" can be replaced by ``representable" using
an $l_2$-sum of a sequence of
%290512
separable Banach spaces (see Lemma \ref{l:sum}.3).  %\cite[Lemma 4.9]{Me-nz}). % and  \cite[Lemma 6.2]{GM-rose}.
\end{proof}

\sk

\begin{remark} \label{r:iso-DFJP}
The fundamental DFJP-factorization construction from \cite{DFJP} has an ``isometric modification".
According to \cite{LNO} one may assume in Theorem \ref{t:general}
that the bounded operator $j: V \to \Acal$ has the property $||j|| \leq 1$.
More precisely, we can replace in the Equation \ref{bLNO}
the sequence of sets $M_n:=2^n W + 2^{-n} B$ by $K_n:=a^{\frac{n}{2}} W + a^{-\frac{n}{2}} B$, where
$2 < a < 3$ is the unique solution of the equation $\sum_{n=1}^{\infty} \frac{a^n}{(a^n+1)^2} =1$.
For details see \cite{LNO}.
Taking into account this modification (which is completely compatible with our $S$-space setting)
for a set $F \subset C(X)$ with $\sup \{|f(x)| : \ x \in X, f \in F\} \leq 1$
we can assume that $\nu(F) \subset B$ and $\a(X) \subset B^*$. Hence the following sharper diagram commutes

\begin{equation} \label{diag2}
\xymatrix{ F \ar@<-2ex>[d]_{\nu} \times X
\ar@<2ex>[d]^{\a} \ar[r]  & [-1,1] \ar[d]^{id} \\
B \times B^* \ar[r]  &  [-1,1] }
\end{equation}
Note also that this modified version from \cite{LNO} of the DFJP-construction repairs in particular
the proof of \cite[Theorem 4.5]{Me-nz}. The latter was first corrected in the arxiv version of \cite[Theorem 4.5]{Me-nz}
using, however, diagrams like \ref{diag1}, where $\nu(F)$ and $\a(X)$ are bounded.
\end{remark}

%Here we present a characterization of tame functions in terms of Banach representations.

%170213  X instead of S
\begin{thm} \label{t:mat} \
\ben
\item
Let $X$ be a compact $S$-space. The following conditions are equivalent:
\begin{enumerate}
\item $f \in \mathrm{Tame}(X)$ (respectively, $f \in \mathrm{Tame}_c(X)$).
\item There exist: a weakly (respectively, strongly) continuous representation $(h,\a)$
of $(S,X)$ on a Rosenthal Banach space $V$ and a vector $v \in V$ such that
$f(x)= \langle v, \a(x)
\rangle \ \ \  \forall \ x \in X.$
\end{enumerate}
%If $X$ is metrizable we can suppose in addition in (2) that $V$ is separable.
\item
Let $S$ be a semitopological semigroup and $f \in C(S)$. The following conditions are equivalent:
\begin{itemize}
\item [(a)] $f \in \mathrm{Tame}(S)$ (respectively, $f \in \mathrm{Tame}_c(S)$).
\item [(b)] $f$ is a matrix coefficient of a weakly (respectively, strongly) continuous
co-representation of $S$ on a Rosenthal space.
That is, there exist: a Rosenthal space $V$, a weakly (respectively, strongly) continuous
co-homomorphism $h: S \to \Theta(V)$, and vectors $v \in V$ and $\psi
\in V^*$ such that $f(s)=\psi(vs)$ for every $s \in S$.
\end{itemize}
\item
Similar
%031012
(to (1) and (2))
results are valid for
\begin{itemize}
\item [(a)] Asplund functions and Asplund Banach spaces;

\item [(b)] WAP functions and reflexive Banach spaces.
\end{itemize}
\een
\end{thm}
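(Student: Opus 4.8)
The plan is to deduce all parts from Theorem \ref{t:general} (and its Asplund/reflexive analogues described after its proof) together with the cyclic-compactification machinery of Section \ref{s:cyclAff} and the intrinsic characterizations of the three function classes (Proposition \ref{p:tame-f}, Fact \ref{f:Asp}, and Grothendieck's double-limit criterion for WAP). The common mechanism: a single function $f$ lies in one of these algebras iff its orbit $fS$ generates a ``small'' family for $X$ in the appropriate sense, and such a family can be packed inside a Banach space of the corresponding class via the DFJP construction, with $f$ appearing as a matrix coefficient.

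First I would prove (1). For the implication (a)$\Rightarrow$(b): if $f \in \mathrm{Tame}(X)$ then by Proposition \ref{p:tame-f} the orbit $fS$ has $\cls_p(fS) \subset \F(X)$, i.e. $fS$ is a Rosenthal family for $X$; it is visibly $S$-invariant since $(fs)t = f(st)$. Apply Theorem \ref{t:general} with $F := fS$ (or, to be safe, the norm-bounded $S$-invariant Rosenthal family $\langle$ its pointwise closure $\rangle$ — but $fS$ already suffices). This yields a Rosenthal Banach space $V$, a weakly continuous co-homomorphism $h : S \to \Theta(V)$, a weak$^*$-continuous $S$-map $\a : X \to V^*$, and an injection $\nu : fS \to B_V$ with $g(x) = \langle \nu(g), \a(x) \rangle$ for all $g \in fS$, $x \in X$. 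Taking $v := \nu(f)$ gives $f(x) = \langle v, \a(x) \rangle$ for all $x$, which is (b). For the converse (b)$\Rightarrow$(a): given such $(h,\a)$ and $v$, Remark \ref{r:actions}.2 (the restriction operator $r : V \to C(X)$, $r(v)(x) = \langle v, \a(x)\rangle$, which is an $S$-operator) shows $f = r(v)$, and since $V$ is Rosenthal, $(\Theta^{op}, B^*)$ and hence the factor $(S, Y)$ with $Y = \overline{\a(X)}$ is tame (Theorem \ref{t:tame2}); then $f$ comes from the tame $S$-system $Y$, so $f \in \mathrm{Tame}(X)$ by the definition of $\mathrm{Tame}(X)$ via Remark \ref{r:univ}. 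The parenthetical ``$\mathrm{Tame}_c(X)$ / strongly continuous'' variant is identical, using the last clause of Theorem \ref{t:general}: when the action $S \times X \to X$ is continuous one may take $h$ strongly continuous, and conversely a strongly continuous representation gives $r(V) \subset \mathrm{RUC}(X)$, placing $f$ in $\mathrm{Tame}_c(X) = \mathrm{Tame}(X) \cap \mathrm{RUC}(X)$ (Lemma \ref{l:classes}.1, tameness being preserved by factors).

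Next, (2) is the special case $X := S$ with $S$ acting on itself by left translations. Here $\mathrm{Tame}(S) = \mathrm{Tame}(S,S)$, and a function $f$ on $S$ is a matrix coefficient $s \mapsto \psi(vs)$ of a co-representation $h : S \to \Theta(V)$ precisely when, setting $\a := $ the regular-type weak$^*$-continuous map $S \to V^*$ built from $\psi$ (i.e. $\a(s) = s\psi$, noting $\a(e) = \psi$), one has $f(s) = \langle v, \a(s) \rangle$. So (2)(a)$\Leftrightarrow$(2)(b) is exactly (1)(a)$\Leftrightarrow$(1)(b) applied to $X = S$, once one checks that for $X = S$ the map $\a$ produced by Theorem \ref{t:general} is of the form $s \mapsto s\psi$ for $\psi = \a(e) \in V^*$ — which holds because $\a$ is an $S$-map of left actions and $e$ is a transitive point of $(S,S)$, so $\a(s) = \a(se) = s\,\a(e)$. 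Then $f(s) = \langle v, s\psi \rangle = (s\psi)(v)$; comparing with our sign conventions ($\langle v, s\psi\rangle = \langle vs, \psi\rangle = \psi(vs)$ from Section \ref{s:ReprB}) gives $f(s) = \psi(vs)$, as required. The strong/$\mathrm{Tame}_c$ variant is again the same.

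Finally, (3): for (a) replace ``Rosenthal family'' throughout by ``Asplund set'' and ``Rosenthal Banach space'' by ``Asplund space'', using the modified version of Theorem \ref{t:general} sketched in the text after its proof (based on \cite[Section 7]{Me-nz}, \cite[Section 9]{GM1}, and \cite[Lemmas 1.4.3, 1.5.3]{Fa}); the characterization $f \in \mathrm{Asp}(X)$ iff $fS$ is an Asplund set for $X$ is the analogue of Proposition \ref{p:tame-f}, and the converse uses Theorem \ref{t:F-semig}. For (b), $f \in \mathrm{WAP}(X)$ iff the weak closure of $fS$ is weakly compact in $C(X)$; then $\co(fS)$ is a weakly compact convex $S$-invariant set, and the classical de Leeuw–Glicksberg / Eberlein construction (equivalently, Fact \ref{t:wap} combined with Proposition \ref{p:refl}) gives a reflexive $V$ with $f$ a matrix coefficient; conversely a (weakly continuous) representation on a reflexive space forces $(\Theta^{op}, B^*)$, hence $Y$, to be WAP (Proposition \ref{p:refl}), so $f \in \mathrm{WAP}(X)$. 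I expect the only genuinely delicate point to be Step 4-type bookkeeping in the Asplund case of (3)(a): verifying that $co(F \cup -F)$ being an Asplund set for $B^*$ indeed propagates through the DFJP construction to make $B_V$ an Asplund-type family for $B_{V^*}$, and that the weak continuity of $h$ survives (the argument via Haydon's theorem used in Step 3 must be replaced by the $(w^*,\mathrm{norm})$-fragmentability argument of \cite{Me-nz}). Everything else is a routine transcription of the $X$-case to the $S$-case plus an application of the already-proved representation theorems.
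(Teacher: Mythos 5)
Your parts (1) and (3) follow the paper's own route and are correct: for (1)(a)$\Rightarrow$(b) use Proposition \ref{p:tame-f} and apply Theorem \ref{t:general} to the $S$-invariant Rosenthal family $F:=fS$, taking $v:=\nu(f)$; for the converse use Theorem \ref{t:tame2} (and, in (3), the Asplund version of Theorem \ref{t:general} together with Theorem \ref{t:F-semig}, respectively the reflexive/WAP machinery of Fact \ref{t:wap} and Proposition \ref{p:refl}, which is essentially the route the paper takes via \cite{Me-nz}). The bookkeeping you flag in the Asplund case is indeed the only delicate point there, and it is handled exactly as you describe.

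The genuine gap is in part (2), direction (a)$\Rightarrow$(b): you reduce it to ``(1) applied to $X=S$'', but (1) -- and the tools behind it, Proposition \ref{p:tame-f} and Theorem \ref{t:general} -- are stated and proved only for \emph{compact} $X$, while $S$ is in general not compact. The proof of Theorem \ref{t:general} uses compactness in an essential way (Haydon's theorem is applied to the weak$^*$ compact set $j^*(X)$, and Step 3 uses $j^*(P(X))=Q$), and for non-compact $X$ membership in $\mathrm{Tame}(S)$ is \emph{defined} through compactifications (Remark \ref{r:univ}), so your argument has nothing to run on until you pass to one. The repair is exactly what the paper does: write $f=f_0\circ\gamma$ with $\gamma:S\to X$ a tame (respectively, jointly continuous tame) $S$-compactification and $f_0\in C(X)$, apply Theorem \ref{t:general} to $f_0$ on the compact system $X$, and set $v:=\nu(f_0)$, $\psi:=\a(\gamma(e))$; since $\gamma(s)=s\gamma(e)$ and $\a$ is an $S$-map, $\a(\gamma(s))=s\psi$, whence $f(s)=\langle v,s\psi\rangle=\psi(vs)$ -- precisely the computation you perform, but it must be run on the compactification rather than on $S$ itself. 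Your argument for (2)(b)$\Rightarrow$(a) is fine as written, since there one can pass directly to the compact orbit closure $cl_{w^*}(S\psi)$, which is also the paper's argument.
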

\begin{proof}
(1) \ (b) $\Rightarrow$ (a):
$(\Theta(V)^{op},B^*)$ is a tame system for every Rosenthal space $V$ by Theorem \ref{t:tame2}.
The action is separately (jointly) continuous for the weak (respectively, strong) operator topology on $\Theta(V)^{op}$.

(a) $\Rightarrow$ (b): Let $f \in \mathrm{Tame}(X)$. This means by
Proposition \ref{p:tame-f} that the orbit $fS$ is a Rosenthal
family for $X$. Now we can apply Theorem \ref{t:general} to the
family $F:=fS$ (getting %the cyclic $S$-factor
$X_f$ as $\a(X)$).

(2)  \  (a) $\Rightarrow$ (b):
$f \in \mathrm{Tame}(S)$ (respectively, $f \in \mathrm{Tame}_c(S)$) means that there exist:
a tame $S$-compactification $\g: S \to X$ of the $S$-space $S$ such that $S \times X \to X$ is separately
continuous (respectively, jointly continuous)
and a continuous function $f_0: X \to \R$ such that $f=f_0 \circ \nu$.
Apply Theorem \ref{t:general} to $f_0$ getting the
desired $V$ and vectors $v:=\nu(f)$ and $\psi:=\a(\g(e))$. Now
$$f(s)= \langle v, \a(\g(s)) \rangle = m(v,\psi)(s) \ \ \  \forall \ s \in S.$$

(b) $\Rightarrow$ (a): Since $h: S \to \Theta(V)$ is weakly (strongly) continuous the natural action
of $S$ on the compact space $X:=cl_{w^*}(S\psi)$ is separately (respectively, jointly) continuous.
Apply Theorem \ref{t:tame2} to establish that $(S,X)$ is tame. Finally observe that $f(s)=\lan v,s \psi \ran$
comes from the $S$-compactification $S \to X, s \mapsto s \psi$.

(3) (a) is similar to (1) using the Asplund version of Theorem \ref{t:general}.
For (b) note that the case of $f \in \mathrm{WAP}(S)$ was proved in \cite[Theorem 5.1]{Me-nz}.
The case of $f \in WAP_c(S)$ is similar using \cite[Theorem 4.6]{Me-nz}.
\end{proof}

If in Theorem \ref{t:mat}, $S:=G$ is a semitopological group then for any monoid co-homomorphism
$h: G \to \Theta(V)$ we have $h(G) \subset \Iso(V)$.
Recall also that $\mathrm{WAP}(G)=\mathrm{WAP_c}(G)$ (Lemma \ref{l:classes}.4).
%by Ellis-Lawson's %Joint Continuity
%Theorem (Fact \ref{f:Lawson}), $\mathrm{WAP}(G)=WAP_c(G)$.
%Furthermore we may suppose that $h: G \to Is(V)$ is strongly continuous.
%!! Moreover, $\mathrm{Asp}(G)=Asp_c(G)$.

\begin{prop} \label{p:tame-is-wruc}
Let $S \times X \to X$ be a separately continuous action.
Then:
\ben
\item $\mathrm{Tame}(X) \subset \mathrm{WRUC}(X)$. In particular, $\mathrm{Tame}(S) \subset \mathrm{WRUC}(S)$.
\item If $X$ is a compact tame (e.g., $\mathrm{HNS}$ or $\mathrm{WAP}$) system then $(S,X)$ is $\mathrm{WRUC}$.
\een
\end{prop}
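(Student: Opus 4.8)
The plan is to prove both items by reducing everything to the matrix-coefficient description of $\mathrm{Tame}(X)$ from Theorem \ref{t:mat}, together with the standard fact that matrix coefficients of weakly continuous Banach representations live in $\mathrm{WRUC}$. First I would handle item (1). Let $f \in \mathrm{Tame}(X)$. By Theorem \ref{t:mat}(1) (the equivalence (a) $\Leftrightarrow$ (b)), there exist a weakly continuous representation $(h,\a)$ of $(S,X)$ on a Rosenthal Banach space $V$ and a vector $v \in V$ with $f(x) = \langle v, \a(x)\rangle$ for all $x \in X$. Now recall from Remark \ref{r:actions}.2 that the restriction operator $r \colon V \to C(X)$, $r(w)(x) = \langle w, \a(x)\rangle$, is a linear $S$-operator of norm $\leq 1$, and that weak continuity of $h$ gives $r(V) \subset \mathrm{WRUC}(X)$. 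Since $f = r(v)$, we conclude $f \in \mathrm{WRUC}(X)$, which is exactly what we want. The statement $\mathrm{Tame}(S) \subset \mathrm{WRUC}(S)$ is the special case $X := S$ with the left translation action.

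For item (2), suppose $X$ is a compact tame $S$-system. Then $C(X) = \mathrm{Tame}(X)$ by definition of tameness (every member of $E(X)$ is fragmented, so by Proposition \ref{p:tame-f} every $f \in C(X)$ lies in $\mathrm{Tame}(X)$; alternatively this is built into the Remark \ref{r:univ} machinery since a tame compact system is its own universal tame compactification). Applying item (1), $C(X) = \mathrm{Tame}(X) \subset \mathrm{WRUC}(X) \subset C(X)$, so $C(X) = \mathrm{WRUC}(X)$, i.e. $X$ is $\mathrm{WRUC}$-compatible in the sense of Definition \ref{d:admS}. The parenthetical cases $\mathrm{HNS}$ and $\mathrm{WAP}$ follow because every HNS system is tame and every WAP system is HNS (Lemma \ref{l:ourclasses}), so they are covered by the tame case.

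The one point that requires a little care — and the only place I see a potential obstacle — is the logical dependence: Theorem \ref{t:mat}(1) is invoked in the proof of (1), and Theorem \ref{t:mat} in turn relies on Theorem \ref{t:general}/Theorem \ref{t:tame=R-repr}, whose proofs should not circularly depend on Proposition \ref{p:tame-is-wruc}. Inspecting those proofs, the representation theorem for tame functions uses only the DFJP-construction, Haydon's theorem, and the Rosenthal-family characterization of $\mathrm{Tame}(X)$ (Proposition \ref{p:tame-f}), none of which needs $\mathrm{WRUC}$-compatibility, so there is no circularity. An alternative, more self-contained route for (1) that avoids even quoting Theorem \ref{t:mat} would be to observe directly that for $f \in \mathrm{Tame}(X)$ the orbit $fS$ is relatively pointwise compact in $C(X)$ with closure consisting of fragmented functions, and then argue that weak continuity of the orbit map $S \to C(X)$, $s \mapsto fs$, follows from the joint continuity of the evaluation pairing on the (angelic, by BFT) pointwise closure; but the cleanest exposition is simply to cite Theorem \ref{t:mat} and Remark \ref{r:actions}.2 as above.
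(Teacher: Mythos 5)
Your argument is essentially the paper's own proof: the paper likewise writes a tame function as a matrix coefficient of a weakly continuous representation on a Rosenthal space (via Theorem \ref{t:general}) and then concludes $f\in \mathrm{WRUC}(X)$ from the restriction operator of Remark \ref{r:actions}.2, and item (2) is reduced to (1) exactly as you do, via $C(X)=\mathrm{Tame}(X)$ and Lemma \ref{l:ourclasses}. The one small repair needed: Theorem \ref{t:mat}(1) is stated only for \emph{compact} $X$, while the proposition concerns an arbitrary $S$-space (e.g.\ $X=S$), so for general $X$ you should first use the definition of $\mathrm{Tame}(X)$ to write $f=\tilde f\circ\nu$ for a tame $S$-compactification $\nu\colon X\to Y$ with $Y$ compact, apply Theorem \ref{t:general} (equivalently Theorem \ref{t:mat}(1)) to $\tilde f$ on $Y$, and pull the matrix coefficient back through $\nu$ --- which is precisely how the paper phrases the step; with that adjustment your proof is correct and coincides with the paper's.
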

\begin{proof}
(1) Let $f \in \mathrm{Tame}(X)$. Then there exist: a compact tame $S$-system $Y$, an
$S$-compactification $\nu: X \to Y$ and $\tilde{f} \in C(Y)$ such that $f=\tilde{f} \circ \nu$.
By Theorem \ref{t:general},
$\tilde{f}$ comes from a weakly continuous representation $(h,\a)$ of $(S,Y)$ on a Rosenthal space $V$. That is,
$\tilde{f}(y)= \langle \nu(\tilde{f}), \a(y)
\rangle \  \forall \ y \in Y.
$
Consider the restriction operator (Remark \ref{r:actions}.2),
$r: V \to C(X), \ r(v)(x)=\lan v, \a(x) \ran$. Then for the vector $r(\nu(\tilde{f}))=f$
the orbit map $S \to C(X), s \mapsto f s$ is weakly continuous.

(2) Since $X$ is tame we have $\mathrm{Tame}(X)=C(X)$. On the other hand, by (1) we have $\mathrm{Tame}(X) \subset \mathrm{WRUC}(X) \subset C(X)$.
Hence, $\mathrm{WRUC}(X)=C(X)$.
\end{proof}

\begin{remark} \label{drop-w-adm}
Proposition \ref{p:tame-is-wruc} %is new and
allows us to strengthen some results of \cite{Me-nz}.
Namely, in 7.7, 7.11 and 7.12 of \cite{Me-nz} one may drop the assumption of WRUC-compatibility of $(S,X)$.
%031012
Theorem \ref{t:mat} unifies and strengthens some earlier results from \cite{Me-nz,GM-rose}.
\end{remark}

%050312 new subsection
\subsection{Representations of topological groups}

\begin{thm} \label{t:GrRep}
Let $G$ be a topological group such that
$Tame_c(G)$ (respectively, $Asp_c(G)$, $\mathrm{WAP}(G)$) separates points and closed subsets.
Then there exists a Rosenthal (respectively, Asplund, reflexive) Banach space
$V$ and a topological group embedding $h: G \hookrightarrow \Iso(V)$ with respect to the strong topology.
%!!!! Furthermore, if $G$ is second countable then we can suppose in addition that $V$ is separable.
\end{thm}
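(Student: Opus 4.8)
The plan is to build the desired representation by assembling matrix-coefficient representations of individual functions and taking an $l_2$-sum, using Theorem \ref{t:mat} as the engine and Lemma \ref{l:sum} to stay within the relevant class of Banach spaces. First I would fix the case of $\mathrm{Tame}_c(G)$; the Asplund and reflexive cases are handled by the same argument with Theorem \ref{t:mat}(3) in place of Theorem \ref{t:mat}(2), noting for the reflexive case that $\mathrm{WAP}(G)=\mathrm{WAP}_c(G)$ by Lemma \ref{l:classes}.4. Since $G$ is a topological group, every monoid co-homomorphism $h:G\to\Theta(V)$ lands in $\Iso(V)$ (as remarked after Theorem \ref{t:mat}), so ``representation'' automatically gives us isometries; what we must extract is that $h$ can be taken to be a \emph{topological embedding} into $\Iso(V)$ with the strong operator topology.

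The core step: for each $f\in\mathrm{Tame}_c(G)$, Theorem \ref{t:mat}(2) with $S=G$ produces a Rosenthal space $V_f$, a \emph{strongly} continuous co-homomorphism $h_f:G\to\Theta(V_f)$, and vectors $v_f\in V_f$, $\psi_f\in V_f^*$ with $f(g)=\psi_f(v_f g)$ for all $g\in G$. By rescaling we may assume $\|v_f\|\le 1$, $\|\psi_f\|\le 1$. Now I would choose, using the separation hypothesis, a family $\{f_i\}_{i\in I}\subset\mathrm{Tame}_c(G)$ which separates points and closed subsets of $G$; since $G$ need not be separable this family may be uncountable, but that is fine. Form $V:=\left(\Sigma_{i\in I}V_{f_i}\right)_{l_2}$, which is Rosenthal by Lemma \ref{l:sum}.2, and let $h:=\left(\Sigma_{i\in I}h_{f_i}\right)$ act coordinatewise, rescaling the $i$-th summand's embedding weight so that the orbit maps remain well behaved on $V$ (the contraction property $\|h(g)\|\le 1$ is preserved coordinatewise). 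Then $h$ is a co-homomorphism into $\Iso(V)$, and it is strongly continuous because each $h_{f_i}$ is and the $l_2$-structure makes pointwise norm-convergence on a dense set suffice (this is the standard $l_2$-sum argument of Lemma \ref{l:sum}.3).

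It remains to verify that $h$ is a topological \emph{embedding} for the strong topology, and this is the step I expect to be the main obstacle — everything else is bookkeeping. Injectivity and continuity are routine; the real content is that the inverse is continuous, i.e. that the strong-operator topology pulled back along $h$ recovers the original topology of $G$. For this I would argue that the matrix coefficients $g\mapsto\psi_{f_i}(v_{f_i}g)=f_i(g)$ are continuous functions of $h(g)$ in the weak (hence, since the coefficients come through fixed vectors, controllable via the strong) operator topology, so that the topology induced on $G$ by $h$ refines the initial topology generated by $\{f_i\}$; since that family separates points and closed subsets, it generates the topology of $G$, giving the reverse inclusion. A cleaner route, which I would prefer to write, is to note that the regular representation of $G$ on the $G$-invariant space $\mathrm{Tame}_c(G)\subset\mathrm{RUC}(G)$ is strongly continuous (as $\mathrm{Tame}_c(G)\subset\mathrm{RUC}(G)$ means orbit maps are norm continuous), and then invoke the DFJP/$l_2$-sum machinery exactly as in Theorem \ref{t:general} and Lemma \ref{l:sum}.3 to factor this single regular representation through a Rosenthal space $V$ while preserving both strong continuity and the embedding property — here one uses that $\mathrm{Tame}_c(G)$ separating points and closed subsets makes the regular representation on it a topological embedding of $G$ (this is the standard fact, cf. Teleman's representation in Remark \ref{r:ex}.1, that a $G$-invariant point-and-closed-set-separating subalgebra of $\mathrm{RUC}(G)$ yields a group embedding into $\Iso$). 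Either way, the delicate point is ensuring the factorization through a Rosenthal (resp. Asplund, reflexive) space does not destroy the ``embedding'' part of ``topological group embedding,'' which follows because the restriction operator $r:V\to C(G)$ of Remark \ref{r:actions}.2 sends the new vectors back to the original separating functions, so the pulled-back topology still separates points and closed sets.
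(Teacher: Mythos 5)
Your proposal is correct and follows essentially the same route as the paper's proof: apply Theorem \ref{t:mat}.2 to a family in $\mathrm{Tame}_c(G)$ (resp.\ $\mathrm{Asp}_c(G)$, $\mathrm{WAP}(G)$) separating points and closed subsets, take the $l_2$-sum of the resulting representations (Rosenthal, Asplund or reflexive by Lemma \ref{l:sum}.2, with strong continuity preserved), and recover the topology of $G$ from the matrix coefficients $m(v_i,\psi_i)=f_i$ to conclude the map is a topological embedding. The only bookkeeping you gloss over is that a strongly continuous co-homomorphism gives an embedding into $\Iso(V)^{op}$, so one composes with the involution $g\mapsto g^{-1}$ to obtain an honest topological group embedding into $\Iso(V)$, exactly as the paper does.
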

\begin{proof} We consider only the case of $\mathrm{Tame}(G)$. Other cases are similar.
The case of $\mathrm{WAP}(G)$ is known \cite{Me-nz, Me-hilb}.

For every topological group $G$ the involution $inv: g \mapsto g^{-1}$ defines a
topological isomorphism between $G$ and its opposite group $G^{op}$.
So it is equivalent to show that there exists a topological group embedding $h: G \to \Iso(V)^{op}$.
Let $\{f_i\}_{i \in I}$ be a collection of tame functions
which come from jointly continuous tame $G$-compactifications of $G$ and
separates points and closed subsets.
By Theorem \ref{t:mat}.2
%$f_i$ is a matrix coefficient of a weakly (respectively, strongly) continuous
%co-representation of $S$ on a Rosenthal space.
for every $i \in I$ there exist: a Rosenthal space $V_i$, a strongly continuous
co-homomorphism $h_i: G \to \Iso(V_i)$, and vectors $v_i \in V_i$ and $\psi_i \in V_i^*$ such that $f_i(g)=\psi_i(v_ig)$ for every $g \in G$.
Consider the $l_2$-type sum $V:=(\Sigma_{i \in I} V_i)_{l_2}$ which is Rosenthal by virtue of Lemma \ref{l:sum}.2.
We have the natural homomorphism
$h: G \to \Iso(V)^{op}$ defined by $h(v)=(h_i(v_i))_{i \in I}$ for every $v= (v_i)_{i \in I} \in V$.
%Since  $V^*=(\Sigma_{i \in I} V_i)_{l_2}^*=(\Sigma_{i \in I} V_i^*)_{l_2}$ (Lemma \ref{l:sum}.1)
%and each $h_i$ is weakly continuous
It is easy to show that $h$ is a strongly continuous homomorphism. Since $\{f_i\}_{i \in I}$ separates points and closed subsets, the family
of matrix coefficients $\{m(v_i, \psi_i)\}_{i \in I}$ generates the topology of $G$. It follows that $h: G \to \Iso(V)^{op}$ is a
topological embedding.
\end{proof}

Recall (see Remark \ref{r:ex}) that for the group $G:=H_+[0,1]$ every
Asplund  (hence also every WAP) function is constant
and every continuous representation $G \to \Iso(V)$ on an Asplund
(hence also reflexive) space $V$ must be trivial.
In contrast one may show that $G$ is Rosenthal representable
%($G$ admits a strong embedding into $Iso(V)$ for a Rosenthal space $V$).

%This remark suggests the following

\begin{thm} \label{H_+}
The group $G:=H_+[0,1]$ is Rosenthal representable.
\end{thm}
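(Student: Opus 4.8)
The plan is to produce a faithful Rosenthal representation of $G = H_+[0,1]$ by exhibiting a sufficiently large family of tame functions on $G$ that separates points and closed subsets, and then invoking Theorem \ref{t:GrRep}. The natural candidates come from the tautological action of $G$ on $[0,1]$: for a point $t \in [0,1]$ the evaluation functions $g \mapsto g(t)$ are right uniformly continuous on $G$, and the dynamical system $(G,[0,1])$ is the prototypical tame system — indeed every element of its enveloping semigroup is a monotone (hence fragmented, Helly-type) self-map of $[0,1]$, so $([0,1],G)$ is tame in the sense of Definition \ref{d:Tame}. Thus each coordinate function $f_t(g) = g(t)$, coming from the tame $G$-compactification $G \to [0,1]$ (via $g \mapsto g(t_0)$ for a fixed base point, or more robustly via the orbit maps), lies in $\mathrm{Tame}_c(G)$. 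The family $\{f_t : t \in [0,1]\}$ clearly separates points of $G$, since $g_1 \neq g_2$ means $g_1(t) \neq g_2(t)$ for some $t$.

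First I would verify carefully that the action $G \times [0,1] \to [0,1]$ is (jointly) continuous, which is immediate since $G$ carries the uniform (sup-metric) topology, and that it is tame: the enveloping semigroup $E(G,[0,1])$ consists of nondecreasing surjections' limits, i.e. arbitrary nondecreasing self-maps of $[0,1]$, each of which is fragmented (has the point of continuity property, by monotonicity) — this is essentially the statement that the Helly space is a Rosenthal compactum. Hence by Lemma \ref{l:both} the system is tame and each $f_t \in \mathrm{Tame}_c(G)$. Next I would check the separation of points and closed subsets: given $g_0 \in G$ and a closed set $C \not\ni g_0$, there is $\varepsilon>0$ with the $\varepsilon$-ball around $g_0$ missing $C$, and since the topology of $G$ is generated by the semimetrics $(g,h) \mapsto |g(t)-h(t)|$ over $t$ in a countable dense subset of $[0,1]$, finitely many coordinate functions $f_{t_1},\dots,f_{t_k}$ already distinguish $g_0$ from all of $C$; equivalently the matrix coefficients $\{f_t\}$ generate the topology of $G$. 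Then Theorem \ref{t:GrRep} applies directly and yields a Rosenthal Banach space $V$ together with a topological group embedding $h : G \hookrightarrow \Iso(V)$ for the strong topology, which is exactly the assertion that $G$ is Rosenthal representable.

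The main obstacle, and the point that deserves the most care, is confirming that the coordinate functions $f_t$ genuinely produce tame (not merely continuous) $G$-compactifications and that the family is rich enough to separate points from closed sets in the group topology — in particular that the uniform topology on $H_+[0,1]$ is indeed the topology of pointwise convergence on a dense set, so that finitely many $f_{t_i}$ suffice. One subtlety is that $g \mapsto g(t)$ for varying $t$ must be assembled into a single $G$-compactification whose structure space is (a quotient of) a power of the Helly space; here one uses that products and closed subsystems of tame systems are tame (Lemma \ref{l:both}) together with the fact, recorded in the excerpt, that Rosenthal compacta are closed under countable products and closed subspaces. Once these ingredients are in place the proof is a direct citation of Theorem \ref{t:GrRep}; the contrast with the Asplund and reflexive cases (where by Remark \ref{r:ex} only trivial representations exist) is precisely the content of the theorem.

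\begin{proof}
The tautological action of $G$ on $[0,1]$ is jointly continuous, and its enveloping semigroup consists of nondecreasing self-maps of $[0,1]$, each of which has the point of continuity property by monotonicity and is therefore fragmented (Remark \ref{r:fr1}.3). Hence $([0,1],G)$ is a tame system (Definition \ref{d:Tame}), and by Lemma \ref{l:both} so is every power of it together with its closed subsystems. For $t\in[0,1]$ put $f_t(g)=g(t)$. Each $f_t$ comes from a jointly continuous tame $G$-compactification of $G$ (namely the orbit map into a suitable power of the Helly space), so $f_t\in \mathrm{Tame}_c(G)$.

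Fix $g_0\in G$ and a closed set $C\subset G$ with $g_0\notin C$. There is $\varepsilon>0$ so that the $\varepsilon$-ball of $g_0$ in the sup-metric misses $C$. Since the sup-metric topology on $H_+[0,1]$ coincides with the topology of pointwise convergence on a countable dense subset $D\subset[0,1]$ (for $g,h\in H_+[0,1]$, monotonicity forces $\|g-h\|_\infty = \sup_{t\in D}|g(t)-h(t)|$ up to a fixed controllable error, and one refines $D$ to absorb it), finitely many coordinates $f_{t_1},\dots,f_{t_k}$ with $t_i\in D$ already separate $g_0$ from $C$. Thus the family $\{f_t\}_{t\in[0,1]}\subset \mathrm{Tame}_c(G)$ separates points and closed subsets of $G$.

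Applying Theorem \ref{t:GrRep} we obtain a Rosenthal Banach space $V$ and a topological group embedding $h:G\hookrightarrow\Iso(V)$ with respect to the strong operator topology. Therefore $G=H_+[0,1]$ is Rosenthal representable.
\end{proof}
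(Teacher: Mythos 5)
Your proof is correct, but it takes a different route from the paper's. The paper also starts from the tautological action of $G=H_+[0,1]$ on $[0,1]$ and the observation that every element of $E(G,[0,1])$ is nondecreasing, but it deduces tameness by noting that $E$ sits inside the Helly compactum and invoking the dynamical BFT dichotomy (Fact \ref{D-BFT}); it then applies Theorem \ref{t:tame=R-repr} to get one faithful representation of the \emph{system} $(G,[0,1])$ on a separable Rosenthal space $V$, and finally upgrades the injective homomorphism $h:G\to \Iso(V)^{op}$ to a topological embedding via the minimality properties of the compact-open topology. You instead verify fragmentedness of the elements of $E(G,[0,1])$ directly from monotonicity (Remark \ref{r:fr1}.3), feed the evaluation functions $f_t(g)=g(t)$ (which come from the orbit compactifications $G\to[0,1]$) into $\mathrm{Tame}_c(G)$, check that this algebra separates points and closed subsets, and cite Theorem \ref{t:GrRep}. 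The two arguments share the same dynamical input; the paper's version is more economical and yields a \emph{separable} $V$ outright, while yours shifts the topological-embedding issue from the compact-open-topology argument to the verification that the matrix coefficients $f_t$ generate the topology of $G$ — which, if you restrict to $t$ in a countable dense set $D$, would also recover separability of $V$. One point to tighten: your justification that finitely many evaluations on $D$ control the sup-metric is stated imprecisely (the identity $\|g-h\|_\infty=\sup_{t\in D}|g(t)-h(t)|$ holds exactly by density and continuity, but it is not what you need, since the pointwise topology only controls finitely many coordinates at a time); the correct and standard argument is that if $g_0(t_{i+1})-g_0(t_i)<\varepsilon/2$ for a partition $0=t_0<\dots<t_k=1$ with $t_i\in D\cup\{0,1\}$ and $|h(t_i)-g_0(t_i)|<\varepsilon/2$ for all $i$, then monotonicity and the fixed endpoints give $\|h-g_0\|_\infty\le\varepsilon$. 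Also note that the hypothesis of Theorem \ref{t:GrRep} asks that the algebra $\mathrm{Tame}_c(G)$ separate points and closed sets, so after finding the finitely many $f_{t_i}$ you should pass to a single function such as $\sum_i (f_{t_i}-g_0(t_i))^2$; with these two small repairs your argument is complete.
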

\begin{proof}
Consider the natural action of $G$ on the closed interval
$X:=[0,1]$ and the corresponding enveloping semigroup $E=E(G,X)$.
%020412
Every element of $G$ is a (strictly) increasing self-homeomorphism of $[0,1]$.
Hence every element $p \in E$ is a nondecreasing function.
It follows that $E$ is
%300312
naturally homeomorphic to a subspace of the Helly compact space (of all nondecreasing selfmaps of $[0,1]$ in the pointwise topology).
Hence $E$ is a Rosenthal compactum. So by the dynamical BFT dichotomy,
Fact \ref{D-BFT}, the $G$-system $X$ is tame.
By Theorem \ref{t:tame=R-repr} we have a faithful representation
$(h,\a)$ of $(G,X)$ on a separable Rosenthal space $V$. Therefore we obtain a $G$-embedding $\a: X \hookrightarrow (V^*, w^*)$.
Then the strongly continuous homomorphism $h: G \to \Iso(V)^{op}$ is injective.
Since $h(G) \times \a(X) \to \a(X)$ is continuous
(and we may identify $X$ with $\a(X)$) it follows, by the minimality properties of the compact open topology, that $h$ is an embedding.
Thus $h \circ \inv: G \to Iso(V)$ is the required topological group embedding.
\end{proof}

\begin{remark} \label{r:moreH_+} \
\ben
\item
Recall that by \cite{me-fr} continuous group representations on Asplund spaces have the \emph{adjoint continuity property}.
In contrast this is not true for Rosenthal spaces.
Indeed, assuming the contrary we would have, from Theorem \ref{H_+}, that
the dual action of the group $H_+[0,1]$ on $V^*$ is continuous, but this is impossible
%200312!
by the following fact
\cite[Theorem 10.3]{GM-suc} (proved also by Uspenskij (private communication)):
every adjoint continuous (co)representation of $H_+[0,1]$ on a Banach space is trivial.
\item
There exists a semigroup compactification $\nu: G=H_+[0,1] \to P$
into a tame semigroup $P$ such that $\nu$ is an embedding. Indeed, the associated enveloping semigroup compactification
$j: G \to E$ of the tame system $(G,[0,1])$ is tame.
 %(Remark \ref{e:ass-comp})
 Observe that $j$ is a topological embedding because the compact open topology on $j(G) \subset \Homeo([0,1])$ coincides
with the pointwise topology.
\een
\end{remark}
%%

%Then $E$ is a tame $G$-system and
%every function $m(f,y): E \to \R, p \mapsto f(py)$ is a tame function for every $f \in C(E), y \in E$.
%Since such functions separate points of $E$ we obtain that the algebra $\mathrm{Tame}(E)=C(E)$ separates the points and closed subsets of $E$.

%!!q
\begin{question}
%\ben
%\item
Is it true that every Polish topological group $G$ is Rosenthal representable ?
Equivalently, is this true for the universal Polish groups $G=\Homeo([0,1]^{\N})$ or
$G=\Iso(\U)$ (the isometry group of the Urysohn space $\U$) ?
By Theorem \ref{t:GrRep} a strongly related question is the question
 whether the algebra $\mathrm{Tame}(G)$ separates points and closed subsets.
%050312 Eli, something strange -- I cannot see if the following two conditions are equivalent
%(I guessed they are equivalent)
%
%(1) \mathrm{Tame}(G) separates points and closed subsets.
%(2) G is Rosenthal representable.
%
% By theorem above indeed (1) => (2) but why (2) => (1) ?
%[matrix coefficients cannot help so much they control only weak operator topology]
% for (2) => (1) it is enough to show that there exists a compact tame G-system X
%such that the map $j: G \to E(G,X)$ into the Enveloping semigroup is embedding.
%So my question can be reduced to another question which is also very interesting for me:
%Question: what you know when $j: G \to E(G,X)$ is a topological embedding ?
%Somehow it would be linked with (uniform) rigidness ?
\end{question}

\section{Banach representations of right topological semigroups and affine systems}
\label{s:appl}

%%!! Mazur th
%We first recall a fundamental result of Haydon.  %\cite[Theorem 3.3]{Hay}.
%
%\begin{f} \label{f:hay}
%\emph{(Haydon \cite[Theorem 3.3]{Hay})}
%Let $V$ be a
%Banach space. \TFAE \ben
%
%%! \item $V$ has the Krein-Milman property for weak$^*$ compact sets.
%\item
%For every weak$^*$ compact subset $Y \subset V^*$ the weak$^*$
%and the norm closures of the convex hull $co(Y)$ in $V^*$ coincide:
%$cl_{w^*}(co(Y))=cl_{norm}(co(Y))$.
%\item
%$V$ contains no isomorphic copy of the Banach space $l_1$
%% eli
%(i.e. $V$ is Rosenthal).
%\een
%\end{f}

\subsection{Tame representations}
\label{s:TameRepr}

\begin{thm} \label{t:Ros-E} \
\ben
\item
Every weakly continuous representation $(h,\a)$ of an
$S$-space $X$ on a Rosenthal Banach space is E-compatible.

\item
If the representation in (1) is $w^*$-generating
%200312!
%we have in addition $m(V,V^*) \subset \A(E(Y),e)$ for every $f \in V, \phi \in V^*$ (where $Y=\overline{\a(X)}$) and thus
then the representation is strongly E-compatible.
 \een
\end{thm}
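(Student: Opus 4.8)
The plan is to deduce both parts from Haydon's characterization of Rosenthal spaces combined with the machinery already developed in Section~\ref{s:Op-env}, in particular Lemma~\ref{norm-cl}. First I would set up the notation: given a weakly continuous representation $(h,\a)\colon (S,X)\rightrightarrows(\Theta(V)^{op},V^*)$ on a Rosenthal Banach space $V$, put $Y:=\overline{\a(X)}^{w^*}$ and $Q:=\overline{co}\,\a(X)=\overline{co}^{w^*}(Y)$, so that $Y$ is a weak$^*$ compact subset of $V^*$ and $Q$ is its weak$^*$-closed convex hull.

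For part (1), the key observation is that $Y$ is weak$^*$ compact, so Haydon's theorem (Fact~\ref{f:RosFr}.4, the equivalence (1)$\Leftrightarrow$(4)) applies to $Y$ and yields
$$
\overline{co}^{w^*}(Y)=\overline{co}^{norm}(Y).
$$
This is precisely the hypothesis of Lemma~\ref{norm-cl}.1, which then gives directly that the representation $(h,\a)$ is E-compatible. So part (1) is essentially a one-line citation once the weak$^*$ compactness of $Y$ and the applicability of Haydon's criterion are pointed out; there is really no obstacle here beyond bookkeeping.

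For part (2), assume in addition that $(h,\a)$ is $w^*$-generating, i.e.\ $\a(X)$ (equivalently $Y$) is a $w^*$-generating subset of $V^*$. Then Lemma~\ref{norm-cl}.2 states that for $w^*$-generating representations, E-compatibility implies strong E-compatibility. Combining this with part (1) gives the conclusion. Thus part (2) is an immediate corollary of part (1) together with Lemma~\ref{norm-cl}.2.

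The only point requiring a little care is the verification that Haydon's theorem genuinely applies: it is stated in Fact~\ref{f:RosFr}.4 for \emph{every} weak$^*$ compact subset $Y\subset V^*$, so since $V$ is Rosenthal and $Y=\overline{\a(X)}^{w^*}$ is weak$^*$ compact (being a weak$^*$-closed subset of a bounded, hence weak$^*$-relatively-compact, set — recall $\a$ is bounded by the definition of a representation), the hypothesis is met without further work. Hence I do not expect any real obstacle; the proof is a clean reduction to Haydon's characterization and Lemma~\ref{norm-cl}. In the write-up I would simply say: (1) follows from Fact~\ref{f:RosFr} and Lemma~\ref{norm-cl}.1, and (2) follows from (1) and Lemma~\ref{norm-cl}.2.
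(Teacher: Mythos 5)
Your proposal is correct and follows exactly the paper's own argument: part (1) is Haydon's characterization (Fact \ref{f:RosFr}.4) applied to the weak$^*$ compact set $Y=\overline{\a(X)}$ fed into Lemma \ref{norm-cl}.1, and part (2) is then Lemma \ref{norm-cl}.2. No gaps; your extra remark on the boundedness of $\a(X)$ guaranteeing weak$^*$ compactness of $Y$ is the only bookkeeping the paper leaves implicit.
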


\begin{proof} (1)
Applying Haydon's theorem (Fact \ref{f:RosFr}.4) %to $Y:=\overline{\a(X)}$
we get by Lemma \ref{norm-cl}.1 that the representation $(h,\a)$ is E-compatible.

(2) Use (1) %and Lemma \ref{l:E-compatible}
taking into account Lemma \ref{norm-cl}.2.
\end{proof}

\begin{thm} \label{t:tame is inj}
\emph{(\cite{Ko} and \cite{Gl-tame} for metrizable systems)}
Every tame compact %separately continuous
$S$-space $X$ is injective.
Hence, every affine $S$-compactification of a tame system is E-compatible.
\end{thm}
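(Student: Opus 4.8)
The plan is to reduce injectivity of a tame compact $S$-space $X$ to the $E$-compatibility of a single, carefully chosen affine compactification coming from a Banach representation on a Rosenthal space, and then to invoke Theorem \ref{t:Ros-E}. First I would recall from Definition \ref{d:inj} that $X$ is injective precisely when its greatest affine $S$-compactification $X \to P(X)$ is $E$-compatible, equivalently when the regular representation of $(S,X)$ on $C(X)$ is (strongly) $E$-compatible, equivalently when $m(C(X),C(X)^*) \subset \langle m(C(X),X) \rangle = \Acal(E(X),e)$. Since $X$ is tame, $C(X) = \mathrm{Tame}(X)$, and by Proposition \ref{p:tame-f} the orbit $fS$ of every $f \in C(X)$ is a Rosenthal family for $X$. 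Thus the hypotheses of Theorem \ref{t:general} are met by $F := fS$, and we obtain a weakly continuous representation $(h_f,\a_f)$ of $(S,X)$ on a Rosenthal Banach space $V_f$, together with a vector $v_f \in V_f$ with $f(x) = \langle v_f, \a_f(x) \rangle$ for all $x \in X$.

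The core step is then to combine these representations and apply Theorem \ref{t:Ros-E}. I would form the $l_2$-sum $V := (\Sigma_{f \in C(X)} V_f)_{l_2}$ (rescaling each $\a_f$ so that the sum makes sense, as in Lemma \ref{l:sum}.3), which is again Rosenthal by Lemma \ref{l:sum}.2; the resulting diagonal representation $(h,\a)$ of $(S,X)$ on $V$ is weakly continuous and \emph{faithful}, since the functions $f$ separate points of $X$. Let $Y := \overline{\a(X)}^{w^*}$ and $Q := \overline{co}^{w^*}(\a(X))$. By Theorem \ref{t:Ros-E}.1 this representation is $E$-compatible, i.e. $\Phi : E(Q) \to E(Y)$ is an isomorphism. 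Now because $\a$ is a topological embedding and $X$ is compact, $Y \cong X$ as $S$-systems, so $E(Y) \cong E(X)$; and the induced $S$-affine compactification $\a : X \to Q$ is faithful. By the monotonicity of $E$-compatibility (Lemma \ref{norm-cl}.4) together with the fact that $X \to P(X)$ is the greatest affine $S$-compactification — hence dominates $\a : X \to Q$ — the $E$-compatibility of $\a$ forces the $E$-compatibility of $X \to P(X)$. That is exactly condition (1) of Definition \ref{d:inj}, so $X$ is injective. The final sentence of the theorem is then immediate: if $X$ is injective, then again by the monotonicity in Lemma \ref{norm-cl}.4 (or directly by Definition \ref{d:inj}.4), every faithful affine $S$-compactification of $X$ is $E$-compatible, and for a non-faithful one we factor through its faithful quotient.

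The main obstacle I anticipate is the bookkeeping needed to pass from the family of single-function Rosenthal representations $(h_f,\a_f)$ to one faithful Rosenthal representation whose induced affine compactification is dominated by $X \to P(X)$ — in particular, making sure the $l_2$-sum construction is compatible with the $S$-action (so that $\a$ is a genuine $S$-map) and that $Y \cong X$ so that $E(Y)$ really is $E(X)$. A cleaner alternative, which avoids re-summing, is to note that $\mathrm{Tame}(X) \subset \mathrm{WRUC}(X)$ (Proposition \ref{p:tame-is-wruc}) so that $P(X)$ is a genuine separately continuous affine $S$-system, and then argue directly on the regular representation on $C(X)$: since every $f \in C(X)$ has $\overline{co(X_f)}^p \subset \Acal(E(X),e)$ (using Theorem \ref{t:general} applied to $fS$ and the description in Lemma \ref{l:prop}), condition (4) of Proposition \ref{p:E-aff+} holds for $V = C(X)$, giving $E$-compatibility of $X \to P(X)$ directly. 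Either route works; the second is probably the shorter write-up.
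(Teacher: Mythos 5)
There is a genuine gap in your main route: you apply the monotonicity of $E$-compatibility (Lemma \ref{norm-cl}.4) in the wrong direction. That lemma says that if $\a_1 \succeq \a_2$ (both faithful) then $E$-compatibility of the \emph{dominating} compactification $\a_1$ passes down to $\a_2$. You have established $E$-compatibility of the affine compactification $\a: X \to Q$ coming from the Rosenthal representation, which is \emph{dominated} by the greatest one $X \to P(X)$, and you then claim this forces $E$-compatibility of $X \to P(X)$; no result in the paper supports that upward implication, and it is false in general. Indeed, Example \ref{e:z2} exhibits a (non-tame) $\Z^2$-system which admits an $E$-compatible faithful affine compactification and yet is not injective, so ``some faithful affine compactification is $E$-compatible'' does not imply injectivity. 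To make your $l_2$-sum route work you would have to upgrade it: make the summed representation generating (Lemma \ref{l:generating}.2), so that $E$-compatibility becomes \emph{strong} $E$-compatibility (Lemma \ref{norm-cl}.2, Theorem \ref{t:Ros-E}), i.e. $m(V,V^*) \subset \A(E(X),e)$, and then observe via the restriction operator $r: V \to C(X)$ that every $m(f,\phi)$ with $f \in C(X)$, $\phi \in C(X)^*$ equals $m(v_f, r^*\phi) \in m(V,V^*)$; that yields condition (3) of Definition \ref{d:inj}. This is essentially what the paper does, except that it works one function at a time and therefore needs neither faithfulness nor the sum: for each $f$ it takes the generating Rosenthal representation of Theorem \ref{t:general}, gets strong $E$-compatibility from Theorem \ref{t:Ros-E} and Lemma \ref{l:E-compatible}, and concludes $m(f,\phi)=m(v_0,r^*\phi) \in \A(E(\a(X)),e) \subset \A(E(X),e)$, using that $\a(X)$ is a factor of $X$.

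Your ``cleaner alternative'' is closer to a correct argument, but as written it is under-justified at the decisive point: Theorem \ref{t:general} together with Lemma \ref{l:prop} does not by itself give $\overline{co (X_f)}^p \subset \A(E(X),e)$. Finite convex combinations of the functions $m(f,x)$ lie in the norm-closed algebra $\A(E(X),e)$, but their \emph{pointwise} limits need not; the step that closes this gap is precisely Haydon's characterization of Rosenthal spaces (Fact \ref{f:RosFr}.4, packaged as Theorem \ref{t:Ros-E} or as the Haydon property of tame functions, Proposition \ref{p:tame-func-n}), which identifies the pointwise closure of the convex hull with its norm closure. Once you insert that step explicitly (plus the restriction-operator identification of $\overline{co(X_f)}^p$ with matrix coefficients of the Rosenthal representation, and the factor inclusion $\A(E(\a(X)),e)\subset\A(E(X),e)$), the second route becomes a correct proof and coincides in substance with the paper's. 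Your deduction of the final sentence (factoring a non-faithful affine compactification through its faithful quotient, using that factors of tame systems are tame) is fine.
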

\begin{proof} In view of Definition \ref{d:inj}
we have to show that $m(f,\phi) \in \A(E(X),e)$ for  every $f \in C(X), \phi \in C(X)^*$.
By Theorem \ref{t:general},
%$f$ is a matrix coefficient of a continuous co-representations of $S$ on Rosenthal spaces.
$f$ comes from a Rosenthal representation.
There exist: a weakly continuous representation $(h,\a)$
of $(S,X)$ on a Rosenthal Banach space $V$ and a vector $v_0 \in V$
such that
$$f(x)= \langle v_0, \a(x) \rangle \ \ \  \forall \ x \in X.$$

Consider the restriction linear $S$-operator (Remark \ref{r:actions}.2)
$$r: V \to C(X), \ r(v)(x)=\langle v, \a(x) \rangle.$$
Let $r^*: C(X)^* \to V^*$ be the adjoint operator.
%Then $r$ is an $S$-map and it is easy to see that $m(r(v),\phi)=m(v,r^*(\phi))$.
Since $m(f,\phi)= m(r(v_0),\phi))=m(v_0, r^*(\phi)),$ it is enough to show that
$m(v_0, r^*(\phi)) \in \A(E(X),e)$.

Analyzing the proof of Theorem \ref{t:general}
we may assume in addition, in view of Lemma \ref{l:generating}.2, that the representation $(h,\a)$ is generating.
By Theorem \ref{t:Ros-E}
%031012
the representation $(h,\a)$ is strongly E-compatible.
So by Lemma \ref{l:E-compatible}
we have $ m(v_0,r^*(\phi)) \in \A(E(\a(X)),e)$.
Since $\a: X \to \a(X)$
is a surjective $S$-map we have the natural surjective
homomorphism $E(X) \to E(\a(X))$. %(Lemma \ref{l:Env-all}.2).
Hence, $\A(E(\a(X)),e) \subset \A(E(X),e)$.
Thus, $m(v_0, r^*(\phi)) \in \A(E(X),e)$, as required.
\end{proof}

\begin{thm} \label{t:tame-sc-case}
Let $\nu: S \to P$ be a right topological semigroup compactification.
 If $P$ is a tame semigroup (e.g., HNS-semigroup, semitopological, or metrizable)
then the $S$-system $P$ is injective and the algebra of the
compactification $\nu$ is introverted (in particular, $\nu$ is an operator compactification).
\end{thm}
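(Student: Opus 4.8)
The plan is to deduce this from the results already assembled in the paper, since all the ingredients are in place. First I would recall that by Corollary \ref{c:FrechetSisTame}, a metrizable compact right topological admissible semigroup is a tame semigroup (via Lemma \ref{t:F}.3), and likewise every HNS-semigroup and every compact semitopological semigroup is tame (Lemma \ref{t:F}); so it suffices to treat the case where $P$ is a tame semigroup. By Lemma \ref{l:both}.3, since $\nu(S) \subset \Lambda(P)$ and $P$ is a tame semigroup, the $S$-system $P$ is tame. Moreover $P$ is a compact (separately continuous) $S$-system, being a right topological semigroup compactification, and it is WRUC by Proposition \ref{p:tame-is-wruc}.2, so the notion of injectivity from Definition \ref{d:inj} applies to it.

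Next I would invoke Theorem \ref{t:tame is inj}: every tame compact $S$-space is injective. Applying this to the tame compact $S$-system $P$, we conclude that $(S,P)$ is injective. This immediately gives, via Proposition \ref{p:from-inj}, that the enveloping semigroup compactification $S \to E(S,P)$ is equivalent to a standard operator compactification and hence all the equivalent conditions of Proposition \ref{t:conditions-new} hold. But by Lemma \ref{l:Env-all}.4, $E(S,P)$ is naturally isomorphic to $P$, so the compactification $S \to E(S,P)$ is (equivalent to) $\nu: S \to P$ itself; thus $\nu$ is an operator compactification.

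For the introversion statement I would use Theorem \ref{t:introv<->inj}. Let $\A = \Acal(P,e) = \nu^*(C(P))$ be the m-introverted closed subalgebra of $C(S)$ corresponding to $\nu$ (this is m-introverted by Fact \ref{f:adm}.1, since $\nu$ is a semigroup compactification). Theorem \ref{t:introv<->inj} asserts that the $S$-system $P$ is injective if and only if $\A$ is introverted. Since we have just shown $(S,P)$ is injective, it follows that $\A$ is introverted, which is the final assertion. (As a sanity check, this is consistent with Proposition \ref{t:conditions-new}: the algebra of an operator compactification is intro-generated, and here it is actually introverted, being its own introverting subspace.)

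The main obstacle is not really a mathematical difficulty at this level but rather making sure the hypotheses of the cited results are genuinely met: one must check that $P$, as an $S$-system with $S$ acting through $\nu(S) \subset \Lambda(P)$, is indeed a compact WRUC $S$-system so that Definition \ref{d:inj} and Theorem \ref{t:tame is inj} apply, and that the identification $E(S,P) \cong P$ of Lemma \ref{l:Env-all}.4 is compatible with the compactification maps $j_P$ and $\nu$. Both points are handled by Lemma \ref{l:both}.3, Proposition \ref{p:tame-is-wruc}.2 and Lemma \ref{l:Env-all}.4 respectively, so the proof reduces to a careful chaining of these statements; no new estimates or constructions are needed.
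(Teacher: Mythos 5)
Your proof is correct and follows essentially the same route as the paper: Lemma \ref{l:both}.3 to see that $(S,P)$ is a tame system, Theorem \ref{t:tame is inj} for injectivity, Theorem \ref{t:introv<->inj} for introversion, and Proposition \ref{t:conditions-new} (via Lemma \ref{l:Env-all}.4) for the operator-compactification conclusion. The extra checks you make (WRUC-compatibility via Proposition \ref{p:tame-is-wruc}.2 and the identification $E(S,P)\cong P$) are exactly the implicit hypotheses the paper relies on, so nothing is missing.
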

\begin{proof} By Lemma \ref{l:both}.3, $P$ is a tame $S$-system.
Theorem \ref{t:tame is inj} guarantees that it is injective. Hence,
by Theorem \ref{t:introv<->inj} the algebra of the
compactification $\nu$ is introverted (Proposition \ref{t:conditions-new} implies that $\nu$ is an operator compactification).
\end{proof}

%050312  I like the following thm (see Remark \ref{notTameF} below)
%recall that for "good" groups Tame_c(G)=Tame(G), ....
\begin{thm} \label{m-intro to intro}
Let $V \subset C(S)$ be an m-introverted %a point-universal
Banach subalgebra. %and $\a: G \to P$ be the corresponding semigroup compactification.
If $V \subset \mathrm{Tame}(S)$ (e.g., if $V$ is separable) then $V$ is
introverted. In particular, $\mathrm{Tame}(S)$, $\mathrm{Asp}(S)$,
$\mathrm{WAP}(S)$ are introverted
%031012
(and the same is true for $\mathrm{Tame_c}(S)$, $\mathrm{Asp_c}(S)$, $\mathrm{WAP_c}(S)$).
\end{thm}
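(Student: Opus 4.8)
\textbf{Proof plan for Theorem \ref{m-intro to intro}.}
The plan is to reduce the statement to the already-established injectivity of tame systems. First I would recall that, by Fact \ref{f:adm}.1, an m-introverted closed subalgebra $V \subset C(S)$ induces a right topological semigroup compactification $\delta: S \to P := MM(V)$ whose associated algebra is exactly $V$; that is, $\Acal(P,e) = j^*(C(P)) = V$ under the natural identifications. The key point I would then invoke is the equivalence established in Theorem \ref{t:introv<->inj}: the $S$-system $P$ is injective if and only if $V$ is introverted. So the entire burden of the proof is to show that $P$ is an injective $S$-system whenever $V \subset \mathrm{Tame}(S)$.

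The second step is to verify that the hypothesis $V \subset \mathrm{Tame}(S)$ forces $P$ to be a tame semigroup. Since $V = \Acal(P,e)$ consists of tame functions on $S$, and these come (by Remark \ref{r:univ} and the description of $\mathrm{Tame}(S)$) from tame $S$-compactifications, one sees that the compactification $\delta: S \to P$ factors through tame systems; more precisely $C(P) \cong \langle V \rangle = V \subset \mathrm{Tame}(S)$, so every continuous function on $P$ pulls back to a tame function on $S$, which is exactly the statement that $(S,P)$ is a tame $S$-system, and since $P \cong E(S,P)$ by Lemma \ref{l:Env-all}.4, Lemma \ref{l:both}.4 gives that $P$ is a tame semigroup. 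Now apply Theorem \ref{t:tame-sc-case} (or directly Theorem \ref{t:tame is inj} together with Lemma \ref{l:both}.3): the $S$-system $P$ is injective. Feeding this back into Theorem \ref{t:introv<->inj} yields that $V$ is introverted.

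For the ``in particular'' clause: the algebras $\mathrm{Tame}(S)$, $\mathrm{Asp}(S)$, $\mathrm{WAP}(S)$ are all m-introverted by Remark \ref{r:univ} (they are the algebras of the universal tame, HNS, WAP semigroup compactifications), and all are contained in $\mathrm{Tame}(S)$ by Lemma \ref{l:ourclasses}; hence each is introverted by the first part. The parenthetical remark that separability of $V$ suffices follows from Proposition \ref{p:metrAlg}.2: a separable m-introverted closed subalgebra of $C(S)$ is automatically contained in $\mathrm{Asp}(S) \subset \mathrm{Tame}(S)$. The same argument applies verbatim to $\mathrm{Tame_c}(S)$, $\mathrm{Asp_c}(S)$, $\mathrm{WAP_c}(S)$ since these are also m-introverted subalgebras of $\mathrm{Tame}(S)$.

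The main obstacle, conceptually, is that all the real work has been front-loaded into Theorem \ref{t:introv<->inj} and Theorem \ref{t:tame is inj}; the only genuine check here is that $V \subset \mathrm{Tame}(S)$ really does make the induced compactification $P = MM(V)$ a \emph{tame semigroup} rather than merely a semigroup all of whose \emph{functions} on $S$ are tame --- i.e.\ that tameness transfers correctly through the identification $\Acal(P,e) = V$ and the isomorphism $P \cong E(S,P)$. This is handled by Lemma \ref{l:both}, but it is the step where one must be careful about the direction of the implications and about the distinction between ``tame as an $S$-system'' and ``tame as a semigroup''.
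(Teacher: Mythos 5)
Your proposal is correct and follows essentially the same route as the paper: pass to the compactification $\nu: S \to P$ associated with $V$, note that $V \subset \mathrm{Tame}(S)$ makes $(S,P)$ a tame system and hence (via Lemma \ref{l:Env-all}.4 and Lemma \ref{l:both}) $P$ a tame semigroup, then combine Theorem \ref{t:tame-sc-case} with Theorem \ref{t:introv<->inj}, handling the separable case through Proposition \ref{p:metrAlg}. The only cosmetic difference is that you spell out the identification $\Acal(P,e)=V$ and the transfer of tameness in more detail than the paper does.
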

\begin{proof}
%031012
Consider the corresponding semigroup compactification $\nu: S \to P$.
Since $V \subset \mathrm{Tame}(S)$ the system $(S,P)$ is tame. Then its enveloping semigroup $E(S,P)$ is a tame semigroup
(Lemma \ref{l:both}.2) and $E(S,P)$ can be naturally identified with $P$ (Lemma \ref{l:Env-all}.4). Now
combine Theorems \ref{t:tame-sc-case} and \ref{t:introv<->inj}.
By Remark \ref{r:univ} the subalgebras above are m-introverted
(if $V$ is separable then
$V \subset \mathrm{Asp}(S)$ by Proposition \ref{p:metrAlg}, hence, $V \subset \mathrm{Tame}(S)$).
\end{proof}

\subsection{Banach representation of enveloping semigroups}

By Theorem \ref{t:tame2} the semigroup $\E(V)$ is tame for every Rosenthal space $V$.  %(compact right topological admissible)
We now show that, in the converse direction, every tame (respectively, HNS)
semigroup $P$, or equivalently, every enveloping semigroup of a tame (respectively, HNS) system, admits a faithful
representation %(Definition \ref{d:reprCRTS})
on a Rosenthal (respectively, Asplund) Banach space $V$.
Fact \ref{f:sem-in-ref} (for semitopological semigroups and reflexive spaces) is a particular case of the following result.

\begin{thm} \label{t:Srepr} \emph{(Enveloping semigroup representation theorem)}
\begin{enumerate}
\item
Let $P$ be a tame %compact right topological admissible
semigroup.
Then there exist a Rosenthal Banach space $V$ and a
%031012 (was simply: admissible)
 $\Lambda(P)$-admissible
 embedding of $P$ into $\E(V)$.
\item
If $P$ is a HNS-semigroup  %separately continuous
then there is a
%031012 (was simply: admissible)
 $\Lambda(P)$-admissible
embedding of $P$ into $\E(V)$ where $V$ is an Asplund Banach space.
\item
If $P$ is a semitopological semigroup %separately continuous
then there is
%031012 deleting: "an admissible" (it is superfluous because the semigroups in this case are semitopological...)
an
embedding of $P$ into $\Theta(V)=\E(V^*)$
where $V$ is a reflexive Banach space.
\end{enumerate}
\end{thm}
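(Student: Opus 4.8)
The plan is to prove all three items in a uniform way, using the structure theory developed in the paper: a tame (resp.\ HNS, semitopological) semigroup $P$ is point-universal as an $S$-system for $S:=\Lambda(P)$, so $P\cong E(S,P)$ by Lemma \ref{l:Env-all}.4, and the job reduces to representing the \emph{dynamical system} $(S,P)$ faithfully on a Banach space of the right class and then passing to enveloping semigroups. First I would set $S:=\Lambda(P)$, which is dense in $P$ by admissibility, so $(S,P)$ is a point-transitive separately continuous $S$-system whose enveloping semigroup is canonically $P$ itself. By Lemma \ref{l:both}.3 (resp.\ Lemma \ref{l:HNS-prop}.3, resp.\ the semitopological case handled via Fact \ref{f:Lawson}/Fact \ref{f:sem-in-ref}) the $S$-system $P$ is tame (resp.\ HNS, WAP). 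Now apply Theorem \ref{t:tame=R-repr}: since $(S,P)$ is tame (resp.\ HNS), it is weakly Rosenthal-approximable (resp.\ weakly Asplund-approximable), and in fact, because $C(P)$ separates points, one gets a \emph{faithful} weakly continuous representation $(h,\a)$ of $(S,P)$ on a Rosenthal (resp.\ Asplund) Banach space $V$ — taking, if necessary, an $l_2$-sum of the separating family of cyclic representations and invoking Lemma \ref{l:sum}.2 to stay inside the class.

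Given such a faithful representation $(h,\a)\colon (S,P)\rightrightarrows (\Theta(V)^{op},V^*)$ with $Y:=\overline{\a(P)}=\a(P)$ (as $P$ is compact), I would then invoke the key rigidity input: by Theorem \ref{t:Ros-E}, a weakly continuous representation on a Rosenthal space is E-compatible, and if it is $w^*$-generating (which we may assume, arranging $\a$ via the DFJP construction as in Lemma \ref{l:generating}.2 and Remark \ref{r:iso-DFJP}) it is \emph{strongly} E-compatible. Strong E-compatibility says precisely, by Lemma \ref{l:E-compatible}, that the Ellis compactification $j\colon S\to E(Y)$ and the standard operator compactification $h\colon S\to \overline{h(S)}\subset \E(V)$ are naturally isomorphic, and that there is a $j(S)$-admissible embedding $h'\colon E(Y)\hookrightarrow \E(V)$ with $h'\circ j=h$. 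But $\a\colon P\to Y$ is a faithful $S$-map between compacta, hence a $G$-homeomorphism, so $E(Y)\cong E(P)\cong P$; composing, we obtain a topological embedding of $P$ into $\E(V)$. Since $\a(S)=h(S)\subset \Theta(V)^{op}=\Lambda(\E(V))$ (Lemma \ref{l:transit}.5) and $S=\Lambda(P)$ is dense in $P$, this embedding is $\Lambda(P)$-admissible in the sense of Definition \ref{d:reprCRTS}. This proves (1); item (2) is identical with ``Rosenthal'' replaced by ``Asplund'' throughout, using the Asplund half of Theorem \ref{t:tame=R-repr} and the Asplund version of Theorem \ref{t:Ros-E} via Fact \ref{f:Asp}.

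For (3), when $P$ is compact semitopological, I would note that $(S,P)$ is WAP (indeed by Fact \ref{f:sem-in-ref} every compact semitopological semigroup embeds into $\Theta(W)=\E(W^*)$ for reflexive $W$, but one can also argue intrinsically: $C(P)=\mathrm{WAP}(P)$ by Grothendieck's double-limit criterion since multiplication is separately continuous). Then Theorem \ref{t:tame=R-repr}(2)'s reflexive analogue (Fact \ref{t:wap}, together with Lemma \ref{l:sum}.2 for the $l_2$-sum) yields a faithful weakly continuous representation on a reflexive $V$; since $V$ is reflexive, $\E(V^*)=\Theta(V^*)^{op}$ — more directly, one uses Proposition \ref{p:refl} to identify $\E$ with $\Theta^{op}$ for reflexive spaces — and on $\Iso(V)$ (or $\Theta(V)$) the weak and strong operator topologies agree, so the embedding lands in $\Theta(V)$ itself and no ``$\Lambda$-admissibility'' caveat is needed; this recovers Fact \ref{f:sem-in-ref}.

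The main obstacle, and the step I expect to require the most care, is the passage from ``$(S,P)$ is tame/HNS'' to a \emph{faithful} (point-separating) representation on a \emph{single} Banach space of the prescribed class while simultaneously keeping it $w^*$-generating, so that Theorem \ref{t:Ros-E} delivers \emph{strong} E-compatibility rather than merely E-compatibility. Concretely one must: (a) for each $f$ in a separating family of continuous functions on $P$, produce via Theorem \ref{t:general} (or Theorem \ref{t:mat}) a weakly continuous Rosenthal (Asplund) representation realizing $f$ as a matrix coefficient; (b) take the $l_2$-sum with weights $2^{-n}$ as in Lemma \ref{l:sum}.3 to obtain a faithful representation on a space still in the class; and (c) either observe that the $l_2$-sum representation is automatically generating on the closure of its span, or replace $\a$ by $j^*\circ\a$ through the DFJP factorization of Lemma \ref{l:generating}.2 to force $w^*$-generation without destroying faithfulness (using that $j^*$ is norm-dense and injective on the relevant weak$^*$ closures). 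Once that bookkeeping is in place, the enveloping-semigroup identifications and the admissibility check are formal consequences of Lemmas \ref{l:Env-all}, \ref{l:W=E}, \ref{l:E-compatible} and \ref{l:transit}.
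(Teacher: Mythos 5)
Your outline follows the paper's opening moves (set $S:=\Lambda(P)$, observe $(S,P)$ is tame/HNS/WAP and $E(S,P)\cong P$, invoke Theorem \ref{t:tame=R-repr} and Theorem \ref{t:Ros-E}, land in $\E(V)$ with admissibility from Lemma \ref{l:transit}.5), but it hinges on a step that is not available in general: the existence of a \emph{single} faithful, weakly continuous, $w^*$-generating representation $(h,\a)$ of $(S,P)$ on one Rosenthal (Asplund) space, with $Y:=\a(P)\cong P$. Theorem \ref{t:tame=R-repr} gives faithfulness only for metrizable $X$; for a general (typically non-metrizable) tame semigroup $P$ it yields only a point-separating \emph{family} $\{(h_i,\a_i)\}_{i\in I}$. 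The $l_2$-sum device for upgrading approximability to a faithful representation (Lemma \ref{l:sum}.3) needs countably many summands with norms decaying like $2^{-n}$ so that the diagonal map lands in $(\Sigma_i V_i^*)_{l_2}$; for uncountable $I$ there is no square-summable rescaling, so the faithful $\a$ you build your argument on need not exist. Moreover, even when it does exist, an $l_2$-sum of generating representations is generally \emph{not} $w^*$-generating: the diagonal image $\a(P)=(\a_i(p))_i$ spans only a ``diagonal'' subspace of $(\Sigma_i V_i^*)_{l_2}$. Your two proposed fixes do not close this: being ``generating on the closure of its span'' is not the generating property for $V$ (and passing to a quotient of $V$ only gives the weak$^*$ closure of that span as the new dual), and Lemma \ref{l:generating}.2 is the DFJP factorization of $C(X)$ used to build each individual $V_i$; it is not a tool for upgrading the sum representation while preserving faithfulness, weak continuity and the Rosenthal/Asplund class.

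The paper's proof is structured precisely to avoid needing such a single representation. It keeps the whole family $\{(h_i,\a_i)\}_{i\in I}$, arranges each one (via Lemma \ref{l:generating}.2) to be generating, hence strongly E-compatible by Theorem \ref{t:Ros-E}, and forms the $l_2$-sum \emph{only of the homomorphisms} $h_i$ into $\Theta(V)^{op}$ (no summability issue there, since each $\|h_i(s)\|\le 1$). The embedding of $P$ is then obtained not through a faithful map $P\to V^*$ but through the operator compactification $j_K: S\to K=\overline{h(S)}\subset\E(V)$, by proving the algebra identity
$\A_K=\lan \bigcup_{i\in I} m(V_i,V_i^*)\ran=\lan \bigcup_{i\in I}\A(E(Y_i),e)\ran=\A_j$,
which forces $K\cong P$ as compactifications of $S$ and gives the $\Lambda(P)$-admissible embedding. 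This algebra computation (the ``Claim'' in the paper) is the essential content your proposal is missing, and without it (or a correct substitute for the generating/faithfulness step) the argument does not go through for non-metrizable $P$. Your treatment of item (3) via Fact \ref{f:sem-in-ref} / Proposition \ref{p:refl} is fine, but the tame and HNS cases need the paper's family-wise argument or an equivalent.
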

\begin{proof}
%!! in fact it is already a part of our standing assumptions
%One may assume that $P$ is a monoid.
Let  $S=\Lambda(P)$ be the topological center of $P$.
Since $P$ is admissible, $S$ is a dense submonoid of $P$.
%031012
Denote by $j: S \to P$ the corresponding inclusion.
Now $P$, as an $S$-system, is tame (Lemma \ref{l:both}.3).
By Theorem \ref{t:tame=R-repr}
%a modification of the main result of \cite{GM-rose}
%\footnote{Here we need a version for separately continuous semigroup actions. In \cite{GM-rose} we deal with continuous group actions}
there exists a family of flow representations $\{(h_i,\a_i)\}_{i \in I}$
$$ \ h_i: S \to \Theta(V_i)^{op}, \ \ \a_i: P \to B_{V^*_i}$$
of $(S,P)$ on Rosenthal Banach spaces $V_i$, where each $h_i$ is a weakly continuous homomorphism and
$\{\a_i\}_{i \in I}$ separates points of $P$.
As in the proof of Theorem \ref{t:tame is inj}
we may assume (by Lemma \ref{l:generating}.2) that these representations are generating.
Then, by Theorem \ref{t:Ros-E}, they are strongly E-compatible.

Consider the $l_2$-type sum $V:=(\Sigma_{i \in I} V_i)_{l_2}$. Then we have the natural $l_2$-sum of representations
$h: S \to \Theta(V)^{op}$ defined by $h(v)=(h_i(v_i))_{i \in I}$ for every $v= (v_i)_{i \in I} \in V$.
Since  $V^*=(\Sigma_{i \in I} V_i)_{l_2}^*=(\Sigma_{i \in I} V_i^*)_{l_2}$ (Lemma \ref{l:sum}.1)
%031012
and each $h_i$ is weakly continuous
it is easy to show that $h$ is a weakly continuous homomorphism.
We have the corresponding standard operator compactification $j_K: S \to K=\overline{h(S)} \subset \E(V)$.
Since $h(S) \subset \Theta^{op}(V) = \Lambda(\E(V))$, the embedding $K \subset \E(V)$ is $S$-admissible (Definition \ref{d:reprCRTS}).
 By Lemma \ref{l:sum}.2 we know that $V$ is Rosenthal. So in order to complete the proof for ``Rosenthal case"
(other cases are similar) we have to check the following claim.

\begin{claim} \emph{The semigroup compactifications $j: S \to P$ and $j_K: S \to K$ are equivalent. }
\end{claim}

\nt  \emph{Proof of the claim:}
Let $\A_j$ and $\A_K$ be the corresponding subalgebras of $C(S)$. We will show that each of them equals to
%We will show that these compactifications have the same algebra
$$\A:=\lan \bigcup_{i \in I} m(V_i,V_i^*) \ran.$$
Each $Y_i:=\a_i(P)$ is an $S$-factor of $P$. Consider its enveloping semigroup $E(S,Y_i)$
and the compactification $j_i: S \to E(S,Y_i)$.
%200312!   $Y_i$
Since the family of $S$-maps $\{\a_i: P \to Y_i\}_{i \in I}$ separates points of $P$ the induced
system of homomorphisms $r_{\a_i}: E(S,P) \to E(S,Y_i)$ separates points of $P=E(S,P)$.
So, $\lan \cup_{i \in I} \Acal(E(Y_i),e) \ran = \A_j$.
The representations $(h_i,\a_i)$ are strongly E-compatible.
By Lemma \ref{l:E-compatible} we get $m(V_i,V_i^*) \subset \Acal(E(Y_i),e)$.
By Lemma \ref{l:algebras}, $\A(E(Y_i),e)=\lan m(V_i,Y_i) \ran$.
So, $\lan m(V_i,V_i^*) \ran = \Acal(E(Y_i),e) \ \  \forall i \in I.$
This implies that $\lan \cup_{i \in I} m(V_i,V_i^*) \ran =\lan \cup_{i \in I} \Acal(E(Y_i),e) \ran$.
Therefore, $\A=\A_j$.

Now we show that $\A_K=\A$. First observe that the set $L:=\cup_{i \in I} V_i$ separates points of $V^*=(\Sigma_{i \in I} V_i^*)_{l_2}$ (and hence of $B_{V^*}$).
%031012  after writing $V^*=(\Sigma_{i \in I} V_i^*)_{l_2}$ (instead of $V^*$)
%the following 4 lines can be removed as I think because the verification is straightforward
%Indeed, if $\phi=(\phi_i)_{ i\in I}$ and $\varphi=(\varphi_i)_{i \in I}$ are different vectors in $V^*$ then
%$\phi_{i_0} \neq \varphi_{i_0}$ for some $i_0 \in I$. Choose $v_{i_0} \in V_{i_0}$ such that
%$\phi_{i_0}(v_{i_0}) \neq \varphi_{i_0}(v_{i_0})$.
%Then $\phi_{i_0}(v_{i_0})=\phi (v_{i_0}) \neq \varphi (v_{i_0})=\varphi_{i_0}(v_{i_0})$.
%%
By Lemma \ref{l:W=E} .1 the standard operator compactification $j_K: S \to K$ is equivalent to
the Ellis compactification $S \to E=E(S,B_{V^*})$.
Apply Lemma \ref{l:Env-all}.1 to the $S$-system $X=B_{V^*}$ and $L$. Then $\A_K= \lan m(L,B_{V^*}) \ran = \lan m(L,V^*) \ran$.
For every $v \in V_i \subset L, \ \phi \in V^*, s \in S$ we have $\phi(h(s)(v))=\phi_i(h_i(s)(v))$. So, $m(v,\phi) = m(v, \phi_i)$.
Therefore, $m(\cup_{i \in I} V_i , V^*)=\cup_{i \in I} m(V_i, V^*)=\cup_{i \in I} m(V_i, V_i^*)$.
It follows that $\A_K= \lan m(L,V^*) \ran = \lan \cup_{i \in I} m(V_i, V_i^*) \ran = \A$, as desired. So the claim is proved.

\sk

If $P$ is a HNS-semigroup (or a semitopological semigroup)
then one may modify our proof accordingly to ensure that
$V$ is an Asplund (or a reflexive) Banach space
using Theorem \ref{t:tame=R-repr}.2 (respectively, \ref{t:wap}) and Lemma \ref{l:sum}.2.
\end{proof}

%050312  Eli, I have a story here (let us discuss it by skype)
\begin{thm} \label{gen-Ellis}
\emph{(A generalized Ellis' theorem)}
Every tame compact right topological \underline{group} $P$ is a topological group.
%!! Assume that $P$ is a closed subsemigroup of a tame s. $S$ such that $P$ algebraically is a group. Is it true that $P$ is a
%topological group?
\end{thm}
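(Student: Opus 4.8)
The plan is to reduce the statement to the already-established tools about tame semigroups and Banach representations, and then invoke the distal rigidity machinery developed in Section~\ref{s:inj}. First I would take $P$ to be a tame compact right topological group; by admissibility $S:=\Lambda(P)$ is a dense submonoid of $P$, and since $P$ is a group the system $(S,P)$ is distal (its enveloping semigroup, being $P$ itself by Lemma~\ref{l:Env-all}.4, is a group). Next I would apply the Enveloping semigroup representation theorem (Theorem~\ref{t:Srepr}.1) to obtain a Rosenthal Banach space $V$ and a $\Lambda(P)$-admissible embedding of $P$ into $\E(V)$; equivalently, the identity map furnishes a right topological semigroup compactification $\nu:S\to P$ which, by Lemma~\ref{l:W=E} and the construction in Theorem~\ref{t:Srepr}, is equivalent to a standard operator compactification on the Rosenthal space $V$. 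In particular $\nu:S\to P$ is an operator compactification.

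With that in hand, the proof is essentially a citation of Theorem~\ref{p:not-E-gr}: we have a right topological semigroup compactification $\nu:S\to P$ of the group $S$ onto a compact right topological \emph{group} $P$, and we have just shown that $\nu$ is equivalent to an operator compactification. The equivalence (1)$\Leftrightarrow$(3) of Theorem~\ref{p:not-E-gr} then yields immediately that $P$ is a topological group. Internally, the argument of Theorem~\ref{p:not-E-gr} runs as follows: the operator compactification $\nu:S\to P$ embeds into an affine semigroup compactification $\nu':S\to Q$ with $\overline{co}(P)=Q$ (Proposition~\ref{t:conditions-new} and Proposition~\ref{l:W=Univ}); by Proposition~\ref{p:dominates}.2 the affine compactification $S\to Q$ is $E$-compatible; the system $(S,P)$ is distal (as $E(S,P)=P$ is a group) and point-transitive, hence minimal; then Proposition~\ref{p:not-E-fl} forces $(S,P)$ to be equicontinuous, and an equicontinuous minimal flow whose acting ``group" $S$ is dense in the group $P=E(S,P)$ has $P$ equal to a compact topological group.

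I expect the main obstacle to be bookkeeping rather than a genuine new difficulty: one must make sure that the ambient acting semigroup in Theorem~\ref{p:not-E-gr} and Proposition~\ref{p:not-E-fl} can be taken to be the topological center $S=\Lambda(P)$ (which need not be a topological group a priori, only a semitopological monoid that happens to be algebraically a group here, since $P$ is a group and $S$ is a submonoid), and that the various notions of compactification equivalence line up. Concretely, the care points are: (i) verifying that $S=\Lambda(P)$ is indeed a subgroup of $P$, so that the word ``group" in Theorem~\ref{p:not-E-gr} applies to $\nu:S\to P$; and (ii) checking that the admissible Rosenthal representation produced by Theorem~\ref{t:Srepr}.1 does give an \emph{operator} compactification in the precise sense of Section~\ref{s:OCandW}, which is exactly the content of Lemma~\ref{l:W=E}. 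Once these are dispatched, Theorem~\ref{p:not-E-gr} closes the argument, and the proof is short. I would write it as: ``Let $S=\Lambda(P)$. By Theorem~\ref{t:Srepr}.1 there is a $\Lambda(P)$-admissible embedding of $P$ into $\E(V)$ for a Rosenthal space $V$; by Lemma~\ref{l:W=E} this means the compactification $\nu:S\to P$ is equivalent to an operator compactification. Since $P$ is a group, Theorem~\ref{p:not-E-gr} implies $P$ is a topological group.''
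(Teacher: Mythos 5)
Your proposal is correct and follows essentially the same route as the paper: invoke Theorem \ref{t:Srepr}.1 to get a $\Lambda(P)$-admissible embedding of $P$ into $\E(V)$ for a Rosenthal space $V$, observe that $\Lambda(P)$ is a subgroup of $P$, and then apply Theorem \ref{p:not-E-gr} (the paper cites its condition (2) directly, while you pass through the equivalent condition (1) via Lemma \ref{l:W=E}, a negligible difference). Your two ``care points'' are exactly the ones the paper's short proof also addresses, so there is no gap.
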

\begin{proof}
By Theorem \ref{t:Srepr} there exists a
%031012 (was simply: admissible)
 $\Lambda(P)$-admissible
embedding of $P$ into $\E(V)$ for some Rosenthal Banach space $V$.
%031012
Since $P$ is a group it is easy to see that its topological
center $G:=\Lambda(P)$ is a subgroup of $P$.
Now apply Theorem \ref{p:not-E-gr} to the compactification $\nu: G \hookrightarrow P$ (defined by the natural inclusion)
and conclude that $P$ is a topological group.
\end{proof}

Since every compact semitopological semigroup is tame,
Ellis' classical theorem (Fact \ref{t:Ellis}) now follows as a special case of
Theorem \ref{gen-Ellis}. (Note that we are not using Ellis' theorem
as an intermediate step in the proof of Theorem \ref{gen-Ellis}.)

%!! Assume that $P$ is a closed subsemigroup of a tame s. $S$ such that $P$ algebraically is a group. Is it true that $P$ is a
%topological group?

%%031012
Combining Corollary \ref{c:FrechetSisTame} and Theorem \ref{gen-Ellis} we also have:

\begin{cor}  \label{c:small}
Let $P$ be a compact admissible right topological group.
Assume that $P$, as a topological space, is Fr\'{e}chet.
Then $P$ is a topological group.
\end{cor}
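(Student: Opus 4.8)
The plan is to combine two results that have already been established in the excerpt: Corollary \ref{c:FrechetSisTame}, which says that a compact admissible right topological semigroup which is Fréchet as a topological space is automatically a tame semigroup, and Theorem \ref{gen-Ellis} (the generalized Ellis theorem), which says that every tame compact right topological group is a topological group. So the argument is essentially a one-line syllogism once the hypotheses are matched up.

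First I would note that $P$ is by hypothesis a compact admissible right topological group; in particular it is a compact admissible right topological semigroup. Since $P$ is Fréchet (equivalently Fréchet--Urysohn) as a topological space, Corollary \ref{c:FrechetSisTame} applies and tells us that $P$ is a tame semigroup in the sense of Definition \ref{d:sem2}, i.e.\ every left translation $\lambda_a\colon P\to P$ is a fragmented map. Next, $P$ being also a group, it is a tame compact right topological \emph{group}, so Theorem \ref{gen-Ellis} applies directly and yields that $P$ is a topological group. That completes the proof.

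There is really no hard step here: the corollary is a formal consequence of the two cited results, and the only thing to check is that ``compact admissible right topological group'' together with ``Fréchet'' supplies exactly the hypotheses needed by Corollary \ref{c:FrechetSisTame} (compact admissible right topological semigroup, Fréchet) and then by Theorem \ref{gen-Ellis} (tame compact right topological group). One might note in passing, as the text already does for recognizing Fréchet spaces, that Rosenthal compacta are Fréchet by Bourgain--Fremlin--Talagrand, so the statement in particular covers compact admissible right topological groups which happen to be Rosenthal compacta; but this is just an illustration, not part of the proof. If one wanted to be self-contained one could instead invoke Theorem \ref{t:FrechetE} applied to the system $(\Lambda(P),P)$ — since $P$ is admissible, $\Lambda(P)$ is dense and by Lemma \ref{l:Env-all}.4 we have $E(\Lambda(P),P)=P$ — to conclude tameness of $P$, which is precisely the content of Corollary \ref{c:FrechetSisTame}. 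The only real ``obstacle'', and it is a purely bookkeeping one, is making sure the admissibility hypothesis is invoked so that $\Lambda(P)$ is a dense subgroup and the enveloping-semigroup identification in Theorem \ref{gen-Ellis} goes through; everything else is immediate from the quoted theorems.
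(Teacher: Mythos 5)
Your proposal is correct and follows exactly the paper's own route: the text introduces the corollary with ``Combining Corollary \ref{c:FrechetSisTame} and Theorem \ref{gen-Ellis} we also have,'' which is precisely your syllogism (Fr\'echet plus compact admissible right topological implies tame, and tame compact right topological group implies topological group). Your extra remarks about invoking admissibility so that $\Lambda(P)$ is dense and $E(\Lambda(P),P)=P$ are the same bookkeeping already built into Corollary \ref{c:FrechetSisTame} and Theorem \ref{gen-Ellis}, so nothing further is needed.
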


In particular this holds in each of the following cases:
\ben
\item \emph{(Moors \& Namioka \cite{MN})} $P$ is first countable.
\item
\emph{(Namioka \cite{Na72}, Ruppert \cite{Rup-73})}
$P$ is metrizable.
%%!!!!! {in fact: for every compact metrizable right topological group $G$ its center $\Lambda(G)$ is a compact topological group}
\een
%%

%\begin{cor} \emph{(Namioka \cite{Na72} and Ruppert \cite{Rup-73})}
%If $P$ is a \emph{metrizable} compact right topological group,
%then $P$ is a topological group.
%%!!!!! \footnote{probably: for every compact metrizable right topological group $G$ its center $\Lambda(G)$ is a compact topological group}
%\end{cor}
%\begin{proof} Directly from Lemma \ref{t:F}.3 we obtain that the group $P$ is a HNS-semigroup. Hence, $P$ is tame.
%So Theorem \ref{gen-Ellis} applies.
%\end{proof}

%!!! G is semit gr
\begin{cor} \label{Gl-Gen}
\emph{(Glasner \cite{Gl-tame} for metrizable $X$)}
A distal minimal
%\footnote{equivalently, distal and point transitive}
(not necessarily, metric) compact $G$-system is tame
if and only if it is equicontinuous.
\end{cor}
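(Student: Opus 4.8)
The plan is to deduce Corollary \ref{Gl-Gen} directly from the machinery already assembled, essentially by chaining Theorem \ref{gen-Ellis} (the generalized Ellis theorem) with Proposition \ref{p:not-E-fl}. One direction is immediate: an equicontinuous compact $G$-system is WAP, hence HNS, hence tame by Lemma \ref{l:ourclasses}, so nothing needs to be said there. The substance is the forward implication: a distal minimal tame compact $G$-system $X$ must be equicontinuous.

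First I would recall that by Ellis' theorem (for the abstract right topological setting this is the hypothesis being unpacked) the distality of $X$ means precisely that the enveloping semigroup $E(X)$ is a group. Since $X$ is minimal, $E(X)$ is a minimal right topological semigroup which is a group; and it is admissible (Remark \ref{r:env=crtadm}), with $\Lambda(E(X)) \supseteq j(G)$. Now the tameness of the system $X$ gives, by Lemma \ref{l:both}.2, that $E(X)$ is a tame semigroup. Thus $E(X)$ is a \emph{tame compact right topological group}, so Theorem \ref{gen-Ellis} applies and tells us that $E(X)$ is in fact a topological group.

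Next I would exploit this to conclude equicontinuity. With $P := E(X)$ now a compact topological group and $\nu : G \to P$ the Ellis compactification (a right topological, in fact now dynamical, semigroup compactification of $G$), consider the natural action of $G$ on $P$: it is distal because $E(G,P) = P$ is a group (Lemma \ref{l:Env-all}.4), and it is minimal since $(G,P)$ is point-transitive (the identity is a transitive point) and distal. Apply Proposition \ref{p:not-E-fl} — or more directly Theorem \ref{p:not-E-gr}, whose implication (3) $\Rightarrow$ everything shows a compact right topological group that happens to be a topological group yields an $E$-compatible equicontinuous picture — to see that $(G,P)$ is an equicontinuous $G$-flow. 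Since $X$ is a $G$-factor of $P$ (via $p \mapsto p x_0$ for a transitive point $x_0 \in X$), and equicontinuity passes to factors, $X$ is equicontinuous.

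The main obstacle, and the place to be careful, is the passage from ``$E(X)$ is a topological group'' back to ``$X$ is equicontinuous'': one must be sure that the equicontinuity of the $G$-action on $P = E(X)$ is genuinely available and not circular. The clean route is to invoke Proposition \ref{p:not-E-fl} applied to the minimal distal $G$-flow $P$ itself: once $P = E(G,P)$ is known to be a topological group, the compactification $G \hookrightarrow P$ is a faithful affine-compactifiable, $E$-compatible situation (this is exactly the content of the implication chain inside the proof of Theorem \ref{p:not-E-gr}), so Theorem \ref{t:Glasner} via Lemma \ref{l:min-gen} and Remark \ref{r:Glasnerthm} forces $P$ equicontinuous. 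All the non-metrizable subtleties have already been absorbed into Remark \ref{r:Glasnerthm}, so no new argument is needed there; one only has to assemble the citations in the right order.
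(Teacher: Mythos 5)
Your proposal is correct, and its core coincides with the paper's argument: distality plus Ellis' theorem gives that $E(X)$ is a group, Lemma \ref{l:both}.2 gives that it is a tame semigroup, and Theorem \ref{gen-Ellis} upgrades it to a compact topological group. Where you diverge is only the final step. The paper finishes in one line by identifying the minimal system $X$ with the compact coset space $E/H$, $H=\St(x_0)$, of the compact topological group $E=E(X)$ (the evaluation map $p\mapsto px_0$ is onto by minimality), from which equicontinuity of $X$ is immediate. You instead re-run the distal--affine machinery on the flow $(G,E(X))$: using that $E(X)$ is a topological group you produce, via Theorem \ref{p:not-E-gr} together with Propositions \ref{t:conditions-new}, \ref{p:dominates} and \ref{p:not-E-fl}, an $E$-compatible faithful affine compactification of $(G,E(X))$, conclude that this flow is equicontinuous, and then pass to $X$ as its factor. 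This is valid and involves no circularity (none of the results you invoke depend on the corollary), but it is heavier than necessary and relies on two standard facts that the paper's route sidesteps: that left translations of a compact topological group form an equicontinuous family (which by itself already yields equicontinuity of $(G,E(X))$ without any appeal to Proposition \ref{p:not-E-fl}), and that equicontinuity of compact systems passes to factors (true, but asserted without proof or reference). The paper's coset-space finish buys brevity; if you keep your version, at least replace the detour through Proposition \ref{p:not-E-fl} by the one-line observation about translations in a compact topological group, and justify the factor step.
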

\begin{proof}
%031012
We give the proof for the (non-trivial) ``only if" part.
When $X$ is distal, $E$ is a group by a well known theorem of Ellis.
Also $E:=E(X)$ is a tame semigroup by Lemma \ref{l:both}.2.
By Theorem \ref{gen-Ellis} we get that $E$ is a topological group.
Finally, $X$ is equicontinuous
because $X$ can be identified with the compact coset
$E$-space $E/H$, where $H=St(x_0)$ is the stabilizer of some point $x_0 \in X$. % (which is transitive).
\end{proof}
%050312 In fact, a distal compact $G$-space is tame if and only if the $G$-system $X$ is locally equicontinuous.

%050312 please check corollary and remarks
\begin{cor} \label{c:notTameF}
$\mathrm{D}(G) \cap \mathrm{Tame}(G) = \mathrm{AP}(G)$ for every
topological group $G$. %, where $\mathrm{D}(G)$ be the set of all distal right uniformly continuous functions on $G$.
\end{cor}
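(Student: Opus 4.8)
The plan is to show both inclusions of the identity $\mathrm{D}(G) \cap \mathrm{Tame}(G) = \mathrm{AP}(G)$, where $\mathrm{D}(G)$ is the algebra of distal functions and $\mathrm{AP}(G)$ is the Bohr (almost periodic) algebra. The inclusion $\mathrm{AP}(G) \subseteq \mathrm{D}(G) \cap \mathrm{Tame}(G)$ is easy: $\mathrm{AP}(G) \subseteq \mathrm{WAP}(G) \subseteq \mathrm{Asp}(G) \subseteq \mathrm{Tame}(G)$ by Lemma \ref{l:ourclasses}, and $\mathrm{AP}(G) \subseteq \mathrm{D}(G)$ because every equicontinuous (Bohr) system is distal, so that its functions are distal. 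So the content is in the reverse inclusion.

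For the reverse inclusion, the idea is to pass to compactifications. Suppose $f \in \mathrm{D}(G) \cap \mathrm{Tame}(G)$. First I would like to realize $f$ on a single compactification that is simultaneously distal and tame. One clean way: let $\nu_D : G \to G^{\mathrm{D}(G)}$ be the universal distal (semigroup) compactification, which is a right topological group since $\mathrm{D}(G)$ is an m-introverted algebra whose associated system is distal; in particular $E(G,G^{\mathrm{D}(G)}) = G^{\mathrm{D}(G)}$ is a group. Now $f$ comes from this compactification, i.e. $f = \tilde f \circ \nu_D$ for some $\tilde f \in C(G^{\mathrm{D}(G)})$. The cyclic $G$-system $X_f$ generated by $f$ (Section \ref{s:cyclAff}) is then a factor of $G^{\mathrm{D}(G)}$, hence distal (distality passes to factors). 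On the other hand $f \in \mathrm{Tame}(G)$ means precisely, by Proposition \ref{p:tame-f}, that $\mathrm{cl}_p(fG) \subset \mathcal{F}(X_f)$, so $X_f$ is a tame $G$-system. Thus $X_f$ is a distal tame minimal $G$-system: minimality follows because a distal point-transitive system is minimal. Now apply Corollary \ref{Gl-Gen}: a distal minimal compact $G$-system that is tame is equicontinuous. Hence $X_f$ is equicontinuous, so $f$, coming from an equicontinuous $G$-system, lies in $\mathrm{AP}(G)$.

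The one step that needs a small amount of care — and which I expect to be the main obstacle to get exactly right — is the reduction to the minimal/point-transitive case, i.e. checking that the cyclic system $X_f$ is genuinely distal and minimal. Distality of $X_f$: one must be sure that the distal functions algebra $\mathrm{D}(G)$ really is m-introverted and its universal system is distal (this is classical, see \cite[p. 178]{BJM}), and then that distality is inherited by factors of compact $G$-flows, which is standard. Minimality: a distal point-transitive compact flow is minimal because in a distal flow every orbit closure is minimal (the classical Ellis structure theory). With these in hand the argument is complete. Alternatively, and perhaps more efficiently, one can avoid even forming $X_f$ explicitly: observe directly that $\mathrm{D}(G) \cap \mathrm{Tame}(G)$ is a closed $G$-invariant subalgebra of $C(G)$, pass to the associated semigroup compactification $G \to P$, note $P$ is a distal (hence, being point-transitive, minimal) tame right topological group, invoke Theorem \ref{gen-Ellis} to conclude $P$ is a topological group, and then that $P$ is equicontinuous as a minimal distal system with metrizable... no — here one should instead use Proposition \ref{p:not-E-fl} or Corollary \ref{Gl-Gen} again to get equicontinuity, whence the algebra is contained in $\mathrm{AP}(G)$. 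Either route reduces the statement to Corollary \ref{Gl-Gen}; I would present the cyclic-function version since it keeps the reasoning concrete and self-contained.
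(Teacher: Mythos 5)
Your proposal is correct and follows essentially the same route as the paper: reduce to the cyclic system $X_f$, verify it is distal, minimal and tame, and invoke Corollary \ref{Gl-Gen} to conclude equicontinuity, hence $f \in \mathrm{AP}(G)$. The only cosmetic difference is that the paper cites \cite[p.196]{BJM} directly for minimality of $X_f$, while you derive distality and minimality from the universal distal compactification and the standard fact that point-transitive distal flows are minimal.
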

\begin{proof}
Let $f \in \mathrm{D}(G) \cap \mathrm{Tame}(G)$. Then the cyclic $G$-space $X_f$
has the following properties: a) distal, b) minimal, c) tame.
Indeed, for every distal function on a topological group the cyclic system $(G,X_f)$ is minimal (see \cite[p.196]{BJM}).
Now Corollary \ref{Gl-Gen} concludes that $X_f$ is equicontinuous.
%031012
Hence, $f \in AP(G)$. This proves $\mathrm{D}(G) \cap \mathrm{Tame}(G) \subset \mathrm{AP}(G)$. The reverse inclusion is trivial.
\end{proof}

% eee

%\begin{remark}
%Theorem  \ref{gen-Ellis} has a more direct proof when $P$ is assumed to
%be metrizable. In fact, under the assumptions of the theorem the dynamical
%system $(\Lambda(P),P)$ is tame, hence injective. As $P$ is its own enveloping
%semigroup another result of Ellis implies that it is distal and therefore also minimal. Finally Theorem 1.7 of \cite{Gl-tame} asserts that a minimal distal metric dynamical system is injective iff it is equicontinuous.
%\end{remark}

\begin{remark} \label{notTameF} (Non-tame functions)
\ben
\item
Corollary \ref{c:notTameF} implies that
$(\mathrm{D}(G) \setminus \mathrm{AP}(G)) \subset (\mathrm{RUC}(G) \setminus \mathrm{Tame}(G))$.
Hence any distal function on $G$ which not almost periodic is not tame.
As a concrete example for $G =\Z$, take $f(n) = \cos(2\pi n^2\alpha)$
with $\al$ any irrational real number.
% eee
\item
Any function $f \in l_{\infty}(\Z)$ such that the system $X_f$ either has
positive entropy, or
%031012 Eli, I think here "(e.g.)" can be (or, should be ?) removed
% (e.g.)
is minimal and weakly mixing,  is non-tame.
%200312
% (and also non-distal).
%%
%\item
%There exists a non-tame function
%$f \in \mathrm{RUC}(\Z^2)=l_{\infty}(\Z^2)$ on the group $\Z^2$
%which is non-distal.
%Indeed by Example \ref{e:z2} there exists an
%intro-generated m-introverted Banach subalgebra $V$ of $l_{\infty}(\Z^2)$ which is not
%introverted. So by Theorem \ref{m-intro to intro}, $V$ is not contained in $\mathrm{Tame}(\Z^2)$.
%%050312 Eli, do you know distal non-tame function on $\Z$ ? probably it is equivalent to Problem \ref{q:Z?}
%If Problem \ref{q:Z?} has a positive solution then (as in the case of $\Z^2$)
%we obtain that there are non-tame functions on $\Z$ which are non-distal.
\een
\end{remark}
%050312   "introvertion" plays an important role in this remark because
%I think there is no reason to derive such conclusions in more general cases when have
%only a transitive compactification $\a: G \to (X,x_0)$ and (not a semigroup compactification).
%In this case if $\a$ is not injective (so, X is not tame)
%still, as I think, the algebra $j(C(X)$ can be a part of $Tame(G)$

\subsection{Haydon's functions}

Recall (Section \ref{s:cyclAff}) that for every $f \in \mathrm{WRUC}(X)$ on an $S$-system $X$
 we have the cyclic affine $S$-compactification $\pi_f: X \to Q_f$, where
$Q_f$ is the pointwise closure of ${co} (X_f)$ in $C(S)$ %(equivalently, in $l_{\infty}(S)$).
%$Q_f:=\overline{co}^{p} (X_f) \subset C(S)$ is compact.
%As in Section \ref{s:cyclAff} denote by
and $X_f:=cl_p(\{m(f,\delta_f(x)\}_{x \in X})$ is
the cyclic $S$-system generated by $f$.

\begin{defin} \label{d:new}
We say that $f \in \mathrm{WRUC}(X)$ has the \emph{Haydon's Property} (or is a {\em Haydon function})
%!! or, maybe "Krein-Milman Property" ?
if the pointwise and norm closures of $co (X_f) $ in $C(S)$ (equivalently, in $l_{\infty}(S)$)
coincide. That is, if $$\overline{co}^{norm} (X_f)=\overline{co}^{p} (X_f).$$
\end{defin}

\begin{prop} \label{p:tame-func-n}
Every tame function $f \in \mathrm{Tame}(X)$ has Haydon's property.
\end{prop}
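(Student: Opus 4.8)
\textbf{Proof proposal for Proposition \ref{p:tame-func-n}.}

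The plan is to reduce Haydon's property for $X_f$ to Haydon's characterization of Rosenthal spaces, Fact \ref{f:RosFr}.4, exactly in the way it was used in the proof of Theorem \ref{t:Ros-E}.1 via Lemma \ref{norm-cl}.1. First I would invoke Theorem \ref{t:mat}.1 (equivalently Theorem \ref{t:general} applied to the Rosenthal family $F:=fS$, as in the proof of Theorem \ref{t:tame is inj}): since $f \in \mathrm{Tame}(X)$, there is a weakly continuous representation $(h,\a)$ of $(S,X)$ on a \emph{Rosenthal} Banach space $V$ and a vector $v_0 \in V$ with $f(x)=\langle v_0,\a(x)\rangle$ for all $x\in X$. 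Moreover, analyzing that proof as in Theorem \ref{t:tame is inj}, we may assume (Lemma \ref{l:generating}.2) that the representation is $w^*$-generating; in fact for the present statement it suffices to work with the cyclic data directly.

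Next I would identify $X_f$ and $Q_f$ inside $V^*$. By the construction in Section \ref{s:cyclAff} and Lemma \ref{l:f-introtype}, the maps $T_f$ realize $\delta_f: X\to M(V_f)$ and $\pi_f: X\to Q_f\subset C(S)$ as isomorphic affine $S$-compactifications, and $Q_f=\overline{co}^{p}(X_f)$ in $C(S)$. On the Banach-space side, put $Y:=\overline{\a(X)}^{w^*}\subset V^*$, a $w^*$-compact subset. The key step is that the matrix coefficient map $w\mapsto m(v_0,w)$ transports the $w^*$-topology of $\overline{co}^{w^*}(Y)\subset V^*$ to the pointwise topology on $\overline{co}^{p}(X_f)\subset C(S)$, and transports the norm topology of $V^*$ (restricted to the bounded set $\overline{co}^{w^*}(Y)$, using $\|m(v_0,w)-m(v_0,w')\|_\infty\le \|v_0\|\cdot\|w-w'\|$) to the sup-norm on $C(S)$. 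Hence the two closures $\overline{co}^{p}(X_f)$ and $\overline{co}^{norm}(X_f)$ in $C(S)$ are the images under $w\mapsto m(v_0,w)$ of $\overline{co}^{w^*}(Y)$ and (a subset of) $\overline{co}^{norm}(Y)$ respectively. Since $V$ is Rosenthal, Haydon's theorem (Fact \ref{f:RosFr}.4) gives $\overline{co}^{w^*}(Y)=\overline{co}^{norm}(Y)$, and therefore $\overline{co}^{p}(X_f)=\overline{co}^{norm}(X_f)$ in $C(S)$, which is exactly Haydon's property for $f$.

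The main obstacle — and the point needing care — is bookkeeping between the several realizations of the cyclic compactification: the abstract $Q_f\subset C(S)$ from Section \ref{s:cyclAff}, the image $\overline{co}(\a(X))\subset V^*$ from the representation $(h,\a)$, and the restriction operator $r_{Q}:V\to C(S)$ of Lemma \ref{l:dense}/Remark \ref{r:actions}.2. One must check that these are $S$-equivariantly and topologically matched so that the equality of $w^*$- and norm-closures of the convex hull \emph{upstairs} in $V^*$ descends to the equality of pointwise- and norm-closures \emph{downstairs} in $C(S)$; here the one-Lipschitz bound $\|r_{Q}\|\le 1$ and the fact that $\a(X)$ (hence $Y$) is norm-bounded are what make the norm-to-sup-norm comparison work, while weak continuity of $h$ is what makes the $w^*$-to-pointwise comparison work. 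Once this identification is set up cleanly — essentially a repetition of the argument already carried out for Lemma \ref{norm-cl}.1 and Theorem \ref{t:Ros-E}, now applied to the single function $f$ rather than to the whole representation — the proof is short.
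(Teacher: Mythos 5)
Your proposal is correct and follows essentially the same route as the paper: represent $f$ as a matrix coefficient $f(x)=\langle v_0,\a(x)\rangle$ of a weakly continuous representation on a Rosenthal space (Theorem \ref{t:mat}/Theorem \ref{t:general}), then push the equality $\overline{co}^{w^*}(Y)=\overline{co}^{norm}(Y)$ given by Haydon's theorem (Fact \ref{f:RosFr}.4) through the weak$^*$-pointwise continuous, norm-bounded operator $\mu\mapsto m(v_0,\mu):V^*\to C(S)$ to obtain $\overline{co}^{p}(X_f)=\overline{co}^{norm}(X_f)$. The only cosmetic difference is that you mention the $w^*$-generating reduction, which (as you note) is not actually needed here.
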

\begin{proof}
%By Remark \ref{r:cycl-comes}.2, for the \emph{compact} $S$-system $X_f$ we have $(X_f)_{F_e} = X_f$.
%So we can assume that $X$ is compact.
By Theorem \ref{t:mat}
there exist a weakly continuous representation $(h,\a)$
of $(S,X)$ on a Rosenthal Banach space $V$ and a vector $v \in V$ such that
$f(x)= \langle v, \a(x)
\rangle \ \ \  \forall \ x \in X.$

Consider the linear bounded $S$-operator (between left $S$-actions)
$$T: V^* \to C(S), \ \mu \mapsto m(v,\mu).$$
By Lemma \ref{l:prop}, $X_f:=cl_p(m(f,\delta_f(X)))$. By the choice of $v \in V$ we have $m(f,\delta_f(X))=m(v,\a(X))$.
So, $T(\a(X))=\delta_f(X)$.
Then $T(Y)=X_f$, where $Y:=\overline{\a(X)}^{w^*}$.
Since $T$ is weak$^*$-pointwise continuous, the compactness argument imply that
$T(\overline{co}^{w^*} \ (Y))= \overline{co}^p \ (X_f).$
By Haydon's theorem (Fact \ref{f:RosFr}.4),
we have $\overline{co}^{norm} (Y)=\overline{co}^{w^*} (Y).$
By the linearity and norm continuity of $T$ we get
$T (\overline{co}^{norm} (Y)) \subset \overline{co}^{norm} (T(Y))$.
Clearly,
$\overline{co}^{norm} (T(Y)) \subset \overline{co}^{p} (T(Y))$.
Summing up (and taking into account that $T(Y)=X_f$) we obtain
$\overline{co}^{norm} (X_f) = \overline{co}^{p} (X_f).$
%a Banach space $V$ contains no copy of $l_1$ (in our terms: Rosenthal) if and only if
%it has a Krein-Milman property for weak$^*$ compact sets. The latter is equivalent to
%say that for every weak$^*$ compact set $Y \subset V^*$ the weak$^*$
%and norm closures of the convex hull $co(Y)$ in $V^*$ coincide: $cl_{w^*}(co(Y))=cl_{norm}(co(Y))$.
%Applying this characterization to
\end{proof}

%0912 we discuss this in your office, you mentioned that there exists a counterexample (by some subshifts ?)
% eli ?

% 2112
% Note that not every Haydon function on the group $G:=\Z$ is tame.
\begin{ex}
Let $\om \in \Om=\{0,1\}^\Z$ be a transitive point under the shift
$\sig : \Om \to \Om$. We consider $\om$ as an element of $l_\infty(\Z)$.
Then by assumption the cyclic flow $X_\om = \Om$, and it can be
%031012  Eli, could you give me some intuitive explanation in a skype-conversation ?
easily checked that $\overline{co}^{norm} (X_\om)=\overline{co}^{p} (X_\om)$.
Thus $\om$ is a Haydon function which is clearly not tame.
Thus the converse of Proposition \ref{p:tame-func-n} is not true. However we do have the following proposition.
\end{ex}

%2112
\begin{prop}\label{Hay}
For a Haydon function $f: X \to \R$, the cyclic affine compactification
$$
\a: X \to Q_f=\overline{co}^{norm} (X_f)=\overline{co}^{p} (X_f)
$$
is E-compatible.
\end{prop}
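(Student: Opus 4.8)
The statement is that for a Haydon function $f\colon X\to\R$, the cyclic affine compactification $\a\colon X\to Q_f=\overline{co}^{norm}(X_f)=\overline{co}^{p}(X_f)$ is E-compatible. The natural route is to invoke Lemma \ref{norm-cl}.1, which says precisely that if the weak$^*$ and norm closures of the convex hull of $Y$ coincide (where $Y=\overline{\a(X)}^{w^*}$), then the associated representation is E-compatible. So the whole issue is to realize the cyclic affine compactification $\pi_f\colon X\to Q_f$ as the affine compactification induced by a concrete Banach representation of $(S,X)$, and then to check that the Haydon property of $f$ translates into the hypothesis of Lemma \ref{norm-cl}.1 for that representation.

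\textbf{Key steps.} First I would take the Banach space $V:=V_f=\overline{sp}^{norm}(fS\cup\{\mathbf 1\})\subset\mathrm{WRUC}(X)$, the affine compactification space of $\pi_f$ (recall $f\in\mathrm{WRUC}(X)$ since the statement presupposes $f$ is a Haydon function, hence in $\mathrm{WRUC}(X)$ by Definition \ref{d:new}). By Proposition \ref{bar}.1 and Lemma \ref{l:f-introtype}, the regular $V_f$-representation $(h,\a)$ of $(S,X)$ on $V_f$ has $\delta_f\colon X\to M(V_f)$ as its induced affine compactification, and via the isomorphism $T_f$ of Lemma \ref{l:f-introtype}.2 this is affinely $S$-isomorphic to $\pi_f\colon X\to Q_f$. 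Then, by Proposition \ref{p:E-aff+} (equivalence of (1) and (2)), E-compatibility of the affine $S$-compactification $\pi_f$ is equivalent to E-compatibility of this representation on $V_f$. Next I would identify $Y:=\overline{\a(X)}^{w^*}$; under the identifications above $Y$ corresponds to $X_f=\overline{\pi_f(X)}^p\subset C(S)$ (Lemma \ref{l:f-introtype}.3, Lemma \ref{l:prop}.1), and $\overline{co}^{w^*}(Y)$ corresponds to $Q_f$, while $\overline{co}^{norm}(Y)$ corresponds to $\overline{co}^{norm}(X_f)$ (the operator $T_f$ is a linear $w^*$-to-pointwise continuous bounded $S$-map, and since $T_f$ restricted to $M(V_f)$ is an isomorphism onto $Q_f$, it carries norm-closures to norm-closures and $w^*$-closures to pointwise-closures appropriately). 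The Haydon property of $f$ says exactly $\overline{co}^{norm}(X_f)=\overline{co}^{p}(X_f)$, i.e. in the $V_f$-picture $\overline{co}^{norm}(Y)=\overline{co}^{w^*}(Y)$. This is the hypothesis of Lemma \ref{norm-cl}.1, so that lemma yields E-compatibility of the representation, hence of $\pi_f$.

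\textbf{Main obstacle.} The genuinely delicate point is matching up the two pictures cleanly: transporting the identity $\overline{co}^{norm}(X_f)=\overline{co}^{p}(X_f)$ in $C(S)$ to the statement $\overline{co}^{norm}(Y)=\overline{co}^{w^*}(Y)$ in $V_f^*$ through the map $T_f$. One direction (pointwise/$w^*$-closures correspond) is immediate from $w^*$-pointwise continuity of $T_f$ plus compactness; the norm-closure correspondence requires knowing that $T_f$ restricted to the relevant bounded convex sets is not merely continuous but bi-Lipschitz-like on $M(V_f)\cong Q_f$ — which is available because $T_f|_{M(V_f)}$ is an affine homeomorphism onto $Q_f$ and, since the state space sits in a bounded set where the $V_f$-norm and its image norm are comparable, one gets $T_f(\overline{co}^{norm}(Y))=\overline{co}^{norm}(X_f)$. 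Once this bookkeeping is done, the result drops out of Lemma \ref{norm-cl}.1 and Proposition \ref{p:E-aff+}. An alternative, slightly slicker packaging would be to verify directly the criterion of Proposition \ref{p:E-aff+}(4), namely $\overline{co(X_f)}^p\subset\langle m(V_f,Y)\rangle$: since $\overline{co(X_f)}^p=\overline{co(X_f)}^{norm}$ by hypothesis and $m(V_f,Y)$ already contains (dense) pieces building up to $X_f$, the norm-closure argument via Lemma \ref{l:algebras}.1 closes the gap — but the route through Lemma \ref{norm-cl}.1 is the cleanest since Haydon's property is manifestly its hypothesis.
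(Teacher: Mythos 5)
Your reduction to Lemma \ref{norm-cl}.1 has a genuine gap at precisely the step you flag as delicate. The hypothesis of Lemma \ref{norm-cl}.1 is an equality of closures taken in the \emph{dual norm} of $V_f^*$: $\overline{co}^{w^*}(Y)=\overline{co}^{norm}(Y)$ with $Y=\overline{\delta_f(X)}^{w^*}\subset M(V_f)$ and $\|\mu\|_{V_f^*}=\sup\{|\mu(v)|: v\in B_{V_f}\}$, whereas Haydon's property for $f$ is an equality of closures in the sup norm of $C(S)$. The operator $T_f:V_f^*\to C(S)$ is norm-Lipschitz and weak$^*$-pointwise continuous, and $T_f|_{M(V_f)}$ is an affine homeomorphism onto $Q_f$ for the weak$^*$/pointwise topologies; but its inverse is \emph{not} norm-continuous, and the claimed comparability of the two norms on the state space is false in general. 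Indeed $\|T_f\mu-T_f\nu\|_\infty=\sup_{s\in S}|(\mu-\nu)(fs)|$ only controls $\mu-\nu$ on the orbit $fS$ (and on $\mathbf{1}$), while a generic $v\in B_{V_f}$, $V_f=\overline{sp}^{norm}(fS\cup\{\mathbf{1}\})$, is a norm-limit of combinations $c_0\mathbf{1}+\sum_i c_i\, fs_i$ with $\sum_i|c_i|$ unbounded, so smallness of $\mu-\nu$ on $fS$ gives no bound on $|(\mu-\nu)(v)|$. Consequently, from $T_f\mu=\lim_n T_f(\nu_n)$ in $\|\cdot\|_\infty$ with $\nu_n\in co(Y)$ (which is all that Haydon's property yields) you cannot conclude $\nu_n\to\mu$ in $V_f^*$, and the hypothesis of Lemma \ref{norm-cl}.1 remains unverified. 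Your alternative packaging via Proposition \ref{p:E-aff+}(4) is also incomplete as stated: that criterion requires $\overline{co(X_g)}^p\subset\lan m(V_f,Y)\ran$ for \emph{every} $g\in V_f$, not only for $g=f$.

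The paper avoids this transport problem by running the norm-approximation argument where the Haydon hypothesis actually lives, namely in $(C(S),\|\cdot\|_\infty)$. One regards $Q_f\subset C(S)$ and uses the matrix coefficients of the evaluation duality: for $t\in S$, $y\in Q_f$, set $m_w(t,y)(s)=y(ts)$. Since the evaluations at points of $S$ separate points of $Q_f$ and of $X_f$, Lemma \ref{l:Env-all}.1 identifies the algebras of the Ellis compactifications $S\to E(Q_f)$ and $S\to E(X_f)$ as $\lan m_w(S,Q_f)\ran$ and $\lan m_w(S,X_f)\ran$; each map $y\mapsto m_w(t,y)$ is affine and satisfies $\|m_w(t,y)-m_w(t,y_0)\|_\infty\le\|y-y_0\|_\infty$, so the Haydon equality $Q_f=\overline{co}^{norm}(X_f)$ gives $m_w(S,Q_f)\subset\overline{sp}^{norm}(m_w(S,X_f))\subset\lan m_w(S,X_f)\ran$, whence the two Ellis compactifications coincide and $E(Q_f)\to E(X_f)$ is injective, i.e. E-compatibility. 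If you prefer to stay in your $V_f$-picture, the correct completion is to verify Proposition \ref{p:E-aff+}(3) on the spanning set $\{ft: t\in S\}\cup\{\mathbf{1}\}$ of $V_f$, noting $m(ft,\mu)(s)=(T_f\mu)(ts)=m_w(t,T_f\mu)(s)$ and $T_f\mu\in Q_f$ (Lemma \ref{l:prop}), and then apply the same sup-norm Lipschitz/affine argument; but the route through Lemma \ref{norm-cl}.1, as you propose it, does not go through.
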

\begin{proof} By Lemma \ref{l:f-introtype}, $Q_f$ is a subset of $C(S)$.
Therefore, the evaluation map $w: S \times Q_f \to \R$, where $w(s,\phi)=\phi(s)$,
is separately continuous. Since $f \in \mathrm{WRUC}(X)$, %results of Section \ref{s:cyclAff} imply
%031012
$\a: X \to Q_f$ is an affine $S$-compactification. In particular, the action $S \times Q_f \to Q_f$ is separately continuous. So,
the function
$$m_w(t,y): S \to \R, \ s \mapsto \tilde{t}(sy)=y(ts)$$ is continuous for every $y \in Q_f$ and $t \in S$.
Clearly, $S$ separates points of $Q_f$.
By Lemma \ref{l:Env-all}.1,
$\lan m_w(S,Q_f) \ran$ and $\lan m_w(S,X_f) \ran$ are the algebras of the Ellis compactifications
$j_Q: S \to E(Q_f)$ and $j_{X_f}: S \to E(X_f)$, respectively.
 Since all $s$-translations on $Q_f$ are affine maps we have $m_w(t, \sum_{i=1}^n c_i q_i)= \sum_{i=1}^n c_i m_w(t, q_i)$
 for every $\sum_{i=1}^n c_i=1$, $c_i > 0$. Also, $|m_w(t,y)(s) - m_w(t,y_0)(s)| \leq ||y-y_0||_{\infty}$.
 Since $Q_f=\overline{co}^{norm} (X_f)$, it follows %summing up
 that $$m_w(S,Q_f) \subset \overline{sp}^{norm} (m_w(S,X_f)) \subset \lan m_w(S,X_f) \ran.$$
 Hence, the Ellis compactifications $j_{Q_f}$ and $j_{X_f}$ are equivalent.
 \end{proof}

\begin{ex}
%031012   Eli, I don't understand this example. Namely, why this function is not Haydon ?
%maybe we need to mention also Theorem 5.5  (= Theorem \ref\ref{p:not-E-gr})  ?
The distal function $f(n) = \cos(2\pi n^2 \alpha)$ in $l_\infty(\Z)$
is not a Haydon function. This follows from Proposition \ref{Hay} and Proposition \ref{p:not-E-fl}.
% and Remark \ref{notTameF}.1.
\end{ex}

\subsection{Banach representations of affine $S$-systems}
\label{s:aff-rep}

%In this section all $G$-systems $X$ are assumed to be having a continuous action.

As we have already mentioned in Remark \ref{r:aff-list}.4, all the affine
$S$-compactifications $\a: X \to Q$ of $X$ come, up to equivalence,
from representations of dynamical $S$-systems $X$ on \emph{Banach spaces}.
In particular, it follows that $Q$ is \emph{affinely} $S$-isomorphic to an affine
$S$-subsystem of the weak$^*$-compact unit ball $B^*$ of $V^*$ for some Banach space $V$.
This suggests the following question.

\begin{question} \label{q:aff-rep}
Which metric affine $S$-compactifications $X \to Q$
can be obtained via representations of $(S,X)$ on good Banach spaces $V$,
(say, Rosenthal, Asplund or reflexive)
%031012
where $Q$ is a weak$^*$-compact \emph{affine} $S$-subset of $V^*$ (as in Section \ref{s:motivation}).
% More specifically, where $V$ is Rosenthal, Asplund or reflexive ? %, Hilbert ?
%%
\end{question}

First note that there is no obstruction in the purely topological case (i.e. for trivial actions).
Indeed, by Keller's theorem \cite[p. 98]{BP}
any metric compact convex affine set $Q$ in a locally convex linear space is affinely homeomorphic to a compact convex subset $K$
in the Hilbert space $l_2$.
% eee  ?
%300312
%We may choose the Hilbert cube $K \subset l_2$.
%200312   iff $Q$ is infinite-dimensional.

\begin{thm} \label{t:aff-rep}
\emph{(A representation theorem for $S$-affine compactifications)}
Let $X$ be a tame (HNS, WAP) compact metric $S$-system. Then every $S$-affine compactification $\g: X \to Q$ comes
from a weakly continuous representation of $(S,X)$ on a separable
Rosenthal (respectively: Asplund, reflexive) Banach space $V$, where
$Q \subset V^*$ is a weak$^*$ compact affine subset.
If $S \times X \to X$ is continuous we can assume that $h$ is strongly continuous.
If $S=G$ is a group then $h(G) \subset \Iso(V)^{op} \subset \Theta(V)^{op}$.
\end{thm}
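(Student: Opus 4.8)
The plan is to reduce the statement to the representation theorems already proved in this section (Theorems \ref{t:tame=R-repr} and \ref{t:tame is inj}) together with the cyclic/affine compactification machinery of Sections \ref{s:Affine} and \ref{s:Op-env}. First I would recall that, by Lemma \ref{aff-descr} (its $S$-space version, Proposition \ref{bar}.1), the affine $S$-compactification $\g: X\to Q$ is determined by its affine compactification space $V_0:=A(Q)|_X$, which is an $S$-invariant closed unital subspace of $\mathrm{WRUC}(X)$. Since $X$ is compact tame (HNS, WAP) we have $C(X)=\mathrm{Tame}(X)$ (resp. $\mathrm{Asp}(X)$, $\mathrm{WAP}(X)$), so in particular every $f\in V_0$ is a tame (resp. Asplund, WAP) function. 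Moreover $X$ is metrizable, hence $C(X)$ is separable, and therefore $V_0$ is separable.

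Next I would apply Theorem \ref{t:general} (the Rosenthal version; in the HNS case its Asplund analogue discussed in the proof of Theorem \ref{t:tame=R-repr}.2, in the WAP case Fact \ref{t:wap}) not to a single function but to a countable point-separating family, or equivalently to the whole separable space $V_0$ viewed as an $S$-invariant "Rosenthal family" (by Proposition \ref{p:tame-f} the orbit of each $f\in V_0$ is a Rosenthal family for $X$, and one builds $F$ as the union of such orbits over a countable dense subset). This produces a weakly continuous representation $(h,\a):(S,X)\rightrightarrows(\Theta(V)^{op},V^*)$ on a separable Rosenthal (resp. Asplund, reflexive) Banach space $V$, together with a linear map $\nu:F\to B_V$ realizing $f(x)=\langle\nu(f),\a(x)\rangle$. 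By Lemma \ref{l:generating}.2 (the DFJP/LNO factorization as used there) we may and do assume the representation is $w^*$-generating, and by Remark \ref{r:iso-DFJP} we may normalize so that $\a(X)\subset B^*$. One then forms $Q':=\overline{co}^{w^*}(\a(X))\subset V^*$, a weak$^*$-compact affine $S$-subset.

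It remains to identify the abstract affine $S$-compactification $\g:X\to Q$ with $\a:X\to Q'$. Here I would use Theorem \ref{t:Ros-E}: the $w^*$-generating weakly continuous representation on the Rosenthal space $V$ is (strongly) E-compatible, and more importantly the restriction operator $r:V\to C(X)$, $r(v)(x)=\langle v,\a(x)\rangle$, has image exactly the closed linear span of $\{r(\nu(f)):f\in F\}$, which by construction contains $V_0$ and, since $\nu(F)$ is built from $V_0$, is contained in it; hence $r$ induces an $S$-isometry of the affine compactification space of $\a:X\to Q'$ onto $V_0=A(Q)|_X$. By the bijective correspondence of Lemma \ref{aff-descr} (and Lemma \ref{l:morph} to get that the identifying map is an $S$-morphism) the two affine $S$-compactifications $\g$ and $\a$ are affinely $S$-isomorphic. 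The clauses about the continuous case ($h$ strongly continuous, using the last paragraph of the proof of Theorem \ref{t:general}) and the group case ($h(G)\subset\Iso(V)^{op}$, since a monoid co-homomorphism of a group into $\Theta(V)$ lands in the isometry group) follow exactly as in Theorems \ref{t:general} and \ref{t:mat}.

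\textbf{Main obstacle.} The delicate point is not producing \emph{some} Banach representation of $X$ with the right Banach-space class — that is Theorem \ref{t:tame=R-repr} — but arranging that the resulting weak$^*$-compact affine set $Q'=\overline{co}^{w^*}(\a(X))$ is \emph{affinely} $S$-isomorphic to the prescribed $Q$, rather than to the universal affine compactification $P(X)$ or some other affine compactification dominating $\g$. This forces us to apply the DFJP-type construction to the specific family $F$ generating precisely $V_0=A(Q)|_X$ (and to track, via $\nu$ and $r$, that the affine compactification space of the constructed representation is exactly $V_0$ and not something larger); controlling $\langle F\rangle$ versus $\overline{sp}^{norm}(F)$ and making sure $r(\nu(F))$ linearly spans a dense subspace of $V_0$ without enlarging it is where the care is needed. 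Equivalently, one must invoke monotonicity of E-compatibility (Lemma \ref{norm-cl}.4) and the fact that faithful affine compactifications are pinned down by their compactification spaces to conclude $Q'\cong Q$ and not merely $Q'\succeq\g$.
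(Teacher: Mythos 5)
Your proposal is correct and follows essentially the same route as the paper: take a countable family spanning a dense subspace of $A(Q)|_X$, form the $S$-invariant Rosenthal family $F$ from its orbits, apply Theorem \ref{t:general} to get the representation, and then identify $Q$ with $\overline{co}^{w^*}(\a(X))$ by showing the two affine compactification spaces coincide (Lemma \ref{l:dense} plus Lemma \ref{aff-descr}); the detour through E-compatibility and Theorem \ref{t:Ros-E} is not needed for this identification. The only details you gloss over, which the paper handles explicitly, are normalizing $\|f_n\|\le 2^{1-n}$ so that the countable union of orbits is eventually fragmented (a union of Rosenthal families is not automatically Rosenthal) and including $f_1=\mathbf{1}$ to absorb the constants when applying Lemma \ref{l:dense}.
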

\begin{proof}
%We consider only the tame case. HNS and WAP cases are similar.
Let $(\g,Q)$ be an $S$-affine compactification of a tame system $X$.
As usual let $\mathrm{A(Q)}|_X \subset C(X)$ be the corresponding
affine compactification space. $\mathrm{A(Q)}|_X$ is a closed linear unital subspace of $C(X)$.
Moreover, it is separable because $X$ is compact metrizable.
Choose a countable subset $\{f_n\}_{n \in \N} \subset \mathrm{A(Q)}|_X$ such that
$||f_n|| \leq \frac{1}{2^{n-1}}$ and $sp(\{f_n\}_{n \in \N})$ is norm dense in $\mathrm{A(Q)}|_X$.
We can suppose that $f_1=\textbf{1}$.

Since $(S,X)$ is tame, every $f_n \in \mathrm{Tame}(X)=C(X)$.
So $f_nS$ is a Rosenthal family for $X$ (Proposition \ref{p:tame-f}) for any $n \in \N$.
%031012
Hence, $f_nS$ is an eventually fragmented family of maps $X \to \R$ by Fact \ref{f:sub-fr}.
Then $F:=\cup_{n \in \N} (f_nS)$ is again an eventually fragmented family, as can be shown
by diagonal arguments, and the condition $||f_n|| \leq \frac{1}{2^{n-1}}$.
Hence, $F$ is a Rosenthal family for $X$ by Fact \ref{f:sub-fr}.

Since $F$ is also $S$-invariant %and separates the points of $X$
we can apply Theorem \ref{t:general}. We obtain:
a Rosenthal space $V$, an injective continuous operator $j: V \to C(X)$ and
a weakly continuous representation $(h,\a)$ of $(S,X)$ on the Rosenthal Banach space $V$.

As we have noticed in the proof of Theorem \ref{t:general},
one of the properties of this construction is that $F \subset V$. Hence,
$sp(F) \subset V$.
 Consider the associated $S$-affine compactification $\g_0: X \to Q_0 \subset V^*$.
Here $Q_0=\overline{co}^{w^*} (\a (X))$.
We claim that $(\g_0, Q_0)$ is equivalent to $(\g, Q)$. It suffices to show that
$\mathrm{A}(Q_0)|_X=\mathrm{A}(Q)|_X$.

%200312!
Consider the restriction operators:
$$
r_X: V \to \mathrm{A}(Q)|_X \subset C(X), \ \ r_X(v)(x):=\lan v, \a(x) \ran.
$$
$$
r_{Q_0}: V \to \mathrm{A}(Q_0) \subset C(Q_0), \ \ r_Q(v)(y):=\lan v,y \ran.
$$
$$
r_0: C(Q_0) \to C(X), \ \ r_0(v)(x):=\lan v, \a(x) \ran =\lan v, \g_0(x) \ran.
$$
By the choice of $F$, clearly, $r_X(sp(F))$ and hence also $r_X(V)$ are norm dense in the Banach space $\mathrm{A}(Q)|_X$.
Now it suffices to show that $r_X(V)$ is dense also in $\mathrm{A}(Q_0)|_X$.
First, by Lemma \ref{l:dense}, $r_{Q_0}(V) + \R \cdot \textbf{1}$ is dense in $\mathrm{A}(Q_0)$.
Since $\textbf{1}=r_{Q_0}(f_1) \in r_{Q_0}(V_0)$ and $r_{Q_0}(V)$ is a linear subspace
we conclude that $r_{Q_0}(V) + \R \cdot \textbf{1}=r_{Q_0}(V)$.
Therefore, $r_{Q_0}(V)$ is norm dense in the Banach space $\mathrm{A}(Q_0)$.
Then $r_0(r_{Q_0}(V))$ is dense in $r_0(\mathrm{A}(Q_0))=A(Q_0)|_X$.
Finally, it is easy to check that $r_0(r_{Q_0}(V))=r_X(V)$. So we can conclude that indeed
$r_X(V)$ is dense also in $A(Q_0)|_X$, as desired.
This proves the Tame case.

For the Asplund (respectively, reflexive) case we use the corresponding version of Theorem \ref{t:general} as explained
in the proof of Theorem \ref{t:tame=R-repr}.2
(respectively, \cite[Theorem 4.5]{Me-nz}).  %and \cite[Theorem 7.7]{Me-nz}.
%In the latter case the WRUC-compatibility can be removed as we mentioned in Remark \ref{drop-w-adm}.
%% by Proposition \ref{p:tame-is-wruc}.2.
\end{proof}

%050312
Theorem \ref{t:aff-rep} can be extended to general (not necessarily metrizable) $S$-systems $X$ under the assumption that
the space $\mathrm{A(Q)}|_X$ of the affine compactification $\g: X \to Q$ is \emph{$S$-separable}.
The latter condition means that there exists a countable subset
$C \subset \mathrm{A(Q)}|_X$ such that $sp(CS)$ is dense in $\mathrm{A(Q)}|_X$.
In this general case the corresponding Rosenthal space $V$ is not necessarily separable.

Since the space $V_f$ of any cyclic affine $S$-compactification $\pi_f: S \to Q_f$ is always $S$-separable
we conclude that $\pi_f$ can be affinely $S$-represented
on a Rosenthal space for every $f \in \mathrm{Tame}(S)$.
%%

%\section{Questions}

%!!
%050312 I guess Thm \ref{t:aff-rep} can work also for  Hilbert representable metric systems $(G,X)$
%producing affine embeddings into Hilbet spaces.
%The problem is that not every function $f \in C(X)=Hilbert(X)$ is an evaluation
%by some vector function regarding some Hilbert representation
%If we define such functions as Hilbert representable, tentative notation $Hilb_{re}(X)$, then I think
%this is a dense $G$-subalgebra of $Hilb(X)$. In a sense, it is an analogue of Fourier-Stieltjes (positive definite functions on groups)
%still dense should be enough for adopting the proof of the theorem in Hilbert case ...
%For G=U(H)=X, the unitary group, and f=m(v,v),  v \in H, we have interesting geometric cyclic objects ...

%%%%%%%%%%%%%%%%%%%%%%%%%%%%%%%%%%%%%%%%%%%%%%%%%%%%%%%%%%%%%%%%%
%%%%%%             Bibliography                            %%%%%%
%%%%%%%%%%%%%%%%%%%%%%%%%%%%%%%%%%%%%%%%%%%%%%%%%%%%%%%%%%%%%%%%%

\bibliographystyle{amsplain}

\end{document}